\newcommand{\Span}{\operatorname{span}}
\newcommand{\bb}{\mathbb}
\newcommand{\Aff}{\operatorname{Aff}}
\newcommand{\C}{\bb C}
\newcommand{\Z}{\bb Z}
\newcommand{\R}{\bb R}
\newcommand{\N}{\bb N}
\newcommand{\Q}{\bb Q}
\newcommand{\SL}{\operatorname{SL}}
\newcommand{\GL}{\operatorname{GL}}
\newcommand{\FA}{\operatorname{FA}}
\newcommand{\T}{\mathcal T}
\newcommand{\TT}{\mathbb T}
\newcommand{\hh}{\mathcal H}
\newcommand{\om}{\omega}
\newcommand{\Om}{\Omega}
\newcommand{\bPsi}{\boldsymbol{\Psi}}
\newcommand{\minuszero}{\backslash\{0\}}
\newtheorem{Theorem}{Theorem}
\newtheorem{ques}[Theorem]{Question}
\newtheorem{Cor}[Theorem]{Corollary}
\newtheorem{Prop}[Theorem]{Proposition}
\newtheorem{lemma}[Theorem]{Lemma}
\newtheorem{conj}[Theorem]{Conjecture}
\newtheorem{rmk}[Theorem]{Remark}
\newcommand{\JA}{\textcolor{blue}}
\newcommand{\WPH}{\textcolor{green}}
\newcommand{\spec}{\mathcal S}
\newcommand{\genspec}{\widehat{\mathcal S}}
\numberwithin{equation}{section}
\numberwithin{Theorem}{section}
\begin{document}
\title{Linear Flows on Translation Prisms}
\author[Athreya]{Jayadev S.~Athreya}
\address{Department of Mathematics, University of Washington, Box 354350, Seattle, WA 98195, USA}
\email{jathreya@uw.edu}
\author[B\'edaride]{Nicolas B\'edaride}
\address{Aix-Marseille Universit\'e,
Institut de Math\'ematiques de Marseille (I2M), UMR 7373, 3 place Victor Hugo, Case 19
13331 Marseille Cedex 3, France}
\email{nicolas.bedaride@univ-amu.fr}
\author[Hooper]{W.~Patrick Hooper}
\address{Department of Mathematics, The City College of New York, 160 Convent Ave, New York, NY 10031, USA}
\address{Department of Mathematics, CUNY Graduate Center, 365 5th Ave, New York, NY 10016, USA}
\email{whooper@ccny.cuny.edu}
\author[Hubert]{Pascal Hubert}
\address{Aix-Marseille Universit\'e,
Institut de Math\'ematiques de Marseille (I2M), UMR 7373, 3 place Victor Hugo, Case 19
13331 Marseille Cedex 3, France}
\email{pascal.hubert@univ-amu.fr}

\begin{abstract} Motivated by the study of billiards in polyhedra, we study linear flows in a family of singular flat $3$-manifolds which we call \emph{translation prisms}. Using ideas of Furstenberg~\cite{Furstenberg} and Veech~\cite{Veech:metrictheory}, we connect results about weak mixing properties of flows on translation surfaces to ergodic properties of linear flows on translation prisms, and use this to obtain several results about unique ergodicity of these prism flows and related billiard flows. Furthermore, we construct explicit eigenfunctions for translation flows in pseudo-Anosov directions with Pisot expansion factors, and use this construction to build explicit examples of non-ergodic prism flows, and non-ergodic billiard flows in a right prism over a regular $n$-gons for $n=7, 9, 14, 16, 18, 20, 24, 30$.
\end{abstract}

\maketitle

\textit{Dedicated to Giovanni Forni on his 60th birthday}

\section{Introduction and Statement of Results}\label{sec:intro}

\subsection{Billiards in prisms}\label{sec:billiards:prisms} 

%

Let $P \subset \C$ be a rational polygon, that is, a polygon whose angles are rational multiples of $\pi$, or, equivalently, a polygon such that the group generated by reflections in the sides of $P$ is finite modulo translations. This is a condition that emerges naturally when studying \emph{billiards} in polygons, that is, the movement of a point mass at unit speed with elastic collisions with sides. For rational polygons, the phase space $P \times S^1$ decomposes into invariant surfaces for the billiard flow, and so we will refer to the billiard flow in direction $\theta$ as $B^{\theta}_t$. Let $\mathcal P = P \times [0,1]$ be the right regular prism over $P$ with height $1$. Consider the movement of a point mass in the interior of $\mathcal P$, with elastic collisions with faces. We parameterize our initial velocities for particles by pairs $(\theta, s)$ as $S^1 \times \R$, which corresponds to the velocity vector $\mathbf{v}(\theta, s) = (\cos \theta, \sin \theta, s)$, and refer to $s$ as the vertical speed. We consider the billiard flow $B^{\theta, s}_{t}$ on $\mathcal P$ defined by letting $B^{\theta, s}_t(p)$ denote the position of the particle starting at  $p \in \mathcal P$ with initial velocity $\mathbf{v}(\theta, s)$ after time $t$. Applications of our results to the study of billiards include:


\begin{Theorem}\label{theorem:prisms} Let $\mathcal P = P \times [0, 1]$ be a right-angled prism over a rational polygon $P \subset \C$. 
\begin{enumerate}
    \item For almost every $(\theta, s) \in S^1 \times \R$, the billiard flow $\{B^{\theta, s}_t\}$ is uniquely ergodic. In particular, for every $p \in \mathcal P$ whose forward orbit under $B^{\theta, s}_t$ is defined for all $t>0$, the trajectory $\{B^{\theta, s}_t(p)\}_{t >0}$ becomes equidistributed (with respect to the Lebesgue probability measure $m_{\mathcal P}$) in $\mathcal P$, that is, for any measurable set $A \subset \mathcal P$, $$\frac{1}{T}|\{0 \le t \le T: B^{\theta, s}_t(p) \in A\}| \rightarrow m_{\mathcal P}(A).$$

 \item There are explicit polygons $P$ and non-equidistributing directions with non-zero vertical speed. For example, if $P=P_7$, the regular $7$-gon with a horizontal side, and 
 $\theta = \arctan\big( 6 \sin(\tfrac{3 \pi}{7}) + 6 \sin(\tfrac{2 \pi}{7}) + 2 \sin(\tfrac{\pi}{7})\big)$, there is a rank $3$ subgroup of real numbers $E_7$ so that for any $s \in E_7$, the flow $B^{\theta,s}$ is non-minimal on the prism $\mathcal P_7$, although the billiard flow in direction $\theta$ on the polygon $P_7$ is uniquely ergodic. 

 \end{enumerate}
\end{Theorem}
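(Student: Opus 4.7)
The plan is to reduce both parts of the theorem to statements about the linear flow $\Phi_t^{\theta,s}$ on a translation prism $M = X \times \R/\Z$. Applying the Zemlyakov--Katok unfolding of $P$ slice-by-slice produces a translation surface $(X,\om)$ such that the billiard flow $B_t^{\theta,s}$ on $\mathcal P$ lifts to a linear flow $\Phi_t^{\theta,s}$ on $M$, whose horizontal component is the translation flow $\phi_t^{\theta}$ on $X$ and whose vertical component is rotation of the $\R/\Z$ factor at speed $s$. Unique ergodicity, equidistribution, and non-minimality of $B_t^{\theta,s}$ can then be read off from the same properties for $\Phi_t^{\theta,s}$.

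For part (1), I combine the Furstenberg-type characterization of unique ergodicity of $\Phi_t^{\theta,s}$ (promised as Theorem 1.2 of the paper) with weak-mixing theorems for translation flows. That characterization asserts that $\Phi_t^{\theta,s}$ is uniquely ergodic if and only if $s$ is not in the generalized spectrum $\genspec$ of $\phi_t^{\theta}$. By the theorems of Avila--Forni, Avila--Delecroix, and Arana-Herrera--Chaika--Forni, for Lebesgue almost every $\theta$ the translation flow $\phi_t^{\theta}$ is weak mixing, in which case $\genspec = \{0\}$. Fubini then yields unique ergodicity of $\Phi_t^{\theta,s}$ for a.e.\ $(\theta,s) \in S^1 \times \R$. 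Equidistribution of \emph{every} forward trajectory that stays defined for all $t>0$ follows from the Birkhoff--Oxtoby theorem applied to the continuous flow $\Phi_t^{\theta,s}$ on the compact space $M$, then projected back to $\mathcal P$.

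For part (2), take $(X,\om)$ to be the double regular heptagon, a genus-$3$ Veech surface underlying billiards in $P_7$, and let $\rho$ be a pseudo-Anosov in its Veech group whose expansion factor $\beta$ is a Pisot cubic. The angle $\theta=\arctan\bigl(6\sin(\tfrac{3\pi}{7})+6\sin(\tfrac{2\pi}{7})+2\sin(\tfrac{\pi}{7})\bigr)$ is precisely the unstable direction of $\rho$ on $X$, computed from the $\beta$-eigenvector of the action of $\rho_{\ast}$ on (relative) $H_1(X,\R)$. By the Pisot eigenfunction construction developed later in the paper, there is a rank-$3$ subgroup $E_7 \subset \R$ of \emph{continuous} eigenvalues of $\phi_t^{\theta}$: for each $s \in E_7$ there is a continuous $\psi_s \colon X \to \R/\Z$ with $\psi_s(\phi_t^{\theta} x) = \psi_s(x) + st$. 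Then $F_s(x,z) := \psi_s(x)-z$ on $M$ is continuous, non-constant, and $\Phi_t^{\theta,s}$-invariant, so its level sets partition $M$ into proper closed invariant subsets. Hence $\Phi_t^{\theta,s}$, and thus the billiard flow $B_t^{\theta,s}$ on $\mathcal P_7$, is non-minimal. Unique ergodicity of the base billiard $B_t^{\theta}$ on $P_7$ itself follows from the Veech dichotomy, since $\theta$ is not a cylinder direction on the lattice surface $X$.

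The main obstacle is the Pisot eigenfunction construction used in part (2): building, for a pseudo-Anosov with Pisot expansion factor $\beta$ of degree $d$, a rank-$d$ group of continuous eigenvalues of the unstable translation flow with eigenfunctions expressed via asymptotic cohomology classes of the Kontsevich--Zorich cocycle on zippered rectangles. Together with this, one must diagonalize $\rho_{\ast}$ on the homology of the double heptagon to identify $\theta$ and a $\Z$-basis of $E_7$ explicitly in the form given. These computations occupy the bulk of the technical sections of the paper.
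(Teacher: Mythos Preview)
Your outline for Part~(2) is essentially the paper's argument: Corollary~\ref{cor:pisot} applied to the double heptagon, using the Pisot expansion factor of a specific pseudo-Anosov, together with the Veech dichotomy for the base direction. That part is fine.

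There is, however, a genuine gap in your argument for Part~(1). You assert that ``for Lebesgue almost every $\theta$ the translation flow $\phi_t^{\theta}$ is weak mixing,'' citing Avila--Forni, Avila--Delecroix, and Arana-Herrera--Chaika--Forni. But none of these applies to an \emph{arbitrary} rational polygon. Avila--Forni is about generic surfaces in a stratum; Avila--Delecroix needs a non-arithmetic lattice surface; and Arana-Herrera--Chaika--Forni requires the tautological plane to contain no integer cohomology class. The unfolding of the unit square (or equilateral triangle, or any integer-coordinate rectangle) is a flat torus or a square-tiled surface, and on such surfaces \emph{no} direction is weak mixing. So your Fubini step, as written, fails for these polygons.

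The fix is the much softer argument the paper actually uses (Corollary~\ref{cor:ae:ue}): for any ergodic measure-preserving flow on a space with separable $L^2$, the set of eigenvalues is countable, so $\genspec^m_\theta(\om)$ has Lebesgue measure zero for every $\theta$ with $\phi_t^\theta$ ergodic. By Kerckhoff--Masur--Smillie this holds for a.e.\ $\theta$, and then Theorem~\ref{theorem:ue} plus Fubini gives unique ergodicity of $\Phi_t^{\theta,s}$ for a.e.\ $(\theta,s)$. No weak mixing is needed; weak mixing only enters when one wants the stronger conclusion ``a.e.\ $\theta$ and \emph{every} $s$,'' which Part~(1) does not claim.
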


\paragraph*{\bf Almost sure results} Beck-Chen-Yang~\cite{BCY} showed, using very different methods, that for almost every initial data $(p, \theta, s) \in \mathcal P \times S^1 \times \R$, the trajectory $\{B^{\theta, s}_t(p)\}_{t >0}$ becomes equidistributed (with respect to the Lebesgue probability measure) in $\mathcal P$. Our result applies to \emph{every} initial point (with well-defined forward trajectory).


\subsection{Translation surfaces and translation prisms}\label{sec:prisms} The key tool in our study of billiards in prisms will be the study of linear flows on \emph{translation prisms}. Let $S=(X,\om)$ be a translation surface, $\{\phi_t\}_{t \in \R}$ the vertical flow on $S$, and $\{\phi_t^{\theta}\}_{t \in \R}$ the flow in direction $\theta$ from the vertical on $S$. We define the \emph{translation prism}, $M = S \times (\R/\Z),$ and given $s >0$, define the \emph{prism flow} $\{\Phi^{\theta, s}_t\}_{t \in \R}$ on $M$ by \begin{equation}\label{eq:prismflow} \Phi^{\theta, s}_t(p, \tau) = (\phi^{\theta}_t(p), \tau + ts) \end{equation} $\Phi^{\theta, s}_t$ preserves Lebesgue measure $m$, given by $dm = dp d\tau$ on $M$. We refer to this as the prism flow of speed $s$ over the flow in direction $\theta$. We are interested in the \emph{ergodic} properties of $\Phi^{\theta, v_0}_t$. We will write $\Phi^{s}_t$ for the prism flow of speed $s$ over the vertical unit speed flow $\phi_t$. The study of these flows was introduced by Forni~\cite{Forni}, who calls them \emph{twisted translation flows}. 

\paragraph*{\bf Unfolding} Just as billiard flows in rational polygons $P$ yield, via a classical unfolding procedure of Fox-Kershner~\cite{FoxKershner} and Zelmjakov-Katok~\cite{KatokZelmjakov}, linear flows on a translation surface $S_P$, the billiard flow in the prism $\mathcal P = P \times [0, 1]$ yield a prism flow on the translation prism $M_{\mathcal P}$ over $S_P$. To be completely precise, $M_{\mathcal P}$ should be the translation prism of height $2$, that is, $S \times (\R/2\Z),$ but we will, for ease of notation, use $M_{\mathcal P}$ to denote the prism of height $1$, and then billiard flows of vertical speed $s$ on $\mathcal P$ will correspond to flows with vertical speed $2s$ on the translation prism $M_{\mathcal P}$.

\subsection{Unique ergodicity and minimality}\label{sec:ue} Classical ergodic theory relates the ergodicity (respectively minimality) of $\Phi^{\theta, s}_t$ to the measurable (respectively continuous) eigenvalues of the flow $\phi_t$. We say $\kappa \in \R$ is a measurable (respectively continuous) \emph{eigenvalue} for $\phi_t$ if there is a measurable (respectively continuous) $\psi: S \rightarrow \C$ such that for almost every $p \in S$, \begin{equation}\label{eq:eigenvalue} \psi(\phi_t p) = e^{2\pi i t \kappa} \psi(p). \end{equation} We denote the set of measurable eigenvalues for $\phi_t$ by $\mathcal S^m(\om)$, the subset of continuous eigenvalues by $\mathcal S^c(\om)$ and the sets of measurable and continuous eigenvalues for $\phi_t^{\theta}$ by $\mathcal S^m_{\theta}(\om)$ and $\mathcal S^c_{\theta}(\om)$ respectively.  We define the \emph{generalized spectrum} $\genspec^m(\om)$ by \begin{equation}\label{eq:genspec} \genspec^m(\om) = \bigcup_{n \in \Z\minuszero} \frac{1}{n} \spec^m(\om),\end{equation} the \emph{generalized continuous spectrum} by \begin{equation}\label{eq:genspeccont} \genspec^c(\om) = \bigcup_{n \in \Z\minuszero} \frac{1}{n} \spec^c(\om),\end{equation} and $\genspec^m_{\theta}(\om)$ and $\genspec^c_{\theta}(\om)$ similarly. Note that for $\ast = c, m$ $\mathcal S^{\ast}_{\theta}(\om) = \mathcal S^{\ast}(e^{-i\theta} \om)$ and $\genspec^{\ast}_{\theta}(\om) = \genspec^{\ast}(e^{-i\theta}\om)$. Our first main result combines ideas of Furstenberg~\cite{Furstenberg} and Veech~\cite{Veech:metrictheory} to obtain:

\begin{Theorem}\label{theorem:ue} Let $S=(X,\om)$ be a translation surface, $M = S \times (\R/\Z)$ the associated translation prism, and let $s >0$. Then the flow $\Phi^{s}_t$ is uniquely ergodic if and only if $s \notin \genspec^m(\om)$, that is, for every non-zero integer $n \in \Z\backslash\{0\}$, $n s \notin \mathcal S^m(\om)$. The flow $\Phi^{s}_t$ is minimal if and only if $s \notin \genspec^c(\om)$, that is, for every non-zero integer $n \in \Z\backslash\{0\}$, $n s \notin \mathcal S^c(\om)$. 
\end{Theorem}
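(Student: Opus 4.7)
The plan is to exploit the product structure of the prism flow---$\Phi_t^s$ is the direct product of $\phi_t$ with the speed-$s$ translation flow $R^s_t(\tau) = \tau + st$ on $\R/\Z$---and perform Fourier analysis in the circle fiber. Writing $e_n(\tau) = e^{2\pi i n\tau}$, the decomposition $L^2(M) = \bigoplus_{n\in\Z} L^2(S)\otimes \C e_n$ is preserved by the Koopman operator of $\Phi_t^s$, which acts on the $n$-th summand as $f\otimes e_n \mapsto (e^{2\pi i n s t}\, f\circ\phi_t)\otimes e_n$. All the spectral and dynamical information about $\Phi_t^s$ on the $n$-th Fourier mode is therefore encoded in the twisted Birkhoff averages
\[
A_T^{(n)}(f,p) \;=\; \frac{1}{T}\int_0^T f(\phi_t p)\, e^{2\pi i n s t}\, dt, \qquad f \in C(S).
\]

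The ``only if'' directions are the easy obstruction constructions. If $s\in\genspec^m(\om)$, pick a nonzero integer $n$ and a measurable eigenfunction $\psi:S\to\C$ with $\psi\circ\phi_t = e^{2\pi i n s t}\psi$. The function $\Psi(p,\tau) := \psi(p)\,e^{-2\pi i n\tau}$ is then manifestly $\Phi_t^s$-invariant and nonconstant (since $n\neq 0$), so $\Phi_t^s$ fails to be ergodic and in particular cannot be uniquely ergodic. If $\psi$ is continuous (i.e.\ $s\in\genspec^c(\om)$), so is $\Psi$, and the sublevel sets of $\re\Psi$ are proper closed $\Phi_t^s$-invariant sets, obstructing minimality.

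For the converses I would apply the Furstenberg--Veech criterion mode by mode. For $n=0$ the required uniform convergence $A_T^{(0)}(f,p)\to\int_S f$ is exactly unique ergodicity of $\phi_t$, which the theorem implicitly reduces to. For $n\neq 0$ the target limit is $0$, and Furstenberg's dichotomy for strict ergodicity of circle extensions by constant cocycles, in its continuous-time form developed by Veech~\cite{Veech:metrictheory}, asserts that $A_T^{(n)}(f,p)\to 0$ holds uniformly in $p$ for every $f\in C(S)$ if and only if $ns$ is not a measurable $L^2$-eigenvalue of $\phi_t$. In the nontrivial direction one argues by contradiction: a failure of uniform convergence yields, by weak-$*$ compactness, a $\Phi_t^s$-invariant non-Lebesgue probability measure $\mu$ on $M$, and disintegrating $\mu$ along the circle fibers produces a measurable eigenfunction of $\phi_t$ at $ns$. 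A density argument---finite Fourier sums are dense in $C(M)$---then assembles the modes into uniform equidistribution of every continuous function on $M$, which is unique ergodicity of $\Phi_t^s$. The minimality statement follows from the topological version of the same dichotomy, with ``measurable'' replaced by ``continuous'' and ``uniform equidistribution'' replaced by ``density of every orbit.''

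The main obstacle is exactly this Furstenberg--Veech step: transferring an $L^2$-spectral statement (absence of an eigenfunction at $ns$) into a $C^0$-dynamical conclusion (uniform twisted equidistribution, or density of every prism orbit). The argument depends essentially on compactness of the circle fiber, disintegration of invariant measures along it, and a Wiener--Wintner-flavored treatment of twisted ergodic averages; the transition between Furstenberg's original discrete-time formulation and the continuous-time form needed here is where most of the careful work lies.
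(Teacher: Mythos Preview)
Your approach is essentially the same as the paper's---Fourier decomposition in the circle fiber, and the obstruction construction $\Psi(p,\tau)=\psi(p)e^{-2\pi i n\tau}$ is exactly what the paper uses. The difference is in how you handle the converse direction, and here the paper is considerably more direct than your proposal.

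Rather than passing through uniform convergence of twisted Birkhoff averages, weak-$*$ limits, and disintegration of an anomalous invariant measure, the paper simply Fourier-expands a hypothetical nonconstant $\Phi_t^s$-invariant $F\in L^2(M)$: writing $F(p,\tau)=\sum_n c_n(p)e^{2\pi i n\tau}$, invariance forces $c_n(\phi_t p)=e^{-2\pi i n s t}c_n(p)$, so any nonzero $c_n$ with $n\neq 0$ is an eigenfunction at $ns$. This is the full content of the paper's Lemma~\ref{lemma:furstenberg:flow}, and it dispatches in a few lines what you flag as ``where most of the careful work lies.'' The passage from ergodicity of Lebesgue to unique ergodicity is then the one-line observation (from Furstenberg's original argument, recalled in \S\ref{sec:furstenberg:skew}) that when the base is uniquely ergodic these are equivalent. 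So the ``main obstacle'' you identify is not really an obstacle, and there is no need to invoke Veech~\cite{Veech:metrictheory} in the form you state---indeed Veech's paper does not contain a statement about twisted averages in that form.

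The paper also offers a second, independent proof via the first-return map to a ``face'' $I\times(\R/\Z)$: this return map is a skew product over an IET with piecewise-constant cocycle $e^{2\pi i s h(x)}$, and Veech's observation that solutions to the resulting cohomological equation correspond exactly to eigenfunctions of $\phi_t$ closes the argument. This IET route is what actually uses~\cite{Veech:metrictheory}. For the minimality statement, note that the same Fourier expansion applied to a \emph{continuous} invariant $F$ yields continuous $c_n$; the remaining step (non-minimality $\Rightarrow$ existence of a nonconstant continuous invariant function) is the standard topological argument for circle extensions over a minimal base, which neither you nor the paper spells out in detail.
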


\paragraph*{\bf Separability} Using the general fact that for measure-preserving flows, the spectrum is of Lebesgue measure $0$ (indeed we have a countable number of eigenvalues since $L^2(S,\mu)$ is a separable space), we have:

\begin{Cor}\label{cor:ae:ue}  Let $S=(X,\om)$ be a translation surface, $M = S \times (\R/\Z)$ the associated translation prism of height $1$. Then for almost every $s \in \R$, the flow $\Phi^{s}_t$ is uniquely ergodic.
\end{Cor}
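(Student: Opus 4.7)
The plan is to derive the corollary directly from Theorem~\ref{theorem:ue} by observing that the generalized measurable spectrum $\genspec^m(\om)$ is a Lebesgue null set, indeed a countable set.

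First I would recall that for any measure-preserving flow on a standard probability space, the set of $L^2$-eigenfunctions corresponding to distinct eigenvalues forms an orthogonal family in $L^2(S,\mu)$. Since $L^2(S,\mu)$ is separable, this forces $\spec^m(\om)$ to be (at most) countable. Next I would write
\[
\genspec^m(\om) = \bigcup_{n \in \Z\minuszero} \tfrac{1}{n}\,\spec^m(\om),
\]
which is a countable union of countable sets, hence countable, and in particular of Lebesgue measure zero in $\R$. Therefore the complement $\R \setminus \genspec^m(\om)$ has full Lebesgue measure.

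Finally I would apply Theorem~\ref{theorem:ue}: for every $s$ in this full-measure set, $s \notin \genspec^m(\om)$ implies $\Phi^{s}_t$ is uniquely ergodic, which is the conclusion.

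There is no real obstacle here; the only thing to be careful about is the separability argument for countability of $\spec^m(\om)$, but this is completely standard and can be dispatched in one line by citing separability of $L^2$. The corollary is essentially an observation about countable sets combined with the ``if'' direction of Theorem~\ref{theorem:ue}.
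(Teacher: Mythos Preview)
Your proposal is correct and matches the paper's own argument essentially verbatim: the paper deduces the corollary from Theorem~\ref{theorem:ue} by noting that separability of $L^2(S,\mu)$ forces $\spec^m(\om)$ to be countable, whence $\genspec^m(\om)$ is a countable union of countable sets and hence Lebesgue null. There is nothing to add.
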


\subsection{Applications of weak mixing}\label{sec:application:wm} Theorem~\ref{theorem:ue} connects the ergodic properties of prism flows to the spectral properties of the associated translation flow. We recall that a flow is called \emph{weak mixing} if its spectrum is reduced to zero. That is, the vertical flow on a translation surface $S= (X,\om)$ is weak mixing if $\mathcal{S}^m(\om) = \{0\}$, and the flow in direction $\theta$ is weak mixing if $\mathcal S_{\theta}^m(\om) = \{0\}$.

\paragraph*{\bf Generic surfaces} The main result of Avila-Forni~\cite{AvilaForni} states that for almost every $\om$, the vertical flow, and indeed the flow in almost every direction $\theta$) is weak mixing, so $\mathcal S(\om)$ is reduced to zero, and for almost every $\om$ and almost every $\theta$, $\mathcal S_{\theta}(\om)$ is reduced to zero. Combining this with Theorem~\ref{theorem:ue} yields the following corollary (see \S\ref{sec:iet} for precise definitions of strata $\hh$ of moduli spaces of translation surfaces, Masur-Smillie-Veech (MSV) measure $\mu_{\hh}$):

\begin{Cor}\label{corollary:ue:generic} Let $\hh$ be a connected component of a stratum of translation surfaces.  For $\mu_{\hh}$-almost every $S=(X, \om) \in \hh$, and every $s >0$,  the prism flow $\{\Phi^{s}_t\}_{t \in \R}$ is uniquely ergodic. Moreover, for $\mu_{\hh}$-almost every $S=(X, \om) \in \hh$, Lebesgue-almost every $\theta$ and every $s >0$,  the prism flow $\{\Phi^{\theta, s}_t\}_{t \in \R}$ is uniquely ergodic. 
\end{Cor}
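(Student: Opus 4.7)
The plan is to combine Theorem~\ref{theorem:ue} with the weak mixing results of Avila--Forni in an essentially formal way. The key observation is that if the vertical flow on $(X,\omega)$ is weak mixing, meaning $\mathcal{S}^m(\omega)=\{0\}$, then the generalized spectrum $\genspec^m(\omega)=\bigcup_{n\in\Z\minuszero}\tfrac{1}{n}\mathcal{S}^m(\omega)$ also collapses to $\{0\}$. Hence every $s>0$ avoids $\genspec^m(\omega)$, and Theorem~\ref{theorem:ue} immediately yields unique ergodicity of $\Phi_t^s$ for \emph{every} positive $s$, not merely almost every $s$ as in Corollary~\ref{cor:ae:ue}.

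First I would invoke the Avila--Forni theorem to produce a $\mu_{\hh}$-full measure subset $\hh_0\subset\hh$ such that for every $\omega\in\hh_0$ the vertical flow on $(X,\omega)$ is weak mixing. For any $\omega\in\hh_0$ and any $s>0$, the observation above gives $s\notin\genspec^m(\omega)$, and Theorem~\ref{theorem:ue} gives unique ergodicity of $\Phi_t^s$. This handles the first assertion.

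For the second assertion I would use the stronger (directional) form of Avila--Forni: there is a $\mu_{\hh}$-full measure set $\hh_1\subset\hh$ such that for every $\omega\in\hh_1$, the flow $\phi_t^\theta$ is weak mixing for Lebesgue-almost every $\theta\in S^1$. Using the identity $\mathcal{S}^m_\theta(\omega)=\mathcal{S}^m(e^{-i\theta}\omega)$ recorded in the text, weak mixing of $\phi_t^\theta$ is the statement $\mathcal{S}^m(e^{-i\theta}\omega)=\{0\}$, and the same elementary computation gives $\genspec^m_\theta(\omega)=\genspec^m(e^{-i\theta}\omega)=\{0\}$. Applying Theorem~\ref{theorem:ue} to the rotated translation prism over $(X,e^{-i\theta}\omega)$ then shows that $\Phi_t^{\theta,s}$ is uniquely ergodic for every $s>0$.

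There is essentially no obstacle to this argument; the only points worth emphasizing are that the collapse $\genspec^m(\omega)=\{0\}$ under weak mixing is what upgrades ``almost every $s$'' (Corollary~\ref{cor:ae:ue}) to ``every $s>0$'', and that in the directional statement the order of quantifiers is preserved: we fix the generic $\omega$ first, then the generic $\theta$, and then the conclusion holds for \emph{all} $s>0$ simultaneously.
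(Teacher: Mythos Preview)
Your proposal is correct and follows essentially the same route as the paper: the paper simply notes that Avila--Forni gives $\mathcal S^m(\omega)=\{0\}$ (and $\mathcal S^m_\theta(\omega)=\{0\}$ for almost every $\theta$) for $\mu_{\hh}$-almost every $\omega$, so that the generalized spectrum is trivial and Theorem~\ref{theorem:ue} yields unique ergodicity of the prism flow for every $s>0$. Your write-up is slightly more explicit about the collapse $\genspec^m(\omega)=\{0\}$ and the use of $\mathcal S^m_\theta(\omega)=\mathcal S^m(e^{-i\theta}\omega)$, but the argument is the same.
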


\paragraph*{\bf Lattice surfaces} An important class of translation surfaces are \emph{lattice surfaces}, whose \emph{Veech group} (stabilizer under the natural $SL(2,\R)$-action) is a lattice. These surfaces form a set of measure $0$, so the Avila-Forni result does not apply. The main result of Avila-Delecroix~\cite{AvilaDelecroix} is that for \emph{non-arithmetic} lattice surfaces $S = (X, \om)$, and almost every direction $\theta$, the flow in direction $\theta$ is weak mixing. Combining this with Theorem~\ref{theorem:ue}, we obtain a unique ergodicity result for prism flows built over \emph{lattice surfaces}. 

\begin{Cor}\label{corollary:ue:veech} If $S = (X, \om)$ is a non-arithmetic lattice surface, then for Lebesgue-almost every $\theta$ and every $s >0$, the prism flow $\{\Phi^{\theta, s}_t\}_{t \in \R}$ is uniquely ergodic. 
\end{Cor}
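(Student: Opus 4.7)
The plan is to deduce the corollary by combining Theorem~\ref{theorem:ue} with the Avila-Delecroix weak mixing theorem, applied independently to each directional flow on the non-arithmetic lattice surface $S=(X,\omega)$.

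First, I would reduce the corollary to a statement about vertical flows on rotated surfaces. For fixed $\theta$, the directional flow $\phi_t^\theta$ on $(X,\omega)$ is by definition the vertical flow on $(X, e^{-i\theta}\omega)$, so the prism flow $\Phi_t^{\theta,s}$ on $M = S \times (\R/\Z)$ coincides with the prism flow of speed $s$ over the vertical flow on the translation prism built from $(X, e^{-i\theta}\omega)$. Applying Theorem~\ref{theorem:ue} to this rotated surface gives that $\Phi_t^{\theta,s}$ is uniquely ergodic if and only if $s \notin \genspec^m(e^{-i\theta}\omega) = \genspec^m_\theta(\omega)$; here the last equality is the identity recorded in \S\ref{sec:ue} immediately before Theorem~\ref{theorem:ue}.

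Next, I would invoke the Avila-Delecroix theorem~\cite{AvilaDelecroix}: on any non-arithmetic lattice surface, Lebesgue-almost every directional flow is weak mixing, so $\spec^m_\theta(\omega) = \{0\}$ for a.e.\ $\theta$. For any such $\theta$, the generalized spectrum collapses,
\begin{equation*}
\genspec^m_\theta(\omega) = \bigcup_{n \in \Z\minuszero} \tfrac{1}{n}\spec^m_\theta(\omega) = \{0\},
\end{equation*}
and consequently every $s > 0$ lies outside $\genspec^m_\theta(\omega)$. Combining with the previous step, $\Phi_t^{\theta,s}$ is uniquely ergodic for every $s>0$ and Lebesgue-almost every $\theta$.

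There is no serious technical obstacle here; the corollary is essentially a bookkeeping consequence of Theorem~\ref{theorem:ue} and the Avila-Delecroix theorem. The only subtle point worth writing out carefully is the order of quantifiers: one first picks the full-measure set of directions $\theta$ guaranteed by weak mixing, and then observes that for each such $\theta$ the generalized spectrum is $\{0\}$, which is why the statement can assert unique ergodicity for \emph{every} $s > 0$ (not merely for a full-measure set of $s$ depending on $\theta$). This is the same mechanism by which Corollary~\ref{cor:ae:ue} upgrades to the stronger Corollary~\ref{corollary:ue:generic}, so the argument parallels the preceding result.
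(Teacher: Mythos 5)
Your proposal is correct and takes essentially the same approach as the paper: invoke Avila--Delecroix to conclude weak mixing (spectrum reduced to zero) for a.e.\ direction $\theta$ on a non-arithmetic lattice surface, then apply Theorem~\ref{theorem:ue} via the rotation identity $\genspec^m_\theta(\omega) = \genspec^m(e^{-i\theta}\omega)$ to get unique ergodicity for \emph{every} $s>0$. The paper's own proof is a one-line remark in \S\ref{sec:weakmixing}; your write-up simply spells out the quantifier order and the reduction to the vertical flow on $(X, e^{-i\theta}\omega)$ more explicitly.
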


\paragraph*{\bf The tautological plane} Recently, Arana-Herrera--Chaika--Forni~\cite{AHChaikaForni}*{Theorem 1.1} have given a very general characterization of weak-mixing for translation flows, showing that on any translation surface $S$, the existence of one weak-mixing direction is equivalent to weak-mixing for almost every direction, and further, this existence can be characterized by the absence of integer vectors in the \emph{tautological plane}, the plane in real cohomology $H^1(S, \R)$ spanned by the real and imaginary parts of the differential $\omega$. We have:

\begin{Cor}\label{corollary:ue:AHChaikaForni} For any translation surface $S$ with no integer vectors in the tautological plane, for Lebesgue-almost every $\theta$ and every $s >0$,  the prism flow $\{\Phi^{\theta, s}_t\}_{t \in \R}$ is uniquely ergodic. 
\end{Cor}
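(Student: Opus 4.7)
\smallskip
\noindent\textbf{Proof proposal.} The plan is to combine Theorem~\ref{theorem:ue}, applied in direction $\theta$, with the main result of Arana-Herrera--Chaika--Forni~\cite{AHChaikaForni}. Fix a translation surface $S = (X, \omega)$ with no integer vectors in its tautological plane. By \cite{AHChaikaForni}*{Theorem 1.1}, the absence of integer vectors in the tautological plane implies the existence of at least one weak-mixing direction, and further, the same theorem asserts that the existence of one weak-mixing direction forces weak mixing in Lebesgue-almost every direction. Thus there is a full measure set $\Theta \subset S^1$ such that for every $\theta \in \Theta$ the directional flow $\{\phi^{\theta}_t\}$ is weak mixing, i.e.\ $\mathcal{S}^m_{\theta}(\omega) = \{0\}$.

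Next, I compute the generalized spectrum in these directions. By definition,
\[
\widehat{\mathcal S}^m_{\theta}(\omega) \;=\; \bigcup_{n \in \Z \setminus \{0\}} \tfrac{1}{n}\, \mathcal S^m_{\theta}(\omega) \;=\; \bigcup_{n \in \Z \setminus \{0\}} \tfrac{1}{n} \{0\} \;=\; \{0\}
\]
for every $\theta \in \Theta$. Hence for every such $\theta$ and every $s > 0$ we have $s \notin \widehat{\mathcal S}^m_{\theta}(\omega)$.

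Finally, I apply Theorem~\ref{theorem:ue} to the rotated surface $e^{-i\theta}\omega$, whose vertical flow is $\phi^{\theta}_t$ and whose associated translation prism carries the prism flow $\Phi^{\theta,s}_t$. Using the identification $\widehat{\mathcal S}^m_{\theta}(\omega) = \widehat{\mathcal S}^m(e^{-i\theta}\omega)$ noted in \S\ref{sec:ue}, the condition $s \notin \widehat{\mathcal S}^m_{\theta}(\omega)$ is exactly the hypothesis of Theorem~\ref{theorem:ue}, which yields unique ergodicity of $\{\Phi^{\theta,s}_t\}$ for every such $\theta \in \Theta$ and every $s > 0$.

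The argument is essentially a direct assembly of two inputs, so the only real content is the verification that the AHCF weak-mixing result can be applied under the stated tautological-plane hypothesis and that it yields full-measure rather than merely a single direction. There is no serious obstacle once Theorem~\ref{theorem:ue} and \cite{AHChaikaForni}*{Theorem 1.1} are in hand; the deduction is a pure quantifier exchange via the identity $\widehat{\mathcal S}^m_\theta(\omega) = \{0\}$ whenever $\mathcal S^m_\theta(\omega) = \{0\}$.
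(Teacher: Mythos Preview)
Your proposal is correct and follows exactly the same approach as the paper, which simply states that applying \cite{AHChaikaForni}*{Theorem 1.1} yields Corollary~\ref{corollary:ue:AHChaikaForni}. You have merely spelled out the implicit steps: weak mixing in almost every direction forces $\widehat{\mathcal S}^m_\theta(\omega)=\{0\}$, whereupon Theorem~\ref{theorem:ue} applies for every $s>0$.
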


\subsubsection{Billiards in right regular prisms}\label{sec:billiards} As discussed above, one our main motivations for this family of problems comes from the study of billiards in polyhedra, in particular, right prisms over rational polygons.  Veech~\cite{Veech:dichotomy} showed that the translation surfaces obtained by unfolding the billiard in the regular $n$-gon $P_n$ are lattice surfaces, which allows us to obtain the following result:

\begin{Theorem}\label{theorem:regular:prism} Let $P_n$ denote the regular $n$-gon with $n\geq 2$, $S_n =(X_n, \om_n)$ the associated translation surface, and $M_n$ the associated translation prism. Then for every $\theta$ such that $\phi_t^{\theta}$ is ergodic on $S_n$, for almost every $s \in \R^+$, the prism flow $\Phi_{t}^{\theta, s}$ is uniquely ergodic. In particular, the billiard flow in the right regular prism $\mathcal P_n$ over $P_n$ is uniquely ergodic in almost every direction.
\end{Theorem}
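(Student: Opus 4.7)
The plan is to derive the first claim directly from Theorem~\ref{theorem:ue} applied to the rotated translation surface, and then deduce the billiard assertion by combining this with the Veech dichotomy. The only substantive input beyond Theorem~\ref{theorem:ue} is that the measurable spectrum of any measure-preserving flow is countable, which is a soft consequence of $L^2$-separability.

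First I would fix $\theta \in S^1$ such that $\phi^\theta_t$ is ergodic on $S_n$. The vertical flow on the rotated surface $(X_n, e^{-i\theta}\om_n)$ is the same transformation as $\phi^\theta_t$ on $(X_n, \om_n)$, so $\spec^m(e^{-i\theta}\om_n) = \spec^m_\theta(\om_n)$. Eigenfunctions with distinct eigenvalues are orthogonal in the separable Hilbert space $L^2(S_n)$, so $\spec^m_\theta(\om_n)$ is countable, and hence
\[
\genspec^m_\theta(\om_n) \;=\; \bigcup_{m \in \Z\setminus\{0\}} \tfrac{1}{m}\,\spec^m_\theta(\om_n)
\]
is countable as well, in particular of Lebesgue measure zero in $\R$. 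Applying Theorem~\ref{theorem:ue} to $(X_n, e^{-i\theta}\om_n)$ then gives, for every $s \in \R^+ \setminus \genspec^m_\theta(\om_n)$, unique ergodicity of the prism flow $\Phi^{\theta,s}_t$ on $M_n$, which is the first assertion.

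For the billiard statement, I would invoke Veech's theorem~\cite{Veech:dichotomy}: the translation surface $S_n$ obtained by unfolding $P_n$ is a lattice (Veech) surface and hence satisfies the Veech dichotomy, so in each direction $\theta$ the flow $\phi^\theta_t$ is either completely periodic or uniquely ergodic. The set of completely periodic directions on a Veech surface is countable, so for Lebesgue-almost every $\theta \in S^1$ the flow $\phi^\theta_t$ is uniquely ergodic, and in particular ergodic. Combining with the previous paragraph via Fubini on $S^1 \times \R^+$, the set of $(\theta, s)$ for which $\Phi^{\theta,s}_t$ is uniquely ergodic has full Lebesgue measure. The unfolding correspondence described in \S\ref{sec:prisms} identifies the billiard flow in direction $(\theta, s)$ on $\mathcal{P}_n$ with the prism flow $\Phi^{\theta, 2s}_t$ on $M_n$ (the factor $2$ being the harmless rescaling noted there), so this conclusion transfers to the billiard flow on $\mathcal{P}_n$.

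There is no serious obstacle in this argument, which is largely bookkeeping on top of Theorem~\ref{theorem:ue} and Veech's dichotomy. The one point requiring a small amount of care is passing cleanly between the ``vertical direction, speed $s$'' formulation of Theorem~\ref{theorem:ue} and the ``direction $\theta$, speed $s$'' formulation needed here, accomplished by rotating $\om_n$ to $e^{-i\theta}\om_n$ and noting that the generalized spectrum of the rotated surface is again countable.
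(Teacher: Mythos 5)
Your argument is correct. For the first assertion your route matches the paper's intended one: rotate so the direction is vertical, note that for an ergodic directional flow every measurable eigenfunction has constant modulus and hence lies in $L^2$, so orthogonality of eigenfunctions for distinct eigenvalues plus separability gives countability of $\spec^m_\theta(\om_n)$ and hence of $\genspec^m_\theta(\om_n)$, and then invoke Theorem~\ref{theorem:ue}. This is exactly Corollary~\ref{cor:ae:ue} applied to $e^{-i\theta}\om_n$. One point worth stating more carefully: the ``only if'' direction of Theorem~\ref{theorem:ue} (that $s\notin\genspec^m$ forces unique ergodicity) is proved via Furstenberg's criterion, which requires the base transformation $T_0$ to be \emph{uniquely} ergodic, not merely ergodic. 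Your hypothesis is only ergodicity of $\phi^\theta_t$; on a lattice surface this upgrades to unique ergodicity by the Veech dichotomy, but that upgrade should be invoked before applying Theorem~\ref{theorem:ue}, not only in the subsequent billiard paragraph. Since you do cite the Veech dichotomy, the ingredient is present — just slightly misplaced.

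For the ``in particular'' billiard assertion, you take a different and in fact cleaner route than the paper's remark. The paper deduces the billiard part from Corollary~\ref{corollary:ue:veech} (Avila--Delecroix weak mixing for non-arithmetic lattice surfaces) for $n=5$ and $n\ge 7$, and defers to ``elementary methods'' for the arithmetic cases $n=3,4,6$. That route yields the strictly stronger conclusion ``for a.e.\ $\theta$ and \emph{every} $s$'' when applicable, but requires a case split. Your route — Veech dichotomy gives unique ergodicity for a.e.\ $\theta$, then Fubini across the first assertion — is uniform in $n$, avoids the weak-mixing input entirely, and delivers precisely the conclusion the theorem asks for (a.e.\ $(\theta,s)$). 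Both are valid; yours buys uniformity at the cost of not recording the stronger per-$\theta$ statement available via Corollary~\ref{corollary:ue:veech}.
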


\noindent For $n=5$, and $n \geq 7$,  this follows from Corollary~\ref{corollary:ue:veech}, as the translation surface $S_n$ associated to unfolding the billiard flow in $P_n$ is a non-arithmetic lattice surface. For $n = 3, 4, 6$ the translation surface $S_n$ is an \emph{arithmetic} lattice surface, and this result can be obtained through relatively elementary methods. In fact, for $n=3, 4$ $S_n$ is a torus.

\paragraph*{\bf Quantitative results} Forni~\cite{Forni}*{Theorem 1.5} gives an \emph{effective version} of equidistribution results for prism flows almost every surface $S$ and almost every direction, using results on \emph{twisted Birkhoff integrals} of translation flows developed by, among others, Avila-Forni-Safaee~\cite{AFS} and Bufetov-Solomyak~\cite{BufetovSolomyak}. It is an interesting open question to see if these quantitative results can be extended to the setting of lattice surfaces, and in particular to billiards in right regular prisms.

\subsection{Non-ergodicity}\label{sec:nonergodic} Our other main result is the construction of explicit non-minimal prism flows where the direction of the flow on the base translation surface $S$ is the expanding direction for an affine pseudo-Anosov map $\rho:S \rightarrow S$ with Pisot expansion factors, allowing us to use self-similarity properties to find explicit eigenvalues and eigenfunctions for the linear flow, via linear algebra and the action of $\rho^*$ on the cohomology $H^1(S, \R)$. 

\paragraph*{\bf Eigenfunctions} Let $\phi_t^u: S \rightarrow \S$ denote the (unit-speed) flow in the expanding direction of $\rho$. A (continuous) eigenfunction with eigenvalue $\lambda$ for the flow is a (continuous) function  $F: S \rightarrow S^1$ satisfying $$F(\phi_t^u x) = e^{2\pi \lambda t} f(x).$$ We can also work \emph{additively}: if we write $F(x) = e^{2\pi i f(x)}$, then $f: S \rightarrow \R/\Z$ satisfies $$f(\phi_t^u x) = f(x) + \lambda t.$$ We note that the set of eigenvalues forms a group under addition.

\paragraph*{\bf Pseudo-Anosov directions}  For the definition and background on pseudo-Anosov maps, see, for example, Farb-Margalit~\cite{FarbMarg} or Fathi-Laudenbach-Poenaru~\cite{FLP}. Given a pseudo-Anosov map $\rho$, the derivative $D \rho \in \SL(2,\mathbb R)$ is a hyperbolic matrix. Let $\beta \in \R$ be the expanding eigenvalue of $D\rho$, which we refer to as the \emph{expansion factor} of $\rho$.  Let $w^u, w^s \in {\mathbb R}^2$ be the unstable and stable unit eigenvectors for $D\rho$ satisfying $$(D\rho)w^u = \beta w^u \quad \text{and} \quad (D\rho)w^s = \beta^{-1} w^s.$$ By replacing $\rho$ with a power if necessary, we assume that that $\beta > 1$, and that $\rho$ fixes every singular point, and moreover, every separatrix (singular leaf in the stable ($w^s$) direction) on $S$. A number is \emph{Pisot} if it is an algebraic integer of absolute value greater than $1$ and all of its Galois conjugates have absolute value strictly less than $1$. Our main result for linear flows in the unstable direction of pseudo-Anosov maps states that these flows are non-weak mixing if and only if expansion factor is Pisot, and in this setting, there is a rank $d$ group of eigenvalues, which can be computed explicitly, and moreover, a simultaneous semi-conjugacy of the pseudo-Anosov map $\rho$ and the flow in the unstable direction to a hyperbolic automorphism $A$ and linear flow on a real $d$-torus, where $d$ is the degree of $\beta$. Precisely, we have:

\begin{Theorem}\label{theorem:pisotsemiconjugacy} Let $\rho: S \rightarrow S$ be a pseudo-Anosov map with expansion factor $\beta$, and let $\phi_t^u$ denote the unit-speed flow in the unstable direction. \begin{enumerate}

\item The flow $\phi_t^u$ is not weak-mixing if and only if $\beta$ is Pisot. 

\item If $\beta$ is Pisot of degree $d$, all eigenfunctions are continuous, and the group $E(\rho) \subset \R$ of eigenvalues of $\phi_t^u$ is isomorphic to $\Z^d$. 

\item Given any generating set $\mathbf{c} = (c_1, \ldots, c_d)$ for $E(\rho)$, there is a continuous surjective map $\Psi_{\mathbf{c}}: S \rightarrow \TT^d$ which simultaneously semi-conjugates $\rho$ to a hyperbolic automorphism $A$ of $\TT^d$ with eigenvalue $\beta$, and $\phi_t^u$ to a linear flow $L_t$ in an irrational direction (in fact, the direction of the $\beta$-eigenspace of $A$) on $\TT^d$. 
\begin{center}
    
\begin{tikzcd}
S \arrow[r, "\Psi_{\mathbf c}"] \arrow[d, "\rho", "\phi_t^u"']
& \TT^d \arrow[d, "A", "L_t"'] \\
S \arrow[r, "\Psi_{\mathbf c}"]
&  \TT^d
\end{tikzcd}
\end{center}


\item Moreover, there is a bijection between the set of \emph{affine maps} $\Aff(\TT^d, \R/\Z)$ and the set of eigenfunctions. Here, affine maps are maps $f:\TT^d \to \R/\Z$ of the form
$$f(x) = a \cdot x + b' \pmod{\Z},$$
where $a \in \Z^d$ and $b' \in \R/\Z$, and the bijection between $\Aff(\TT^d, \R/\Z)$ and the set of eigenfunctions is given by $$f \longmapsto f \circ \Psi_{\mathbf c}.$$
\end{enumerate}
\end{Theorem}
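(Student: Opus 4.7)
My plan is to construct continuous eigenfunctions directly from the cohomological action of $\rho$, derive the Pisot obstruction as a consequence, and then classify all eigenfunctions via the resulting semi-conjugacy to $\TT^d$.

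Assume $\beta$ is Pisot of degree $d$. Let $\rho^*$ act on $H^1(S,\R)$ and let $\eta^u \in H^1(S,\R)$ be the class of the unstable one-form, so $\rho^* \eta^u = \beta \eta^u$. There is a $\rho^*$-invariant subspace $V_\R \subset H^1(S,\R)$ of dimension $d$ on which $\rho^*$ has the minimal polynomial of $\beta$ as its characteristic polynomial; it contains $\eta^u$, and $V_\Z := V_\R \cap H^1(S,\Z)$ is a rank-$d$ lattice. Each $v \in V_\Z$ decomposes uniquely as $v = c(v)\eta^u + v'$ with $v'$ in the sum of eigenspaces for the Galois conjugates $\beta_2, \ldots, \beta_d$ (all of absolute value $<1$), so $(\rho^*)^n v - \beta^n c(v)\eta^u$ stays uniformly bounded. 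I realize $v$ as an integer cocycle using a zippered-rectangle decomposition of $S$ self-similar under $\rho$; integrating this cocycle along unstable leaves, iterating $\rho$ and renormalizing by $\beta^n$, the Pisot contraction of $v'$ forces uniform convergence modulo $\Z$ to a continuous $\psi_v: S \to \R/\Z$ satisfying $\psi_v(\phi_t^u x) = \psi_v(x) + c(v)t$. This produces a rank-$d$ subgroup $\{c(v) : v \in V_\Z\} \subset E(\rho)$ of eigenvalues with continuous eigenfunctions, proving part (2) together with the $\Leftarrow$ direction of (1).

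For the semi-conjugacy in (3), fix a $\Z$-basis $(v_1, \ldots, v_d)$ of $V_\Z$ and set $c_i = c(v_i)$, $\mathbf c = (c_1, \ldots, c_d)$. The intertwining $\rho \circ \phi_t^u = \phi_{\beta t}^u \circ \rho$ forces $\psi_v \circ \rho = \psi_{\rho^* v}$, so $\Psi_{\mathbf c} := (\psi_{v_1}, \ldots, \psi_{v_d}): S \to \TT^d$ satisfies $\Psi_{\mathbf c} \circ \rho = A \circ \Psi_{\mathbf c}$, where $A \in \GL(d,\Z)$ is the matrix of $\rho^*|_{V_\Z}$ in the basis $(v_i)$ -- hyperbolic with eigenvalue $\beta$. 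The flow intertwining $\Psi_{\mathbf c} \circ \phi_t^u = L_t \circ \Psi_{\mathbf c}$ (with $L_t$ translation by $t\mathbf c$) is immediate. Since $(c_1, \ldots, c_d)$ is $\Q$-linearly independent in $\Q(\beta)$, $L_t$ is minimal on $\TT^d$; the image of $\Psi_{\mathbf c}$ is closed and $L_t$-invariant, hence equals $\TT^d$.

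For part (4), pullback by $\Psi_{\mathbf c}$ sends any affine $f(y) = a \cdot y + b'$ to a continuous eigenfunction with eigenvalue $a \cdot \mathbf c$, and injectivity is surjectivity of $\Psi_{\mathbf c}$. Conversely, for any continuous eigenfunction $g$ with eigenvalue $\lambda$, the observation that $g \circ \rho$ is an eigenfunction with eigenvalue $\beta \lambda$ shows $E(\rho)$ is invariant under multiplication by $\beta$; this with countability of $E(\rho)$ and the boundedness $|g|=1$, via the Host--Solomyak argument for self-similar substitution systems, forces $\beta$ to be Pisot and $\lambda \in c(V_\Z)$ -- which is also the $\Rightarrow$ direction of (1). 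Writing $\lambda = a \cdot \mathbf c$, both $g$ and $a \cdot \Psi_{\mathbf c}$ are continuous eigenfunctions for $\lambda$, and unique ergodicity of $\phi_t^u$ (Masur's criterion in the pseudo-Anosov unstable direction) forces them to differ by a constant. This gives the bijection with $\Aff(\TT^d, \R/\Z)$ and, in particular, $E(\rho) \cong \Z^d$.

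The principal technical obstacle is the eigenfunction construction of the first paragraph: turning the cohomological decomposition of $V_\R$ into a genuinely continuous $\R/\Z$-valued function on $S$. This requires a $\Z$-valued cocycle representative of each $v \in V_\Z$ compatible with the strip renormalization under $\rho$, uniform tail estimates from the Pisot contraction on $v'$, and verification that the limiting function extends continuously across the singularities of $S$.
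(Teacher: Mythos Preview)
Your outline follows essentially the same route as the paper: the invariant subspace $V_\R$, the lattice $V_\Z$, the decomposition $v = c(v)\eta^u + v'$ with $v'$ in the contracting eigenspaces, construction of eigenfunctions via self-similar zippered rectangles, and the semi-conjugacy $\Psi_{\mathbf c}$ assembled from a generating set of eigenfunctions. You also correctly identify the eigenfunction construction as the main technical obstacle; the paper carries this out by first solving the cohomological equation for the self-renormalizing IET (via a skew-product over the IET, a renormalization identity, a bounded-orbit argument, and an oscillation criterion for continuity in the interval topology), and only then extending to $S$ via the Markov partition.

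Two points need tightening. First, the identity $\psi_v \circ \rho = \psi_{\rho^* v}$ only holds after you normalize all eigenfunctions to vanish at a common fixed point $x_0$ of $\rho$; without this, the two sides differ by a constant and the semi-conjugacy to $A$ becomes a semi-conjugacy to an affine map (the paper makes this normalization explicit, and also notes that the induced toral map is $(\rho^*)^T$ rather than $\rho^*$). Second, your appeal to ``the Host--Solomyak argument'' to force an arbitrary eigenvalue $\lambda$ into $c(V_\Z)$ is too quick: Solomyak gives weak mixing iff $\beta$ is not Pisot and continuity of eigenfunctions, but not directly the inclusion $E(\rho)\subset c(V_\Z)$. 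The paper closes this by taking any continuous eigenfunction $\Psi_\lambda$, reading off its integer cohomology class $[\Psi_\lambda]$, and checking directly that $(\rho^*)^n\big([\Psi_\lambda]-\lambda\eta^u\big)\to 0$ (the difference is a change of $\Psi_\lambda$ along a shrinking stable segment), so $\lambda\in\FA(\rho)=c(V_\Z)$.
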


\paragraph*{\bf Measurable conjugacy and the Pisot conjecture} A natural question is whether the map $\Psi_c$ is a conjugacy, that is, is it almost surely 1-to-1.  We conjecture that these maps are in fact conjugacies, a kind of \emph{Geometric Pisot Conjecture}. We recall that the \emph{Pisot conjecture} says that if a substitution (see \S\ref{sec:substitutions} for definitions) is irreducible and Pisot, then its subshift has pure discrete spectrum, and is thus conjugated to a translation on an abelian compact group. We refer to Akiyama-Barge-Berthe-Lee-Siegel~\cite{ABBLS} for an overview on the subject. The Pisot conjecture has attracted a fair amount of attention. In fact, Pisot substitution systems and the Pisot conjecture have numerous applications, for example to Diophantine approximation, equidistribution properties of toral translations and
low discrepancy sequences, beta-shifts, multidimensional continued fraction expansions, generation or recognition of arithmetic discrete planes, and the effective construction of Markov partitions for toral automorphisms whose largest eigenvalue is a Pisot number. The Pisot conjecture is supported by numerical evidence since it can be reformulated in effective terms, but it has only been proved in the case of two symbols~\cite{HS}.


\paragraph*{\bf A semi-algorithm} In Appendix \ref{sect: example semiconjugacy}, we describe a procedure to analyze this question in any particular case of the flow in the unstable direction for a pseudo-Anosov map with a Pisot expansion factor, using a semi-algorithm of Mercat~\cite{Mercat:semialgorithm}, based on ideas of Rauzy~\cite{Rauzy}. We give a concrete example of a pseudo-Anosov map $\rho_7$ (with a degree $3$ Pisot expansion factor) of the genus $3$ surface $S_7$ arising from the double regular heptagon for which the map $\Psi_c$ defined above is indeed a conjugacy. Choosing a favorite eigenfunction, we explain that the return map of straight-line flow is conjugate to an interval exchange. We show this interval exchange is conjugate to a translation of $\TT^2$, and draw an approximation of this measurable conjugacy, see Figure~\ref{fig:torus}. By work of Clark-Sadun~\cite{ClarkSadun}, see remark in the paragraph below, the interval exchange has pure discrete spectrum if and only if the associated flow does.

\paragraph*{\bf A symbolic perspective} Solomyak~\cite{Solomyak97ETDS}*{page 96} studied \emph{self-similar substitution flows} and showed that they are weak mixing if and only if $\beta$ is not Pisot (see also the Bufetov-Solomyak survey~\cite{BufetovSolomyakSurvey}*{Theorem 5.3}), and that all eigenfunctions are continuous, following ideas of Host~\cite{Host}. This implies the first part of Theorem~\ref{theorem:pisotsemiconjugacy}, although the topology used by Solomyak is the Cantor topology. We will show directly in \S\ref{sect:Pisot} that the eigenfunctions we construct in our setting are continuous. Barge-Kwapisz~\cite{BargeKwapisz} study primitive unimodular Pisot substitutions, and associated tiling flows, and provide a geometric model space, the \emph{strand space} of the substitution, and their work also suggests a possible semi-algorithm for checking for discrete spectrum. It seems likely that the flow on the strand space should be related to a flow on a (perhaps infinite) translation surface, via the construction of Lindsey-Trevino~\cite{LindseyTrevino}. We also note there is a combinatorial semi-algorithm for checking discrete spectrum for Pisot substitutions based on ideas of Livshits~\cites{Livshits1, Livshits2}, explained by Sirvent-Solomyak~\cite{SirventSolomyak}. We also note that Clark-Sadun~\cite{ClarkSadun} showed that in the Pisot self-similar substitution flow setting, the flow has pure discrete spectrum if and only if the shift map does.

\paragraph*{\bf Degree $2$ expansion factors and square-tiled surfaces}  If $\beta>1$ is the expansion factor of a pseudo-Anosov map on a translation surface $S$, and $[\Q(\beta):\Q] = 2$, Franks-Rykken~\cite{FranksRykken} showed that $S$ must be square-tiled, that is, a branched cover of the flat torus branched at $0$, and so the flow is not weak-mixing (each coordinate of the covering map to the torus yields an eigenfunction). In this case $\beta$ is Pisot, since $\beta^{-1}$ is its only conjugate.

\paragraph*{\bf Non-minimality and holonomy fields} We will state a corollary to Theorem~\ref{theorem:pisotsemiconjugacy} yielding explicit non-minimal prism flow directions. Let $K \subset \R$ be a subfield. We say a translation surface $S$ is defined over $K$ if $S$ has a decomposition into polygons (obtained by cutting along saddle connections) whose edge vectors lie in $K^2$. The {\em holonomy field} is the smallest subfield $K_S \subset \R$ such that there is an $A \in \GL(2,\R)$ such that $A \cdot S$ is defined over $K_S$. Given a pseudo-Ansosov map $\rho:S \to S$, it is known that $K_S$ equals $\mathbb Q$ adjoin the trace of $D \rho$ \cite{KenyonSmillie}*{Theorem 28}, so this field is also called the {\em trace field}. We have $\mathrm{tr}(D \rho)=\beta + \beta^{-1}$. When $\beta$ is a Pisot eigenvalue of $D \rho$ with degree at least three, $\beta^{-1}$ cannot be an algebraic conjugate of $\beta$, and so $\beta \in K_S$ and it follows that $K_S=\Q(\beta).$

\begin{Cor}\label{cor:pisot} Let $S$ be as in Theorem~\ref{theorem:pisotsemiconjugacy} and use the same hypotheses and notation here. Suppose in addition that $S$ is defined over its own holonomy field, $K_S = \Q(\beta)$. Let $M=S \times (\R/\Z)$ denote the associated translation prism, and consider the prism flow in a direction $\mathbf v \in \R^3$ whose projection to $S$ lies in the unstable direction of $\rho$. This prism flow is non-minimal if and only if $\mathbf v$ has a real scalar multiple in $\Q(\beta)^3$.
\end{Cor}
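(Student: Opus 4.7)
The plan is to combine Theorem~\ref{theorem:ue} with Theorem~\ref{theorem:pisotsemiconjugacy} and then extract an algebraic condition on $\mathbf{v}$ using the hypothesis $K_S = \Q(\beta)$. Write $\mathbf{v} = (c\,w^u, s)$, where $w^u$ is the unit unstable vector of $D\rho$, $c \in \R \setminus \{0\}$ is the speed in the unstable direction of $S$, and $s \in \R$ is the vertical speed. The prism flow in direction $\mathbf v$ is a constant-time reparametrization of the flow $\Phi^{\theta^u, s/c}_t$ defined in \eqref{eq:prismflow}, hence has the same minimality behavior. Theorem~\ref{theorem:ue} says that $\Phi^{\theta^u, s/c}_t$ is non-minimal if and only if $s/c \in \genspec^c_{\theta^u}(\om)$, and Theorem~\ref{theorem:pisotsemiconjugacy}(2) identifies the continuous-eigenvalue set of the unstable flow with $E(\rho) \cong \Z^d$. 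So non-minimality of the prism flow is equivalent to $s/c \in \Q \cdot E(\rho)$.

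Next I would compute $\Q \cdot E(\rho)$. Conjugation by $\rho$ sends an eigenfunction with eigenvalue $\lambda$ to one with eigenvalue $\lambda\beta$, so $E(\rho)$ is closed under multiplication by $\beta$. Fix any nonzero $c_0 \in E(\rho)$; then $c_0, c_0\beta, \ldots, c_0\beta^{d-1}$ lie in $E(\rho)$ and are $\Q$-linearly independent since $\beta$ has degree $d$. As $E(\rho)$ has $\Q$-rank $d$, they form a $\Q$-basis, so $\Q \cdot E(\rho) = c_0\,\Q(\beta)$. To interpret the other side of the corollary, normalize $S$ to be defined over $K_S=\Q(\beta)$; then $D\rho \in \SL(2,\Q(\beta))$, and we may choose $w^u_{\mathrm{alg}} \in \Q(\beta)^2$ with $w^u = \mu\,w^u_{\mathrm{alg}}$ for some $\mu > 0$. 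The condition that $\mathbf v$ has a nonzero real scalar multiple in $\Q(\beta)^3$ becomes $(c w^u_1 : c w^u_2 : s) \in \PP^2(\Q(\beta))$, which, using $w^u \in \mu\,\Q(\beta)^2$, simplifies to $s/c \in \mu\,\Q(\beta)$.

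Combining the two reductions, the corollary reduces to the purely algebraic equality $c_0\,\Q(\beta) = \mu\,\Q(\beta)$, i.e.\ $c_0/\mu \in \Q(\beta)^*$ for some (equivalently, every) nonzero $c_0 \in E(\rho)$. This matching of scalars is where I expect the main difficulty to lie. I would establish it using the explicit construction of eigenfunctions from Section~\ref{sect:Pisot}: the eigenvalues in $E(\rho)$ arise as pairings of integer cohomology classes in $A(S,\rho) \subset H^1(S,\Z)$ against a class built from the unstable direction $w^u$, and when $S$ is defined over $\Q(\beta)$ the resulting period integrals visibly take values in $\mu\,\Q(\beta)$. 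Thus $E(\rho) \subset \mu\,\Q(\beta)$, and a $\Q$-dimension count (both sides have $\Q$-rank $d$) forces $\Q \cdot E(\rho) = \mu\,\Q(\beta)$, completing the proof.
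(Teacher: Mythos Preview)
Your approach is correct and essentially parallels the paper's: both reduce non-minimality to membership of the ratio $s/c$ in the generalized continuous spectrum via Theorem~\ref{theorem:ue}, identify this with $\Q \cdot E(\rho)$ using Theorem~\ref{theorem:pisotsemiconjugacy}(2), and then verify the algebraic matching $\Q \cdot E(\rho) = \mu\,\Q(\beta)$. The paper organizes this last step through Proposition~\ref{prop:normalization:zbeta}: it first applies a matrix $g \in \GL(2,\Q(\beta))$ to diagonalize $D\rho$, which absorbs your factor $\mu$ into the change of coordinates, and then establishes $E(\rho_g) \subset \Q(\beta)$ via the Pisot--Vijayaraghavan theorem (from $\|c\beta^n\eta^u(\gamma)\|_{\R/\Z} \to 0$ for all $n$ one deduces $c\,\eta^u(\gamma) \in \Q(\beta)$).

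Your direct route (without diagonalizing) is fine, but the phrase ``the resulting period integrals visibly take values in $\mu\,\Q(\beta)$'' glosses over exactly the step that requires work. What you need is that the map $\alpha^{-1}:V_\Z \to E(\rho)$ of \S\ref{sect:Pisot algebraic setup} is the restriction to $V_\Z$ of the left $\beta$-eigenfunctional of the integer matrix $\rho^\ast|_{V_\Z}$, suitably normalized; since that eigenfunctional is defined over $\Q(\beta)$, its values on $V_\Z$ lie in $r\,\Q(\beta)$ where $r$ is any scalar with $r\eta^u \in \Q(\beta)^{2g}$. Unwinding the definition of $\eta^u$ in terms of holonomy (which lies in $\Q(\beta)^2$ by hypothesis) and $w^u = \mu\, w^u_{\mathrm{alg}}$ gives $r \in \mu\,\Q(\beta)^\ast$, completing the matching. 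Either this linear-algebra argument or the Pisot--Vijayaraghavan input should replace ``visibly''.
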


\paragraph*{\bf Holonomy fields} We remark that there is a more technical result that holds when $S$ is not defined over its holonomy field which comes from an understanding of how eigenvalues change under affine maps;
see Proposition \ref{prop:affine change}.

\paragraph*{\bf Eigenfunctions and non-minimality} The link between (continuous) eigenfunctions for the base flow and (continuous) invariant functions for the prism flow is explained in more detail \S\ref{sec:furstenberg:flow}. For now, we note that if we have a continuous eigenfunction $f:S \rightarrow \R/\Z$ with eigenvalue $-\lambda$, that is, $$f(\phi_t x) = e^{2\pi i(-\lambda )t} f(x) \mbox{ for all } t \in \R$$ then the function $F(x, r) = e^{2\pi i r} f(x)$ is a continuous invariant function for the prism flow with vertical speed $\lambda$.

\paragraph*{\bf The heptagonal prism} Part (2) of Theorem~\ref{theorem:prisms}  follows from Corollary~\ref{cor:pisot} applied to the translation surface $S_7$ associated to the double heptagon, which arises from unfolding billiards in the $(\pi/7, 5\pi/14, \pi/2)$-right triangle. In \S\ref{sec:genus3}, we describe a pseudo-Anosov transformation $\rho_7$ of this surface with a degree $3$ Pisot expansion factor, and explicitly compute the group $E(\rho_7)$ of eigenvalues, which yield non-minimal directions for the prism flow on $M_7 = S_7 \times S^1$, and thus for the associated billiard flow. As we mentioned above, we explore this example even further in Appendix~\ref{sect: example semiconjugacy}, where we show that for $\rho_7$ the associated map $\Psi_c$ defined in Theorem~\ref{theorem:pisotsemiconjugacy} is in fact a conjugacy.

\paragraph*{\bf Other regular prisms} In Appendix~\ref{appendix:ngons}, we use Corollary~\ref{cor:pisot} to find explicitly non weak-mixing directions and eigenvalues for the straight-line flow on the translation surfaces $S_n$ associated to regular polygons. We consider examples when $n=7, 9, 14, 18, 20, 24$ drawn from the work of Arnoux-Schmidt~\cite{ArnouxSchmidt}*{Theorem 2}, who were searching for directions with vanishing SAF invariant. We also give examples when $n=16$ and $n=30$, which were found by our own search. Pictures of eigenfunctions for each of these $n$ are provided in Figure \ref{fig:heptagon eigenfunction} and in the ancillary files on the arXiv.

\paragraph*{\bf Arnoux-Yoccoz example} Theorem~\ref{theorem:pisotsemiconjugacy} can be viewed as a generalization of a result of Arnoux~\cite{ArnouxBSMF88}, who introduced a genus $3$ example and built a semi-conjugacy (later shown to be a conjugacy by Arnoux-Cassaigne-Ferenczi-Hubert~\cite{ArnouxCassaigneFerencziHubert}) for a pseudo-Anosov map, associated linear flow, and interval exchange transformation related to the Tribonacci substitution and the Rauzy fractal. Here, additionally, the interval exchange map given by first return map of the unstable flow on the surface to a stable segment is conjugated to a toral translation, the first return map of the linear flow on the torus to a codimension 1 sub-torus. In fact this example is part of a larger family of examples growing in genus known now as the Arnoux-Yoccoz examples. This example does not arise from a classical billiard, and is not a lattice surface.

\paragraph*{\bf Do-Schmidt examples and beyond} There is a large family of examples of genus $3$, found by Do-Schmidt~\cite{DoSchmidt}*{Theorem 2}. Motivated by the study of vanishing \emph{Sah-Arnoux-Fathi (SAF) invariant} (see \S\ref{sec:SAF}), they showed that for each integer $k \geq 2$, there exists at least four orientable pseudo-Anosov
maps in the hyperelliptic component of the stratum $\hh(2,2)$ with expansion factor $\beta_k$ whose minimal polynomial is given by $$P_k(x) = x^3
- (2k+ 4)x^2 + (k+ 4)x -1.$$ It is easy to verify directly that the largest root $\beta_k$ of this polynomial is Pisot (the other eigenvalues of this polynomial are in fact real and have absolute value less than $1$). Other interesting explicit examples of pseudo-Anosovs with vanishing SAF invariant can be found in McMullen~\cite{McMullen:cascade}*{\S5} (who explains an example of Lanneau) and a larger class of examples in Winsor~\cite{Winsor}. We will discuss the Do-Schmidt series of examples, and the connections between our constructions and the SAF invariant in \S\ref{sec:genus3}.

\subsection{Minimality and non-unique ergodicity}\label{sec:minimal} Recall that a dynamical system is \emph{minimal} if every (well-defined) orbit is dense, and uniquely ergodic it there is a unique invariant probability measure. Corollary~\ref{cor:pisot} gives examples of non-minimal directions for prism flows, since the associated eigenfunctions for the translation flow are continuous. Following a construction of Avila-Delecroix~\cite{AvilaDelecroix}*{Theorem 32} (which uses a criterion of Bressaud-Durand-Maass~\cite{BDM}*{Theorem 1}, applied as~\cite{AvilaDelecroix}*{Theorem 33}) of a large set of directions admitting measurable but non-continuous eigenfunctions for linear flows on lattice surfaces, we can use our dictionary to deduce the existence of (a relatively large) set of minimal, non-uniquely ergodic directions on lattice prisms- we note these do not correspond to expanding directions for pseudo-Anosov maps on the associated lattice surface. We say a lattice surface $S = (X, \om)$ is \emph{Salem} if its Veech group is non-arithmetic and contains a pseudo-Anosov map with a Salem expansion factor, where we recall that an algebraic integer is \emph{Salem} if it has absolute value greater than 1, and all of its algebraic conjugates have modulus at most $1$, and there is at least one Galois conjugate of modulus $1$. We note that for this result we need both the lattice assumption, and the existence of a Salem pseudo-Anosov, so the existence of a Salem pseudo-Anosov is not sufficient. To summarize:

\begin{Cor}\label{cor:minimalnue} Let $S$ be a Salem lattice surface and $M$ be the associated translation prism. There exist \begin{itemize}
    \item A positive Hausdorff dimension set of angles $\theta$, such that there are non-zero $s$ so that the prism flow in direction $(\theta, s)$ on $M$ is minimal and non-uniquely ergodic.
    \item A positive Hausdorff dimension set of angles $\theta$, such that there are non-zero $s$ so that the prism flow in direction $(\theta, s)$ on $M$ is non-minimal.
\end{itemize}
\end{Cor}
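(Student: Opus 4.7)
The approach converts both bullets into spectral statements via Theorem~\ref{theorem:ue} applied to the rotated differential $e^{-i\theta}\om$: the prism flow $\Phi^{\theta,s}_t$ on $M$ is uniquely ergodic if and only if $s \notin \genspec^m_\theta(\om)$, and minimal if and only if $s \notin \genspec^c_\theta(\om)$. Thus the first bullet reduces to producing a positive Hausdorff dimension set of $\theta$ carrying a non-zero $s \in \mathcal S^m_\theta \setminus \genspec^c_\theta$, while the second asks for a positive Hausdorff dimension set of $\theta$ carrying a non-zero $s \in \mathcal S^c_\theta$.

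For the first bullet, I would invoke the construction of Avila-Delecroix (their Theorem~32), which applies the Bressaud-Durand-Maass criterion (their Theorem~33) to the Rauzy-Veech renormalization of the non-arithmetic Salem lattice surface $S$ to produce a positive Hausdorff dimension set $\Theta \subset S^1$ of directions $\theta$ for which $\phi^\theta_t$ has a non-zero measurable but non-continuous eigenvalue $\kappa(\theta)$. Setting $s = \kappa(\theta)$ places $s \in \mathcal S^m_\theta$, so $\Phi^{\theta,s}_t$ fails to be uniquely ergodic. For minimality, I would need $ns \notin \mathcal S^c_\theta$ for every non-zero $n \in \Z$. I plan to deduce this from the stronger claim that $\mathcal S^c_\theta = \{0\}$ in every direction produced by the Avila-Delecroix construction: a non-zero continuous eigenfunction would force the asymptotic cohomology class of $s\eta^u$ to be integral in the sense of Theorem~\ref{theorem:pisotsemiconjugacy}, but the Salem character of the Kontsevich-Zorich cocycle along the stable/unstable splitting of the Salem pseudo-Anosov rules this out outside the (exceptional, periodic) parabolic directions excluded by the Bressaud-Durand-Maass summability. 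Given $\mathcal S^c_\theta = \{0\}$ and $s \neq 0$, we conclude $s \notin \genspec^c_\theta$, and $\Phi^{\theta,s}_t$ is minimal while failing unique ergodicity.

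For the second bullet, the analogous strategy demands non-zero continuous eigenvalues of $\phi^\theta_t$ on a positive Hausdorff dimension set of $\theta$, and here I expect the main obstacle to lie. Isolated directions carrying a non-zero continuous eigenvalue are produced on $S$ e.g.\ by every Pisot pseudo-Anosov via Theorem~\ref{theorem:pisotsemiconjugacy}, but only countably many such directions appear in $S^1$. The plan is to strengthen the Bressaud-Durand-Maass summability assumption to a quantitatively stronger cohomological condition guaranteeing that the limit of partial Birkhoff sums defines a \emph{continuous} cocycle rather than merely an $L^2$ one, in the spirit of the Veech-Forni distributional framework for measured foliations. The key remaining step is then a Jarnik-Besicovitch-type dimension estimate establishing that this strengthened diophantine condition on Rauzy-Veech return times is satisfied on a positive Hausdorff dimension set of directions, obtained by sharpening the dimension computation that underlies Avila-Delecroix's proof of Theorem~32.
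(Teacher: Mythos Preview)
Your translation via Theorem~\ref{theorem:ue} is exactly right, and the paper proceeds the same way. But both bullets are meant to be read off directly from Avila--Delecroix~\cite{AvilaDelecroix}*{Theorem~32}, which already produces \emph{two} positive Hausdorff dimension sets of directions on a Salem lattice surface: one set where the translation flow has a non-zero continuous eigenvalue, and another where it has a non-zero measurable eigenvalue but no non-zero continuous eigenvalue. The paper's proof is just ``apply the dictionary to each of these.''

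Your main misstep is in the second bullet. You treat it as the hard open case and sketch a program (strengthening the Bressaud--Durand--Maass summability to force continuity, then running a Jarn\'ik--Besicovitch dimension argument). That program is not new work to be done; it is precisely what Avila--Delecroix already carried out in proving their Theorem~32. The BDM criterion~\cite{BDM}*{Theorem~1}, as cited in~\cite{AvilaDelecroix}*{Theorem~33}, gives both a sufficient condition for measurable eigenvalues and a (stronger) sufficient condition for continuous ones, and Avila--Delecroix verify the continuous condition on a positive Hausdorff dimension set of approximating directions to the Salem pseudo-Anosov. So you should simply quote that half of their theorem and apply Theorem~\ref{theorem:ue}.

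For the first bullet, you correctly spot a genuine subtlety that the paper glosses over: having $\kappa\in\mathcal S^m_\theta\setminus\mathcal S^c_\theta$ does not by itself rule out $n\kappa\in\mathcal S^c_\theta$ for some $n\neq 0$. But your proposed fix is off: Theorem~\ref{theorem:pisotsemiconjugacy} concerns flows in the unstable direction of a pseudo-Anosov, whereas the Avila--Delecroix directions are explicitly \emph{not} pseudo-Anosov directions (the paper even notes this). The actual resolution is that for the directions in the ``measurable-only'' part of~\cite{AvilaDelecroix}*{Theorem~32}, one has $\mathcal S^c_\theta=\{0\}$: the necessary condition for continuous eigenvalues in the BDM criterion fails for every non-zero candidate along those directions, not just for $\kappa$. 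With that in hand, any non-zero $s\in\mathcal S^m_\theta$ gives a prism flow that is minimal (since $\genspec^c_\theta=\{0\}$) but not uniquely ergodic.
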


\paragraph*{\bf Minimal non-uniquely ergodic directions on regular prisms} Avila-Delecroix~\cite{AvilaDelecroix}*{Proposition 35} show that the Veech groups for the surfaces associated to billiards regular polygons $P_n$ with $n \le 15$, $n$ odd, and $n \le 30$, $n$ even, contain Salem elements, so Corollary~\ref{cor:minimalnue} applies in these cases. A natural question would be to identify for which regular polygons $P_n$ does the associated translation surface admit a pseudo-Anosov map with a \emph{Pisot} expansion factor.

\paragraph*{\bf Organization} We review the basics of linear flows on translation surfaces and interval exchange transformations in \S\ref{sec:background}, including a criterion of Furstenberg~\cite{Furstenberg} for unique ergodicity of skew-products in \S\ref{sec:furstenberg}. We use this criterion to derive our unique (and non-unique) ergodicity results (Theorem~\ref{theorem:ue}, Corollaries~\ref{corollary:ue:generic}, \ref{corollary:ue:veech}, \ref{corollary:ue:AHChaikaForni}, and Corollary~\ref{cor:minimalnue}) from weak-mixing results in \S\ref{sec:weakmixing}. In \S\ref{sect:Pisot} we show how to generate eigenvalues and eigenfunctions for the unstable flow associated to pseudo-Anosov maps with Pisot expansion factor, proving the first two parts of Theorem~\ref{theorem:pisotsemiconjugacy}, and in \S\ref{sec:tori} we show how to simultaneously semi-conjugate these pseudo-Anosov maps and straight line flows to hyperbolic toral automorphisms and linear flows, proving the remaining two parts of Theorem~\ref{theorem:pisotsemiconjugacy}. We discuss the relationship between the degree of expansion factors and genus and then discuss the double heptagon, Arnoux's genus $3$ example, and the Do-Schmidt examples in \S\ref{sec:genus3}, including constructing our concrete example of a non-uniquely ergodic prism flow in the right regular heptagonal prism in \S\ref{sec:nonergodicdirections}. We also use a combinatorial construction of Mercat's~\cite{Mercat} to show that for the double heptagon, the natural associated interval exchange map is weak mixing. In Appendix~\ref{appendix:ngons}, following Arnoux-Schmidt~\cite{ArnouxSchmidt} we give further examples of regular $N$-gons where the translation surface associated to the billiard flow has pseudo-Anosov maps with Pisot expansion factors, and thus, non weak-mixing directions. In Appendix~\ref{sect:computing eigenfunctions}, we explain how we plotted the eigenfunctions. In Appendix~\ref{sect: example semiconjugacy}, we work out some details of the semi-conjugacy that arises from Theorem~\ref{theorem:pisotsemiconjugacy} in the context of the regular heptagon: the return map to a level set of an eigenfunction is semi-conjugate to a toral rotation. 

\paragraph*{\bf Acknowledgments} This work was started as part of research in residence at Centre International de Recontres Mathématiques (CIRM) in Luminy, as part of the Chaire Jean Morlet program in Autumn 2023. The Chaire Jean Morlet program was supported by the National Science Foundation, CNRS, Aix-Marseille Université, Ville de Marseille, the Clay Mathematics Institute, and the Compositio Foundation. We thank the wonderful staff at CIRM for their hospitality and the ideal working conditions. JSA was partially supported by the United States National Science Foundation (DMS 2404705), the Pacific Institute for the Mathematical Sciences, and the Victor Klee Faculty Fellowship at the University of Washington; JSA also thanks Aix-Marseille Université and the City University of New York (CUNY) for their hospitality in Autumn 2024. WPH was partially supported by the Simons Foundation and by a PSC-CUNY Award, jointly funded by The Professional Staff Congress and CUNY; NB and PH thank ANR IZES. Thanks to Pierre Arnoux, Julien Cassaigne, Stefano Marmi, Paul Mercat and Maurice Reichert for useful discussions.

\section{Translation flows, interval exchange maps, and Furstenberg's criterion}\label{sec:background} We first (\S\ref{sec:translation}) recall some background on translation surfaces, their moduli spaces, associated flows (\S\ref{sec:linearflows}), interval exchange maps (\S\ref{sec:iet}) and renormalization schemes (\S\ref{sec:substitutions}). These sections will be very brief and mainly serve as a pointer to appropriate references. Then (\S\ref{sec:furstenberg}) we discuss a beautiful criterion of Furstenberg~\cite{Furstenberg} for unique ergodicity of skew product transformations where the skewing factor is a circle, and show how to modify it for flows (\S\ref{sec:furstenberg:flow}), and then show how to use it to prove  our unique (and non-unique) ergodicity results (Theorem~\ref{theorem:ue}, Corollaries~\ref{corollary:ue:generic}, \ref{corollary:ue:veech}, \ref{corollary:ue:AHChaikaForni}, and Corollary~\ref{cor:minimalnue}) from weak-mixing results in \S\ref{sec:weakmixing}.

\subsection{Translation surfaces}\label{sec:translation} A comprehensive reference for the basic definitions of translation surfaces and their moduli spaces is, for example,~\cite{AthreyaMasur}. A (compact) \emph{translation surface} $S$ is a pair $(X,\omega)$ of a compact genus $g$ Riemann surface $X$ and a holomorphic one-form $\omega$. Integrating $\om$ away from its finite zero set $\Sigma$ gives an atlas of charts to $\mathbb C$ whose transition maps are translations, justifying the name translation surface. Decomposing the surface into disks along a basis for the relative homology $H_1(S, \Sigma; \Z)$ and integrating $\om$ along this basis gives a presentation of the surface as a polygon in $\C$ with parallel sides identified by translations, and indeed, given such a polygonal presentation pulling back $dz$ from $\C$ recovers the original one-form $\om$. The orders of the zeros of $\omega$ must add up to $2g-2$, and a zero of order $k$ corresponds to a cone point of angle $2\pi(k+1)$ for the flat metric it determines. Two translation surfaces $S_1 = (X_1, \om_1)$ and $S_2 = (X_2, \om_2)$ are equivalent if there is a biholomorphism $f:X_1 \rightarrow X_2$ between the underlying Riemann surfaces $X_1$ and $X_2$ with $f_* \om_2 = \om_1$. 

\paragraph*{\bf Strata} The moduli space of genus $g$ translation surfaces is denoted by $\Omega_g$, and this space is stratified into spaces $\Omega_g(\alpha)$ by integer partitions $\alpha = (\alpha_1, \ldots, \alpha_n)$ of $2g-2$, where the $\alpha_i$ are the orders of the zeros of one-forms $\omega \in \Omega_g(\alpha)$. Kontsevich-Zorich~\cite{KZ} showed that these strata have at most $3$ connected components. The group $GL^+(2,\R)$ of invertible orientation preserving two-by-two $\R$-linear maps acts on $\Omega_g(\alpha)$ by $\R$-linear postcomposition with translation charts. We will work with the \emph{area 1} loci $\hh(\alpha) \subset \Omega_g(\alpha)$, the set of surfaces which have area $1$, where $\mbox{Area}(\om) = \frac i 2 \int_X \omega \wedge \overline{\om}$ is the area of the translation surface. These loci are preserved by the action of the subgroup of determinant $1$ matrices $SL(2,\R)$, and carry an ergodic absolutely continuous invariant probability measure known as Masur-Smillie-Veech (MSV) measure, which we denote $\mu_{\mathcal H}$. As discussed above, associated to a rational billiard table $P$, there is a associated translation surface $S_P$. We note that the set of surfaces arising from billiards is of measure $0$ in each stratum. The dynamics of the $SL(2,\R)$-action serves as a renormalization dynamics for natural dynamics on the surface, as we explain further below.

\paragraph*{\bf Lattice surfaces} Given a translation surface $S = (X, \om) \in \hh(\alpha)$, we denote its stabilizer under the $SL(2,\R)$ action by $SL(S)$. Generically, a surface $S \in \hh(\alpha)$ has \emph{trivial} stabilizer, but for a dense set of surfaces, the stabilizer, known as the \emph{Veech group}, is a lattice, and these surfaces are known as \emph{Veech surfaces}, or \emph{lattice surfaces}. Surveys on lattice surfaces include Hubert-Schmidt~\cite{HubertSchmidt} and a more recent survey of McMullen~\cite{McMullen:survey}.

\subsubsection{Linear flows}\label{sec:linearflows} Associated to each translation surface, we have a $S^1$-indexed family of directional flows $\phi^t_{\theta}$, which gives the unit-speed flow in direction $\theta$ on the surface, which preserve the natural Lebesgue measure on the surface.  Motivated by billiards, the ergodic properties of these flows have been studied from many different perspectives, including renormalization dynamics and symbolic dynamics. They are a prime example of what has come to be known as \emph{parabolic dynamics}. These are generalizations of linear flows on the torus, which, in the absence of any singularities, have elliptic dynamics (and are never weak mixing). 

\paragraph*{\bf Renormalization} Using renormalization ideas, and in particular the action of the positive diagonal subgroup of $SL(2,\R)$, known as the \emph{Teichm\"uller geodesic flow}, Masur~\cite{Masur} and Veech~\cite{Veech:ergodic} independently showed that the \emph{vertical} flow on almost every translation surfaces is uniquely ergodic, and Kerckhoff-Masur-Smillie~\cite{KMS} proved that for every surface, and almost every direction, the flow is uniquely ergodic. For lattice surfaces, Veech~\cite{Veech:dichotomy} showed that the dynamics at the level of unique ergodicity is similar to that of the torus: in every direction, the flow is either periodic or uniquely ergodic. We will be particularly interested in some low genus lattice surfaces, which arise from billiard constructions, where we can produce explicit examples of non-weak mixing directional flows.

\paragraph*{\bf Affine diffeomorphisms} Elements of $SL(S)$ correspond to (derivatives of) \emph{affine automorphisms} of the translation surface $S$. We will be particularly interested in \emph{pseudo-Anosov} maps $\rho$, which correspond to hyperbolic elements of $SL(S)$. Associated to such a $\rho$, there are a pair of eigendirections for the action of the matrix on $\R^2$, one expanding and one contracting, which induce a pair (expanding and contracting) of invariant transverse measured foliations on $S$. 

\subsubsection{Interval Exchange Transformations}\label{sec:iet} Choosing a transverse segment to a linear flow on a translation surface, the first return map is an \emph{interval exchange transformation}, an orientation-preserving pieceweise isometry of the interval with finitely many discontinuities. Excellent surveys about interval exchange transformations include those of Viana~\cite{Viana}, Yoccoz~\cite{YoccozSurvey} and Zorich~\cite{Zorich:survey}. See Figure~\ref{fig-iet} for an illustrative example of a translation surface, a transverse interval, and associated interval exchange map.

\subsubsection{Rauzy Induction and pseudo-Anosov maps}\label{sec:rauzyPA} There is a discrete analog of the Teichm\"uller geodesic flow which serves as a renormalization dynamics for interval exchange transformations, known as \emph{Rauzy induction}. This involves carefully choosing a subinterval of the original domain of definition, and considering the first return map to this interval, which, when chosen properly, will have the same number of intervals of continuity as the original transformation. When the IET arises as the first return map for the linear flow in the expanding direction of a pseudo-Anosov diffeomorphism, Rauzy induction shows how to view the interval exchange transformation as a \emph{substitution system}, a particularly well-studied family of symbolic dynamical systems.

\subsubsection{Zippered rectangles}\label{sec:zippered} An important construction related to interval exchange transformations, originally studied by Veech~\cite{Veech:ergodic} and further developed by Zorich~\cite{Zorich:gauss} is the notion of \emph{zippered rectangles}. Given a translation surface $S= (X,\om)$ and a (minimal) directional flow $\phi_t$, and a transverse interval $I \subset S$, we can express the flow $\phi_t$ as a suspension flow over the interval exchange map $T: I \rightarrow I$ which is the first return map of $\phi_t$ to $I$. The return time function $h: I \rightarrow \R^+$ is in fact \emph{constant} on intervals $I_j$ of continuity of $T$, and this allows us to decompose the surface $S$ into a union of rectangles, with bases $I_j$ and heights $h_j$, together with gluing data. In the particular situation when the flow is in the unstable direction of a pseudo-Anosov map $\rho$, the zippered rectangle decomposition in fact corresponds to a \emph{Markov partition} for $\rho$ (see, for example, Fathi-Laudenbach-Po\'enaru~\cite{FLP}*{Expos\'e 10}). We refer to Section \ref{sec:nonergodicdirections} for an explicit example.

\subsection{Substitution systems}\label{sec:substitutions} 
For an introduction to subshifts and substitutions, we refer to \cite{Durand-Perrin-22}, \cite{Foggbook} and \cite{Queffelec}. We recall some important definitions and constructions.

\paragraph*{\bf Alphabets and languages} Let $\mathcal{A}$ be a finite set called the alphabet with cardinality $d\ge 2$. Elements of $\mathcal{A}$ are called \emph{letters} or \emph{digits}.
A \emph{word} is a finite  or infinite string of digits. If $u=u_{0}\ldots u_{n-1}$ is a word, a prefix of $u$ is any word $u_{0}\ldots u_{j}$ with $j\le n-1$. A suffix of $u$ is any word of the form $u_{j}\ldots u_{n-1}$ with $0\le j\le n-1$.  If $v$ is the  finite word $v=v_0\dots v_{n-1}$ then $n$ is called the length of the word $v$ and is denoted by $|v|$. The set of all finite words over $\mathcal{A}$ is denoted by $\mathcal{A}^*$. 


\paragraph*{\bf The shift map} The shift map is the map defined on $\mathcal{A}^\mathbb{N}$ by $\sigma(u)=v$ with $v_n=u_{n+1}$ for every integer $n$. We endow $\mathcal A$ with the discrete topology and consider the product topology on $\mathcal A^\N$. This topology is compatible with the distance $d$ on $\mathcal A^{\mathbb N}$ defined by
$$d(x,y)=\frac{1}{2^n}\quad \text{if}\quad n=\min\{i\geq 0,  x_i\neq y_i\}.$$ A {\bf subshift} is a closed subset of $\mathcal{A}^{\mathbb N}$ which is invariant by the shift map.

\paragraph*{\bf Substitutions} A \emph{substitution} $\sigma$ is a map from an alphabet $\mathcal{A}$ to the set $\mathcal{A}^*\setminus\{\epsilon\}$ of nonempty finite words on $\mathcal{A}$. It extends to a morphism of $\mathcal{A}^*$ by concatenation, that is $\sigma(uv)=\sigma(u)\sigma(v)$. The matrix associated to a substitution is an element $M$ of $M_d(\mathbb N)$ such that $M_ {i,j}$ is equal to the number of ocurences of the letter $i$ in the word $\sigma(j)$. The susbtitution is said to be \emph{primitive} if there exists a positive integer $k$ such that $M^k$ is a strictly positive matrix.

\paragraph*{\bf Subshifts} Let $\sigma$ be a substitution over the alphabet $\mathcal{A}$, the subshift associated to $\sigma$ is a subset $X_\sigma$ of $\mathcal A^{\mathbb N}$ such that $x\in X_\sigma$ if  and only if for every positive integers $i, j$ the word $x_i\dots x_{j+i}$ appears in some $\sigma^n(a)$ for a letter $a$. It is called the {\it subshift} associated to the substitution. By a classical result \cite{Foggbook}, if the substitution is primitive, then the subshift is uniquely ergodic and minimal.


\paragraph*{\bf Suspensions} If $H$ is an element of $\mathbb R_+^d$, the suspension flow of the subshift $X$ by $H$ is the vertical flow $\phi_t:Y \rightarrow Y$ on the space $$Y = \bigsqcup_{a \in \mathcal A} (X_a \times [0, H_a])/\sim.$$ Here $(x, H_{a}) \sim (\sigma(x), 0),$ if $x\in X_a$ and $X_a$ is the cylinder associated to the letter $a$. The flow is given by $\phi_t(x,s)=(x,s+t)$.

\paragraph*{\bf Subshifts and interval exchange maps}
Given an interval exchange map, we can define an associated subshift, where the alphabet is given by the intervals, and by coding the orbit of a point under the interval exchange by the sequence of intervals visited.  The collection of all such infinite words defines a subshift. This class of subshifts has been extensively studied, see \cite{F} and a characterization of such subshifts exists~\cite{FZ}. In some cases (for example, if the orbit of the interval exchange transformation is periodic under Rauzy induction), these subshifts can be associated to a substitution, see for example \cite{BC} where it is proven that if the lengths of the exchanged intervals all belong to a quadratic number field then the subshift is substitutive, or \cite{J}.

\subsection{Furstenberg's criterion}\label{sec:furstenberg} 

\subsubsection{Skew-products}\label{sec:furstenberg:skew} We review a beautiful result of Furstenberg~\cite{Furstenberg}*{Lemma 2.1} (inspired, according to discussions with Bryna Kra, by work of Auslander on nilflows, see, for example~\cite{AGH}) connecting the ergodic properties of skew-products with a circle factor to the weak-mixing properties of the base transformation. Let $T_0: X_0 \rightarrow X_0$ be a continuous uniquely ergodic transformation of a compact metric space $\Om_0$, with $\mu_0$ the unique $T_0$-invariant measure on $X_0$, and let $g: X_0 \rightarrow S^1 = \{z \in \C: |z| =1\}$ be continuous. We define the \emph{skew-product} $T: X \rightarrow X$, where $X = X_0 \times S^1$ by \begin{equation}\label{eq:skewproduct} T(x, z) = (T_0 x, g(x) z). \end{equation} By definition, $T$ preserves the measure $\mu = \mu_0 \times m$, where $m$ is the Lebesgue probability measure on $S^1$. Furstenberg's criterion states that $T$ is \emph{not uniquely ergodic} if and only if there is a $0 \neq n \in \Z$ and a measurable function $R: X_0 \rightarrow S^1$ such that \begin{equation}\label{eq:furstenberg} \frac{R(T_0 x)}{R(x)} = g^n(x). \end{equation}

\paragraph*{\bf Fourier expansions} We sketch Furstenberg's argument from~\cite{Furstenberg}*{Proof of Lemma 2.1}. Observe first that the unique ergodicity of $T$ is equivalent to the \emph{ergodicity} of the measure $\mu$. If $T$ is not ergodic, then there is a non-constant $T$-invariant function $F: \Om \rightarrow \C$, $F \in L^2(\Om, \mu)$. Then for $\om \in \Om$, we can consider the Fourier expansion of the function $F(\om, z)$ in the z variable, where \begin{equation}\label{eq:fourier} F(\om, z) = \sum_{n \in \Z} c_n(\om) z^n.\end{equation} Since $F$ is non-constant, there is an $n_0 \in \Z\minuszero$ such that $c_n(\om)$ is not identically zero. By $T$-invariance, we have $$\sum_{n \in \Z} c_n(T_0 \om) g^n(\om) z^n = F(T(\om, z)) = F(\om, z) = \sum c_n(\om) z^n.$$ By the uniqueness of Fourier coefficients, and since $c_{n_0} (\om) \neq 0$, we have $$c_{n_0}(\om) = c_{n_0}(T_0\om) g^{n_0}(\om).$$ So putting $R(\om) = \frac{1}{c_{n_0}(\om)}$ is a solution to \eqref{eq:furstenberg}, with $n=n_0$. Conversely, if we have a solution $R$ to \eqref{eq:furstenberg} with $n=n_0$, the function $$F(\om, z) = \frac{z^{n_0}}{R(\om)}$$ is a $T$-invariant non-constant $L^2$-function on $\Om$. Note that if $n_0 = 1$ we see that for any $c \in \C$, the level sets $$F^{-1}(c) = \{(\om, z) \in \Om: F(\om, z) = c\}  = \{(\om, z): z = cR(\om)\}$$ are copies of the graphs of the functions $cR$ (note that if $R$ is a solution to \eqref{eq:furstenberg}, so is $cR$ for any $c \in \C$). For each $c$, the pushforward (under the map $cR$) of $\mu_0$ on $F^{-1}(c)$ is the unique ergodic invariant probability measure  on $F^{-1}(c)$. In particular, there are infinitely many distinct ergodic invariant probability measures (and this conclusion holds even if $n_0 \neq 1$).

\subsubsection{Furstenberg's criterion for flows}\label{sec:furstenberg:flow} We now show how Furstenberg's criterion can be modified to give conditions for unique ergodicity (and minimality) for product flows with a circle factor. 
\begin{lemma}\label{lemma:furstenberg:flow} Let $\phi_t: (X_0, \mu_0) \rightarrow (X_0, \mu_0)$ be a flow on a probability (topological) measure space $(X_0, \mu_0)$, and for $s \in \R$, define the product flow $\Phi_t^s$ on $(X, \mu)$ with $X = X_0 \times \R/\Z$, $\mu =\mu_0 \otimes m$, where $m$ is the Lebesgue measure on $\R/\Z$ by $$\Phi_t^s(x, r) = (\phi_t(x), r+st).$$ Then \begin{enumerate}
    \item If there is a (continuous) non-trivial eigenfunction $f \in L^2(X_0, \mu_0)$ for $\phi_t$ with eigenvalue $\kappa$, there is a (continuous) non-constant invariant function $F_{n} \in L^2(X, \mu)$ for $\Phi_t^{-\frac{\kappa}{n}}$ for all $0 \neq n \in \Z$.
    \item If there is a (continuous) non-constant invariant function $F \in L^2(X, \mu)$ for $\Phi_t^s$ there exists a $0 \neq n \in \Z$ and a (continuous) eigenfunction for $\phi_t$ with eigenvalue $-ns$.
\end{enumerate}
    
\end{lemma}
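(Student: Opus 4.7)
The strategy is to transplant Furstenberg's Fourier-analytic argument from the skew-product transformation setting (sketched around \eqref{eq:furstenberg}) directly into the flow setting: invariant functions for $\Phi_t^s$ are expanded in a Fourier series along the circle factor $\R/\Z$, and each Fourier coefficient (viewed as a function on $X_0$) is forced to be an eigenfunction of $\phi_t$ with an eigenvalue determined by its mode and the speed $s$.

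For part (1), given an eigenfunction $f: X_0 \to \C$ satisfying $f(\phi_t x) = e^{2\pi i t \kappa} f(x)$ and any nonzero $n \in \Z$, I would simply define
$$F_n(x, r) = f(x)\, e^{2\pi i n r}.$$
A one-line computation gives
$$F_n(\Phi_t^s(x, r)) = f(\phi_t x)\, e^{2\pi i n (r + s t)} = e^{2\pi i t (\kappa + n s)} F_n(x, r),$$
which is exactly $F_n(x, r)$ when $s = -\kappa/n$. The function $F_n$ inherits continuity from $f$; it lies in $L^2(X, \mu)$ by Fubini with $\|F_n\|_2 = \|f\|_2$; and it is non-constant because $f$ is non-trivial and $n \neq 0$, so on any fiber $\{x\} \times (\R/\Z)$ with $f(x) \neq 0$ (a set of positive $\mu_0$-measure), the value varies with $r$.

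For part (2), take a non-constant $\Phi_t^s$-invariant $F \in L^2(X, \mu)$ and expand it in its Fourier series along the circle factor,
$$F(x, r) = \sum_{n \in \Z} c_n(x)\, e^{2\pi i n r}, \qquad c_n(x) = \int_0^1 F(x, r)\, e^{-2\pi i n r}\, dr.$$
Fubini puts each $c_n$ in $L^2(X_0, \mu_0)$; if $F$ is continuous on the compact space $X$, then uniform continuity makes each $c_n$ continuous on $X_0$. Writing out $F \circ \Phi_t^s = F$ and matching Fourier coefficients in the $r$ variable yields
$$c_n(\phi_t x)\, e^{2\pi i n s t} = c_n(x) \quad \text{for every } n \in \Z,$$
so each non-trivial $c_n$ is an eigenfunction of $\phi_t$ with eigenvalue $-n s$. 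The $c_0$ term is $\phi_t$-invariant, so under the standing assumption that $\phi_t$ is ergodic (the context of Theorem~\ref{theorem:ue}), $c_0$ is a.e.\ constant, and the non-constancy of $F$ forces some $c_n$ with $n \neq 0$ to be non-trivial, producing the desired eigenfunction.

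The only mild subtlety is the treatment of the $c_0$ term in part (2), which is handled by ergodicity of the base flow; the promotion from the measurable to the continuous version on each side requires only that Fourier coefficients of a continuous function on a compact space are continuous. Everything else is a direct translation of Furstenberg's original argument, with the discrete skewing cocycle replaced by the continuous $\R$-action on the fiber $\R/\Z$ at speed $s$, so no serious obstacle arises.
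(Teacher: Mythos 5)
Your proof follows essentially the same Fourier-expansion argument as the paper: part (1) is the identical one-line computation showing $f(x)\,e^{2\pi i n r}$ is invariant for $\Phi_t^{-\kappa/n}$, and part (2) is the same expansion of $F$ along the circle fiber, matching Fourier coefficients to get $c_n(\phi_t x) = e^{-2\pi i n s t} c_n(x)$. The one place where you are more careful than the paper is the treatment of the zero mode in part (2). The paper's proof simply asserts that non-constancy of $F$ forces $c_n$ to be non-zero for some $n \neq 0$; but this can fail if $F(x,r) = c_0(x)$ is a non-constant $\phi_t$-invariant function, i.e.\ precisely when $\phi_t$ is not ergodic. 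You correctly point out that ergodicity of the base flow is the extra hypothesis that rules this out, and that this assumption is implicit in the applications (Theorem~\ref{theorem:ue} concerns translation flows whose unique ergodicity is the whole point) even though it is not stated in the lemma. This is a genuine, if minor, gap in the paper's own proof, and your version closes it.
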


\begin{proof} To prove the first assertion, suppose $f(x)$ is an eigenfunction for $\phi_t$ with eigenvalues $\kappa$, that is $$ f(\phi_t x) = e^{2\pi i \kappa t} f(x),$$ then $$F_n(x, r) = f(x)e^{2\pi i n r}$$ is $\Phi_t^{-\frac{\kappa}{n}}$ invariant. Indeed, 
\begin{align*} F_n\left(\Phi_t^{-\frac{\kappa}{n}}(x,r)\right) &= F_n\left(\phi_t x, r-\frac{\kappa}{n}\right)\\ &= f(\phi_t x) e^{2\pi i n\left(r- \frac{\kappa}{n} t\right)} \\ &= f(x) e^{2\pi i \kappa t} e^{2\pi i nr}e^{-2\pi i \kappa t} \\ &= F_n(x,r).\end{align*}
Note that by construction, if $f \in L^2(X_0, \mu_0)$, $F_n \in L^2(X, \mu)$, and if $f$ is continuous, so is $F_n$. To prove the second assertion, suppose $F(x, r) \in L^2(X,\mu)$ is $\Phi_t^s$-invariant. We have the Fourier expansion $$F(x, r) = \sum_{n \in \Z} c_n(x) e^{2\pi i n r},$$ where $$c_n(x) = \int_{\R/\Z} F(x, r) e^{-2\pi i n r} dr.$$ Note that if $F(x,r)$ is continuous, so is $c_n(x)$. By $\Phi_t$-invariance, we have $$F(\varphi_t x, r+st)= F(x, r),$$ thus $$\sum_{n \in \Z} c_n(\phi_t x) e^{2\pi i n (r+st)} = \sum_{n \in \Z} c_n(x) e^{2\pi i nr},$$ so by uniqueness of Fourier coefficients, $$c_n(\phi_t x) = e^{-2\pi i n st} c_n(x).$$ If $F$ is non-constant, there is an $0 \neq n \in \Z$ such that $c_n(x)$ is non-zero, proving the assertion.
    
\end{proof}

\subsection{Ergodicity, Weak mixing, and coboundaries}\label{sec:weakmixing} We explain how to use Furstenberg's criteria and how to relate them to a criterion of Veech linking weak mixing and the cohomological equation.

\paragraph*{\bf Applying Furstenberg's criterion} We now turn to using the Furstenberg criteria we explained in \S\ref{sec:furstenberg} to show Theorem~\ref{theorem:ue}. We give two proofs, using two different interpretations of the prism flow. For the first proof, we consider the first return map to the product of an interval in the base translation surface with the vertical direction, which we, by slight abuse of notation, call a \emph{face}. This first return map is a skew-product over an interval exchange map with a circle factor, and the skewing function is given by the heights of the associated zippered rectangles for the linear flow on the translation surface, normalized by the vertical speed, and applying the classical Furstenberg criterion and a result of Veech~\cite{Veech:metrictheory}*{Theorem 6.3} gives us our result. For our second proof, we can use directly the Furstenberg criterion modified for flows, and the expression of our prism flow as a product flow over a linear flow on a translation surface.

\paragraph*{\bf Coboundaries and zippered rectangles} A crucial ingredient in our results is Veech's~\cite{Veech:metrictheory} observation linking the existence of eigenfunctions for translation flows to the cohomological equation for interval exchange maps which arises from the \emph{zippered rectangles}~\cite{Veech:ergodic} construction. Let $\phi_t$ be the vertical flow  on a translation surface $S$, $I \subset S$ be a transverse segment, $T_0$ the first return map to $I$, and $h$ the associated return time function, \emph{constant} on intervals of continuity of $T$. We write $I = \bigsqcup_{j=1}^k I_j$, where the $I_j$ are these intervals of continuity, and let $h_j \in \R^+$ denote the value of $h$ on $I_j$. If $\psi_c: S \rightarrow \R/\Z$ is an (additive) eigenfunction for the flow with eigenvalue $c$, we have $$\psi_c(\phi_t p) - \psi_c(p) = ct \mod \Z,$$ for all $p \in S$ with well-defined $\phi_t$-orbit. We remark that we will primarily work additively in the sequel, and use $\R/\Z$ as the target space for our eigenfunctions. In particular, starting with $x \in I_j$, putting $t=h_j$, and letting $f_c = \psi_c|_{I}$, we have $$f_c(T_0x) - f_c(x) = c h_j \mod \Z.$$ That is, an eigenfunction for the flow $\phi_t$ yields a \emph{coboundary} for the interval exchange map $T_0$.

\paragraph*{\bf Time changes} The IET $T_0$ is weak-mixing if no constant function is a coboundary, and has an eigenvalue $c$ if the constant function $c$ is a coboundary for the IET. The flow $\phi_t$ has an eigenvalue $c$ if the constant function $c$ is a coboundary for the flow, which means that the function $ch$ is a coboundary for the IET. We see that the weak-mixing properties of the flow and the IET, while related, are by no means equivalent, and indeed, even different height functions will lead to potentially different weak-mixing properties. This is part of a general principle, well-known to ergodic theorists, but crucial for us, so that we emphasize it: {\em Weak-mixing is sensitive to time-changes}. We refer to Section \ref{sec:nonergodicdirections} for an explicit example.

\paragraph*{\bf Skew-products, first returns, and suspensions} With notation still as above, let $M = S \times (\R/\Z)$ the associated translation prism. We define the \emph{face} $F = I \times (\R/\Z)$ over $I$, and then the first return map $T$ of the flow $\Phi_t^s$ to $F$ is a skew-product over $T_0$, with piecewise constant skewing function (the constants are scalar multiples of the height function $h$ which governs the returns of $\phi_t$ to $I$), given by \begin{equation}\label{eq:iet:skew}T(x, \theta) = (T_0 x, \theta + s h(x)).\end{equation} Rewriting \eqref{eq:iet:skew} by identifying $\R/\Z$ with $S^1$ in multiplicative notation, we have $$T(x, z) = (T_0 z, e^{2\pi is h(x)} z).$$ Applying Furstenberg's criterion with $g(x) = e^{2\pi is h(x)}$, we have that this map is uniquely ergodic if and only if there are no non-trivial solutions to \eqref{eq:furstenberg}, that is, if for all $n \in \Z\minuszero$ there is no measurable $R: I \rightarrow S^1$ satisfying \begin{equation}\label{eq:veechcriterion}R(T_0 x) = e^{2\pi n s h(x)} R(x).\end{equation} By an observation of Veech~\cite{Veech:metrictheory}, the existence of a solution $R$ to \eqref{eq:veechcriterion} is equivalent to $n s$ being an eigenvalue for the flow $\phi_t$, that is $n s \in \mathcal S(\om)$. Thus, the flow $\Phi^{s}_t$ is uniquely ergodic if and only if $ns \notin \mathcal S(\om)$ for all $n \in \Z\minuszero$, as desired, proving Theorem~\ref{theorem:ue}.


\paragraph*{\bf Floor returns} With notation as above, we can directly apply the Furstenberg criterion for product flows as explained in \S\ref{sec:furstenberg:flow} to conclude that the flow $\Phi_t^{s}$ is uniquely ergodic if and only if $ns \notin S(\om)$ for all $n \in \Z\minuszero$, as desired. We note that this also says that the first return map of the flow $\Phi_t^{s}$ to the floor $S \times \{0\} \subset M$, that is, the time $s$ map for the flow $\phi_t$ is uniquely ergodic if and only if $ns \notin S(\om)$ for all $n \in \Z\minuszero$, once again proving Theorem~\ref{theorem:ue}.

\paragraph*{\bf Relationships} Before moving on to proving our corollaries, we recall the relationships between the various flows and maps we have introduced, which induce relationships between ergodicity, weak-mixing, and coboundaries of these systems. The face return map $T$ is a first return map for the prism flow $\Phi_t^s$ and a skew-product over the IET $T_0$, which is the first return map for the translation flow $\phi_t$ to a transverse interval. The return time (or roof) function $h$, for $T_0$ with respect to $\phi_t$ is piecewise constant, and the associated return time function for the face return with respect to $\Phi_t^s$ is the scaled function $sh$, and is constant on vertical fibers. The prism flow $\Phi_t^s$ is a  skew-product flow over $\phi_t$ with constant skewing function $s$, and a suspension flow over the floor return map $\phi_s$ with constant roof function $1$, and this map is also the time $s$-map of $\phi_t$.

\subsubsection{Spectra of translation flows}\label{sec:allsurfaces} As discussed in \S\ref{sec:prisms}, $\mathcal S(\om)$ is of Lebesgue measure $0$ for all $\omega$, and therefore, so is $\genspec(\om)$. Then since for all $s \notin \genspec(\om)$, we have unique ergodicity, we have the first conclusions in Theorem~\ref{theorem:ue}.

\paragraph*{\bf Generic surfaces} Applying the main result of Avila-Forni~\cite{AvilaForni}, which states that for almost all $S= (X, \om)$, $\mathcal S(\om) = \emptyset$ and in fact, for almost all $\om$ and almost all $\theta$, $\mathcal S_{\theta}(\om) = \emptyset$, we have that the associated prism flows are in fact uniquely ergodic for \emph{every} choice of vertical speed $s$, giving us the remainder of Theorem~\ref{theorem:ue}.

\paragraph*{\bf Lattice prisms} Applying the main result of Avila-Delecroix~\cite{AvilaDelecroix}*{Theorem 1} almost every $\theta$, $\mathcal S_{\theta}(\om) = \emptyset$, we obtain Corollary~\ref{corollary:ue:veech}.

\paragraph*{\bf Tautological planes} Applying the main result of Arana-Herrera--Chaika--Forni~\cite{AHChaikaForni}*{Theorem 1.1}, we obtain Corollary~\ref{corollary:ue:AHChaikaForni}.

\paragraph*{\bf Minimal non-uniquely ergodic directions} Corollary~\ref{cor:minimalnue} follows from Avila-Delecroix~\cite{AvilaDelecroix}*{Theorem 32}, which guarantees a non-zero Hausdorff dimension set of directions on lattice surfaces which admit measurable (but not continuous) eigenfunctions. Applying our dictionary we obtain Corollary~\ref{cor:minimalnue}.
\section{Pisot expansion factors in pseudo-Anosov homeomorphisms}\label{sect:Pisot} 
\paragraph*{\bf Proof strategy and outline} We now turn to proving the first two parts of Theorem~\ref{theorem:pisotsemiconjugacy}, namely, constructing eigenfunctions for the directional flows in the unstable direction of a pseudo-Anosov $\rho$ with Pisot expansion factor of degree $d$, and showing that the group $E(\rho) \subset \R$ of eigenvalues is isomorphic to $\Z^d$. We recall that if we have a pseudo-Anosov map $\rho$ whose expansion factor $\beta$ is not Pisot, results of Solomyak~\cite{Solomyak97ETDS}*{page 96} show that the linear flow in the unstable direction is in fact weak mixing, and thus the group $E(\rho)$ of eigenvalues is the trivial additive group $\{0\}$. We now turn to the case when $\beta$ is Pisot. We will show that the group of eigenvalues $E(\rho)$ is equal to the group $\FA(\rho) \subset \R$ of numbers $c \in \R$ such that the multiples $c\eta^u$ of a real cohomology class $\eta^u \in H^1(S,\R)$ associated to $\rho$ are (forward) asymptotic under the action of $\rho^*$ on cohomology to an integer class $\varphi^{\infty}_c \in H^1(S, \Z)$. 

\paragraph*{\bf Periodic strong Veech criterion} The inclusion of $E(\rho)$ in $\FA(\rho)$ is the \emph{strong Veech criterion} (see, for example~\cite{AvilaDelecroix}*{Theorem 33}, which is~\cite{BDM}*{Theorem 1} in a geometric context) applied to the setting of periodic Teichm\"uller geodesics, and the reverse inclusion is a \emph{converse} to this criterion.

\paragraph*{\bf Markov partitions and winding numbers} We will identify $\FA(\rho)$ with the subgroup of integer cohomology consisting of the classes $\varphi^{\infty}_c$, and using the Pisot property of $\beta$, show that this subgroup has rank $d$, and then show how to construct eigenfunctions $\psi_c$ with eigenvalue $c$ associated to each $c \in \FA(\rho)$ using the zippered rectangle decomposition of the surface corresponding to a Markov partition for the pseudo-Anosov map $\rho$. Vice-versa, to show $E(\rho) \subset FA(\rho)$, we will show that if we have an eigenfunction $\psi_c:S \rightarrow \R/\Z$ of eigenvalue $c$, there is a natural integer cohomology class $[\psi_c]$ (given by taking the winding number of the function $\psi_c$ around any closed curve) so that $c\eta^u$ is forward asymptotic to $[\psi_c]$.

\paragraph*{\bf Pseudo-Anosov maps} We recall our notation. Let $S = (X, \om)$ be a translation surface, and $\rho:S \to S$ denote a pseudo-Anosov affine automorphism, that is, the derivative $D \rho \in \SL(2,\mathbb R)$ is a hyperbolic matrix with expanding eigenvalue $\beta \in \R, |\beta| >1$, and unit (unstable and stable) eigenvectors $w^u, w^s \in {\mathbb R}^2$ satisfying $$(D\rho)w^u = \beta w^u \quad \text{and} \quad (D\rho)w^s = \beta^{-1} w^s.$$ We recall that we assume (by replacing $\rho$ with a power if necessary) that $\beta > 1$, and that $\rho$ fixes every singular point, and moreover, every separatrix (singular leaf in the stable ($w^s$) direction) on $S$.

\paragraph*{\bf Action on cohomology} Consider the action $\rho^\ast$ of $\rho$ on absolute cohomology of the surface, $H^1(S; \mathbb R)$, which is naturally isomorphic to the space of linear maps $H_1(S; \Z) \to \R$. There is a unique pair of cohomology classes $\eta^u, \eta^s \in H^1(S; \R)$ such that for every $\gamma \in H_1(S; \Z)$, the vector
$$\eta^u(\gamma) w^u + \eta^s(\gamma) w^s$$
is the holonomy vector of $\gamma$. By construction, $$\rho^\ast(\eta^u)=\beta \eta^u \quad \text{and} \quad \rho^\ast(\eta^s)=\beta^{-1} \eta^s.$$

\paragraph*{\bf Forward asymptotics} We say a class $\varphi \in H^1(S; \mathbb R)$ is {\em forward asymptotic} to an integral class $\varphi^\infty \in H^1(S; \mathbb Z)$ if
$$\lim_{n \to +\infty} \|(\rho^\ast)^n(\varphi) - (\rho^\ast)^n(\varphi^\infty)\| = 0.$$ This notion does not change if we replace $\rho$ by a power. Motivated by Veech's criterion~\cite{Veech:metrictheory}*{\S 7} (see also ~\cite{AvilaForni}*{Theorem 6.1}) for the existence of eigenvalues for translation flows, we define the set $\FA(\rho)$ by
\begin{equation}
	\label{eq:E}
	\FA(\rho) = \{c \in \R:~\text{$c \eta^u$ is forward asymptotic to an integral class}\}
\end{equation}
This is precisely Veech's criterion in the setting of periodic renormalization, which was also studied by Host~\cite{Host}. We define $$\alpha:\FA(\rho) \to H^1(S; \mathbb Z)$$ to be the map $$c \xmapsto{\alpha} \varphi^{\infty}_c$$ that sends $c \in \FA(\rho)$ to the integer class $\varphi^{\infty}_c \in H^1(S, \Z)$ to which $c \eta^u$ is forward asymptotic.  We let $A(\rho)$ denote the image $\alpha\big(\FA(\rho)\big)$, which is the collection of integer classes such that there is a $c \in \R$ for which $c \eta^u$ is forward asymptotic to that class. We note that $\FA(\rho) \subset \R$ and $A(\rho) \subset H^1(S; \mathbb Z)$ are (additive) subgroups, and that the natural map $\alpha$ is a group isomorphism from $\FA(\rho)$ to $A(\rho)$: since $0 \eta^u$ is forward asymptotic to only the zero class, so $\alpha(0)$ is the zero class. By linearity of $\rho^\ast$, if $c_i \eta^u$ is asymptotic to $\varphi^\infty_i \in H^1(S; \mathbb Z)$ for $i=1,2$, then $(c_1+c_2) \eta^u$ is forward asymptotic to $\varphi^\infty_1+\varphi^\infty_2$. It follows that $\FA(\rho) \subset \R$ and $A(\rho) \subset H^1(S; \mathbb Z)$ are subgroups and that $\alpha$ is a group homomorphism. From the first sentence, we see that $\alpha$ is one-to-one, and it is surjective onto $A(\rho)$ by definition.

\paragraph*{\bf Eigenfunctions} Let $\phi_t:S \to S$ denote the straight-line flow in direction $w^u$. The goal of this section is to show that the set of (continuous) eigenvalues for $\phi_t$ coincides with $\FA(\rho)$. Recall that a function $\Psi_c: S \rightarrow \R/\Z$ is an (additive) \emph{eigenfunction} for $\phi_t$ with eigenvalue $c$ if $$\Psi_c (\phi_t x) - \Psi_c(x) = ct.$$ We say the eigenvalue $c$ is \emph{continuous} if the associated eigenfunction $\Psi_c$ is continuous. We let $$E(\rho) = \{c \in \R: c \mbox{ is an eigenvalue for } \phi_t\}$$ be the group of (continuous) eigenvalues of $\phi_t$ (which in fact are all eigenvalues by Solomyak~\cite{Solomyak97ETDS}). We note that since the flow $\phi_t$ is (uniquely) ergodic, any two eigenfunctions with eigenvalue $c$ differ by a constant, since their difference is $\phi_t$-invariant.

\paragraph*{\bf Cohomology classes} Note that each continuous function $f: S \to \mathbb R/\mathbb Z$ has a naturally associated cohomology class $[f] \in H^1(S; \mathbb Z)$ obtained as the pullback of cohomology class of the closed $1$-form $d\theta$, where $\theta \in \R$ is the local coordinate on $\R/\Z$. We interpret $[f]$ as a linear map $H_1(S; \mathbb Z) \to \Z$. If $\gamma: \R/\Z\to S$ is a closed curve, we define $[f]([\gamma])$ to be the winding number of the composition $f\circ \gamma: \R/\Z \rightarrow \R/\Z$. That is, identifying the domain $\R/\Z$ of $\gamma$ with $[0, 1]/0 \sim 1$, lift the composition $f \circ \gamma:[0, 1] \to \R / \Z$ to a continuous map $\widetilde{f \circ \gamma}:[0, 1] \to \mathbb R$, and
$$[f](\gamma) = \widetilde{f \circ \gamma}(1) - \widetilde{f \circ \gamma}(0).$$ By construction, the cohomology class $[f]$ is invariant under composing $f$ with a rotation.

\paragraph*{\bf Eigenvalues and forward asymptotics} The goal of this section is to prove the following result showing that the sets $\FA(\rho)$ and $E(\rho)$ coincide.

\begin{Theorem}
	\label{thm:asymptotic}
	Suppose $\beta$ is Pisot and has degree $d$. Then $\FA(\rho) \subset \R$ is a subgroup of rank $d$, and $$\FA(\rho) = E(\rho).$$ Moreover, if $c \in \FA(\rho)$ there is a continuous eigenfunction $\Psi_c: S \rightarrow \R/\Z$ for $\phi_t$, such that $[\Psi_c] = \varphi_c^{\infty}$, and if $c \in E(\rho)$, with continuous eigenfunction $\Psi_c$, $c\eta_u$ is forward asymptotic to $[\Psi_c]$.
\end{Theorem}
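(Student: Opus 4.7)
The plan is to establish the equality $\FA(\rho) = E(\rho)$ through two explicit correspondences---from asymptotic classes to eigenfunctions via a Markov partition, and back via winding-number cohomology---and then read off the rank using the $\Z[\beta]$-module structure of $\FA(\rho)$ combined with a dimension count in the $\beta$-primary component of $H^1(S;\Q)$.

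\emph{From $\FA(\rho)$ to eigenfunctions.} Fix a zippered rectangle decomposition $S = \bigsqcup_j R_j$ that is a Markov partition for $\rho$ with horizontal direction $w^u$, so that $\rho(R_j)$ is a union of sub-rectangles obtained by vertical cuts. For $c \in \FA(\rho)$ with asymptotic class $\varphi_c^\infty$, define $\Psi_c$ on $R_j$ by $\Psi_c(x,t) = a_j + ct \pmod{\Z}$, where the base values $a_j \in \R/\Z$ are chosen so that the winding-number class $[\Psi_c]$ equals $\varphi_c^\infty$. Self-similarity of the partition under $\rho$ translates continuity of $\Psi_c$ across zippered identifications into an infinite compatibility cascade on $(a_j)$; at each generation $n$ the defect between naive and correct values is $(\rho^\ast)^n(c\eta^u - \varphi_c^\infty)$, which lives in the Galois-conjugate eigenspaces of $\rho^\ast$. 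The Pisot property forces uniform exponential contraction of this defect, upgrading formal solvability to uniform continuity in the surface topology.

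\emph{From eigenfunctions to $\FA(\rho)$.} Conversely, let $\Psi_c: S \to \R/\Z$ be a continuous eigenfunction with eigenvalue $c$. The intertwining $\rho \circ \phi_t = \phi_{\beta t} \circ \rho$ shows $\Psi_c \circ \rho$ is an eigenfunction with eigenvalue $\beta c$, and the winding-number construction gives $[\Psi_c \circ \rho] = \rho^\ast [\Psi_c]$. Since eigenfunctions for the same eigenvalue differ by a constant (unique ergodicity of $\phi_t$), this comparison yields a cohomological identity which, when iterated and rescaled by $\beta^{-n}$, becomes $(\rho^\ast)^n([\Psi_c] - c\eta^u) \to 0$, using Pisot contraction on the non-$\beta$ eigenspaces. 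Thus $c \in \FA(\rho)$ and $\alpha(c) = [\Psi_c]$.

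\emph{Rank.} Invertibility of $\rho^\ast$ together with $\rho^\ast \eta^u = \beta \eta^u$ imply $\FA(\rho)$ is stable under multiplication by $\beta^{\pm 1}$, making it a $\Z[\beta,\beta^{-1}]$-submodule of $\R$. Any nonzero $c_0 \in \FA(\rho)$ (supplied by the first step from a suitable integer class with nonzero $\beta$-eigenspace projection) generates $\Z[\beta] c_0 \subset \FA(\rho)$ of $\Z$-rank $d$. For the matching upper bound, $A(\rho) \otimes \Q$ sits inside the $\beta$-primary component of the $\rho^\ast$-action on $H^1(S;\Q)$, where it inherits a $\Q(\beta)$-module structure via the identification $\alpha$; simplicity of $\beta$ as an eigenvalue of $\rho^\ast$ (the Perron--Frobenius property for the pseudo-Anosov action on cohomology) bounds the dimension of this primary component by $d$, yielding $\mathrm{rank}(\FA(\rho)) = \mathrm{rank}(A(\rho)) \leq d$.

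\emph{The main obstacle} is the continuity of $\Psi_c$ in the surface topology, not merely the symbolic Cantor topology in which Solomyak's~\cite{Solomyak97ETDS} analogous results are framed. The formal solution of the cocycle equation for $(a_j)$ is straightforward once one unpacks the asymptotic condition, but ensuring that $\Psi_c$ extends continuously at the singular points of $S$ and across all levels of the refinement of the Markov partition requires the Pisot contraction in a uniform, quantitative form intertwined with the specific geometry of the zippered rectangles. Matching the winding-number cohomology class of the resulting function with the prescribed $\varphi_c^\infty$ is the bookkeeping step tying together the two correspondences.
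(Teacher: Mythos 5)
Your rank argument takes a genuinely different, and perfectly valid, route from the paper: rather than explicitly identifying $A(\rho)$ with the lattice $V_\Z = V \cap H^1(S;\Z)$ inside the $d$-dimensional rational subspace $V$ spanned by $r\eta^u$ and its Galois conjugates (Proposition~\ref{prop:E intro}), you produce the lower bound from the $\Z[\beta]$-module structure of $\FA(\rho)$ and the upper bound from simplicity of the eigenvalue $\beta$. The paper's version is more concrete and also delivers the formula $\alpha^{-1}(v)$ as an intersection, which is used later, but your framing is cleaner if all one wants is the rank. The converse direction ($E(\rho)\subset\FA(\rho)$) matches the paper's argument.

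The forward direction, however, has a real gap. You write $\Psi_c(x,t) = a_j + ct$ on each rectangle $R_j$ with $a_j$ a \emph{constant}, and then speak of adjusting the $(a_j)$ through a ``compatibility cascade.'' But no choice of constants $a_j$ can produce a continuous eigenfunction at a fixed level of the Markov partition: the restriction of the eigenfunction to the transversal (the union of the bottom edges of the rectangles, i.e.\ the stable segment) must be a genuinely non-constant continuous function, because the return-time cohomological equation $\psi\circ T_0 - \psi = v_i$ has no piecewise-constant solution for a minimal IET with irrational data. The paper therefore defines the eigenfunction by $\tilde\Psi_c|_{R_i}(x,t) = \psi_v(x) + ct$ where $\psi_v:[0,L]\to\C$ is a \emph{continuous} solution of the cohomological equation on the interval, and the hard work (Lemma~\ref{thm:curve case 1}, Lemma~\ref{lem:curve case 2}, Propositions~\ref{prop:strip renormalization}--\ref{prop:continuity criterion}) is to construct $\psi_v$: one shows the $T_v$-orbit of $(0,0)$ in $[0,L]\times\C$ is bounded using the renormalization $R_\mu$ and Pisot contraction, and then that its closure is the graph of a continuous function via an oscillation estimate that again uses the renormalization together with minimality of $T_0$.

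Your ``cascade'' heuristic --- that the defect at generation $n$ is $(\rho^\ast)^n(c\eta^u-\varphi_c^\infty)$ and decays exponentially --- does correctly locate \emph{why} the Pisot hypothesis matters, but exponential decay of the defect does not by itself yield a well-defined continuous limit at every point, including endpoints of intervals of continuity and singularities. That is exactly the content of the oscillation criterion (Proposition~\ref{prop:continuity criterion}) and the argument using the absence of saddle connections in Lemma~\ref{lem:curve case 2}. You flag this as ``the main obstacle'' but do not supply the argument, and the constant-per-rectangle ansatz you start from points in the wrong direction: the construction has to begin with a continuous function on the transversal, not with a constant.

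A further minor point: extending $\psi_v$ from the transversal to a global eigenfunction $\Psi_c$ on $S$ requires checking that the values agree along the full singular leaves of the vertical foliation, not just across the horizontal edges where the Markov rectangles abut. The paper handles this in \S\ref{sec:expliciteigenfunction} using the absence of saddle connections (an idea from Avila--Delecroix). This is the ``bookkeeping step'' you allude to, but it requires its own argument.
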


\paragraph*{\bf Organization of the section} We will prove this theorem using renormalization ideas from Avila-Delecroix~\cite{AvilaDelecroix}, Forni~\cite{Forni}, Marmi-Moussa-Yoccoz~\cite{MarmiMoussaYoccoz}, Veech~\cite{Veech:ergodic}, and Zorich~\cite{Zorich:gauss}, many of which simplify in a very nice way since we are in the pseudo-Anosov setting, so the renormalization is periodic. We have tried to keep our proofs as self-contained as possible since the constructions are very concrete in our setting. We start (\S\ref{sec:computing}) by proving our claim on the rank of the subgroup $\FA(\rho)$. In the proof of this result, we introduce several subspaces of homology and cohomology that will be useful for us in the rest of our construction. Next, in \S\ref{sec:cohom:iet}, we use ideas from Forni~\cite{Forni} and Marmi-Moussa-Yoccoz~\cite{MarmiMoussaYoccoz} to build continuous solutions to the cohomological equation for the (self-renormalizable) interval exchange transformation (IET) obtained by taking a first return map of $\phi_t$ to an appropriately chosen segment of the \emph{stable} foliation for $\rho$. We use these solutions in \S\ref{sec:constructingeigenfunctions} to build eigenfunctions for the flow $\phi_t$, using ideas of Avila-Delecroix~\cite{AvilaDelecroix}, Veech~\cite{Veech:ergodic} and the description of the surface as a \emph{zippered rectangle} over the IET. Finally, in \S\ref{sec:thmasymptotic} we combine these constructions to prove Theorem~\ref{thm:asymptotic}.



\subsection{Computing the eigenvalues}\label{sec:computing}
\label{sect:Pisot algebraic setup} Some of the arguments in this subsection are reminiscent of arguments in the beautiful note of Kwapisz~\cite{Kwapisz}. We first prove that in the Pisot setting, the degree of $\beta$ and the rank of $\FA(\rho)$ coincide.
\begin{Prop}
	\label{prop:E intro}
	If $\beta$ is Pisot and has degree $d$, then $\FA(\rho)$ is an additive subgroup of $\R$ of rank $d$.
\end{Prop}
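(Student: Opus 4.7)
The plan is to sandwich $\FA(\rho)$ between two additive subgroups of $\R$ of rank $d$.  The key structural input is the $p$-primary component of $\rho^\ast$ on cohomology, where $p(x) \in \Z[x]$ is the minimal polynomial of $\beta$.  Set $W_\Q := \ker p(\rho^\ast) \subseteq H^1(S;\Q)$ and its real and integral incarnations $W := W_\Q \otimes_\Q \R$ and $W_\Z := W_\Q \cap H^1(S;\Z)$.  Using the standard fact for pseudo-Anosov maps that the generalized $\beta$-eigenspace of $\rho^\ast$ is spanned by $\eta^u$, I will argue that $W_\Q$ has $\Q$-dimension $d$, so $W_\Z$ is a rank-$d$ lattice.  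The Pisot hypothesis produces the $\rho^\ast$-invariant decomposition $W = \R\eta^u \oplus W^s$, on which $\rho^\ast$ contracts $W^s$; complementing with the remaining $\Q$-primary subspace $V' := V'_\Q \otimes \R$ then yields a global $\rho^\ast$-equivariant projection
\[
\pi_u : H^1(S;\R) \longrightarrow \R\eta^u \cong \R
\]
along $W^s \oplus V'$.

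For the lower bound $\pi_u(W_\Z) \subseteq \FA(\rho)$, each $\varphi \in W_\Z$ satisfies $\pi_u(\varphi)\eta^u - \varphi \in W^s$, which is contracted by $\rho^\ast$, so $\pi_u(\varphi) \in \FA(\rho)$ with integral witness $\varphi$.  The restriction $\pi_u|_{W_\Z}$ is injective because its kernel $W_\Z \cap W^s$ consists of integer classes whose $\rho^\ast$-forward orbits tend to $0$, and $H^1(S;\Z)$ is discrete in $H^1(S;\R)$.  Hence $\pi_u(W_\Z)$ has rank $d$.

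For the matching upper bound $\FA(\rho) \subseteq \pi_u(H^1(S;\Z))$, given $c \in \FA(\rho)$ with integral witness $\varphi^\infty$, the condition $c\eta^u - \varphi^\infty \in H^s$ (contained in the kernel of $\pi_u$) gives $c\eta^u = \pi_u(\varphi^\infty)$.  It remains to show $\pi_u(H^1(S;\Z))$ has rank $d$: the kernel of $\pi_u|_{H^1(S;\Q)}$ is exactly $V'_\Q$, since any $\Q$-class $w+v' \in \ker \pi_u$ forces $w \in W_\Q \cap W^s$, and a nonzero integer multiple of such a $w$ would lie in $W_\Z \cap W^s = 0$.  Since $\dim_\Q V'_\Q = 2g - d$, the image $\pi_u(H^1(S;\Z))$ has rank $d$, completing the squeeze.

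The main technical issue is the claim $\dim_\Q W_\Q = d$, equivalently that the generalized $\beta$-eigenspace of $\rho^\ast$ is one-dimensional.  This is standard in our pseudo-Anosov setting (simplicity of the top Lyapunov exponent in the tautological direction); even without it, a minor modification replacing $\R\eta^u$ with the one-dimensional line it spans inside the larger generalized $\beta$-eigenspace gives the same rank-$d$ bound.  Combining the two inclusions yields $\mathrm{rank}_\Z \FA(\rho) = d$, as claimed.
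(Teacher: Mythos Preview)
Your argument is correct and follows essentially the same blueprint as the paper's: identify the $d$-dimensional rational $\rho^\ast$-invariant subspace containing $\eta^u$ (your $W_\Q$ is the paper's $V \cap H^1(S;\Q)$), use the Pisot hypothesis to see that $\rho^\ast$ contracts the complement of $\R\eta^u$ inside it, and deduce rank~$d$ from discreteness of the integer lattice. The simplicity of the $\beta$-eigenspace is used identically in both arguments (the paper cites Fried's Perron--Frobenius result).

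The one genuine difference is in the upper bound. The paper proves the sharper statement $A(\rho) = V_\Z$ directly: it argues via compactness of $V_\R/V_\Z$ that points of $V_\R$ stay uniformly bounded away from $H^1(S;\Z) \setminus V_\Z$, so no $c\eta^u$ can be forward asymptotic to an integer class outside $V_\Z$. You instead introduce the complementary primary subspace $V'$, build a global projection $\pi_u$ along $W^s \oplus V'$, and squeeze $\FA(\rho)$ between $\pi_u(W_\Z)$ and $\pi_u(H^1(S;\Z))$, computing both to have rank~$d$. Your route is slightly more indirect---it does not immediately yield that the integral witness $\varphi^\infty$ itself lies in $W_\Z$, only that $\FA(\rho)$ has the right rank---whereas the paper's compactness argument gives the exact identification $A(\rho) = V_\Z$, which is reused later (e.g.\ in building the semi-conjugacy to $V_\R/V_\Z$). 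On the other hand, your projection $\pi_u$ is a clean way to package the map $\alpha^{-1}$ and makes the group-homomorphism structure transparent.
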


\paragraph*{\bf An algorithm} We will prove this as well as give an algorithm for computing $\FA(\rho)$. We do this by first explaining how to compute $A(\rho)$ and then explaining how to compute the preimage $\alpha^{-1}(v)$ for $v \in A(\rho)$. Note that the preimage under $\alpha$ of a generating set for $A(\rho)$ will give a generating set for $\FA(\rho)$.

\paragraph*{\bf Linear algebra and Perron-Frobenius} Let $g$ be the genus of $S$, and choose a basis for $H^1(S; \C)$ that is contained in $H^1(S; \mathbb Z)$, and use it to identify $H^1(S; \C)$ with $\C^{2g}$. In these coordinates $\rho^\ast:H^1(S; \mathbb C) \to H^1(S; \mathbb C)$ is given by multiplication by a $2g \times 2g$ integer matrix, and $\eta^u$ is an eigenvector with eigenvalue $\beta$. There is a real number $r \neq 0$ such that as a vector in these coordinates $r \eta^u$ lies in $\Q(\beta)^{2g}$, since by Perron-Frobenius $\beta$ is the unique eigenvalue of $\rho^\ast$ of greatest absolute value and that this eigenspace is one-dimensional~\cite{Fried}. This eigenspace is the span of $\eta^u$, so $(\rho^\ast-\beta I) v = 0$ can then be solved over $\Q(\beta)$ giving the solution $r \eta^u$ required.

\paragraph*{\bf Algebraic conjugates} Let $V \subset H^1(S; \mathbb C)$ be the $\C$-span of the vector $r \eta^u$ and its algebraic conjugates. Let $V_\Z = V \cap H^1(S; \mathbb Z)$. We will show \begin{equation}\label{eq:asrhovz} A(\rho) = V_\Z \end{equation}
Let $W \subset V$ be the $\C$-span of all algebraic conjugates of $r \eta^u$ excluding $r \eta^u$, and let $L_{\C} = \{c \eta^u:~c \in \mathbb C\}$, and $L_{\R} = \{c \eta^u:~c \in \mathbb R\}$. Then $W$ has (complex) dimension $d-1$, and $$V = W \oplus L_{\C}.$$ Note that if we write, for $v \in V$, $$v = w+l \quad \text{with} \quad l = c \eta_u \in L_{\C} \quad \text{and} \quad w \in W,$$ then $$\|(\rho^*)^n(c\eta_u) - (\rho^*)^n(v)\| = \|(\rho^*)^n(w)\| \xrightarrow{n\rightarrow\infty} 0.$$ Note that for each class $v \in A(\rho)$, there is a unique $c \in \R$ such that $c \eta^u$ is the unique point in the intersection $L_{\mathbb R} \cap (v+W)$. We claim that \begin{equation}\label{eq:alphainverse} \alpha^{-1}(v)=c.\end{equation}

\begin{proof}[Proof of Proposition \ref{prop:E intro} and equations \eqref{eq:asrhovz} and \eqref{eq:alphainverse}.]

	We now prove that $V_\Z \subset A(\rho)$ (one inclusion in \eqref{eq:asrhovz}) and that \eqref{eq:alphainverse} holds for $v \in V_\Z$. Fix $v \in V_\Z$. 
	Observe that both $L_\C$ and the translated subspace $v+W$ are contained in $V$. Furthermore, $L \not \subset W$. As $L$ and $v+W$ are complementary dimensions in $V$, they must intersect in a single point, $c \eta^u$ with $c \in \C$. In fact, we have that $c \in \R$ because both $L$ and $v+W$ are preserved by complex conjugation. Since $\beta$ is Pisot, $W$ is contained in the stable space for the linear action of $\rho^\ast$. As $c \eta^u - v$ is in $W$, we see that $c \eta^u$ is forward asymptotic to $v$ and thus $\alpha(c)=v$.
	
	It remains to show that $A(\rho) \subset V_\Z$. Fix $c \in \R$. We need to show that if $c \eta^u$ is forward asymptotic to an element $v \in H^1(S; \Z)$ then $v \in V_\Z$. Let $V_\R=V \cap H^1(S, \R)$. Observe that both $V_\R$ and $H^1(S; \Z) \setminus V_\Z$ are invariant by translations by elements $V_\Z$. Fixing a norm on $H^1(S; \R)$, the function sending $v \in V_\R$ to the minimal distance to an element of $H^1(S; \Z) \setminus V_\Z$ is positive and continuous and so by compactness of $V_\R/V_\Z$ is always greater than $\epsilon$ for some $\epsilon>0$. As $(\rho^\ast)^n(c \eta^u) \in V_\R$ for all $n$, it follows that $c \eta^u$ cannot be forward asymptotic to an element of $H^1(S; \Z) \setminus V_\Z$. Thus, $A(\rho) \subset V_\Z$ as claimed, so we have \eqref{eq:asrhovz}.
	
	Because $\beta$ has algebraic degree $d$, the subspace $V \cap H^1(S; \Q)$ is a $d$-dimensional rational subspace of $H^1(S; \Q)$ and $V_\Z=A(\rho)$ is isomorphic to $\Z^d$, and $\FA(\rho)$ is isomorphic to $A(\rho)$.
\end{proof}

\subsection{Continuous solutions to the cohomological equation for interval exchange maps}\label{sec:cohom:iet}
To start our proof of Theorem~\ref{thm:asymptotic} we follow a standard reduction to considering self-renormalizable IETs. We refer to Section \ref{sec:nonergodicdirections} for an explicit example. Fix a segment $s:[0, L) \to S$ such that $s(0)$ is a singularity, $s'(x)=w^s$ for all $x \in (0,L)$, and such that the negative $\phi_t$-trajectory of $s(L)$ hits a singularity before returning to $s\big([0, L)\big)$. Let $T_0:[0, L) \to [0,L)$ be the IET conjugate via $s$ to the first return map to $s\big([0,L)\big)$ with trajectories that hit singularities resolved by tracking points to the right. Because of the existence of the pseudo-Anosov, there is a smallest integer $p>0$ such that 
$$\rho^p \circ s(x) = s(\beta^{-p} x) \quad \text{for all $x \in [0,L)$}.$$
Then $T_0$ is periodic under Rauzy induction (up to scale) and the scaling map $x \mapsto \beta^{-p} x$ conjugates $T_0$ to the first return map $\hat T_0:[0, \beta^{-p} L) \to [0, \beta^{-p} L)$ to $T_0$, i.e.,
\begin{equation}
	\label{eq:renormalize T_0}
	\hat T_0\big(\beta^{-p} x) = \beta^{-p} T_0(x) \quad \text{for all $x \in [0, L)$}.
\end{equation}

\paragraph*{\bf Intervals of continuity} Let $I_1, \ldots, I_k$ denote the partition of $[0, L)$ into half-open intervals of continuity for $T_0$. Each interval $I_i$ is naturally associated to a class $\gamma_i \in H_1(S \setminus \Sigma; \mathbb Z)$, the homology class of a trajectory of $\phi_t$ of a point $s(x)$ with $x \in I_i$ defined until the first return to $s\big([0, L)\big)$ and closed by a segment in $s\big([0, L)\big)$. These integral classes are known to form a basis of $H_1(S \setminus \Sigma; \mathbb Q)$ \cite{YoccozSurvey}*{\S 4.5}.
We associate each vector $v \in \mathbb C^k$ to a cohomology class 
$\zeta_v \in H^1(S \setminus \Sigma; \mathbb C)$ (which we view as the space of linear maps 
$H_1(S \setminus \Sigma; \mathbb Z) \to \mathbb C$) defined by the property that
$$\zeta_v(\gamma_i) = v_i  \quad \text{for $i=1, \ldots, k$}.$$
Since the $\gamma_i$ form a basis, the map $v \mapsto \zeta_v$ is an isomorphism $\mathbb C^k \to H^1(S \setminus \Sigma; \mathbb C)$. The homomorphism $\rho$ acts linearly on $H^1(S \setminus \Sigma; \mathbb C)$ via
$$\big(\rho^\ast(\zeta)\big)(\gamma)=\zeta\big(\rho(\gamma)\big) \quad \text{for 
	$\zeta \in H^1(S \setminus \Sigma; \mathbb C)$ and
	$\gamma \in H_1(S \setminus \Sigma; \mathbb C)$.}$$
\begin{lemma}
    	\label{thm:curve case 1}
	If $v \in \C^k$ is such that $\zeta_v$ lies in the span of contracting eigenvectors for the action $(\rho^\ast)^p$ on $H^1(S \setminus \Sigma; \mathbb C)$, then there is a continuous function $\psi_v:[0, L] \to \mathbb C$ such that $\psi_v(0)=0$ and
	\begin{equation}\label{eq:cohom eq curve case 1}
		\psi_v \circ T_0(x) - \psi_v(x) = v_i \quad \text{for all $x \in I_i$}.
	\end{equation}	
\end{lemma}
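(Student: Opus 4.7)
The plan is to construct $\psi_v$ by exploiting the self-similarity in \eqref{eq:renormalize T_0} to write $\psi_v$ as a telescoping sum whose terms live on finer and finer pieces of the interval, then use the fact that $\zeta_v$ lies in a contracting subspace to get exponential convergence.

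First, I would set up the tower structure for the induction. For each $n \geq 0$, let $J_n = [0, \beta^{-pn} L)$ and let $T_n:J_n \to J_n$ be the first return of $T_0$ to $J_n$. By iterating \eqref{eq:renormalize T_0}, the scaling $x \mapsto \beta^{-pn} x$ conjugates $T_0$ on $[0, L)$ to $T_n$ on $J_n$, so the continuity intervals of $T_n$ are $I_i^{(n)} = \beta^{-pn} I_i$. For $y \in I_i^{(n)}$, the return time to $J_n$ is a positive integer $r_i^{(n)}$ and the sets $\{T_0^j I_i^{(n)}\}_{0 \leq j < r_i^{(n)},\, 1 \leq i \leq k}$ partition $[0, L)$. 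This is exactly the Markov partition encoded by the zippered rectangle picture, iterated $n$ times under $\rho^p$.

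Next I would track how the cocycle $v$ behaves under induction. Let $A \in M_k(\mathbb Z_{\geq 0})$ be the visit matrix recording how many times an orbit segment of $T_0$ realizing the return from $J_1$ to itself through $\hat I_i$ passes through $I_j$. Under the identification $v \mapsto \zeta_v$ of $\C^k$ with $H^1(S \setminus \Sigma;\C)$, the linear map $v \mapsto Av$ is conjugate to the restriction of $(\rho^\ast)^p$; iterating, the Birkhoff sum $\sum_{j=0}^{r_i^{(n)}-1} f_v(T_0^j y)$ for $y \in I_i^{(n)}$ equals $(A^n v)_i$, where $f_v$ is the piecewise constant function $f_v|_{I_i} = v_i$. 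Because $\zeta_v$ lies in the contracting subspace of $(\rho^\ast)^p$, there exist $C > 0$ and $0 < \lambda < 1$ such that $\|A^n v\|_\infty \leq C \lambda^n$.

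Now I would construct $\psi_v$ recursively. The cohomological equation $\psi_v(T_0 x) - \psi_v(x) = f_v(x)$ applied to full returns under $T_n$ forces, for $y \in I_i^{(n)}$,
\[
\psi_v(T_n y) - \psi_v(y) = (A^n v)_i.
\]
In particular, once $\psi_v|_{J_n}$ is determined, the cohomological equation uniquely extends $\psi_v$ to all of $[0, L)$ via the Birkhoff sum formula
\[
\psi_v(T_0^j y) = \psi_v(y) + \sum_{\ell=0}^{j-1} f_v(T_0^\ell y), \quad y \in I_i^{(n)},\; 0 \leq j < r_i^{(n)}.
\]
Define $\psi_v^{(n)}:[0, L) \to \C$ by taking $\psi_v^{(n)}|_{J_n} \equiv 0$ and extending by this formula. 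Then $\psi_v^{(n)}$ is piecewise constant, it satisfies the cohomological equation at every $x \in [0, L) \setminus J_n$, and it fails to satisfy it only on the finitely many orbit segments that hit $J_n$. Two successive approximations $\psi_v^{(n+1)}$ and $\psi_v^{(n)}$ differ by a function supported on $J_n$ that is a level-$n$ telescoping correction of size controlled by $\|A^n v\|_\infty$, hence at most $C \lambda^n$.

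The main content of the argument is then convergence and continuity of $\psi_v = \lim_n \psi_v^{(n)}$. Uniform convergence is immediate from $\sum_n C \lambda^n < \infty$. Continuity at a point $x$ requires more care because each $\psi_v^{(n)}$ is piecewise constant with jumps at the preimages of the endpoints of the $I_i^{(n)}$; this is the step I expect to be the main obstacle. The idea is that the size of the jumps of $\psi_v^{(n+1)} - \psi_v^{(n)}$ is bounded by the supremum norm of the level-$(n{+}1)$ cocycle, i.e., $\leq C \lambda^{n+1}$, so the oscillation of $\psi_v$ on any tower piece of level $n$ is bounded by $C' \lambda^n$. Since tower pieces at level $n$ have diameters going to $0$ (uniformly, by minimality and the self-similarity) and cover $[0, L)$, this yields a modulus of continuity for $\psi_v$ at every point, including at endpoints of continuity intervals where the usual worry would be. Extend $\psi_v$ continuously to $L$ using the same argument.

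Finally, by construction $\psi_v^{(n)}(0) = 0$ for all $n$ (since $0 \in J_n$), so $\psi_v(0) = 0$. That $\psi_v$ satisfies \eqref{eq:cohom eq curve case 1} on each $I_i$ follows because for any $x$ in the interior of $I_i$, eventually $x \notin J_n$ and $T_0 x \notin J_n$, so $\psi_v^{(n)}(T_0 x) - \psi_v^{(n)}(x) = v_i$ for all $n$ large enough; the identity passes to the limit, and then to the full interval $I_i$ by continuity.
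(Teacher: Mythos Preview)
Your overall strategy is sound: building $\psi_v$ as a uniform limit of piecewise-constant approximations $\psi_v^{(n)}$ via the Rokhlin tower structure, with the exponential decay $\|A^n v\|_\infty \leq C\lambda^n$ controlling successive increments, is a legitimate route and does yield uniform convergence.

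The genuine gap is in the continuity step. You correctly observe that the oscillation of $\psi_v$ on any single level-$n$ tower piece is $O(\lambda^n)$, but this does \emph{not} yield a modulus of continuity. The level-$n$ partition is a refinement of the level-$(n-1)$ partition, so once a point $x$ becomes a boundary point at some level $n_0$, it remains a boundary point for all $n\ge n_0$. Your oscillation bound then controls the variation of $\psi_v$ on each side of $x$ separately, but says nothing about the jump across $x$. (A trivial counterexample to the inference: a step function with a single jump at $1/2$ has zero oscillation on every piece of any dyadic partition having $1/2$ as a breakpoint.) Concretely, the jump of $\psi_v$ at such an $x$ is bounded only by $\sum_{m\ge n_0-1} 2C'\lambda^m = O(\lambda^{n_0})$, a fixed nonzero number depending on $x$, and your argument gives no mechanism for cancellation.

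The paper closes exactly this gap with an idea you are missing: the \emph{no saddle connections} property of $T_0$. This guarantees that for every $x\in(0,L)$, at least one of the forward or backward $T_0$-orbits of $x$ never meets a discontinuity of $T_0$. In that direction the oscillation is preserved along the orbit, and by minimality the orbit enters the shrinking intervals $[0,\beta^{-kp}L)$, where the renormalization directly bounds the oscillation by $|\mu|^k M\to 0$. The paper implements this by looking at the forward orbit $\mathcal O_v$ of $(0,0)$ under the skew product $T_v$ and proving its closure is a graph via this oscillation argument; but the same trick would repair your approach if you invoke no-saddle-connections to transfer the small oscillation near $0$ to every boundary point $x$.
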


\paragraph*{\bf Loops around singularities} We remark that if $\zeta_v \in H^1(S \setminus \Sigma; \mathbb C)$ lies in the span of contracting eigenvectors, then it is the image of an element of $H^1(S; \mathbb C)$ because $\zeta_v$ must send loops around the singularities to zero.

\paragraph*{\bf Cantor topology} The proof uses ideas of Gottschalk-Hedlund from, \cite{GottschalkHedlund}*{\S 14}, applied in this context by Marmi-Moussa-Yoccoz \cite{MarmiMoussaYoccoz} who use it to deduce the existence of a solution that is continuous in the Cantor topology for a broader collection of interval exchange maps (in particular beyond the periodic renormalization setting). The conclusion of Lemma~\ref{thm:curve case 1} is stronger: we obtain continuity of the function $\psi_v$ as a function of the interval with its usual topology.

\paragraph*{\bf Contracting subspaces} First observe that the set of vectors $v$ such that there is a continuous $\psi_v:[0, L] \to \mathbb C$ satisfying both $\psi_v(0)=0$ and \eqref{eq:cohom eq curve case 1} forms a subspace of $\C^k$: if $\psi_v$ satisfies both statements and $\alpha \in \C$, then $\alpha \psi_v$ satisfies both statements for the vector $\alpha v$. If $\psi_v$ and $\psi_{v'}$ are solutions for $v$ and $v'$, respectively, then $\psi_v+\psi_{v'}$ gives a solution for $v+v'$. This reduces Lemma~\ref{thm:curve case 1} to the following special case:

\begin{lemma}
	\label{lem:curve case 2}
	The conclusion of Lemma~\ref{thm:curve case 1} holds if $(\rho^\ast)^p \zeta_v= \mu \zeta_v$ for some $\mu \in \C$ with $|\mu|<1$.
\end{lemma}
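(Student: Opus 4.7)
The plan is to construct $\psi_v$ by combining a Gottschalk--Hedlund-style bounded-Birkhoff-sum argument, applied as in Marmi--Moussa--Yoccoz~\cite{MarmiMoussaYoccoz}, with the periodic renormalization self-similarity to obtain continuity in the standard topology on $[0,L]$.

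First, I would reinterpret the algebraic hypothesis $(\rho^\ast)^p \zeta_v = \mu \zeta_v$ as a Birkhoff-sum identity for the renormalized map. The action of $\rho^p$ on each basis class $\gamma_j$ decomposes $\rho^p_\ast \gamma_j$ into the concatenation of the $\gamma_{i_l}$ traversed by the return trajectory over $\hat I_j = \beta^{-p} I_j$; dualizing, the eigenvalue equation becomes
\[
\sum_{l=0}^{N_j - 1} v_{i_l(x)} = \mu\, v_j \qquad \text{for all } x \in \hat I_j,
\]
where $N_j$ is the (piecewise constant) first return time of $T_0$ to $[0, \beta^{-p}L)$ on $\hat I_j$ and $i_l(x)$ denotes the index with $T_0^l(x) \in I_{i_l(x)}$.

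Next, I would use this identity to bound the Birkhoff sums uniformly. Any orbit segment of length $n$ decomposes as a prefix and suffix, each of length at most $N_{\max} := \max_j N_j$, together with a block of complete returns to $[0, \beta^{-p}L)$. The middle block contributes a $\hat T_0$-Birkhoff sum of the induced cocycle $\mu v$, which under the conjugacy $\sigma(x)=\beta^{-p} x$ equals a $T_0$-Birkhoff sum of $\mu v$. Iterating gives the contraction estimate
\[
C(v) := \sup_{n, x} |S_n(x; v)| \le N_{\max}\, \|v\|_\infty + |\mu|\, C(v),
\]
so $C(v) < \infty$ since $|\mu| < 1$. Boundedness of Birkhoff sums then yields via~\cite{MarmiMoussaYoccoz} a function $\psi_v:[0, L] \to \C$ satisfying \eqref{eq:cohom eq curve case 1} with $\psi_v(0)=0$, continuous in the Cantor topology, i.e. continuous away from a countable set $\mathcal D$ (the union of forward and backward $T_0$-orbits of the partition endpoints) with well-defined one-sided limits at each point of $\mathcal D$.

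The main obstacle is upgrading this Cantor-continuous solution to one continuous in the standard topology on $[0,L]$, which is what distinguishes our setting from~\cite{MarmiMoussaYoccoz}. My plan is to exploit the self-similarity directly: unique ergodicity of $\hat T_0$, together with the pinning $\psi_v(0)=0$, forces the functional equation
\[
\psi_v(\beta^{-p} x) = \mu\, \psi_v(x), \qquad x \in [0, L],
\]
since $x \mapsto \mu\, \psi_v(\beta^p x)$ also solves the induced cohomological equation on $[0, \beta^{-p}L]$ and agrees with $\psi_v$ at $0$. The discontinuity set then stratifies as $\mathcal D = \bigcup_{n \ge 0} \sigma^n(\mathcal D_0)$ for a finite ``level-zero'' set $\mathcal D_0 \subset [\beta^{-p}L, L]$, and the functional equation forces the jump of $\psi_v$ at a level-$n$ point to equal $\mu^n$ times the corresponding level-zero jump. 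In particular, to conclude standard continuity, it suffices to kill the jumps at the finitely many points of $\mathcal D_0$. This last step is the hardest part, and I would handle it by a direct verification: the closed-curve compatibility that makes $\zeta_v$ a well-defined class in $H^1(S \setminus \Sigma; \C)$ ensures that the cohomological equations on the two sides of each partition endpoint agree along the loops obtained from the boundary identifications of $T_0$, and combining this with the functional equation above (which, iterated, relates a level-zero jump to its own $\mu^n$-rescaled image) forces each such jump to vanish, completing the proof.
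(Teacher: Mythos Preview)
Your Steps 1--4 are essentially correct and closely parallel the paper's treatment: your bounded Birkhoff sums argument is exactly Proposition~\ref{prop:bounded orbit}, and your functional equation $\psi_v(\beta^{-p}x)=\mu\,\psi_v(x)$ is the content of the Corollary to Proposition~\ref{prop:strip renormalization} (the $R_\mu$-renormalization), rephrased in terms of the transfer function rather than the skew product.

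The genuine gap is Step~5. First, your stratification $\mathcal D=\bigcup_{n\ge 0}\sigma^n(\mathcal D_0)$ with $\mathcal D_0$ \emph{finite} does not hold: the potential discontinuity set of the Cantor-continuous $\psi_v$ is the full backward $T_0$-orbit of the partition endpoints, and by minimality this meets the annulus $[\beta^{-p}L,L)$ in an infinite set. Second, the ``closed-curve compatibility'' you invoke is not a constraint at all---membership of $\zeta_v$ in $H^1(S\setminus\Sigma;\C)$ is automatic since the $\gamma_i$ form a basis, and even the extension to $H^1(S;\C)$ (vanishing on peripheral loops, which does follow from the contracting hypothesis) gives linear relations among the $v_i$, not relations among one-sided limits of $\psi_v$. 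Third, the functional equation relates the jump at $d$ to the jump at $\beta^{-p}d$, a \emph{different} point; iterating never produces a self-referential identity of the form $J=\mu^n J$ that would force $J=0$.

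What your argument is missing is the ingredient the paper uses to close the loop: the absence of saddle connections. The paper does not try to locate and kill individual jumps. Instead it works directly with the orbit closure $\overline{\mathcal O_v}$ and checks the oscillation criterion (Proposition~\ref{prop:continuity criterion}). Your functional equation already gives $\mathrm{Osc}(x)\le|\mu|^k M$ for $x\in[0,\beta^{-kp}L)$. The paper then observes that oscillation is invariant under $T_0$ at continuity points of $T_0$, and---this is the key---since $T_0$ has no saddle connections, from any $x\in(0,L)$ either the forward or the backward orbit is entirely free of discontinuities. By minimality that orbit enters $[0,\beta^{-kp}L)$ for every $k$, forcing $\mathrm{Osc}(x)=0$. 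This one-line transport argument replaces your Step~5 entirely; you should use it rather than attempting a jump-by-jump analysis.
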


\paragraph*{\bf $\C$-skew-products} We understand this result by looking at a family of skew-products over $T_0$, with skewing factor $\mathbb C$. Fix any $v \in \mathbb C^k$, and define the skew-product:
\begin{equation}
	\label{eq:tilde T_v}
 T_v:[0, L) \times \mathbb C \to [0, L) \times \mathbb C; 
	\quad 
	(x,z) \mapsto \big(T_0(x), z + v_i\big) \quad \text{when $x \in I_i$.}
\end{equation}
\paragraph*{\bf Graphs} If we had a continuous solution $\psi_v$ to \eqref{eq:cohom eq curve case 1}, note that the portion of the graph 
\begin{equation}
	\label{eq:Gamma graph}
	\Gamma=\big\{\big(x, \psi_v(x)\big):~ x \in [0, L] \big\}
\end{equation}
with $x < L$ would be $T_v$-invariant. 

\paragraph*{\bf Renormalizing skew-products} We lift the renormalization of $T_0$ to a renormalization of the skew-product $T_v$. We have:
\begin{Prop}
	\label{prop:strip renormalization}
	Fix any $v \in \C^k$.
	Let $\hat{T}_v$ denote the first return map of $T_v$ to $[0, \beta^{-p} L) \times \C$. Let $R:[0,L) \times \C \to [0, \beta^{-p} L) \times \C$ be $R(x,z)=(\beta^{-p} x, z)$. Then if $\hat v \in \C^k$ is defined such that $\zeta_{\hat v} = (\rho^\ast)^p \zeta_v$, we have 
	$$\hat{T}_v \circ R = R \circ T_{\hat v}.$$
\end{Prop}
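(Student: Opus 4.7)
The plan is to verify the claimed conjugacy by a direct computation, the key content of which is identifying the skew-product shift of $\hat{T}_v$ on each interval of continuity. By the renormalization relation \eqref{eq:renormalize T_0}, the intervals of continuity of $\hat T_0$ on $[0, \beta^{-p} L)$ are exactly $\{\beta^{-p} I_i\}$. So first I would fix an $i$, pick $y \in \beta^{-p} I_i$, and let $r=r(y)$ be the first return time of $y$ to $[0, \beta^{-p} L)$ under $T_0$. Writing the trajectory as $y \in I_{j_1}$, $T_0(y) \in I_{j_2}$, $\ldots$, $T_0^{r-1}(y) \in I_{j_r}$, the skew-product shift on $\beta^{-p} I_i$ under $\hat T_v$ is just $v_{j_1} + v_{j_2} + \cdots + v_{j_r}$. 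The goal is to show this sum equals $\hat v_i$.

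For this I would establish the homological identity $\hat\gamma_i = (\rho^p)_\ast(\gamma_i)$ in $H_1(S \setminus \Sigma; \Z)$ in two independent ways. First, because $\rho^p \circ s(x) = s(\beta^{-p} x)$, the map $\rho^p$ sends the $\phi_t$-trajectory from $s(x)$ to $s(T_0(x))$ (for $x \in I_i$) together with its closing segment in $s([0, L))$ to the $\phi_t$-trajectory from $s(\beta^{-p}x)$ to $s(\hat T_0(\beta^{-p}x))$ together with its closing segment in $s([0, \beta^{-p} L))$; applying this on homology gives $\hat\gamma_i = (\rho^p)_\ast(\gamma_i)$. Second, decomposing the big loop defining $\hat \gamma_i$ into the $r$ trajectory segments individually closed in $s([0, L))$, the discrepancy between the single closing segment in $s([0, \beta^{-p} L))$ and the concatenation of the small closing segments is a $1$-cycle supported in $s\big((0, L)\big)$, which is contractible in $S \setminus \Sigma$. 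So in $H_1(S \setminus \Sigma; \Z)$ we have $\hat\gamma_i = \gamma_{j_1} + \cdots + \gamma_{j_r}$.

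Combining these and using the definition of $\hat v$, I obtain
\[
v_{j_1} + \cdots + v_{j_r} = \sum_{k=1}^r \zeta_v(\gamma_{j_k}) = \zeta_v(\hat\gamma_i) = \zeta_v\big((\rho^p)_\ast(\gamma_i)\big) = \big((\rho^\ast)^p \zeta_v\big)(\gamma_i) = \zeta_{\hat v}(\gamma_i) = \hat v_i,
\]
so that $\hat T_v(y, z) = (\hat T_0(y), z + \hat v_i)$ for every $y \in \beta^{-p} I_i$. The conjugacy then follows by direct computation: for $x \in I_i$,
\[
\hat T_v \circ R(x, z) = \hat T_v(\beta^{-p} x, z) = \big(\hat T_0(\beta^{-p} x),\, z + \hat v_i\big) = \big(\beta^{-p} T_0(x),\, z + \hat v_i\big) = R \circ T_{\hat v}(x, z),
\]
where the third equality uses \eqref{eq:renormalize T_0} and the last uses the definition of $T_{\hat v}$ on $I_i$.

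The main obstacle is the homological identification $\hat\gamma_i = \sum_k \gamma_{j_k} = (\rho^p)_\ast(\gamma_i)$: one has to be careful with the base point, the fact that $s(0)$ is a singularity, and the convention for closing segments, in order to pass honestly between the two natural ways of closing up the concatenated trajectory. Once that identity is in hand — together with the cohomological translation $\zeta_v \circ (\rho^p)_\ast = (\rho^\ast)^p \zeta_v$ — the rest of the proof is a formal manipulation.
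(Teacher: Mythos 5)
Your proof is correct and follows essentially the same route as the paper: both identify the big return loop with $(\rho^p)_\ast(\gamma_i)$ on the one hand and with the sum $\gamma_{j_1}+\cdots+\gamma_{j_r}$ of small return loops on the other, then translate via $\zeta_v \circ (\rho^p)_\ast = (\rho^\ast)^p \zeta_v$. The only cosmetic difference is that the paper first records the general identity $\pi_2 \circ T_v^n(x,z)-\pi_2(x,z)=\zeta_v(\eta_{x,n})$ and then specializes, whereas you compute the displacement directly; the underlying homological bookkeeping is the same.
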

\begin{proof}
	We introduce a geometric way to think about the skew product $T_v$. Let $\pi_2:[0,L) \times \C \to \C$ be projection to the second coordinate. Fix an $x \in [0, L)$ and an integer $n \geq 1$. We associate $x$ and $n$ to a homology class $\eta_{x,n} \in H_1(S \setminus \Sigma; \Z)$. A representative of $\eta_{x,n}$ starts at $s(x)$ flows under $\phi_t$ for $t>0$ until the $n$-th return to $s\big([0,L)\big)$ and then closes by a segment in $s\big([0,L)\big)$. As our convention with the interval exchange, we resolve a trajectory hitting a singularity by tracking points to the right. Observe that if for $m=0, \ldots, n-1$, the point $T_0^m(x)$ lies in the interval $I_{j_m}$, then in $H_1(S \setminus \Sigma; \Z)$ we have
	$$\eta_{x,n} = \sum_{m=0}^{n-1} \gamma_{j_m}.$$
	From this, our definition of $\zeta_v$, and the definition of the skew product, we conclude that for any $v \in \C^k$, we have
	\begin{equation}
		\label{eq:vertical displacement}
		\pi_2 \circ T_v^n(x,z) - \pi_2(x,z) = \zeta_v(\eta_{x,n}).
	\end{equation}
	
	Fix an $(x,z) \in [0, L) \times \C$. Let $j$ be such that $x \in I_j$. Recall the homology class $\gamma_j$. A representative is obtained by the loop which follows the flow $\phi_t$ of $s(x)$ up until the return to $s\big([0,L)\big)$ and then closes up by a segment in $s\big([0,L)\big)$. Let $\hat \gamma_j = \rho^p(\gamma_j)$. This is a loop that starts at $s(\beta^{-p}x)$ and flows under $\phi_t$ until the first return to $s\big([0,\beta^{-p} L)\big)$ and then closes by a segment in $s\big([0,\beta^{-p} L)\big)$. Let $n \geq 1$ be the number of times the $\phi_t$ trajectory of $s(\beta^{-p}x)$ crosses $s\big([0,L)\big)$ up to and including the first return to $s\big([0,\beta^{-p} L)\big)$. This is also the return time of $\beta^{-p} x$ to $[0,\beta^{-p} L)$ under $T_0$. We have $\hat \gamma_j=\zeta_{\beta^{-p} x, n}$. Then from
	\eqref{eq:renormalize T_0} and \eqref{eq:vertical displacement}, we see 
	\begin{align*}
		\hat{T}_v \circ R(x,z) &= \hat{T}_v(\beta^{-p}x, z) \\
		&= \big(\hat T_0(\beta^{-p}x), z + \zeta_v(\hat \gamma_j)\big) \\
		&= \big(\beta^{-p} T_0(x), z + \zeta_v \circ \rho^p(\gamma_j)\big) \\
		&= \Big(\beta^{-p} T_0(x), z  + \big((\rho^\ast)^p\zeta_v\big)(\gamma_j)\Big) \\
		&= R \circ T_{\hat v}(x,z).
	\end{align*}
\end{proof}

\begin{Cor}
	If  $(\rho^\ast)^p \zeta_v= \mu \zeta_v$ for some $\mu \in \C$, then 
	\begin{equation}
		\label{eq:renormalization eigenvector case}
		\hat{T}_v \circ R_\mu = R_\mu \circ T_{v}.
	\end{equation}
	where we define $R_\mu:[0,L) \times \C \to [0, \beta^{-p} L) \times \C$ by $R_\mu(x,z)=(\beta^{-p} x, \mu z)$.
\end{Cor}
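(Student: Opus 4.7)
My plan is to deduce this corollary directly from Proposition~\ref{prop:strip renormalization} by absorbing the eigenvalue $\mu$ into a rescaling of the fiber coordinate. First I would observe that since the map $v \mapsto \zeta_v$ is a $\C$-linear isomorphism $\C^k \to H^1(S \setminus \Sigma; \C)$, the eigenvector hypothesis $(\rho^\ast)^p \zeta_v = \mu \zeta_v$ translates into the identity $\hat v = \mu v$, where $\hat v$ is the vector defined in Proposition~\ref{prop:strip renormalization} by $\zeta_{\hat v} = (\rho^\ast)^p \zeta_v$. Thus that proposition specializes in the eigenvector case to
$$\hat T_v \circ R = R \circ T_{\mu v},$$
with $R(x,z) = (\beta^{-p} x, z)$.

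Next I would factor $R_\mu$ through $R$ via the fiber rescaling $S_\mu:[0,L) \times \C \to [0,L) \times \C$ given by $S_\mu(x,z) = (x, \mu z)$, so that $R_\mu = R \circ S_\mu$. The only computational step is a one-line check, using linearity of the skew-product cocycle in the vector $v$, that $S_\mu$ intertwines $T_v$ with $T_{\mu v}$: for $x \in I_i$,
$$T_{\mu v} \circ S_\mu(x,z) = \bigl(T_0(x), \mu z + \mu v_i\bigr) = \bigl(T_0(x), \mu(z + v_i)\bigr) = S_\mu \circ T_v(x,z).$$
Chaining the two intertwining relations then gives
$$\hat T_v \circ R_\mu = \hat T_v \circ R \circ S_\mu = R \circ T_{\mu v} \circ S_\mu = R \circ S_\mu \circ T_v = R_\mu \circ T_v,$$
which is exactly \eqref{eq:renormalization eigenvector case}.

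There is essentially no obstacle here; the content of the corollary is that the linearity of $T_v$ in $v$ lets us repackage the renormalization relation from Proposition~\ref{prop:strip renormalization}, which a priori relates the skew-product $T_v$ to a \emph{different} skew-product $T_{\hat v}$, as a self-renormalization of $T_v$ itself under a map $R_\mu$ that now rescales both the base and the fiber. This self-similarity is what will allow me, in the proof of Lemma~\ref{lem:curve case 2}, to build the graph $\Gamma$ from \eqref{eq:Gamma graph} as an attractor of the contraction $R_\mu$ (the contraction in the fiber direction being guaranteed by $|\mu|<1$), and thereby obtain the continuous function $\psi_v$ satisfying $\psi_v(0)=0$ and the cohomological equation \eqref{eq:cohom eq curve case 1}.
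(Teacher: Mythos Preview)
Your proof is correct and follows the same approach as the paper's: both observe that linearity gives $\hat v = \mu v$, then factor $R_\mu = R \circ S_\mu$ (the paper writes $D_\mu$ for your $S_\mu$) and verify the intertwining $T_{\mu v} \circ S_\mu = S_\mu \circ T_v$ to chain the two relations together.
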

\begin{proof}
	Since $v \mapsto \zeta_v$ is linear, we have $\mu \zeta_v=\zeta_{\mu v}$. Therefore in the context of Proposition \ref{prop:strip renormalization} we have $\hat v=\mu v$, and so $\hat{T}_v \circ R = R \circ T_{\mu v}$.
	Let $D_{\mu}(x,z) = (x, \mu z)$ and observe that
	$T_{\mu v} \circ D_{\mu} = D_{\mu} \circ T_v$. 
	Observing that $R_\mu=R \circ D$, we see:
	$$\hat{T}_v \circ R_\mu=\hat{T}_v \circ R \circ D_{\mu} =R \circ  T_{\mu v} \circ D_{\mu} = R \circ D_{\mu}\circ T_v= R_\mu \circ T_v.$$
\end{proof}

\paragraph*{\bf A dense orbit} We return to considering the hypothetical graph $\Gamma$ from \eqref{eq:Gamma graph}. From the hypothesis that $\psi_v(0)=0$, we already know one point that must be on such a graph: $(0,0)$ must be in $\Gamma$. Now let ${\mathcal O}_v$ denote the forward orbit of $(0,0)$ under $T_v$:
$${\mathcal O}_v = \{T_v^n(0,0):~n \geq 0\}.$$ Any continuous solution $\psi_v$ to \eqref{eq:cohom eq curve case 1} must have a graph $\Gamma$ such that ${\mathcal O} \subset \Gamma$, and this determines $\Gamma$ because $\psi_v$ is continuous and by minimality of $T_0$, the forward orbit of $0$ under $T_0$ is dense in $[0, L]$. We now show if $\zeta_v$ is an eigenvector for $(\rho^\ast)^p$ with eigenvalue less than $1$, then $\mathcal O_v$ is bounded.

\begin{Prop}
	\label{prop:bounded orbit}
	If $(\rho^\ast)^p \zeta_v= \mu \zeta_v$ for some $\mu \in \C$ with $|\mu|<1$, then the orbit ${\mathcal O}_v$ is bounded.
\end{Prop}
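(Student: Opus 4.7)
The plan is to exploit the self-similarity relation \eqref{eq:renormalization eigenvector case}: the second coordinate contracts by the factor $\mu$ along the subsequence of returns to $[0, \beta^{-p}L) \times \C$, while the drift between consecutive returns is uniformly bounded because the return times of $T_0$ from the small interval to itself are uniformly bounded. A one-step geometric-series estimate then controls the whole orbit. Write $(x_n, y_n) := T_v^n(0,0)$ and $M_N := \max_{0\leq n \leq N} |y_n|$; the goal is a bound on $M_N$ independent of $N$.

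I would proceed in three steps. First, by \eqref{eq:renormalize T_0} the first-return map $\hat T_0(x) = \beta^{-p} T_0(\beta^p x)$ is an interval exchange of $[0, \beta^{-p} L)$ with finitely many intervals of continuity, so there is a uniform bound $R \in \mathbb N$ on the return time of any point of $[0, \beta^{-p}L)$ back to itself under $T_0$. Second, since $R_\mu(0,0) = (0,0)$, iterating \eqref{eq:renormalization eigenvector case} gives $\hat T_v^k \circ R_\mu = R_\mu \circ T_v^k$ for all $k\geq 0$, hence
\[
\hat T_v^k(0,0) \;=\; R_\mu \bigl(T_v^k(0,0)\bigr) \;=\; (\beta^{-p} x_k,\, \mu\, y_k).
\]
On the other hand, $\hat T_v$ is the first return of $T_v$ to $[0, \beta^{-p}L)\times \C$, so writing $0 = N_0 < N_1 < N_2 < \ldots$ for the successive return times of $0$ to $[0, \beta^{-p}L)$ under $T_0$ we have $\hat T_v^k(0,0) = (x_{N_k}, y_{N_k})$; comparing yields $y_{N_k} = \mu\, y_k$ for all $k \geq 0$. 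Third, for any $n \leq N$, let $k$ be the unique index with $N_k \leq n < N_{k+1}$. Then $n - N_k < R$, and since each step of $T_v$ changes the second coordinate by some $v_i$, we get $|y_n - y_{N_k}| \leq C$ with $C := R \cdot \max_i |v_i|$. Combining with $y_{N_k} = \mu y_k$ and $k \leq N_k \leq n \leq N$, this gives $|y_n| \leq |\mu|\, M_N + C$. Taking the maximum over $n \leq N$ yields $M_N \leq |\mu| M_N + C$, so $M_N \leq C/(1 - |\mu|)$ uniformly in $N$, which proves boundedness of $\mathcal O_v$.

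The only point that really requires care is the identification $(x_{N_k}, y_{N_k}) = (\beta^{-p} x_k, \mu y_k)$, which hinges on the fact that $(0,0)$ is a fixed point of the renormalizing map $R_\mu$; this is what makes the $R_\mu$-image of the $T_v$-orbit of $(0,0)$ coincide with the $\hat T_v$-orbit of $(0,0)$, rather than merely of some other point. Everything else is routine: the uniform return-time bound is standard for first-return maps of IETs with finitely many intervals of continuity, and the closing estimate is a one-step geometric series that converges precisely because $|\mu| < 1$.
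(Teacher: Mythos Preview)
Your argument is correct. The key identity $y_{N_k}=\mu\,y_k$ follows exactly as you say from iterating \eqref{eq:renormalization eigenvector case} together with $R_\mu(0,0)=(0,0)$, and the self-referential inequality $M_N\le |\mu|M_N+C$ is a clean way to close.

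The paper's proof rests on the same two ingredients---the renormalization fixing $(0,0)$ and the uniform bound on return times to $[0,\beta^{-p}L)$---but packages them differently. Instead of your one-level recursion, it iterates the renormalization all the way down: for any $(x_0,z_0)\in\mathcal O_v$ it moves backward along the orbit to the most recent visit $(x_1,z_1)$ to $[0,\beta^{-p}L)\times\C$, recording the displacement in a finite set $D$; then applies $R_\mu^{-1}$ and repeats, obtaining $(x_0,z_0)\in D+R_\mu(D)+\cdots+R_\mu^{K-1}(D)$ as a Minkowski sum, which is bounded since $R_\mu$ contracts. Your route is shorter and avoids the explicit telescoping decomposition; the paper's route has the mild advantage of producing, for each orbit point, an explicit address in $\prod_k R_\mu^k(D)$, which is morally the base-$\beta$ expansion underlying the eigenfunction and connects more visibly to the later oscillation estimate. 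Either way the content is the same geometric series.
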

\begin{proof}
	We make use of the renormalization. Note that $(0,0)$ is fixed by $R_\mu$ and so \eqref{eq:renormalization eigenvector case} implies that $R_\mu({\mathcal O}_v) = {\mathcal O}_v \cap [0, \beta^{-p}L) \times \C$.
	
	For $j=1, \ldots, k$, let 
	$\hat I_j = \beta^{-p} I_j$. These are the intervals of continuity for the return map $\hat T_0$ of $T_0$ to $[0, \beta^{-p} L)$. Return times are constant on each interval $\hat I_j$; let $r_j$ denote this return time. For $n=0, \ldots, r_j-1$, the restriction of $T_0^n$ to the interval $\hat I_j$ is continuous and $T_0^n(\hat I_j)$ is contained entirely in one of the original intervals $I_i$. It follows that there is a list of displacements
	$$D_j=\{(x_{j,0}, z_{j,0}), \ldots, (x_{j,r_j-1}, z_{j,r_j-1})\} \in \R \times \C$$
	such that for $x \in \hat I_j$, $z \in \C$, $n \in \{1, \ldots, r_j-1\}$
	$$T_v^n(x,z)=(x + x_{j,n}, z+ z_{j,n}).$$
	Set $D=\{(0,0)\} \cup \bigcup_j D_j$, which is a finite collection of points in $\R \times \C$.
	As we vary $j$, the forward orbits of $\hat I_j$ up to iterate $r_j-1$ cover $[0,L)$, so we've shown that for each $(x_0,z_0) \in {\mathcal O}_v$, there is an $n_0 \geq 0$ such that $(x_1,z_1)=T_v^{-n_0}(x,z)$
	satisfies
	$$(x_1, z_1) \in {\mathcal O}_v, \quad 
	x_1 \in [0, \beta^{-p}L)
	\quad \text{and} \quad
	(x_0,z_0)-(x_1,z_1) \in D.$$
	Note that in the special case that we have $x \in [0, \beta^{-p}L)$, we take $n_0=0$ and the displacement $(x_0,z_0)-(x_1,z_1)$ is zero.
	Applying the renormalization \eqref{eq:renormalization eigenvector case} inductively, we see that for any $k \geq 0$, if $(x_k,z_k) \in {\mathcal O}_v$ and $x_k \in [0, \beta^{-pk}L)$, then there is an $n_k \geq 0$ such that $(x_{k+1}, z_{k+1}) = T_v^{-n_k}(x_k,z_k)$ satisfies
	$$(x_{k+1}, z_{k+1}) \in {\mathcal O}_v, \quad 
	x_{k+1} \in [0, \beta^{-p(k+1)}L)
	\quad \text{and} \quad
	(x_k,z_k)-(x_{k+1},z_{k+1}) \in R_\mu^k(D).$$
	This produces a sequence of points $(x_k, z_k) \in {\mathcal O}_v$ such that $x_k \in [0, \beta^{-p(k+1)}L)$ for all $k$. As we move backward along the forward orbit of $(0,0)$, there is a $K$ such that $(x_k, z_k)=(0,0)$ when $k \geq K$. Then we have
	$$(x_0,y_0)= \sum_{k=0}^{K-1} \big[(x_k,z_k)-(x_{k+1},z_{k+1})\big] \in D+R_\mu(D) + \ldots + R_\mu^{K-1}(D),$$
	where $+$ in rightmost equation denotes Minkowski sums of the sets. Since $D$ is finite and $R_\mu$ is a contraction fixing $(0,0)$, boundedness follows.
\end{proof}

\paragraph*{\bf Oscillation} To complete the proof of Lemma \ref{lem:curve case 2}, we will use the following standard criterion on oscillation for the closure $\overline{\mathcal O}_v \subset \R \times \C$ to be a graph of a continuous function.
\begin{Prop}
	\label{prop:continuity criterion}
	Let $I \subset \R$ be a closed interval and $G \subset I \times \C$ be a bounded set. Then the closure $\bar G$ is the graph of a continuous function $I \to G$ if and only if the projection of $G$ to $I$ is dense in $I$ and at each point $x \in I$, the oscillation of $G$ at $x$ is zero. Here, the {\em oscillation} of $G$ at $x \in I$
	$$\mathrm{Osc}(x) = \lim_{\epsilon \to 0} \sup \{|y'-y''|:~y',y'' \in N_\epsilon(x)\}$$
	where $N_\epsilon(x)$ is the set of all $y' \in \C$ such that there exists an $x'\in I$ such that $|x-x'|<\epsilon$ and $(x',y') \in G$.
\end{Prop}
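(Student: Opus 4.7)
The statement is a standard real-analysis criterion, and the plan is to prove both directions by unwinding the definitions, using boundedness and density of the projection to secure existence of values, and using the zero-oscillation hypothesis to secure both uniqueness (single-valuedness) and continuity of the limiting graph.

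For the easy direction, assume $\bar G$ is the graph of a continuous function $f:I \to \C$. Then for every $x \in I$ there exists $(x,f(x)) \in \bar G$, hence a sequence in $G$ converging to it, and in particular the projection of $G$ to $I$ is dense in $I$. For the oscillation, fix $x \in I$ and $\epsilon>0$. By continuity of $f$, there is a $\delta>0$ such that $|x'-x|<\delta$ implies $|f(x')-f(x)|<\epsilon/3$. Any $y' \in N_\delta(x)$ is a limit of second coordinates of points in $G$ with first coordinates $\delta$-close to $x$; for points of $G$, the second coordinate is within $\epsilon/3$ of $f$ of the first coordinate once one approximates by the graph, so $|y'-f(x)|\le 2\epsilon/3$ for $\delta$ small enough. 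Therefore $\mathrm{Osc}(x) \le 4\epsilon/3$, and letting $\epsilon \to 0$ gives zero oscillation.

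For the interesting direction, assume the projection is dense and the oscillation vanishes everywhere. First I will show that the fiber $\bar G \cap (\{x\}\times \C)$ is nonempty for every $x \in I$: by density, pick $x_n \to x$ with $x_n$ in the projection, then pick $(x_n,y_n)\in G$; since $G$ is bounded the $y_n$ have a convergent subsequence $y_{n_k}\to y$, and $(x,y)\in\bar G$. Next I will show this fiber is a singleton: if $(x,y_1),(x,y_2)\in\bar G$ were distinct, approximating each by sequences in $G$ with first coordinates in $(x-\epsilon,x+\epsilon)$ would force $y_1,y_2 \in \overline{N_\epsilon(x)}$, contradicting $\mathrm{Osc}(x)=0$ for $\epsilon$ small. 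This gives a well-defined function $f:I\to\C$ with $\bar G=\{(x,f(x))\}$.

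Finally I will verify that $f$ is continuous using the oscillation condition once more. Fix $x\in I$ and $\epsilon>0$. Choose $\delta>0$ so that $\sup\{|y'-y''|:y',y''\in N_\delta(x)\}<\epsilon$. For any $x'\in I$ with $|x'-x|<\delta/2$, both $f(x)$ and $f(x')$ lie in $N_\delta(x)$ (each is a limit of points $(x_n,y_n)\in G$ with $x_n$ eventually within $\delta$ of $x$), so $|f(x')-f(x)|<\epsilon$. This establishes continuity and completes the proof. The main (very mild) subtlety is keeping the two uses of the oscillation definition straight, ensuring that when passing to the closure the approximating second coordinates lie in the appropriate $N_\epsilon(x)$; this is purely a bookkeeping matter and there is no real obstacle.
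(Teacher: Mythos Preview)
The paper does not actually include a proof of this proposition; it is stated as a ``standard criterion'' and left without argument (a short proof sketch appears only in a commented-out block of the source). Your proof is correct and follows exactly the natural route that sketch takes: boundedness plus density of the projection gives nonempty fibers in $\bar G$, zero oscillation forces single-valuedness, and zero oscillation at each $x$ is precisely the $\epsilon$--$\delta$ continuity of the resulting $f$. One minor wording slip: in the easy direction, a point $y' \in N_\delta(x)$ is not merely a \emph{limit} of second coordinates in $G$ but is itself the second coordinate of some $(x',y') \in G \subset \bar G = \mathrm{graph}(f)$, so directly $y' = f(x')$ and $|y'-f(x)|<\epsilon/3$; and in the continuity step, $f(x)$ and $f(x')$ lie in $\overline{N_\delta(x)}$ rather than $N_\delta(x)$, giving $|f(x)-f(x')| \le \epsilon$ rather than strict inequality. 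Neither affects the argument.
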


\begin{proof}[Proof of Lemma \ref{lem:curve case 2}]
	Assume $(\rho^\ast)^p \zeta_v= \mu \zeta_v$ with $|\mu|<1$. We apply the criterion from Proposition \ref{prop:continuity criterion} to show that the closure of ${\mathcal O}_v$ is the graph of a continuous function $\psi_v:[0,L] \to \C$. We know ${\mathcal O}_v$ is bounded by Proposition \ref{prop:bounded orbit}. The projection of $\overline{\mathcal O}_v$ to $[0,L]$ is dense because it is an infinite orbit of the minimal interval exchange $T_0$. 
	
	To apply Proposition \ref{prop:continuity criterion}, it remains to check that $\mathrm{Osc}(x)=0$ for $x \in [0,L]$, where oscillation is defined as in the Proposition using the set ${\mathcal O}_v$. We begin my making some observations about the renormalization. Since ${\mathcal O}_v$ is bounded, there is an $M>0$ such that $(x',z'), (x'', z'') \in {\mathcal O}_v$  implies $|z'-z''|<M$. Using the renormalization from \ref{prop:strip renormalization}, we see that for any $k \geq 1$, 
	\begin{equation}
		x', x'' \in [0, \beta^{-kp} L ) \quad \text{implies} \quad |z'-z''| < |\mu|^k M.
	\end{equation}
	In particular, $x \in [0, \beta^{-kp} L )$ implies that $\mathrm{Osc}(x) \leq |\mu|^k M$.
	
	Because $T_v$ acts on the vertical strips $I_j \times \C$ by translation, we have 
	$$\mathrm{Osc}\big(T_0(x)\big)=\mathrm{Osc}(x) \quad \text{whenever $T_0$ is continuous at $x$}.$$
	The same statement holds for $T_0^{-1}$ replacing $T_0$. Fix any $x \in (0, L)$. The map $T_0$ has no saddle connections. Therefore, in either the positive or negative direction, the orbit of $x$ is free from discontinuities. Let $s \in \{-1, 1\}$ be positive or negative respectively. Since $T_0$ is minimal, for any $k \geq 1$, there is an $n$ such that $T_0^{sn}(x) \in [0, \beta^{-kp}L).$ We conclude that $\mathrm{Osc}(x) \leq |\mu|^k M$ for all $k \geq 1$ and so $\mathrm{Osc}(x) = 0$. It remains to check $x \in \{0, L\}$. Observe from the fact about strips that
	$$\mathrm{Osc}(0) \leq \mathrm{Osc}\big(T_0(0)\big) \quad \text{and} \quad \mathrm{Osc}(L) \leq \mathrm{Osc}\big(\lim_{x \to L^-} T_0(x)\big),$$
	so we also have $\mathrm{Osc}(0)=\mathrm{Osc}(L)=0,$ completing the verification of the oscillation condition. Thus we've shown that there is a continuous function $\psi_v:[0,L] \to \C$ whose graph is the closure of ${\mathcal O}_v$. It remains to show \eqref{eq:cohom eq curve case 1} holds. Observe that for $(x,z) \in {\mathcal O}_v$, we have $y=\psi_v(x)$. Since $(x,z) \in {\mathcal O}_v$ implies $\tilde T_v(x,z) \in {\mathcal O}_v$ and the $x$-coordinate of $\tilde T_v(x,z)$ is $T_0(x)$, we have
	$$\tilde T_v\big(x, \psi_v(x)\big) = \big(T_0(x), \psi_v \circ T_0(x)\big).$$
	Then assuming $x \in I_i$, the displacement in the $z$-coordinate by applying $\tilde T_v$ is $v_i$, so we have
	$$\psi_v \circ T_0(x)-\psi_v(x)=v_i$$
	as desired.
\end{proof}

\subsection{Constructing eigenfunctions}\label{sec:constructingeigenfunctions} We now finish showing that $$\FA(\rho) \subset E(\rho).$$ That is, we show how to build eigenfunctions using forward asymptoticity. Let $c \in \FA(\rho)$ be non-zero, and recall that $\alpha(c) = \varphi_c^\infty \in H^1(S; \mathbb Z)$ is defined to be the integral class to which $c \eta^u$ is forward asymptotic. There is a $v\in \R^k$ (depending on $c$) such that $$\zeta_{v} = \varphi_c^\infty - c\eta_u.$$ Since $c \in \FA(\rho),$ $$(\rho^\ast)^n(\zeta_{v}) \xrightarrow{n \rightarrow \infty} 0,$$  and so we can apply Lemma~\ref{thm:curve case 1} to produce a continuous function $\psi_v:[0,L] \to \R$ satisfying \eqref{eq:cohom eq curve case 1}, that is, 
		$$\psi_v \circ T_0(x) - \psi_v(x) = v_i \quad \text{for all $x \in I_i$}.$$ We will use this function and the zippered rectangle construction of Veech~\cite{Veech:ergodic} to produce an eigenfunction for the flow.

\begin{lemma}
	\label{thm:explicit eigenfunction} Let $c \in \FA(\rho)$ be non-zero, and $v, \psi_v$ as above. Then there is a continuous function $\Psi_c: S \to \mathbb R / \mathbb Z$ such that:
\begin{equation}\label{eq:explicit eigenfunction 1} \Psi_c \circ \phi_t(p) =  \Psi_c(p) + ct \pmod{\Z} \end{equation} for all $t \in \mathbb R$ and $p \in S$ such that $\phi_t(p)$ is well defined.
	We have \begin{equation}\label{eq:explicit eigenfunction 2} \Psi_c \circ s(x)=\psi_v(x)\end{equation} for all $x \in [0,L]$.

\end{lemma}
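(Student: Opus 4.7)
The plan is to define $\Psi_c$ by flowing $\psi_v$ up from the stable segment $s([0,L))$ along $\phi_t$, using the zippered rectangle decomposition of $S$ associated to $T_0$. Recall that $S$ is presented as a union of rectangles $R_i = \{\phi_t(s(x)) : x \in I_i,\ t \in [0, h_i)\}$, where $h_i$ is the constant $\phi_t$-return time of $I_i \subset [0, L)$ to $s([0,L))$. I would set
$$\Psi_c\big(\phi_t(s(x))\big) = \psi_v(x) + ct \pmod{\Z} \qquad \text{for } x \in I_i,\ t \in [0, h_i).$$
Equation \eqref{eq:explicit eigenfunction 2} is then immediate, and \eqref{eq:explicit eigenfunction 1} holds on the interior of each rectangle by construction.

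The substantive task is to verify that this definition is well-defined and continuous across the rectangle boundaries, which splits into two checks. The key one concerns the top-to-bottom gluing: flowing $s(x)$ for time $h_i$ produces $s(T_0 x)$, so I need
$$\psi_v(T_0 x) \equiv \psi_v(x) + c h_i \pmod{\Z} \qquad \text{for } x \in I_i.$$
By the cohomological equation $\psi_v(T_0 x) - \psi_v(x) = v_i$ from Lemma~\ref{thm:curve case 1}, this reduces to the purely arithmetic identity $v_i \equiv c h_i \pmod{\Z}$. I would verify this identity by observing that $\eta^u(\gamma_i) = h_i$: the loop $\gamma_i$ flows in the unstable direction for time $h_i$ and closes up along the stable segment, which contributes no unstable holonomy. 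Hence, using the definition $\zeta_v = c \eta^u - \varphi_c^\infty$ (the sign under which $(\rho^\ast)^n \zeta_v \to 0$),
$$v_i = \zeta_v(\gamma_i) = c\, \eta^u(\gamma_i) - \varphi_c^\infty(\gamma_i) \equiv c h_i \pmod{\Z},$$
since $\varphi_c^\infty(\gamma_i) \in \Z$ by integrality of $\varphi_c^\infty$.

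The second continuity check is along the vertical walls between adjacent rectangles: these are $\phi_t$-flowed images of common endpoints of adjacent intervals $I_i$, so agreement across them reduces to the continuity of $\psi_v$ at these endpoints, which was established in Lemma~\ref{thm:curve case 1} (the function $\psi_v$ is continuous on the closed interval $[0,L]$). At singular points of the flat structure, the value of $\Psi_c$ extends by continuity: each singularity lies in the closure of finitely many rectangles on which $\Psi_c$ has already been defined consistently, and the limits from these rectangles match because of the vertical and horizontal gluing relations checked above. This yields a continuous $\Psi_c : S \to \R/\Z$ satisfying the eigenfunction equation at every point where $\phi_t(p)$ is defined.

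I expect the main obstacle to be organizing the boundary/gluing bookkeeping cleanly and handling the singularities with care; the arithmetic content is entirely concentrated in the single congruence $v_i \equiv c h_i \pmod{\Z}$, which in turn is a direct consequence of the integrality of $\varphi_c^\infty$ and the geometric identity $\eta^u(\gamma_i) = h_i$. Once both continuity checks are in place, the eigenfunction equation \eqref{eq:explicit eigenfunction 1} extends from the interior of rectangles to all of $S \setminus \Sigma$ by continuity, completing the proof.
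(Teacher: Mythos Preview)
Your overall strategy and the horizontal (top-to-bottom) gluing argument are exactly those of the paper: defining $\tilde\Psi_c(x,t)=\psi_v(x)+ct$ on the zippered rectangles and reducing the top/bottom consistency to the congruence $v_i\equiv ch_i\pmod{\Z}$, which follows from $c\eta^u-\varphi_c^\infty=\zeta_v$ applied to $\gamma_i$. That part is correct.

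The gap is in your treatment of the vertical walls. Your claim that ``these are $\phi_t$-flowed images of common endpoints of adjacent intervals $I_i$, so agreement reduces to continuity of $\psi_v$'' only handles the portion of the vertical edge \emph{below} the first singularity on that separatrix. Above the singularity, the two sides diverge onto distinct forward separatrices, and the zippered-rectangle structure identifies, say, $(a_j,t)\in R_j$ (with $t>\tau_j$) with a point $(x^*,t')\in R_m$ on a \emph{non-adjacent} rectangle, where in general $x^*\neq a_j$ and $t'\neq t$. The required identity $\psi_v(a_j)+ct\equiv\psi_v(x^*)+ct'\pmod{\Z}$ is not a consequence of continuity of $\psi_v$ at a single endpoint. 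Your sentence on singularities then becomes circular, since it appeals to ``the vertical and horizontal gluing relations checked above,'' which have not been fully checked. The paper closes this gap with an idea from Avila--Delecroix: once the horizontal gluing is established, $\tilde\Psi_c$ descends to $\hat Z$ and satisfies the eigenfunction equation there; any two preimages $\hat p_1,\hat p_2$ of the same surface point lie on a singular leaf, and since there are no saddle connections one can flow forward or backward until both reach a common interior point $\hat q$, whence $\hat\Psi_c(\hat p_1)=\hat\Psi_c(\hat q)-c\tau=\hat\Psi_c(\hat p_2)$. You should replace your vertical-wall paragraph with this argument.
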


\paragraph*{\bf Zippered rectangles} To prove Lemma~\ref{thm:explicit eigenfunction}, we partition the surface $S$ into parallelograms. Let $h \in {\mathbb R}^k$ denote the height vector, that is, $h_i=\eta^u(\gamma_i)$ is the return time of $s(x)$ to $s\big([0, L)\big)$ under the flow $\phi_t$ when $x$ is in the interior of $I_i$. Recall that the domain $[0, L)$ of $T_0$ is partitioned into intervals of continuity $I_1, \ldots, I_k$. For $i=1, \ldots, k$, define the map
$$f_i:I_i^\circ \times (0, h_i)\to S; \quad (x, t) \mapsto \phi_t \circ s(x),$$
where $I_i^\circ$ is the interior of the interval $I_i$. Since $f_i$ is Lipschitz, the map has a continuous extension 
$$\bar f_i: R_i \to S \quad \text{where $R_i$ is the closed rectangle $\bar I_i \times [0, h_i]$}.$$
Define $Z=\bigsqcup R_i$ and define $\bar f:Z \to S$ such that $\bar f|_{R_i}=\bar f_i$.
The collection of images $\{\bar f(R_i):~i=1, \ldots, k\}$ form a Markov partition for $\rho$. We will define a continuous function
$$\tilde \Psi_c: Z \to \mathbb R / \mathbb Z$$
and then argue it descends to a well-defined continuous function $S \to \R/\Z$.
We define
\begin{equation}
\label{eq:tilde Psi definition}
\tilde \Psi_c|_{R_i}(x, t)=\psi_v(x)+ct \pmod{\Z}.
\end{equation}

\noindent If $\tilde \Psi_c$ descends to a function $\Psi_c:S \to \R/\Z$ in the sense that \begin{equation}\label{eq:psicexists} \tilde \Psi_c=\Psi_c \circ \bar f,\end{equation} then both \eqref{eq:explicit eigenfunction 1} and \eqref{eq:explicit eigenfunction 2} are satisfied. To see \eqref{eq:explicit eigenfunction 1}, observe that the equation is satisfied by the restriction to each parallelograms, and a finite $\phi_t$-trajectory will pass through finitely many parallelograms counting multiplicity. To see \eqref{eq:explicit eigenfunction 2} observe that $\tilde \Psi_c|_{R_i}(x, 0)=\psi_v(x)$ and $\bar f_i(x,0)=s(x)$ when $x \in \bar I_i \subset [0,L]$.
So, the existence of $\Psi_c$ is all that is left to prove.

To prove there is a $\Psi_c$ satisfying \eqref{eq:psicexists}, we need to show that if $p_1, p_2 \in Z$ satisfy $\bar f(p_1)=\bar f(p_2)$, then $\tilde \Psi_c(p_1)=\tilde \Psi_c(p_2)$. Our partition is a standard Markov partition, so the map $\bar f$ is one-to-one on each image $\bar f_i(R_i^\circ)$. Therefore, we focus on the boundaries.
Say that the {\em bottom edge} of $R_i$ is $\bar I_i \times \{0\} \subset R_i$, and the {\em top edge} is $\bar I_i \times \{h_i\}$. 
We call both bottom and top edges {\em horizontal edges}. 

\begin{Prop}
	\label{prop: construction part 1}
	If $p_1, p_2 \in Z$	both belong to horizontal edges and $\bar f(p_1)=\bar f(p_2)$, then $\tilde \Psi_c(p_1)=\tilde \Psi_c(p_2)$.
\end{Prop}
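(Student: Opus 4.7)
The plan is to enumerate the identifications $\bar f(p_1)=\bar f(p_2)$ between points on horizontal edges and verify in each type that $\tilde\Psi_c(p_1)=\tilde\Psi_c(p_2)$ in $\R/\Z$. A horizontal edge is either a bottom edge $\bar I_i \times \{0\}$, on which $\bar f_i(x,0)=s(x)$, or a top edge $\bar I_i \times \{h_i\}$, on which $\bar f_i(x,h_i)=\phi_{h_i}\circ s(x)$; for $x \in I_i^\circ$ the latter equals $s\circ T_0(x)$. Using injectivity of $s$ on $[0,L)$ together with the right-tracking convention at singularities, the identifications between two horizontal edges fall into three types: (a) two bottom edges meeting at a shared endpoint of adjacent $\bar I_i$'s; (b) two top edges meeting at a shared endpoint of the permuted intervals $T_0(\bar I_i)$ and $T_0(\bar I_j)$; and (c) the mixed type in which a top edge of some $R_j$ is glued to the bottom edge of the $R_{i'}$ containing $T_0(I_j)$.

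In case (a), both preimages have the form $(x,0)$ with common base coordinate $x$, so $\tilde\Psi_c$ assigns the same value $\psi_v(x)$ to both by single-valuedness of $\psi_v$ on $[0,L]$. Case (c) is the substantive step: for $x \in I_j$ the identification $(T_0(x),0)\sim(x,h_j)$ forces the check
\[
\psi_v\bigl(T_0(x)\bigr) \;\equiv\; \psi_v(x) + c\,h_j \pmod{\Z}.
\]
By the cohomological equation from Lemma~\ref{thm:curve case 1}, $\psi_v\bigl(T_0(x)\bigr)-\psi_v(x)=v_j$, so this reduces to $v_j - c\,h_j \in \Z$. This is precisely where forward asymptoticity of $c\eta^u$ to an integral class enters: by the defining relation $\zeta_v+c\eta^u=\varphi_c^\infty$ (with the sign convention fitting \eqref{eq:tilde Psi definition}) and using $h_j=\eta^u(\gamma_j)$, evaluating on $\gamma_j$ gives $v_j-c\,h_j = -\varphi_c^\infty(\gamma_j)\in\Z$. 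Case (b) then follows by combining two instances of case (c): a top-top identification has a common image $s(y)$ with $y=T_0(x)=T_0(x')$ arising as a limit of interior points, and substituting $\psi_v(x)=\psi_v(y)-v_i$ and $\psi_v(x')=\psi_v(y)-v_j$ into the formula for $\tilde\Psi_c$ reduces consistency to the integer identity $\varphi_c^\infty(\gamma_i)-\varphi_c^\infty(\gamma_j) \in \Z$.

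The main obstacle is careful bookkeeping at the boundary endpoints of the $I_i$'s, where endpoints may lie on separatrices and the right-tracking convention for $T_0$ must be respected in order to match top edges with the correct bottom edges. Since $\psi_v$ is continuous on the closed interval $[0,L]$ and $\phi_t$ is continuous where defined, the $\R/\Z$-valued congruences established at interior points extend by continuity to all boundary points, closing the verification for every horizontal-edge identification. The remaining identifications along vertical edges of the $R_i$, which encode the saddle-connection gluings of the zippered rectangle, are left to the subsequent proposition.
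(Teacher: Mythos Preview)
Your proposal is correct and follows essentially the same approach as the paper: reduce the top-to-bottom identification to the congruence $v_j \equiv c h_j \pmod{\Z}$ via the cohomological equation for $\psi_v$, verify this congruence using the integrality of $\varphi_c^\infty(\gamma_j)$, and handle endpoints by continuity of $\psi_v$ on $[0,L]$. One small bookkeeping slip: from $\zeta_v + c\eta^u = \varphi_c^\infty$ evaluated on $\gamma_j$ you obtain $v_j + c h_j = \varphi_c^\infty(\gamma_j)$, not $v_j - c h_j = -\varphi_c^\infty(\gamma_j)$; the paper in fact uses the convention $c\eta^u = \zeta_v + \varphi_c^\infty$ in its proof (so $c h_j - v_j = \varphi_c^\infty(\gamma_j)$), and with that sign the needed congruence follows directly.
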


\begin{proof}
	On bottom edges, we have $\tilde \Psi_c(x,0)=\psi_v(x) \pmod{\Z}$ by definition of $\tilde \Psi_c$. In particular, $\tilde \Psi_c$ descends to a well defined function on the union of images of bottom edges.
	
	Now let $(x, h_i)$ be a point on the top edge of $R_i$. We have 
	$$\tilde \Psi_c(x, h_i) = \psi_v(x)+c h_i \pmod{\Z}.$$
	Assuming $x$ lies in the half-open interval $I_i$, we have that 
	$$\bar f_i(x, h_i)=\phi_{h_i} \circ s(x)=s \circ T_0(x)=\bar f\big(T_0(x), 0).$$
	So, we need to show that 
	$$\psi_v \circ T_0(x) = \psi_v(x)+c h_i \pmod{\Z}.$$
	From \eqref{eq:cohom eq curve case 1}, we have 
	$\psi_v \circ T_0(x) - \psi_v(x) = v_i$, so it suffices to show that $v_i = c h_i \pmod{\Z}$. Now recall that $c \eta^u = \zeta_v + \varphi^\infty$. Applying this equation to $\gamma_i$, we obtain $c h_i = v_i + \varphi^\infty(\gamma_i)$. Since $\varphi^\infty$ is an integer class and $\gamma_i \in H^1(S \setminus \Sigma; \Z)$, we have $\varphi^\infty(\gamma_i) \in \Z$ and $c h_i = v_i \pmod{\Z}$ as desired.
	
	The previous paragraph holds for all points $(x, h_i)$ on the top edge of $R_i$ unless $x$ is the right-endpoint of $\bar I_i$. Suppose $x_i$ is the right endpoint of $\bar I_i$.
	Since $T_0$ has no saddle connections, $y_i = \lim_{x \to x_i^-} T_0(x)$ must lie in the interior of some $I_j$, and we have $\bar f_i(x_i, h_i) = \bar f_j(y_i, 0)$, so we need to show that
	$$\psi_v(x_i)+c h_i = \psi_v(y_i) \pmod{\Z}.$$
	But, this follows from continuity and the previous paragraph:
	$$\psi_v(x_i)+c h_i = \lim_{x \to x_i^-} \psi_v(x)+c h_i = \lim_{x \to x_i^-} \psi_v \circ T_0(x) = \psi_v(y_i).$$
\end{proof}

\subsection{Proving Lemma \ref{thm:explicit eigenfunction}}\label{sec:expliciteigenfunction} 
Let $\hat Z = Z/\sim$, where we $p_1 \sim p_2$ if both $p_1$ and $p_2$ belong to horizontal edges and $\bar f(p_1)=\bar f(p_2)$. Proposition \ref{prop: construction part 1} shows that $\tilde \Psi_c$ descends to a well defined function $\hat \Psi_c: \hat Z \to \R/\Z$, and by construction, the map $f:Z \to S$ induces a map $\hat f: \hat Z \to S$. We need to prove that $\hat \Psi_c$ further descends to a map $\Psi_c$ on $S$, that is, for $\hat p_1, \hat p_2 \in \hat Z$ such that $\hat f(\hat p_1) = \hat f(\hat p_2) = p \in S$, that $$\hat \Psi_c(\hat p_1) = \hat \Psi_c(\hat p_2).$$ We use an idea from Avila and Delecroix \cite{AvilaDelecroix}*{pp.1201}. Since $\bar f(p_1)=\bar f(p_2) = p$, the image point $p \in S$ belongs to a singular leaf of the vertical foliation of $\rho$. By the construction of $\Psi_c$, in particular the fact that it satisfies the eigenfunction equation, we can recover the $\Psi_c$ at any point on an orbit from the value at one point on the orbit. Since there are no saddle connections, singular leaves either coincide at a non-singular point in either the past or the future.

\subsection{Proving Theorem~\ref{thm:asymptotic}}\label{sec:thmasymptotic} We now put all our arguments together to prove Theorem~\ref{thm:asymptotic}.

\begin{proof}[Proof of Theorem \ref{thm:asymptotic}] First note that for $c=0$, $c\eta_u$ is forward asymptotic to the zero class and that we can take $\Psi_c$ identically zero, which shows that $0$ is in both $\FA(\rho)$ and $E(\rho)$. Now fix a non-zero $c \in \FA(\rho)$, which exists by Proposition~\ref{prop:E intro}. The existence of $\Psi_c:S \to \mathbb R/\mathbb Z$ satisfying the eigenfunction equation is given by Lemma~\ref{thm:explicit eigenfunction}. It remains to show that we have $[\Psi_c]=\varphi_c^\infty$, where $\varphi_c^\infty$ is the integral class to which $c \eta^u$ is forward asymptotic.
	
	Recall that $w \mapsto \zeta_w$ is a group isomorphism from $\mathbb R^k$ to $H^1(S \setminus \Sigma; \mathbb R)$.
	If $h =(h_1, h_2, \ldots, h_k) \in \mathbb R^k$ is the height vector (that is, the vector of return times), then $\zeta_h=\eta^u$. As in Lemma \ref{thm:explicit eigenfunction}, let $v \in \mathbb R^k$ be such that $c \eta^u = \zeta_v + \varphi_c^\infty$. Let $v' \in \Z^k$ be such that $\zeta_{v'} = \varphi_c^\infty$. Then $c h = v + v'$.
	
	Let $\gamma_i$ be one of the basis elements for $H_1(S \setminus \Sigma; \mathbb R)$ given before. We will compute
	$[\Psi_c](\gamma_i)$. Pick a point $x$ from the interior of $I_i$. Then a realization of the class $\gamma_i$ is given by flowing $s(x)$ for time $h_i$ under $\phi_t$ until $s(T_0(x))$ is reached, then returning to $s(x)$ within the segment $s([0,L])$. We know that $\Psi_c$ increases by $ch_i$ as we move along the flow line $\phi_t$ from $x$ to $T_0(x)$. As we return along $s([0,L])$, $\Psi_c$ increases by 
	$$\psi_v(x)-\psi_v \circ T_0(x)=-v_i,$$
	by \eqref{eq:explicit eigenfunction 2} and \eqref{eq:cohom eq curve case 1}. It then follows that
	$[\Psi_c](\gamma_i)=ch_i - v_i = v'_i.$ As $\zeta_{v'}=\varphi_c^\infty$, we have $[\Psi_c]=\varphi_c^\infty$.

Finally, we prove the reverse inclusion, that $E(\rho) \subset \FA(\rho)$. We will show that if $c \in E(\rho)$, then $c\eta_u$ is forward asymptotic to the cohomology class $[\Psi_c]$. As we argued above, $[\Psi_c](\gamma_i) = ch_i -v_i$ where $v_i$ is given by the change in $\Psi_c$ along the stable segment $s([0,L])$. Since by definition $c\eta_u(\gamma_i) = ch_i$  the difference $$[\Psi_c](\gamma_i) - c\eta_u(\gamma_i)=-v_i$$
is also a change in $\Psi_c$ within $s([0,L])$. Similarly, for $n>0$, 
$$(\rho^\ast)^n\big([\Psi_c]-c \eta_u\big)(\gamma_i)=([\Psi_c]-c \eta_u)\big(\rho^n(\gamma_i)\big)$$
is a change in $\Psi$ over $\rho^m\big(s([0,L])\big)=s([0, \beta^{-n} L])$,
which tends to zero as $n \to \infty.$ This holds for all $i$, so $[\Psi_c]$ and $c \eta_u$ are forward asymptotic. 
 \end{proof}

\paragraph*{\bf Proving Corollary~\ref{cor:pisot}.} Finally, we note that our dictionary between weak mixing for the base and the ergodicity of the prism flow, as explained in \S\ref{sec:furstenberg} yields Corollary~\ref{cor:pisot}.

\section{Semi-conjugacies and torus-filling surfaces}\label{sec:tori} 

\paragraph*{\bf The invariant subspace} We now turn to proving the last two parts of Theorem~\ref{theorem:pisotsemiconjugacy}, namely, the simultaneous semi-conjugacy of the flow $\phi_t$ and the pseudo-Anosov $\rho$ to a irrational linear flow $F_t$ and a hyperbolic toral automorphism $M$ on $\TT^d$. In fact, this semi-conjugacy can be seen via the action of $\rho^{\ast}$ on the invariant subspace $V_{\R}$. Recall that $V \subset H^1(S; \mathbb C)$ is the $\C$-span of the vector $r \eta^u$ and its algebraic conjugates,  and $V_\R = V \cap H^1(S; \mathbb R)$, $V_\Z = V \cap H^1(S; \mathbb Z)$. By construction $V_{\R}$ (and $V_{\Z}$) are \emph{invariant} under the action of $\rho^{\ast}$ on $H^1(S, \C)$, and $\rho^{\ast}$ acts as a hyperbolic toral automorphism on the torus $V_{\R}/V_{\Z}$, with top eigenvalue $\beta$. 

\paragraph*{\bf Eigenvalues, eigenfunctions, and cohomology classes} We now fix $$\mathbf{c} = (c_1, \ldots c_d)$$ to be a generating set for the group of eigenvalues $E(\rho) \cong V_{\Z},$ where we recall that we can associate to each eigenvalue $c \in E(\rho)$ the integer cohomology class $[\Psi_c]$ associated to an eigenfunction $\Psi_c$. This does not depend on the choice of eigenfunction, since any two eigenfunctions differ by a rotation. We will write $v_i = [\Psi_{c_i}]$ for the generating set of $V_{\Z}$ (and basis for $V_{\R}$) associated to $\mathbf{c}$. Let $x_0 \in S$ be a fixed point of $\rho$ (which can always be found by replacing $\rho$ by a power if necessary), and let $\Psi_{c_i}: S \rightarrow \R/\Z$ be the eigenfunction with eigenvalue $c_i$ normalized so $\Psi_{c_i}(x_0)= 0.$  We define the map $\Psi_{\mathbf{c}}: S \rightarrow V_{\R}/V_{\Z}$ by $$\Psi_{\mathbf{c}}(x) = \sum_{i=1}^d \Psi_{c_i}(x) v_i.$$ 

\begin{Theorem}
    \label{thm: torus1} Fix notation as in the paragraph above. The map $\Psi_{\mathbf c}$ is continuous and surjective, and simultaneously semi-conjugates the action of $\rho$ on $S$ to the action of $(\rho^{\ast})^T$ on $V_{\R}/V_{\Z}$ and $\phi_t^u$ to the linear flow $F_t$, where $$F_t\left(\sum_{i=1}^d a_i v_i\right) = \sum_{i=1}^d (a_i+c_it) v_i.$$ That is, the following diagram commutes. 
\begin{center}
    
\begin{tikzcd}
S \arrow[r, "\Psi_{\mathbf c}"] \arrow[d, "\rho", "\phi_t^u"']
& V_{\R}/V_{\Z} \arrow[d, "(\rho^{\ast})^T", "F_t"'] \\
S \arrow[r, "\Psi_{\mathbf c}"]
&  V_{\R}/V_{\Z}
\end{tikzcd}
\end{center}

    \end{Theorem}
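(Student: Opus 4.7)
The plan is to split the verification into four pieces: (i) $\Psi_{\mathbf c}$ is well-defined and continuous, (ii) the flow semi-conjugacy, (iii) the semi-conjugacy with $\rho$, and (iv) surjectivity. Pieces (i), (ii), and (iv) are short; all the real content is in (iii).

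For (i), each component $\Psi_{c_i}:S \to \R/\Z$ is continuous by Lemma~\ref{thm:explicit eigenfunction}. Locally lifting each $\Psi_{c_i}$ to $\R$ and forming $\sum_i \Psi_{c_i}(x) v_i \in V_\R$ produces a continuous local lift of $\Psi_{\mathbf c}$; any two such lifts differ by a $\Z$-linear combination of the $v_i$, which lies in $V_\Z$ since the $v_i$ generate $V_\Z$. For (ii), the eigenfunction equation $\Psi_{c_i}(\phi_t^u x) = \Psi_{c_i}(x) + c_i t \pmod{\Z}$ gives
$$\Psi_{\mathbf c}(\phi_t^u x) = \sum_i \bigl(\Psi_{c_i}(x) + c_i t\bigr) v_i = F_t\bigl(\Psi_{\mathbf c}(x)\bigr).$$
For (iv), the image $\Psi_{\mathbf c}(S)$ is closed (by compactness of $S$), non-empty, and $F_t$-invariant by (ii). The direction $(c_1,\ldots,c_d)$ of $F_t$ expressed in the basis $(v_i)$ is $\Q$-linearly independent, since $c_1,\ldots,c_d$ freely generate the rank-$d$ group $E(\rho)$; consequently $F_t$ is minimal on $V_\R/V_\Z$, and the only non-empty closed $F_t$-invariant subset is the full torus, forcing $\Psi_{\mathbf c}(S) = V_\R/V_\Z$.

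The core step (iii) rests on the identity $\Psi_{c_i} \circ \rho = \Psi_{c_i \beta}$. Since $\rho \circ \phi_t^u = \phi_{\beta t}^u \circ \rho$, the composition $\Psi_{c_i} \circ \rho$ is an eigenfunction for $\phi_t^u$ with eigenvalue $c_i\beta$; both $\Psi_{c_i} \circ \rho$ and $\Psi_{c_i \beta}$ vanish at the $\rho$-fixed singularity $x_0$, and two eigenfunctions with the same eigenvalue differ by a constant (by unique ergodicity of $\phi_t^u$), so they agree exactly. Because $\rho^\ast$ preserves $V_\Z$ and $\alpha:E(\rho) \to V_\Z$ is a group isomorphism, there is a matrix $M=(M_{ji}) \in \Mat(d,\Z)$ with $c_i\beta = \sum_j M_{ji} c_j$, and linearity of the normalized eigenfunctions yields $\Psi_{c_i\beta} = \sum_j M_{ji} \Psi_{c_j}$. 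Hence
$$\Psi_{\mathbf c}(\rho x) = \sum_i \Psi_{c_i\beta}(x)\, v_i = \sum_j \Psi_{c_j}(x) \sum_i M_{ji} v_i.$$
On the other hand, $\rho^\ast v_i = [\Psi_{c_i}\circ \rho] = [\Psi_{c_i\beta}] = \sum_j M_{ji} v_j$, so $M$ is the matrix of $\rho^\ast|_{V_\Z}$ in the basis $(v_i)$, and the right-hand side above is exactly $(\rho^\ast)^T\bigl(\Psi_{\mathbf c}(x)\bigr)$ in that basis.

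The main obstacle is the bookkeeping in (iii): one must verify that $\Psi_{c_i}\circ \rho = \Psi_{c_i\beta}$ is an \emph{equality} rather than merely an equality up to a constant, and then recognize that the matrix built from the expansion $c_i\beta = \sum_j M_{ji} c_j$ is the transpose of the matrix representing $\rho^\ast$ on $V_\Z$, which explains why the diagram uses $(\rho^\ast)^T$ and not $\rho^\ast$. Both points are handled by using the $\rho$-fixed normalization together with care about rows versus columns; the remaining ingredients (continuity, additivity of eigenfunctions, minimality of irrational linear flows on tori) are standard.
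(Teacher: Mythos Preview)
Your proof is correct and follows essentially the same approach as the paper's: both argue that $\Psi_{c_i}\circ\rho$ and the $i$-th coordinate of $(\rho^\ast)^T\Psi_{\mathbf c}$ are eigenfunctions with the same eigenvalue $c_i\beta$ vanishing at the fixed point $x_0$, hence equal by unique ergodicity, while the flow semi-conjugacy is immediate from the eigenfunction equation. You supply more detail than the paper's terse paragraph, in particular an explicit surjectivity argument via minimality of the irrational toral flow and the matrix bookkeeping explaining the transpose.
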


\paragraph*{\bf Proving Theorem~\ref{theorem:pisotsemiconjugacy}} To prove Theorem~\ref{theorem:pisotsemiconjugacy}, identify $V_{\R}/V_{\Z}$ via $\TT^d$ by mapping $v_i \mapsto e_i$, the $i$th standard basis vector. The hyperbolic automorphism $A$ is the map $(\rho^{\ast})^T$ with this identification. By abuse of notation, we will write $\Psi_{\mathbf{c}}$ to denote the induced map from $S$ to $\TT^d$. Since $(c_1, \ldots, c_d)$ generate the set of eigenvalues, and eigenfunctions are unique up to rotation, the composition $f \circ \Psi_{\mathbf c}$ of any map $f \in \Aff(\TT^d, \R/\Z)$, $f(x) = a \cdot x + b$, $a \in \Z^d, b \in \TT^d$ yields an eigenfunction with eigenvalue $\sum_{i=1}^d a_i c_i$, and all eigenfunctions arise in this way.

\paragraph*{\bf Coordinates} We note that different bases will of course yield different coordinates, and thus different hyperbolic matrices. We note that if $A_1, A_2 \in \GL(d, \Z)$ are linear maps $\R^d \to \R^d$ with eigenvalue $\beta$, and $w_1, w_2 \in \Z^d$ be primitive elements (parts of generating sets for the group $\Z^d$), there is a unique $C \in \GL(d, \Z)$ such that $C A_1 = A_2 C$ and $C w_1 = w_2$. To see this it suffices to show that for any fixed $A_2 \in \GL(d, \Z)$ with eigenvalue $\beta$ and a primitive $w_2 \in \Z^d$ there is a unique $C$ such that $C A_\beta = A_2 C$ and $C e_1 = w_2$, where $e_1$ is the standard basis vector and $A_\beta$ is the \emph{companion matrix} of $\beta$, given by
\begin{equation}
    \label{eq:M beta}
A_{\beta} = \begin{pmatrix}
    0      & 0 & 0      & \ldots & -a_0 \\
    1      & 0 & 0      & \ldots & -a_1 \\
    0      & 1 & 0      & \ldots & -a_2 \\
    \vdots &   & \ddots &        & \vdots \\
    0      & 0 & 0      & 1      & -a_{d-1} \\    
\end{pmatrix},
\end{equation} where $P_{\beta}(x) = x^d + a_{d-1} x^{d-1} + \ldots + a_1 x + a_0$ is the minimial polynomial of $\beta$. Since $\beta$ is the eigenvalue of a derivative of a pseudo-Anosov, $\beta^{-1}$ is also an algebraic integer and therefore $A_{\beta} \in \GL(d, \Z).$ The conclusions $C A_\beta = A_2 C$ and $C e_1 = w_2$ together with the form of $A_\beta$ imply that the matrix $C$ must have its $i+1$-st column given by $A_2^i w_2$. This proves the uniqueness of $C$. With this definition of $C$ we have $C e_1 = w_2$, and the proof that $C A_\beta = A_2 C$ is standard.

\paragraph*{\bf A virtual model} One can simultaneously semi-conjugate $\rho$ and $\phi_t$ to the action of the companion matrix $M_{\beta}$ in a simple form. Fix a \emph{single} eigenvalue $c_1$ and associated eigenfunction $\Psi_{c_1}$, and consider the map $\Psi_{\mathbf{c_1}}: S \rightarrow \TT^d$ given by $$\Psi_{\mathbf{c_1}}(x) = (\Psi_{c_1}(x), (\Psi_{c_1} \circ \rho)(x), (\Psi_{c_1} \circ \rho^2)(x), \ldots, (\Psi_{c_1} \circ \rho^{d-1}(x)),$$ where $$\mathbf{c_1} = (c_1, c_1\beta, c_1 \beta^2, \ldots, c_1 \beta^{d-1}).$$ Indeed $\Psi_{c_1} \circ \rho^{j}$ is an eigenfunction with eigenvalue $c_1 \beta^j$, since $$(\Psi_{c_1} \circ \rho_j) (\phi_t^u x) = \Psi_{c_1}( \phi_{\beta^j t}^u \rho_j x) = (\Psi_{c_1} \circ \rho_j)(x) - \beta^j c_1 t.$$ Direct computation shows that $\Psi_{\mathbf{c_1}}$ conjugates $\rho$ to $M_{\beta}$, and $\phi_t^u$ to the flow $F_{\mathbf{c_1}t}$ on $\TT^d$. However, although the group generated by $(c_1, c_1\beta, c_1 \beta^2, \ldots, c_1 \beta^{d-1})$ is abstractly $\Z^d$, it need not generate all of $E(\rho)$ - it could generate a finite index subgroup, so we could miss eigenfunctions even when composing with affine maps. As pointed out to us by Bianca Viray, the question of whether we can find $c_1$ so that $(c_1, c_1\beta, c_1 \beta^2, \ldots, c_1 \beta^{d-1})$ generates $E(\rho)$ is closely related to understanding the relationship between $\Z(\beta)$ and the ring of integers in $\Q(\beta)$, which is a subtle problem in algebraic number theory. See Question~\ref{ques:ring} for further questions in this direction.

\paragraph*{\bf Proving Theorem~\ref{thm: torus1}} We now turn to proving Theorem~\ref{thm: torus1}. The action of $\rho$ on $S$ maps eigenfunctions with eigenvalue $c$ to those with eigenvalue $\beta c$, and a direct computation shows that $(\rho^*)^T \Psi_{\bf c}$ and $\Psi_{\mathbf c} \circ \rho$ are both maps from $S$ to $V_{\R}/V_{\Z}$ where the $i$th coordinate is an eigenfunction for $c_i\beta$. By definition they agree at the point $x_0$. This proves that $\rho$ is semi-conjugated to $(\rho^{\ast})^T$. The semi-conjugation of the flow $\phi_t^u$ to the linear flow $F_t$ follows immediately from the eigenfunction equation, completing the proof of Theorem~\ref{thm: torus1}.\qed

\paragraph*{\bf Action on $\Q(\beta)$}  If we denote $\alpha: E(\rho) \rightarrow V_{\Z}$ the map which associates an eigenvalue $c$ to its integer cohomology class $\alpha(c) = [\Psi_c] \in V_{\Z}$, we have $$\alpha(\beta c) = \rho^* \alpha(c),$$ by the characterization via Theorem~\ref{thm:asymptotic} of $[\Psi_c]$ as the class $\varphi^{\infty}_c$ to which $c \eta^u$ is forward asymptotic to under $(\rho^{\ast})$. Indeed, we have $$(\rho^{\ast})^n(c\eta^u - \varphi_c^{\infty}) \rightarrow 0,$$ and since $\rho^{\ast} \eta^u = \beta \eta^u$, we have $$(\rho^{\ast})^n(\beta c\eta^u - \rho^{\ast}\varphi_c^{\infty}) \rightarrow 0.$$ Thus, we can see the action of $\rho^*$ on $V_{\Z}$ as the action of an integer matrix on $E(\rho) \subset \Q(\beta)$ (fixing a $\Z$-basis for $E(\rho)$), whose action corresponds to multiplication by $\beta$. This approach is worked out by Arnoux~\cite{ArnouxBSMF88}*{section 4.2. page 499} for his genus $3$ example.

\paragraph*{\bf Normalizations}
We describe a normalizations of some surfaces that makes our eigenvalues particularly easy to compute, by applying an affine map the stable and unstable directions vertical and horizontal respectively. We normalize using $\GL(2,\R)$. First observe:
\begin{Prop}
    \label{prop:affine change}
    Let $\phi_t^{\mathbf u}:S \to S$ be the unit speed on $S$ in direction $\mathbf u$, and suppose
    $\psi:S \to \R/\Z$ is an eigenfunction for $\phi_t^{\mathbf u}$ with eigenvalue $c$. Let $A \in \GL(2,\R)$ and suppose $h:S \to S'$ is an affine homeomorphism with derivative $A$. Let $\mathbf u' = \frac{A \mathbf u}{\|A \mathbf u\|}$. Then $\psi'=\psi \circ h^{-1}$ is an eigenfunction for the unit speed flow $\phi_t^{\mathbf u'}:S' \to S'$ with eigenvalue $c'=\frac{c}{\|A \mathbf u\|}$.
\end{Prop}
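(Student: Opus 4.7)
The plan is to track how the unit speed flow transforms under an affine homeomorphism, and then substitute directly into the eigenfunction equation. Since $h:S \to S'$ is affine with derivative $A$, in local translation charts $h$ has the form $x \mapsto Ax + b_{\mathrm{loc}}$. The unit speed flow $\phi_t^{\mathbf u}$ acts locally as $x \mapsto x + t\mathbf u$, so
\[
h\bigl(\phi_t^{\mathbf u}(x)\bigr) = h(x) + tA\mathbf u = h(x) + t\|A\mathbf u\|\,\mathbf u'.
\]
The right-hand side is precisely $\phi_{t\|A\mathbf u\|}^{\mathbf u'}\bigl(h(x)\bigr)$, since $\phi_s^{\mathbf u'}$ moves points at unit speed in direction $\mathbf u'$. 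Rearranging, we obtain the semi-conjugacy identity
\[
\phi_s^{\mathbf u'} \circ h \;=\; h \circ \phi_{s/\|A\mathbf u\|}^{\mathbf u}
\]
for every $s \in \R$.

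With this in hand, I would simply substitute $\psi' = \psi \circ h^{-1}$ into the eigenfunction equation. For $y \in S'$ with well-defined flow,
\[
\psi'\bigl(\phi_s^{\mathbf u'}(y)\bigr)
= \psi\bigl(h^{-1}\phi_s^{\mathbf u'}(y)\bigr)
= \psi\bigl(\phi_{s/\|A\mathbf u\|}^{\mathbf u}\bigl(h^{-1}(y)\bigr)\bigr)
= \psi\bigl(h^{-1}(y)\bigr) + \frac{cs}{\|A\mathbf u\|} \pmod{\Z},
\]
which is exactly the eigenfunction equation for $\psi'$ at eigenvalue $c' = c/\|A\mathbf u\|$.

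There is essentially no obstacle beyond verifying that $h$ carries flow lines to flow lines in the manner described: this is immediate away from the singular set $\Sigma$ from the affine structure, and extends by continuity (and by $h$ mapping $\Sigma$ to the singular set of $S'$) to all points where the flow is defined. Continuity and measurability of $\psi'$ pass through from $\psi$ because $h$ is a homeomorphism, so no regularity is lost. The only substantive content of the statement is the rescaling factor $\|A\mathbf u\|$ coming from the fact that $h$ pushes the unit speed flow in direction $\mathbf u$ to the speed-$\|A\mathbf u\|$ flow in direction $\mathbf u'$, which must be compensated by a corresponding rescaling of the eigenvalue when we re-parametrize to unit speed on $S'$.
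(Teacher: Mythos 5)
Your proof is correct, and it is the direct verification the paper had in mind — in fact the paper leaves this proposition to the reader with no argument given, so there is nothing to compare beyond noting that your semi-conjugacy identity $\phi_s^{\mathbf u'} \circ h = h \circ \phi_{s/\|A\mathbf u\|}^{\mathbf u}$ is exactly the fact the authors implicitly rely on. The key observation you isolate (that $h$ carries unit-speed flow lines in direction $\mathbf u$ to speed-$\|A\mathbf u\|$ flow lines in direction $\mathbf u'$, forcing the compensating rescaling of the eigenvalue) is precisely the content of the statement, and the chain of equalities substituting $\psi' = \psi \circ h^{-1}$ into the eigenfunction equation is complete and correct. Your remark that the semi-conjugacy holds away from $\Sigma$ and on all points with well-defined flow is the right level of care, since the eigenfunction equation is only required where the flow is defined.
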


We leave the proof to the reader. Applying this in the setting of a pseudo-Anosov yields:

\begin{Prop}\label{prop:normalization:zbeta} Given a translation surface $S$ with an affine pseudo-Anosov $\rho$ with Pisot eigenvalue $\beta$, there is an $g \in GL(2, \R)$ such that  $$ D(\rho_g) = g(D \rho)g^{-1} = \begin{pmatrix} \beta & 0 \\ 0 & \beta^{-1} \end{pmatrix},$$ where $$\rho_g = g \rho g^{-1}: g\cdot S \rightarrow g \cdot S$$ is a pseudo-Anosov map on the surface $g\cdot S$ and that the collections of eigenvalues in the horizontal and vertical directions, $E(\rho_g)$ and $E(\rho_g^{-1})$,
are both additive subgroups of the ring of integers of the holonomy field $K_S$.
Furthermore, if $S$ is defined over its holonomy field $K_S=\Q(\beta)$, we can assume $g \in \GL(2, K_S)$.
\end{Prop}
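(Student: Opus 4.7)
My plan is to produce $g$ in two stages. Since $D\rho$ is hyperbolic with distinct eigenvalues $\beta, \beta^{-1}$, its unstable and stable eigenvectors $w^u, w^s$ span $\R^2$, and any $g_0 \in \GL(2,\R)$ carrying them (up to positive scale) to the horizontal and vertical axes diagonalizes $D\rho$. To additionally arrange that $g \cdot S$ be defined over $K_S$, I start from any $A \in \GL(2,\R)$ with $A \cdot S$ defined over $K_S$, which exists by the definition of the holonomy field. The conjugate $A(D\rho)A^{-1}$ has entries in $K_S = \Q(\beta)$, and since both eigenvalues $\beta, \beta^{-1}$ lie in $K_S$, its eigenvectors may be chosen in $K_S^2$. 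A matrix $B \in \GL(2, K_S)$ then diagonalizes $A(D\rho)A^{-1}$, and $g = BA$ has the required properties. If $S$ is already defined over $K_S$ one may take $A = I$ so $g = B \in \GL(2, K_S)$, establishing the \emph{furthermore} clause.

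With such a $g$ fixed, $\rho_g$ is an affine pseudo-Anosov on $g \cdot S$ with horizontal unstable direction, and $\rho_g^{-1}$ is a pseudo-Anosov with the two invariant directions swapped but the same expansion factor $\beta$. Applying Theorem~\ref{thm:asymptotic} to $\rho_g$, the eigenvalue group $E(\rho_g) = \FA(\rho_g)$ is free abelian of rank $d$, isomorphic via $\alpha$ to the integer lattice $V_\Z$ inside the $\C$-span $V$ of $\eta^u_g$ together with its Galois conjugates. Because $g \cdot S$ is defined over $K_S$, the horizontal-holonomy class $\eta^u_g$ sends integer cycles into $K_S$. Writing an integer class $v \in V_\Z$ in the $\C$-basis of $V$ consisting of the $\sigma(\eta^u_g)$, Galois-invariance of $v$ forces the $\sigma(\eta^u_g)$-coefficient to equal $\sigma(c)$ where $c = \alpha^{-1}(v)$; pairing with any $\gamma \in H_1(S;\Z)$ then gives $v(\gamma) = \operatorname{Tr}_{K_S/\Q}\bigl(c \cdot \eta^u_g(\gamma)\bigr) \in \Z$. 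In particular $c \in K_S$, so $E(\rho_g) \subset K_S$; the analogous argument applied to $\rho_g^{-1}$ using $\eta^s_g$ yields $E(\rho_g^{-1}) \subset K_S$.

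Under $\alpha^{-1}$ the action of $\rho^\ast$ on $V_\Z$ becomes multiplication by $\beta$ on $E(\rho_g)$, and similarly by $\beta^{-1}$ on $E(\rho_g^{-1})$. Since $\beta$ is an algebraic integer and $\beta^{-1}$ is algebraic, both $E(\rho_g)$ and $E(\rho_g^{-1})$ are finitely generated $\Z[\beta]$-submodules of $K_S$, hence fractional ideals of $\mathcal O_{K_S}$. To secure integrality I will rescale $g$ by a matrix of the form $\operatorname{diag}(a, b)$ with $a, b \in K_S^\times$: such a rescaling preserves both the diagonal form of $D(\rho_g)$ and the property that the holonomy field equals $K_S$, while rescaling $E(\rho_g)$ and $E(\rho_g^{-1})$ independently by $a^{-1}$ and $b^{-1}$. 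Choosing $a, b$ to clear the denominators of finite generators of the two fractional ideals makes both $E(\rho_g)$ and $E(\rho_g^{-1})$ into additive subgroups of $\mathcal O_{K_S}$, completing the argument.

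The main obstacle I foresee is making the Galois-trace identity in the middle paragraph fully rigorous: one must verify that the integer sublattice $V_\Z \subset V$ really has Galois-symmetric coordinates in the Galois-orbit basis of $V$, that Galois commutes with evaluation on integer cycles, and that the coefficient $c$ thus obtained genuinely lies in $K_S$ and not merely in a larger extension. Once these standard points are secured, the remainder reduces to a routine rescaling argument based on the structure of fractional ideals in $\mathcal O_{K_S}$ and to checking that the scaling $\operatorname{diag}(a,b)$ for $a,b \in K_S^\times$ indeed decouples horizontal and vertical behaviour.
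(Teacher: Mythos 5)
Your proof is correct and takes a genuinely different route from the paper's in the central step. To establish $E(\rho_g) \subset K_S$, the paper invokes a classical Pisot--Vijayaraghavan theorem: from $c\,\eta^u$ being forward asymptotic to an integer class one deduces $\|c\beta^n\eta^u(\gamma)\|_{\Z} \to 0$, and then cites Cassels to conclude $c\,\eta^u(\gamma) \in \Q(\beta)$. You instead observe that after the normalization $\eta^u$ has entries in $K_S$, so the $\C$-basis $\{\sigma(\eta^u)\}_\sigma$ of $V$ is a Galois orbit; rationality of $v \in V_\Z$ forces the coefficients of $v$ in that basis to be a Galois orbit of a single element $c \in K_S$, and this $c$ is exactly $\alpha^{-1}(v)$. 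This is a clean linear-algebra argument that avoids the PV theorem entirely (and notably does not use the Pisot hypothesis at this step at all --- only that $\beta$ has degree $d$, $V$ has dimension $d$, and $\eta^u$ lies in $K_S^{2g}$). The trace formula $v(\gamma) = \operatorname{Tr}_{K_S/\Q}\bigl(c\,\eta^u(\gamma)\bigr)$ is a pleasant consequence but isn't actually needed for the conclusion; the containment $c \in K_S$ already follows from the coefficient of the identity embedding being fixed by $\operatorname{Gal}(L/K_S)$, so you can safely deflate the worry you raise in your final paragraph.

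Two smaller points. First, your claim that $E(\rho_g)$ and $E(\rho_g^{-1})$ are fractional ideals of $\mathcal O_{K_S}$ overreaches: they are $\Z[\beta,\beta^{-1}]$-modules (with $\beta^{-1}$ an algebraic integer, hence $\Z[\beta]$-fractional ideals), but $\Z[\beta]$ need not be the full ring of integers --- this is exactly the subtlety behind the paper's Question~\ref{ques:ring}. It does not damage your argument, since a finite-rank additive subgroup of $K_S$ can always be scaled into $\mathcal O_{K_S}$, but the phrasing should be weakened. Second, and relatedly, the paper's rescaling uses plain integers $m_u, m_s \in \Z_{>0}$ for the diagonal entries rather than general $a,b \in K_S^\times$; the integer version is both sufficient and simpler, and sidesteps any discussion of ideals at all.
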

\begin{proof}
    Applying an element of $\GL(2,\R)$ to $S$, we can assume that $S$ is defined over $K_S$. Since $\beta$ is Pisot, $D \rho$ is diagonalizable over $K_S$, so we may assume by applying the conjugating matrix that $D \rho$ has the diagonal form of $D(\rho_g)$ above.

    Now consider $FA(\rho)$ from \eqref{eq:E}, which equals $E(\rho)$ by Theorem \ref{thm:asymptotic}. Recall that $\eta^u$ measures holonomy in the unstable direction. From the normalizations of the first paragraph, $\eta^u(\gamma)$ is the
    $x$-component of holonomy, which lies in $\Q(\beta)$ when $\gamma \in H_1(S; \Z)$. Observe
    $$(\rho^\ast)^n(c\eta^u)(\gamma) = c \beta^n \eta^u(\gamma).$$
    For $x \in \R$, let $\|x\|$ denote the minimal distance from an integer. If $c \eta^u$ is
    forward asymptotic to an integer class, then for any $\gamma \in H_1(S; \Z)$ we have
    $\|c \beta^n \eta^u(\gamma)\| \to 0$ as $n \to +\infty$. It then follows from work of Pisot and Vijayaraghavan \cite{Cassels}*{Ch. 8} that $c \eta^u(\gamma) \in \Q(\beta)$.
    Since $\eta^u(\gamma) \in \Q(\beta)$, we can show (by choosing $\gamma$ such that $\eta^u(\gamma) \neq 0$) that $c \in \Q(\beta)$. We have shown $E(\rho) \subset \Q(\beta)$. Since $E(\rho)$ is a finite-rank abelian group in $\Q(\beta)$, there is an integer $m_u$ such that $m_u E(\rho)$ is contained in the ring of integers.

    Applying the same argument, we see that there is an integer $m_s$ such that $m_s E(\rho^{-1})$ is contained in the ring of integers. Then applying the diagonal matrix with entries $1/m_u$ and $1/m_s$, we get a surface whose horizontal and vertical eigenvalues are contained in $\Q(\beta)$.
\end{proof}

\begin{proof}[Proof of Corollary \ref{cor:pisot}]
    Assume $S$ is defined over its holonomy field $K_S=\Q(\beta)$. Let $\mathbf v = (v_1, v_2, v_3)$ be the flow direction on the prism $M = S \times (\R/\Z)$. By assumption $(v_1,v_2)$ is an unstable eigenvector for $\rho$. Let $g \in \GL(2, K_S)$ be as in the previous proposition.
    Then the flow in direction $\mathbf v$ in $M$ is minimal if and only if the flow in direction $\mathbf v' = \big(g(v_1, v_2); v_3\big)$ in $M' = (g \cdot S) \times (\R/\Z)$ is minimal.
    Since $g \in \GL(2, K_S)$, we have that a scalar multiple of $\mathbf v$ is in $\Q(\beta)^3$
    if and only if $\mathbf v'$ is in $\Q(\beta)^3$. From the definition of $g$, we have $v'=(x', 0, z')$ for some $x',z' \in \R$ and so these conditions are equivalent to the condition that $\frac{z'}{x'} \in \Q(\beta)$. 

    Observe that $\mathbf v'$ is proportional to $(1, 0, \frac{z'}{x'})$. Assume $\frac{z'}{x'} \in \Q(\beta)$. Using the previous proposition, we find an integer $k \geq 1$ such that $\frac{kz'}{x'} \in E(\rho)$. Let $\psi:S \to \R/\Z$ be the associated eigenfunction which satisfies $\psi \circ \phi_t = \frac{kz't}{x'} + \psi$.
    Then the function $F: M' \to \R/\Z$ sending $(p', w)$ to $w - \frac{1}{k} \psi(p')$ is invariant under flow in direction $\mathbf v'$. Thus, this flow is not minimal.

    For the converse, observe that if the flow in direction $\mathbf v'$ is not minimal, then it is not ergodic. Furstenberg's criterion for flows (Lemma~\ref{lemma:furstenberg:flow}) tells us that there is an integer $k \neq 0$ such that $k \frac{z'}{x'}$ is an eigenvalue for the flow in the horizontal direction on $S'$. But then $k \frac{z'}{x'} \in \Q(\beta)$ by the previous proposition again.
\end{proof}

\paragraph*{\bf The ring of integers} 
    A natural question is to what extent the normalization of Proposition \ref{prop:normalization:zbeta} can be chosen so that the groups of eigenfunctions for the horizontal and vertical flows on $g \cdot S$ can be arranged to be equal to the full ring of integers in the holonomy field $K_S=\Q(\beta).$ This can be arranged for all the examples in the appendix (regular $n$-gon surfaces), but we will explain why this might fail for regular $n$-gons with $n$ large.

    \paragraph*{\bf Regular $n$-gons} We focus on translation surfaces associated to the
    regular $n$-gon with $n \geq 3$. Let $a=2 \cos \frac{\pi}{n}$ if $n$ is odd or $a=2 \cos \frac{2\pi}{n}$ if $n$ is even. Then the holonomy field of $S_n$ is $\Q(a)$. It is well known that the ring of integers in $\Q(a)$ is $\Z[a]$. Using ideas related to real multiplication following McMullen \cite{McMullen03}, we have:

    \begin{Prop}\label{prob:normalization:a}
    With the notation of \ref{prop:normalization:zbeta} with $S=S_n$, the groups of eigenvalues for the $E(\rho_g)$ and $E(\rho_g^{-1})$ of eigenfunctions for the horizontal and vertical flows on $g \cdot S$ are ideals in the ring of algebraic integers $\Z[a]$.
    \end{Prop}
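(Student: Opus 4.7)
The plan is to combine the containment $E(\rho_g) \subset \Z[a]$ provided by Proposition \ref{prop:normalization:zbeta} with closure of $E(\rho_g)$ under multiplication by all of $\Z[a]$; the two together will say that $E(\rho_g)$ is an ideal. The argument for $E(\rho_g^{-1})$ is identical, using $\Q(\beta^{-1}) = \Q(\beta) = \Q(a)$. Closure under multiplication by $\Z[\beta]$ is built into the definition of $E(\rho_g) = \FA(\rho_g)$: if $c \eta^u$ is forward asymptotic to $\alpha(c) \in H^1(S_n; \Z)$, then applying $\rho^\ast$ shows $\beta c \eta^u$ is forward asymptotic to $\rho^\ast \alpha(c)$, so $\beta c \in E(\rho_g)$ with $\alpha(\beta c) = \rho^\ast \alpha(c)$. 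Since $\Z[\beta]$ may in general be a proper subring of the full ring of integers $\Z[a]$, upgrading to $\Z[a]$-closure requires an additional source of endomorphisms on cohomology.

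This additional source is the real multiplication structure on the relevant factor of the Jacobian of $S_n$ coming from its affine rotational symmetry, following the framework of McMullen~\cite{McMullen03}. Concretely, I would use an affine automorphism $\sigma \in \Aff(S_n)$ whose derivative is a rotation by an angle $\theta$ with $2\cos\theta = a$; such a $\sigma$ exists by virtue of the rotational symmetry of the regular $n$-gon (of order $n$ if $n$ is even, or of order $2n$ on the double $n$-gon model if $n$ is odd). The integral endomorphism $m(a) := \sigma^\ast + (\sigma^\ast)^{-1}$ of $H^1(S_n; \Z)$ acts on the tautological plane $\Span\{\re\,\omega_n, \im\,\omega_n\}$ as the scalar $a$, since $\sigma^\ast$ restricts there to rotation by $\pm\theta$ and $2\cos\theta = a$. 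Extending to polynomial combinations yields a ring homomorphism $\Z[a] \to \mathrm{End}\big(H^1(S_n; \Z)\big)$. By Galois equivariance of the subspace $V$ (defined as the smallest Galois-stable rational subspace containing $\R \eta^u$), the endomorphism $m(a)$ preserves $V$ and hence $V_\Z$, acting on the $\beta$-eigenspace $L_\C$ spanned by $\eta^u$ as the scalar $a$. Transporting this through the isomorphism $\alpha$ of Theorem~\ref{thm:asymptotic} yields $a \cdot E(\rho_g) \subset E(\rho_g)$, and combined with the $\Z[\beta]$-closure this gives $\Z[a] \cdot E(\rho_g) \subset E(\rho_g)$, completing the verification that $E(\rho_g)$ is an ideal.

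The main obstacle is the rigorous justification of the compatibility between the $\sigma^\ast$-action and the $\rho^\ast$-eigenspace decomposition of $V$, ensuring that $m(a)$ acts on each $\rho^\ast$-eigenspace as the appropriate Galois conjugate of $a$ and in particular as $a$ itself on $L_\C$. This compatibility rests on a Galois-equivariant description of the real multiplication: both $\rho^\ast$ and $\sigma^\ast$ must preserve a Galois-stable decomposition of $V_\C$ into lines indexed by the embeddings $\Q(a) \hookrightarrow \C$, acting on the line corresponding to the identity embedding by $\beta$ and by $e^{\pm i\theta}$ respectively, so that $m(a)$ acts there as $a$. The same decomposition can be verified explicitly for each of the concrete $n$ appearing in Appendix~\ref{appendix:ngons}, confirming the proposition in those cases, while for general $n$ the argument reduces to the Hodge-theoretic identification of $V$ with a real multiplication factor of the Jacobian as in \cite{McMullen03}.
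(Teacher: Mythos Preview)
Your proposal is essentially correct and follows the same core strategy as the paper: construct an integer endomorphism of $H^1(S_n; \Z)$ that acts on holonomy as the scalar $a$, then deduce $a \cdot E(\rho_g) \subset E(\rho_g)$ via the forward-asymptoticity description of $E(\rho_g)$. The difference is purely in how this endomorphism is built. The paper writes down explicit polynomial identities in the Veech group's parabolic generators, namely $aI = AB + BA - 2B$ for $n$ odd and $aI = -2I - (A-B)^2$ for $n$ even, and takes the corresponding integer combination of actions on homology to obtain an $L$ with $\eta \circ L = a\eta$. You instead use the rotational symmetry $\sigma$ and the combination $m(a) = \sigma^\ast + (\sigma^{-1})^\ast$. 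Both produce the required endomorphism; the paper's identities have the small advantage of being immediately verifiable at the level of $2 \times 2$ derivative matrices, with no need to sort out exactly which rotation order is present for each parity of $n$.

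Your stated ``main obstacle''---that $m(a)$ must act on the $\beta$-line $L_\C$ as the scalar $a$ and preserve the decomposition $V = L_\C \oplus W$---has a much more elementary resolution than the Hodge-theoretic route you sketch. Once you know $m(a)$ is an integer matrix and $m(a)\eta^u = a\eta^u$ (which follows directly from $D\sigma + D\sigma^{-1} = aI$), Galois equivariance of integer matrices forces $m(a)$ to act on each algebraic conjugate of $\eta^u$ by the corresponding conjugate of $a$; in particular $m(a)$ preserves both $V$ and $W$. Then $c\eta^u - \varphi \in W$ implies $ac\eta^u - m(a)\varphi = m(a)(c\eta^u - \varphi) \in W$, and $m(a)\varphi$ is integral, so $ac \in \FA(\rho_g)$. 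No real-multiplication machinery on Jacobian factors is required. The paper's proof is terser still, asserting in one line that $ac\eta^u = c\eta^u \circ L$ is forward asymptotic to $\varphi \circ L$; this implicitly uses exactly the same Galois argument. Also note that your detour through $\Z[\beta]$-closure is unnecessary: since $\Z[a]$ is generated as a ring by $a$, closure under multiplication by $a$ alone already gives the ideal property.
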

    \begin{proof}
        Since $E(\rho_g)$ is a subgroup of $\Z[a]$, it suffices to check that multiplication by an element $x \in \Z[a]$ satisfies $x E(\rho_g) \subset E(\rho_g)$. Since $\Z[a]$
        is generated by $a$, it suffices to check that $a E(\rho_g) \subset E(\rho_g)$.
        
        Let $\eta:H_1(S; \Z) \to \R^2$ be the holonomy map. We observe that there is an endomorphism $L:H_1(S; \Z) \to H_1(S; \Z)$ such that $\eta \circ L=a \eta$. Such an $L$ can be produced by an integer-weighted sum of actions on $H_1(S; \Z)$ coming from affine automorphisms. To see this, observe that
        $$a I = \begin{cases}
            AB + BA - 2B & \quad \text{when $n$ is odd}, \\ 
            -2 I - (A - B)^2 & \quad \text{when $n$ is even}, \\ 
        \end{cases}$$
        where $A$ and $B$ are the Veech group generators from \eqref{eq:Veech odd} or \eqref{eq:Veech even}, respectively.
    
        We now claim that $c \in E(\rho)$ implies $ac \in E(\rho)$. To see this suppose $c \in E(\rho)$. Then $c \eta^u$ is forward asymptotic to some integer class $\varphi$. Observe that
        $ac \eta^u =c \eta^u \circ L$ is forward asymptotic to the integer class $\varphi \circ L$, so $ac \in E(\rho)$.
    \end{proof}

\paragraph*{\bf Principal ideals}    Recall that an ideal is {\em principal} if it is generated by a single element. If both     $E(\rho_g)$ and $E(\rho_g^{-1})$ are principal, then we can further multiply $g \cdot S$ by a diagonal matrix $D$ whose diagonal entries are these generators to ensure that $D \cdot g \cdot S$ has horizontal and vertical eigenvalues given precisely by $\Z[a]$. If $\Z[a]$ is a \emph{principal ideal domain} (equivalently has class number one), then all ideals are principal and it follows that we can arrange that the normalized eigenvalues are precisely the ring of integers $\Z[a]$. This holds for $n$ small \cite{derLindenFranciscus} (at least $n \leq 35$), but not in general. 

    \begin{ques}\label{ques:ring}
        Is it true that for every pseudo-Anosov $\rho:S_n \to S_n$ with Pisot expansion factor, the group of eigenvalues $E(\rho)$ is a constant multiple of the ring of integers of the holonomy field, $\Z[a]$?
    \end{ques}

\section{Genus 3 examples}\label{sec:genus3} 
\paragraph*{\bf Arnoux-Yoccoz surfaces} As we have mentioned, Arnoux~\cite{ArnouxBSMF88} constructed an explicit genus $3$ example of a pseudo-Anosov map on a genus $3$ surface with expansion factor $1/\alpha$, where $\alpha^3+\alpha^2+\alpha= 1$, and showed how to simultaneous semi-conjugate this (along with its unstable flow) to an explicit hyperbolic automorphism of the $3$-torus and a flow in an irrational direction on this $3$-torus, which shows that this flow is not weak-mixing (and in fact, builds an at least rank $3$ group of eigenvalues). This example is not a lattice surface, and does not arise from billiards. It has many extraordinary properties, including the fact that the semi-conjugacy in fact conjugates the first return map to an appropriately chosen interval of the stable foliation to a toral translation, in particular, the associated IET is also non weak-mixing. As we will see below, this is different from our situation, as the interval exchange transformation we study in our example on the double heptagon surface is indeed weak-mixing. We also note (as we mentioned above) that Arnoux's construction generalizes to higher-genus surfaces, to pseudo-Anosov maps with expansion factor $\frac{1}{\alpha_n}$, where  $\alpha^n + \alpha^{n-1} + \ldots +\alpha^2+\alpha= 1$.

\paragraph*{\bf Do-Schmidt examples} As we discussed in the introduction, there is in fact a large family of examples of pseudo-Anosov map on translation surface of genus $3$, found by Do-Schmidt~\cite{DoSchmidt}*{Theorem 2}. They showed that for each integer $k \geq 2$, there exist genus $3$ translation surfaces (in the hyperelliptic component of the stratum $\hh(2,2)$) which admit orientable pseudo-Anosov
maps with expansion factor $\beta_k$ whose minimal polynomial is given by $P_k(x) = x^3
- (2k+ 4)x^2 + (k+ 4)x -1$. Since the other eigenvalues of the polynomial $P_k$ are real and have absolute value less than $1$, $\beta_k$ is Pisot, and so we can produce explicit non-weak mixing directions for the linear flows in the unstable direction for these surfaces.

\paragraph*{\bf Degrees of field extensions} We now show a bound of the degree of potential Pisot expansion factors for pseudo-Anosov maps on genus $g$ surfaces. The interactions between the degrees of expansion factors, traces, and genus have been studied by many authors, see for example the work of Kenyon-Smillie~\cite{KenyonSmillie}*{Appendix 7} and \cite{GutkinJudge}*{\S 7}.

\begin{lemma}\label{lemma:pisotbound} Let $\beta$ be the expansion factor for a pseudo-Anosov map $\rho$ of a genus $g>1$ surface and suppose $\beta$ is Pisot. Then $$\deg_{\Q}(\beta) \le g.$$
\end{lemma}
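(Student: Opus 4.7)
The plan is to bound $d = \deg_{\Q}(\beta)$ using the action of $\rho$ on integer cohomology. Because $\rho^\ast$ preserves the symplectic intersection pairing on $H^1(S,\Z) \cong \Z^{2g}$, it lies in $\mathrm{Sp}(2g,\Z)$, and its characteristic polynomial $\chi(x) \in \Z[x]$ has degree $2g$ and is reciprocal: $\chi(x) = x^{2g}\chi(1/x)$. Since the tautological plane $\langle \re\omega, \im\omega\rangle \subset H^1(S,\R)$ is $\rho^\ast$-invariant with $\rho^\ast$ acting on it as (the transpose of) $D\rho$, both $\beta$ and $\beta^{-1}$ are roots of $\chi$. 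In particular, the minimal polynomial $p(x) \in \Z[x]$ of $\beta$, of degree $d$, divides $\chi$; moreover, since the constant term of $\chi$ is $\pm 1$, so is $p(0)$, so $\beta$ is an algebraic unit and the minimal polynomial $q(x)$ of $\beta^{-1}$ also has degree $d$ and divides $\chi$.

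The main step is to show that $p \neq q$ when $d \geq 3$, so that $pq \mid \chi$ yields $2d \leq 2g$. Write the Galois conjugates of $\beta$ as $\beta_1 = \beta, \beta_2, \ldots, \beta_d$; then the conjugates of $\beta^{-1}$ are $\beta_1^{-1}, \ldots, \beta_d^{-1}$. If $p = q$, these multisets coincide, so the conjugate set of $\beta$ is closed under inversion. The Pisot hypothesis gives $|\beta_1| > 1$ and $|\beta_i| < 1$ for $i \geq 2$; hence each $\beta_i^{-1}$ with $i \geq 2$ has modulus greater than one and must equal the unique such conjugate, namely $\beta_1$. This collapses all $\beta_i$ with $i \geq 2$ to the single value $\beta_1^{-1}$, forcing $d = 2$ and contradicting $d \geq 3$. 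Thus $p$ and $q$ are distinct irreducible factors of $\chi$ in $\Z[x]$, and $2d \leq 2g$.

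For the remaining small-degree cases: $d = 1$ would make $\beta$ a rational integer that is also an algebraic unit, forcing $\beta = \pm 1$ and contradicting $|\beta| > 1$; and $d = 2 \leq g$ follows from the hypothesis $g \geq 2$. The only point I expect to need careful justification, and which is standard given references such as \cite{FarbMarg} or \cite{FLP}, is that $\rho^\ast$ acts symplectically on $H^1(S,\Z)$ with the tautological plane as an invariant subspace realizing the eigenvalues $\beta$ and $\beta^{-1}$ of $D\rho$; the rest of the argument is then pure algebra on the characteristic polynomial.
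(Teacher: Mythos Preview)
Your proof is correct and follows essentially the same approach as the paper: both use that $\rho^\ast$ acts symplectically on $H^1(S;\Z)$ to get a reciprocal characteristic polynomial of degree $2g$, and then use the Pisot hypothesis to show that $\beta^{-1}$ is not a Galois conjugate of $\beta$ (for $d \geq 3$), so that the minimal polynomials of $\beta$ and $\beta^{-1}$ are distinct irreducible factors of $\chi$, giving $2d \leq 2g$. Your organization is slightly cleaner than the paper's (which first bounds $d \leq g+1$ by locating the small-modulus conjugates and then rules out $\beta^{-1}$ via palindromicity), but the substance is identical.
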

\begin{proof} If $\beta$ is an expansion factor of a pseudo-Anosov map on $S$, write $K = \Q(\beta)$. The action of $\rho^*$ on cohomology $H^1(S, \Z)$ gives a $2g \times 2g$ symplectic integer matrix for which $\beta$ (and $1/\beta$) are eigenvalues, so $[K:\Q] \le 2g$. The zeros of the characteristic polynomial $P_{\rho}$ of $\rho^*$  occur in reciprocal pairs, since $\rho^*$ is symplectic. Write $$P_{\rho}(x) = (x-\beta)(x- \beta^{-1}) \prod_{i=1}^{g-1} (x-\alpha_i) (x- \alpha_i^{-1}),$$ where $|\alpha_i| \geq 1$. If $P_{\beta}$ is the minimal polynomial of $\beta$,  $$P_{\beta}(x) = (x-\beta)(x- \beta^{-1})^{\epsilon_0} \prod_{i=1}^{g-1} (x- \alpha_i^{-1})^{\epsilon_i},$$ where $\epsilon_i$ can take on the values $0$ or $1$, which yields $[K:\Q] \le g+1$. If $\epsilon_0 = 1$, the polynomial $P_{\beta}$ is palindromic, and all roots occur in reciprocal pairs, so if $\alpha_i^{-1}$ is a root, so is $\alpha_i$, which would mean $\beta$ is not Pisot. Thus, we have our bound $[K:\Q] \le g$.
    
\end{proof}

\paragraph*{\bf Trace fields} We note that for $K = \Q(\beta)$, the bound $[K:\Q] \le 2g$ can also be seen by a bound on the degree of the \emph{trace field} $K'$ of $S$, which is given by $\Q(\tau)$, $\tau = \beta + \frac{1}{\beta}$. We have either $$[K:K'] = 1 \mbox{ or } 2.$$ Kenyon-Smillie~\cite{KenyonSmillie}*{Lemma 27} shows that $[K': \Q] \le g$, so $[K:\Q] \le 2g$. 

\begin{Cor}\label{cor:pisotbound:genus3} Let $\rho$ be a pseudo-Anosov map of a genus $3$ surface $S$ with expansion factor $\beta$. Then $\beta$ is Pisot (and the linear flow in the unstable direction is not weak mixing) if and only if $[\Q(\beta):\Q] \le 3$.   
\end{Cor}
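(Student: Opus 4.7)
The forward direction (Pisot implies $[\Q(\beta):\Q] \leq 3$) is immediate from Lemma~\ref{lemma:pisotbound} applied with $g = 3$. For the converse, let $d := [\Q(\beta):\Q]$ and dispose of the small cases first. The case $d = 1$ is impossible: since $\beta$ and $\beta^{-1}$ are both eigenvalues of the integer matrix $\rho^*$ on $H^1(S;\Z)$, both are algebraic integers, forcing $\beta \in \{\pm 1\}$, contrary to $|\beta| > 1$. The case $d = 2$ is precisely the Franks-Rykken situation recalled in the paragraph before Corollary~\ref{cor:pisot}: the only Galois conjugate of $\beta$ is $1/\beta$, which has absolute value strictly less than $1$, so $\beta$ is Pisot.

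The substantive case is $d = 3$. Let $p(x) \in \Z[x]$ denote the minimal polynomial of $\beta$, with conjugates $\beta, \beta', \beta''$. The characteristic polynomial $P(x)$ of $\rho^*$ on $H^1(S;\Z)$ has degree $2g = 6$ and is palindromic, so its roots are closed under $x \mapsto 1/x$. As in the proof of Lemma~\ref{lemma:pisotbound}, an irreducible cubic that is equal to its own reciprocal polynomial would have $\pm 1$ as a root, which is absurd; so $1/\beta$ is not a Galois conjugate of $\beta$. The reciprocal polynomial $\tilde p(x) := x^3 p(1/x)/p(0)$ is then an irreducible integer cubic coprime to $p$, with roots $1/\beta, 1/\beta', 1/\beta''$, and it divides $P$. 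For degree reasons, $P = p\,\tilde p$, so the six eigenvalues of $\rho^*$ are exactly $\{\beta, \beta', \beta'', 1/\beta, 1/\beta', 1/\beta''\}$.

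To conclude that $\beta$ is Pisot, I would invoke the Perron-Frobenius fact used in \S\ref{sect:Pisot algebraic setup} (following \cite{Fried}): $\beta$ is the unique eigenvalue of $\rho^*$ of greatest absolute value. Applied to the eigenvalues $1/\beta'$ and $1/\beta''$, this yields $|1/\beta'|, |1/\beta''| < \beta$, i.e., $|\beta'|, |\beta''| > 1/\beta$. Combining this with the unit condition $|\beta\beta'\beta''| = |p(0)| = 1$, which gives $|\beta'\beta''| = 1/\beta$, we deduce $|\beta''| = (1/\beta)/|\beta'| < (1/\beta)/(1/\beta) = 1$, and symmetrically $|\beta'| < 1$. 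Hence $\beta$ is Pisot. The parenthetical clause on non-weak-mixing then follows from part (1) of Theorem~\ref{theorem:pisotsemiconjugacy}. The main obstacle in this proof is really only pinning down the eigenvalue structure via the palindromic characteristic polynomial; the spectral-radius input has already been cited and used earlier in the paper, so no new machinery is required.
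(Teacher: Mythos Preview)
Your proof is correct and follows essentially the same route as the paper's. Both arguments use Lemma~\ref{lemma:pisotbound} for the forward direction, and for the $d=3$ case both show $\beta^{-1}$ is not a Galois conjugate of $\beta$, factor $P_\rho=p\,\tilde p$, and finish with the norm condition $|\beta\beta'\beta''|=1$ together with the Perron--Frobenius simplicity of $\beta$ as the spectral radius of $\rho^\ast$. The only cosmetic differences are that the paper deduces non-conjugacy of $\beta$ and $\beta^{-1}$ by observing the hypothetical third root would equal $\pm 1$, whereas you use the (equivalent) fact that a self-reciprocal irreducible cubic would have $\pm 1$ as a root; and the paper phrases the final inequality as a proof by contradiction rather than your direct chain $|\beta'|>1/\beta\Rightarrow|\beta''|<1$.
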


\begin{proof} As noted in \S\ref{sec:nonergodic} if $[\Q(\beta):\Q] = 2$, Franks-Rykken~\cite{FranksRykken} showed that $S$ (of any genus) must be square-tiled, and so the flow is not weak-mixing (there is a covering map to the torus), and $\beta$ is Pisot. If $[Q(\beta):\Q] = 3$, we will show $\beta$ is in fact Pisot. Again, we write $K = \Q(\beta)$, $K' = \Q(\tau)$. If $[K:\Q]=3$, we have $[K:K'] =1$, that is, $\beta \in K'$. Thus, $\beta$ and $\beta^{-1}$ are not conjugates, since if they were, and $\beta_2$ was the third root of the common minimal polynomial of $\beta$ and $\beta^{-1}$, we would have $$\beta \beta^{-1} \beta _ 2 = 1,$$ and thus $\beta_2$ =1, which is impossible. Thus $P_{\rho}(x) = P_{\beta}(x) P_{\beta^{-1}}(x).$  The six roots of $P_{\rho}$ split into two groups, $(\beta, \alpha_1^{-1}, \alpha_2^{-1})$, $(\beta^{-1}, \alpha_1, \alpha_2)$, where \begin{equation}\label{eq:beta} \beta \alpha_1^{-1} \alpha_ 2^{-1} = \beta^{-1} \alpha_1\alpha_2 = 1 \end{equation} and 
$$\beta > \max\{ |\alpha_1|, |\alpha_2|, |\alpha_1^{-1}|, |\alpha_2^{-1}|\} > \beta^{-1}.$$ To see that $\beta$ is Pisot, note that if $|\alpha_1^{-1}| \geq 1$, then $\beta \alpha_ 2^{-1} = \alpha_1$ would imply $\beta|\alpha_2^{-1}| \le 1$, so $\beta \le |\alpha_2|$, a contradiction. The same argument of course applies to $\alpha_2^{-1}$. So we have shown $\beta$ is indeed a Pisot number. Finally, if $\beta$ is Pisot, we can apply Lemma~\ref{lemma:pisotbound} to see $[\Q(\beta):\Q]\le 3$.
    
\end{proof}

\subsubsection{Vanishing Sah-Arnoux-Fathi invariant and special pseudo-Anosovs}\label{sec:SAF} The work of Do-Schmidt~\cite{DoSchmidt}*{Theorem 1} relates the minimial polynomial of the expansion factor of (orientable) pseudo-Anosov maps to the \emph{Sah-Arnoux-Fathi}, or \emph{SAF}-invariant of the associated interval exchange map. A consequence of their result is that if $\beta$ is a Pisot expansion factor of a pseudo-Anosov map, and the degree of $\beta$ is greater than $2$ (that is, the underlying surface is not square-tiled) then the SAF invariant vanishes. So in particular, if a pseudo-Anosov direction is not weak mixing, it must have vanishing SAF invariant. Further results in this direction (with many explicit examples) have been obtained by Winsor~\cite{Winsor}.

\subsection{Non-ergodic directions}\label{sec:nonergodicdirections} 
We now turn to proving the last part of Theorem~\ref{theorem:prisms}, namely producing explicit non-ergodic directions for the prism flow on the prism over $\omega_7$. Let $a=2\cos(\pi/7)$. The minimal polynomial for $a$ is $$p_a(x) = x^3-(x^2+2x-1).$$ The matrix $$\begin{pmatrix}
1&a\\a&a^2+1
\end{pmatrix}$$ has maximal eigenvalue $\beta=a^2+a$, with eigenvector $(1, a^2-1)$. The other eigenvalue is $1/\beta = 2-a$. The number $\beta$ is a Pisot number with minimal polynomial $$p_{\beta}(x) = x^3-6x^2+5x-1.$$ 
It is the expansion factor of a pseudo-Anosov map defined on this surface by Boulanger~\cite{Boulanger}.
We note that this is the same polynomial as the $k=1$ Do-Schmidt polynomial, however, their construction yields a pseudo-Anosov map of a surface in (the hyperelliptic component of) $\hh(2, 2)$, whereas our example is for a surface in $\hh(4)$.

%


\paragraph*{\bf Staircase model} Consider $u=a/(a^2-1)=a-1$
and $y=1/(a^2-1)=a^2-a-1$.
It defines a surface, see Figure \ref{fig:double-hepta}, which is a staircase surface in the $GL^+(2,\R)$-orbit of the double heptagon, with the matrix $$\begin{pmatrix}
    1+a/2&1+a/2\\ -\sin(\pi/7)&\sin(\pi/7)
\end{pmatrix}$$ mapping the staircase to the double regular heptagon, see for example Boulanger~\cite{Boulanger} (or the PhD thesis of T.Monteil~\cite{Monteil}*{Figure 10}).
\begin{figure}[h!]
\begin{tikzpicture}[scale=.75]
\draw (-.5,1)--(-.5,1.8)--(0,1.8)--(1,1.8)--(1,1)--(1.8,1)--(1.8,-.5)--(1,-.5)--(1,0)--(0,0)--(0,1)--cycle;
\draw[dashed] (0,1)--(1,1);
\draw[dashed] (0,1)--(0,1.8);
\draw[dashed] (1,0)--(1,1);
\draw[dashed] (1,0)--(1.8,0);
\draw (-.3,.9) node{\tiny $y$};
\draw (1.1,1.5) node{\tiny $u$};
\draw (.5,-.2) node{\tiny $1$};

\draw (2.1,-.2) node{\tiny $y$};
\draw (1.4,-.8) node{\tiny $u$};
\draw (2.1,.5) node{\tiny $1$};
\end{tikzpicture}
\begin{tikzpicture}[scale=.75]
\draw (-.5,1)--(-.5,1.8)--(0,1.8)--(1,1.8)--(1,1)--(1.8,1)--(1.8,-.5)--(1,-.5)--(1,0)--(0,0)--(0,1)--cycle;
\draw[red] (-.5,1.8)--(0,1.8)--(1,1)--(1.8,0)--(1.8,-.5)--(1,0)--(0,1)--cycle;
\end{tikzpicture}
\begin{tikzpicture}[scale=.75]
\draw[red] (-.5,1.8)--(0,1.8)--(1,1)--(1.8,0)--(1.8,-.5)--(1,0)--(0,1)--cycle;
\draw[blue] (0,0)--(-.8,1);
\draw[blue, dashed] (1,0)--(0,0);
\draw[blue, dashed] (1,0)--(1,-.8);
\draw[blue] (1,-.8)--(0,0);
\draw[blue, dashed] (0,1)--(-.8,1);
\draw[blue] (-.8,1)--(-.8,1.5);
\draw[blue] (-.8,1.5)--(0,1);
\draw[blue, dashed] (1,-.8)--(1,0);
\draw[blue] (1,-.8)--(1.8,-.8)--(1,0);
\draw[blue, dashed] (0,0)--(0,1);
\end{tikzpicture}
\begin{tikzpicture}[scale=.75]
\draw (0,0)--(1,0)--++(51.4:1)--++(102.8:1)--++(154.2:1)--++(205.6:1)--++(257:1)--cycle;
\draw (0,0)--(1,0)--++(-51.4:1)--++(-102.8:1)--++(-154.2:1)--++(-205.6:1)--++(-257:1)--cycle;
\end{tikzpicture}
\caption{Surface of the double heptagon}\label{fig:double-hepta}
\end{figure}
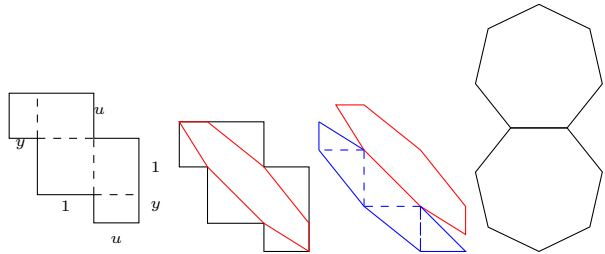


\begin{figure}[h!]
\begin{center}

\begin{tikzpicture}[scale=3]
\draw (0,0)--(0,1)--(-.44,1)--(-.44,1.81)--(0,1.81)--(1,1.81)--(1,1);
\draw (0,0)--(1,0)--(1,-.44)--(1.81,-.44)--(1.81,0)--(1.81,1);
\draw[dashed] (0,0)--(0,1)--(1,1)--(1,0);
\draw[] (1,1)--(1.81,1);
\draw[dashed] (0,1)--(0,1.81);
\draw[dashed](1,0)--(1.81,0);

\draw[red] (0,1)--++(2.41*7.5/10,-7.5/10);

\draw[dotted] (0,.25)--++(1.81,0);
\draw[red] (0,.25)--++(2.41/4,-1/4);

\draw[dotted] (.6,0)--++(0,1.9);
\draw[red] (.6,1.81)--++(2.41/6,-1/6);

\draw[dotted] (1,1.65)--++(-1.44,0);
\draw[red] (-.44,1.65)--++(2.41/10,-1/10);

\draw[dotted] (-.44,1.18)--++(1.44,0);

\draw[green] (0,1.81)--++(-1/3,-2.41/3);
\draw[green] (-.32,1.81)--++(-1/8.3,-2.41/8.3);
\draw[dotted] (-.32,1)--++(0,.81);

\draw[green] (1,1.81)--++(-1/16,-2.41/16);
\draw[green] (1,1)--++(-1/6.5,-2.41/6.5);
\draw[green] (1.81,1)--++(-1/3.8,-2.41/3.8);
\draw[green] (1.81,0)--++(-1/5.5,-2.41/5.5);
\draw[dotted] (1.63,-0.44)--++(0,1.44);
\draw[green] (1.63,1)--++(-1/4,-2.41/4);

\draw[blue] (-.44,1)--++(1/3,2.41/3);
\draw[blue, dashed] (0,1)--++(1/3,2.41/3);
\draw[dotted] (-.1,1.81)--++(0,-.81);

\draw[blue] (-.1,1)--++(1/3,2.41/3);

\draw[dotted] (.33,1.81)--++(0,-2);

\draw[blue] (.33,0)--++(1/23,2.41/23);
\draw (0.23,0)--++(1/17,2.41/17);
\draw[dotted] (0.23,0)--++(0,2);
\draw[blue] (0,0)--++(1/11,2.41/11);
\draw[blue] (1,0)--++(1/4.9,2.41/4.9);
\draw[blue] (1,-.44)--++(1/2.72,2.41/2.72);

\fill (0,0) circle(.05);
\fill (0,1) circle(.05);
\fill (1,0) circle(.05);
\fill (1.81,0) circle(.05);
\fill (1.81,-.44) circle(.05);
\fill (1,-.44) circle(.05);
\fill (-.44,1) circle(.05);
\fill (-.44,1.81) circle(.05);
\fill (0,1.81) circle(.05);
\fill (1,1.81) circle(.05);
\fill (1,1) circle(.05);
\fill (1.81,1) circle(.05);

\fill[pink, opacity=.5] (1.63,-.44)--(1.81,-.44)--(1.81,0)--cycle;
\fill[pink, opacity=.5] (1,0)--(1,-.44)--(1.367,0.446)--(1.205,.491)--cycle;
\fill[pink, opacity=.5] (1.63,1)--(1.81,1)--(1.546,.365)--(1.38,.42)--cycle;

\fill[blue, opacity=.5] (1,-.44)--(1.63,-.44)--(1.81,0)--(1.807,.25)--(1.36,.43)--cycle;

\fill[blue, opacity=.5] (1,1)--(1.63,1)--(1.38,.42)--(.846,.64)--cycle;
\fill[blue,opacity=.5] (0,0)--(0,.25)--(.092,.22)--cycle;

\fill[red, opacity=.5] (.23,0)--(.33,0)--(.37,.09)--(.28,.15)--cycle;
\fill[red, opacity=.5] (-.4,1.64)--(-.44,1.65)--(-.44,1.81)--(-.32,1.81)--cycle;
\fill[red, opacity=.5] (-.44,1)--(-.34,1)--(0,1.81)--(-.1,1.81)--cycle;
\fill[red, opacity=.5] (-.1,1)--(0,1)--(.34,1.81)--(.22,1.81)--cycle;
\fill[red, opacity=.5] (1,1.81)--(1,1.63)--(.937,1.67)--cycle;
\fill[yellow, opacity=.5] (0,1)--(0,.25)--(.092,.22)--(.363,.841)--cycle;
\fill[yellow, opacity=.5] (1.81,.25)--(1.81,1)--(1.546,.365)--cycle;

\fill[green, opacity=.5] (1,1.65)--(1,1)--(.846,.64)--(0,1)--(.34,1.81)--(.6,1.81)--cycle;
\fill[green, opacity=.5] (.33,0)--(.6,0)--(.37,.09)--cycle;
\fill[green, opacity=.5] (-.44,1.65)--(-.44,1)--(-.21,1.55)--cycle;

\fill[yellow, opacity=.5] (.6,1.81)--(1,1.81)--(.937,1.67)--cycle;
\fill[yellow, opacity=.5] (.092,.22)--(.6,0)--(1,0)--(1.2,.49)--(.36,.85)--cycle;

\draw (0,0) node[left]{$(0,0)$};
\end{tikzpicture}
\end{center}
\caption{First return of the flow of slope $-y = 1+a-a^2$ to a transverse (indeed, orthogonal) interval on the staircase surface associated to the double heptagon.}\label{fig-iet}
\end{figure}
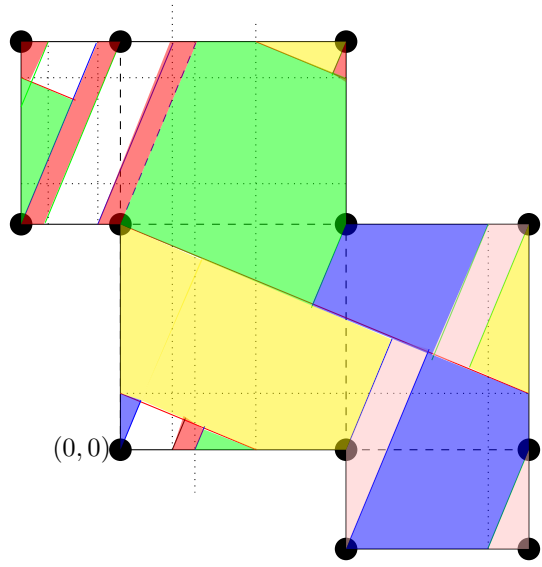


\begin{lemma}\label{lemma:7gonIET}
Consider the line starting at $(0,1)$ of slope $-y$, and the first return map $T_{\lambda, \pi}$ of this linear flow in the direction of slope $y$ on the staircase surface. The map $T_{\lambda, \pi}$ is an interval exchange map of six intervals with permutation 
$$\pi = \begin{pmatrix}
1&2&3&4&5&6\\
4&3&2&6&5&1 
\end{pmatrix}$$
\noindent and lengths of intervals given in the proof. The interval exchange $T_{\lambda, \pi}$ is self-similar with substitution (where we code the intervals by letters in the alphabet $\{1,\dots, 7\}$)
$$\sigma_7: \begin{cases}1\mapsto 14164\\ 2\mapsto 142324\\ 3\mapsto 1423324\\ 4\mapsto 1424\\ 5\mapsto 154154164\\ 6\mapsto 154164  \end{cases}$$

\end{lemma}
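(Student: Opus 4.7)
The plan is to compute the first return map explicitly from the staircase picture, then verify self-similarity by applying the pseudo-Anosov $\rho_7$ and reading off the substitution.

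First, I would pin down the transversal. Using the dimensions $u=a-1$ and $y=a^2-a-1$, the three ``stair'' cells of the staircase have explicit sides, so the straight segment emanating from $(0,1)$ with slope $-y$ continues (passing through identified edges) until it first hits a singularity. This determines the length $L$ of the transversal and an affine parametrization $[0,L) \to S$. Inside the transversal, the discontinuity points of the first return map for the flow of slope (the complementary one indicated in Figure~\ref{fig-iet}) occur exactly at those parameters whose forward trajectory hits a singularity before completing one return. By following each trajectory through the three cells, I can pinpoint the five interior discontinuities, giving the six subintervals $I_1, \ldots, I_6$. Each length is an $\R$-linear combination of $1, u, y$, computable directly from the geometry, and I would record these as entries of a length vector $\lambda$.

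Second, for the permutation I would track each subinterval $I_j$ under the flow until its first return, and read off the index of the target subinterval in the transversal. The shaded regions in Figure~\ref{fig-iet} are designed to make this matching transparent: trajectories from $I_1, I_2, I_3$ (near the bottom of the transversal) loop around and land in the top half in reversed order as $I_4, I_3, I_2$; trajectories from $I_4, I_5, I_6$ land in $I_6, I_5, I_1$. This gives the stated $\pi$.

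Third, for self-similarity I would use the pseudo-Anosov $\rho_7$ with $D\rho_7$ having eigenvalue $\beta = a^2+a$ on the unstable direction and $\beta^{-1}=2-a$ on the stable direction. Applying $\rho_7^{-1}$ to the transversal (or equivalently taking the first return of the flow to the subinterval of length $L/\beta$ at the left end of the original transversal) gives a shorter transversal whose induced IET, after rescaling by $\beta$, has the same combinatorics $\pi$ and length vector $\lambda$. To extract the substitution, for each original interval $I_j$ I would code the orbit of the corresponding subinterval of the rescaled transversal under the long IET up to its first return, and record the sequence of $I_i$'s visited. Cross-checking consistency with $\lambda$ amounts to verifying that $\lambda$ is a right Perron eigenvector, with eigenvalue $\beta$, of the incidence matrix $M_{\sigma_7}$ of the claimed substitution; this is a direct linear-algebraic computation in $\Q(a)$ using $p_a(x)=x^3-x^2-2x+1$.

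The main obstacle is purely bookkeeping: on the staircase there are three cells identified along several edges, so a single return trajectory can cross several cells and several interior transversal crossings (of the shorter, post-renormalization interval) before returning to the long transversal, and one must carefully resolve what happens when a trajectory brushes a singular vertex. The longest word $\sigma_7(5)=154154164$ has nine letters, so for each of the six $j$'s I would have to verify up to nine consecutive crossings match the stated symbol. I would organize this via a single tabular list of the rectangles of the zippered-rectangle decomposition (as in \S\ref{sec:zippered}), showing how each rectangle over $I_j$ meets the shorter transversal in $|\sigma_7(j)|$ subrectangles in the order prescribed by $\sigma_7(j)$; once that tabulation is produced, both the permutation and the substitution drop out, and the only remaining task is to confirm (as is apparent from $\sigma_7$) that the letter $7$ in the stated alphabet never appears, so the alphabet may be taken to be $\{1,\dots,6\}$ without changing the statement.
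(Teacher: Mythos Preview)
Your plan is correct and matches the paper's approach: the paper computes the IET by projecting the polygon vertices onto the transversal and reading the permutation from Figure~\ref{fig-iet}, then records the length and height vectors in a table, leaving the arithmetic to the reader. For the self-similarity and substitution the paper is even sketchier than you are---it essentially asserts the result (implicitly obtained via Rauzy induction, with $L$ and $H$ the right and left Perron eigenvectors of $M_{\sigma_7}$)---so your explicit proposal to renormalize via $\rho_7$ and tabulate the crossings of the short transversal is a sound and somewhat more detailed version of the same computation. Your observation about the alphabet is right: the ``$7$'' in $\{1,\dots,7\}$ is a typo for $\{1,\dots,6\}$.
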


\begin{rmk}
The linear flow on the surface on the considered direction is thus the suspension flow over the subshift generated by the substitution $\sigma_7$ with heights given by $H$. Moreover  $L$ and $H$ are respectively, right and left eigenvectors of the incidence matrix of the substitution. 
\end{rmk}

\paragraph*{\bf Weak mixing} The main result of this subsection is that the interval exchange map defined in Lemma~\ref{lemma:7gonIET} is weakly mixing:
\begin{Prop}\label{prop:eigenvalues-subst}
The interval exchange map defined in Lemma~\ref{lemma:7gonIET} and the associated subshift $(X_{\sigma_7}, S)$ are weakly mixing (no (additive) eigenvalue except $0$).
\end{Prop}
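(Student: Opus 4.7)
The strategy is to reduce weak mixing of $T_{\lambda,\pi}$ to a concrete algebraic problem via the self-similarity of $T_{\lambda,\pi}$ under $\sigma_7$, and then verify it using a combinatorial construction of Mercat~\cite{Mercat:semialgorithm} based on Rauzy induction. The phenomenon being demonstrated is the one emphasized in \S\ref{sec:weakmixing}: weak mixing is sensitive to time changes, so the flow can carry a rank-three eigenvalue group (by Theorem~\ref{thm:asymptotic}) while the IET underneath it has no nontrivial eigenvalues.

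First I would reduce to the symbolic model. The coding in Lemma~\ref{lemma:7gonIET} presents $T_{\lambda,\pi}$ as measurably conjugate to the shift $(X_{\sigma_7}, S)$, and by Host's theorem every measurable eigenvalue of a primitive substitution subshift is already continuous; it thus suffices to exclude nonzero continuous eigenvalues of $(X_{\sigma_7}, S)$. The standard criterion characterizes these as those $\kappa \in \R$ for which
\[
d\bigl(\kappa\,|\sigma_7^n(a)|,\,\Z\bigr)\to 0 \quad \text{as } n\to\infty
\]
for every letter $a$ that appears as a prefix of some $\sigma_7^m(a)$. Writing $M$ for the incidence matrix of $\sigma_7$ and $\mathbf{1}\in\Z^6$ for the all-ones vector, the vector of lengths is $(M^T)^n\mathbf{1}$, so the criterion becomes: $\kappa\mathbf{1}$ is forward asymptotic under $M^T$ to an integer vector. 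This is formally parallel to the condition defining $\FA(\rho_7)$ in \S\ref{sect:Pisot algebraic setup}, but applied to the constant vector $\mathbf{1}$ in place of $\eta^u$.

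Since $\beta$ is Pisot of degree three, the stable subspace $W^{\mathrm{s}}\subset\R^6$ of $M^T$ contains in particular the two Galois conjugates of the Perron eigenvector; forward asymptoticity of $\kappa\mathbf{1}$ under $M^T$ to an integer vector is equivalent to the existence of some $v\in\Z^6$ with $\kappa\mathbf{1}-v\in W^{\mathrm{s}}$. This is a finite algebraic condition on $\kappa$, and Mercat's semi-algorithm, applied to the Rauzy path encoding $\sigma_7$, is precisely designed to check it: it tracks the relevant lattices through Rauzy steps and, thanks to the periodicity of the pseudo-Anosov renormalization, terminates in finitely many steps, certifying that the only $\kappa$ for which such a $v$ exists is $\kappa=0$. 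Since the IET is measurably conjugate to the subshift via the coding of Lemma~\ref{lemma:7gonIET}, weak mixing of the subshift transfers to weak mixing of $T_{\lambda,\pi}$.

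The main obstacle is this explicit computation. The $6\times 6$ matrix $M$ has several eigenvalues beyond the Pisot family of $\beta$, so one must carefully isolate the Pisot-invariant part before comparing against $\Z^6$; Mercat's algorithm provides the bookkeeping needed. The resulting absence of nontrivial eigenvalues stands in sharp contrast with the Arnoux--Yoccoz genus-3 example, where the analogous procedure produces a rank-three eigenvalue group and the corresponding IET is conjugate to a translation of $\TT^2$; the difference shows up concretely in whether the image of the chosen stable transversal under the semi-conjugacy $\Psi_{\mathbf c}$ lies in a rational subtorus of $\TT^3$.
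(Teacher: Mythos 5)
Your reduction to the symbolic model is sound, and the framing — Host's theorem to upgrade measurable to continuous eigenvalues, then an algebraic criterion on the incidence matrix $M$ of $\sigma_7$ — is the right strategy and matches the paper's. But the proof has a genuine gap at the decisive step: you assert that ``Mercat's semi-algorithm \ldots certifies that the only $\kappa$ for which such a $v$ exists is $\kappa=0$,'' without carrying out (or even sketching) the computation that would make this true. The paper's proof is exactly that computation: it finds the return words to the letter $1$ (namely $14, 154, 164, 1424, 142324, 1423324$), checks that their abelianizations have zero left kernel (so the coboundary space is trivial), builds the coboundary matrix $C = (1\ 1\ 1\ 1\ 1\ 1)M_{\sigma_7} = (5\ 6\ 7\ 4\ 9\ 6)$, assembles the matrix $V$ of generalized eigenvectors for eigenvalues of modulus $\geq 1$, writes down the system $xCV = wV$ for $w\in\Z^6$, enlarges it by Galois conjugation, and computes that the resulting left kernel is the $\Z$-span of $(5\ 6\ 7\ 4\ 9\ 6)$, whence the eigenvalue group is $\Z$ and the subshift is weak mixing. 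None of that is present in your argument; ``the algorithm certifies it'' is not a proof.

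Two further points of substance. First, you cite the wrong tool: \cite{Mercat:semialgorithm} is Mercat's \emph{semi}-algorithm for deciding pure discrete spectrum (used in Appendix~\ref{sect: example semiconjugacy}), which would, if it terminated, prove the \emph{opposite} of weak mixing; the relevant reference is Mercat's algorithm for coboundaries and eigenvalues of morphic subshifts, \cite{Mercat}, which is a genuine algorithm and what the paper runs in \S\ref{sec:mercat:eigenvalues}. Your description of ``tracking lattices through Rauzy steps'' matches neither tool as described in the paper. Second, the criterion $d\bigl(\kappa\,|\sigma_7^n(a)|,\Z\bigr)\to 0$, as you state it, is only equivalent to the eigenvalue condition once one has checked that $\sigma_7$ admits no nontrivial coboundaries (otherwise Host's criterion carries a coboundary correction term). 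That verification — the return-word computation showing the abelianization matrix has zero left kernel — is the very first thing the paper's proof establishes, and you cannot skip it.
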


\subsection{Proof of Lemma \ref{lemma:7gonIET}}
We refer to Figure \ref{fig-iet}. On Figure \ref{fig-iet} the first interval corresponds to green intervals, the second one to the blue, the third to the pink, the fourth to the yellow, the fifth to the red and the sixth to the white.
The vertices of the polygon are, in one cyclic order, the following points
$$\begin{cases} (-a^2+a+1,a), (0,a), (1,a), (1,1),(a,1)\\(a,0), (a,-a^2+a+1), (1, -a^2+a+1), (1,0), (0,0),(0,1), (-a^2+a+1,1)
\end{cases}$$

\paragraph*{\bf Lengths and heights} The interval exchange given in Lemma~\ref{lemma:7gonIET} has lengths and heights given by the vectors $L, H$: 
$$\begin{array}{|c|c|c|c|c|c|c|}
\hline
&I_1&I_2&I_3&I_4&I_5&I_6\\
\hline
L&1&2a-3&2-a&a^2-a&-3a-1+2a^2&-3a^2+5a+1\\
\hline
H&1&a^2-a&a&a-1&a^2-1&a^2-a\\
\hline
\end{array}$$
In order to construct the interval exchanges, we need to start from the line and move by the flow along the direction with slope $1/y$  until we come back to the initial line. We deduce that it suffices to project all the vertices of the polygon on this line, and compute the lengths.

\paragraph*{\bf Permutations} To find the permutation associated to the interval exchange, we start from the first line and flow in the direction of slope $1/y$. We see, on Figure \ref{fig-iet} that, for example, the first intersection of the image of $I_1$  by the flow with the interval is at the end. 

\paragraph*{\bf Lengths} We sketch the computations of the lengths and leave the details to the reader. The line which supports the interval has slope $-y$,  We recall that the projection of the vector $(u,v)$ on the line of slope $\alpha$ gives an interval of length $\frac{|u+\alpha v|}{1+\alpha^2}$.

\paragraph*{\bf Heights} To compute the heights, we first compute the coordinates of the points at the intersection of the line of slope $1/y$ and the polygon:

$$\begin{pmatrix}
0\\1
\end{pmatrix},
\begin{pmatrix}
a\\1-u
\end{pmatrix},
\begin{pmatrix}
0\\1-u
\end{pmatrix},
\begin{pmatrix}
\frac{1-u}{y}\\0
\end{pmatrix},
\begin{pmatrix}
\frac{1-u}{y}\\a
\end{pmatrix},
\begin{pmatrix}
1\\2-y
\end{pmatrix},
\begin{pmatrix}
-y\\2-y
\end{pmatrix}.
$$
Now we project the vectors on the direction and obtain the result.

\subsection{Mercat's construction}\label{sec:mercat}

Consider a primitive substitution $\sigma$ on a finite alphabet $\mathcal A$. Mercat~\cite{Mercat} gives an algorithm to compute eigenvalues of the subshift $X_\sigma$ associated to the substitution and also for the suspension flow constructed from $X_\sigma$ and a vector $H$. In order to describe it, we first define a coboundary:

\paragraph*{\bf Coboundaries} For a substitution $\sigma$, the associated language is the set of words in $\mathcal A^*$ which appear in some element of $X_\sigma$. If $w$ is a word, then $ab(w)$ is an element of $\R^d$ which counts the occurrences of each letter in $w$. If $a$ is a letter of $\mathcal{A}$, then a \emph{return word} of $a$ is a word $w$ in the language such that $awa$ is also in the language and contains only two ocurences of $a$. A \emph{coboundary} is a morphism $F: \mathcal A^*\to S^1$ such that $F(w)=1$ if $w$ is a return word of a letter $a$ in the subshift defined by the substitution. This map can be lifted to a linear map $f$ from $A^*$ to $\mathbb R$ such that $f(ab(w))=0$ where $w$ is a return word of a letter $a$. 

\paragraph*{\bf Hosts's criterion} The existence of coboundaries can be reformulated via the following proposition due to Host \cite{Host}:
\begin{Prop}
    A morphism $h: \mathcal A^*\to S^1$ is a coboundary if and only if there exists a function $F:\mathcal A\to S^1$ such that for every word $ab$ in the language of the substitution one has $F(b)=F(a)h(a)$
\end{Prop}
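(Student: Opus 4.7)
The plan is to prove both implications of the equivalence separately. The non-trivial content is the construction of $F$ from the coboundary condition, so I would handle the direct implication first as a warm-up.

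For the forward direction, assume an $F:\mathcal{A} \to S^1$ exists satisfying $F(b) = F(a)h(a)$ whenever $ab$ is in the language. Given a return word $w$ of a letter $a$, I would write the extended word $wa$ as a sequence $a_0 a_1 \cdots a_n$ with $a_0 = a_n = a$, noting that each two-letter factor $a_i a_{i+1}$ is in the language. Applying the relation $F(a_{i+1}) = F(a_i) h(a_i)$ successively and telescoping yields $F(a_n) = F(a_0) \, h(a_0)h(a_1)\cdots h(a_{n-1})$, which, combined with the morphism property of $h$ and the equality $a_0 = a_n$, forces the product to equal $1$, i.e.\ $h(w) = 1$. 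This step is routine.

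For the converse, assume $h$ vanishes on every return word of every letter. I would propagate $F$ along a single infinite orbit: fix any sequence $x = x_0 x_1 x_2 \cdots \in X_\sigma$, set $F(x_0) = 1$, and define inductively $F(x_{i+1}) = F(x_i) h(x_i)$. This gives a candidate value at each letter appearing in $x$, and minimality of $(X_\sigma, S)$ (which follows from primitivity of $\sigma$) ensures that every letter of $\mathcal{A}$ is eventually visited.

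The main obstacle is well-definedness: if a letter $b$ occurs at two indices $i < j$ in $x$, the two candidate values of $F(b)$ must agree. Telescoping along the segment from $i$ to $j$ gives $F(x_j) = F(x_i) \, h(x_i x_{i+1} \cdots x_{j-1})$, so the problem reduces to showing that $h(x_i \cdots x_{j-1}) = 1$. I would resolve this by locating all occurrences of $b$ between positions $i$ and $j$, say at indices $i = k_0 < k_1 < \cdots < k_m = j$, which decompose $x_i \cdots x_{j-1}$ as a concatenation of return words of $b$ (one between each pair of consecutive occurrences). Since $h$ is multiplicative and vanishes on each of these factors by hypothesis, the whole product is $1$. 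Once $F$ is well-defined on $\mathcal{A}$, verifying the criterion $F(b) = F(a) h(a)$ for an arbitrary two-letter word $ab$ in the language is automatic: by minimality such a word appears in $x$, at some position where the relation is built into the construction.
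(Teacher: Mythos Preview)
The paper does not actually prove this proposition; it attributes the result to Host and cites \cite{Host}. So there is no paper-side argument to compare with.

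Your argument is correct. The forward direction is exactly the right telescoping computation, and the converse---propagating $F$ along a fixed $x \in X_\sigma$ and using minimality (from primitivity) to reach every letter and every two-letter factor---is the standard construction. The well-definedness step is handled cleanly: the segment between two occurrences of a letter $b$ in $x$ factors as a concatenation of return words of $b$, each of which is killed by $h$.

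One small remark on conventions: the paper's written definition of a return word (``$awa$ is in the language and contains only two occurrences of $a$'') literally makes $w$ \emph{free of} the letter $a$, whereas the examples listed for $\sigma_7$ (e.g.\ $14$, $154$, $1424$, \ldots) all \emph{begin} with the letter $1$. The examples and Mercat's algorithm follow the Durand convention in which a return word to $a$ begins with $a$, and that is the convention you are implicitly using when you write ``the extended word $wa$ \ldots with $a_0 = a_n = a$''. This is the correct reading; just be aware that the telescoping identity you need ($h(w)=1$ versus $h(aw)=1$) depends on which convention is in force, and make that explicit if you write the proof out in full.
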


\paragraph*{\bf Mercat's algorithm: coboundaries} Mercat~\cite{Mercat}*{\S 5.2} gives an algorithm to compute coboundaries for each letter $a$:
\begin{description}
    \item[Fixed point] Iterate the substitution, and find a fixed point starting with $a$.
    \item[Return words] Find a return word to $a$ which is a prefix of the fixed point. Compute its image by the substitution $\sigma$.
    \item[Decompose] Decompose these images into return words. 
    \item[Iterate] Iterate this process until it stabilizes.
    \item[Abelianizations] List their abelianizations in a matrix (by rows), and find the \emph{left} kernel. 
\end{description}

\paragraph*{\bf Mercat's algorithm: eigenvalues} There is an associated algorithm to find eigenvalues:
\begin{description}
\item[Perron eigenvectors] Compute $V_1$, the right eigenvector of the substitution matrix $M_\sigma$ for the Perron eigenvalue with normalization $(1\dots 1)M_\sigma V_1=1$. 

\item[Generalized expanding eigenvectors] Let $V$ be the set of generalized eigenvectors of $M$ for eigenvalues of modulus $\geq 1$.
\item[Coboundary matrix] Let $C$ be the \emph{coboundary matrix}, with first row $(1\dots 1)M_{\sigma}$ and other rows forming a basis of coboundary.
\item[Integral sublattice] Let $W=\{w\in \mathbb Z^d, \exists x\in \mathbb R^k, xCV=wV\}$,  be the sublattice of $\Z^d$, viewed as row vectors, where $x,w$ are row vectors and $k$ is the number of rows of the coboundary matrix $C$.
\item[Conclusion] The set of eigenvalues is $WV_1$.
\end{description}

\paragraph*{\bf Suspensions} Moreover if we consider the suspension of this subshift by a positive vector $H$, the algorithm can be modified in order to find the eigenvalues of the vertical flow on this suspension, by normalizing $H$ so that $^tH V_1=1$ and replacing the first row of the matrix $C$ by $(H_1\dots H_d)$.

\subsection{Proof of Proposition \ref{prop:eigenvalues-subst}}\label{sec:mercat:eigenvalues}
We now apply Mercat's algorithm to compute the eigenvalues of our interval exchange and the associated linear flow on the double heptagon. Since the interval exchange is conjugated to the subshift defined by the substitution $\sigma_7$, we will prove that this subshift is weak mixing. We use Lemma \ref{lemma:7gonIET}. The incidence matrix of the substitution is
$$M_{\sigma_7} =\begin{pmatrix}
2&1&1&1&3&2\\ 0&2&2&1&0&0\\ 0&1&2&0&0&0\\ 2&2&2&2&3&2\\0&0&0&0&2&1\\ 1&0&0&0&1&1
\end{pmatrix}$$

\paragraph*{\bf Eigenvalues} Recall that $a=2\cos(\pi/7)$, it has minimal polynomial $X^3-(X^2+2X-1)$.
The eigenvalues of $M$ which are greater than $1$ are $a+a^2,a^2, 3+a-a^2$. thus $a^2+a$ is the maximal eigenvalue, and the matrix has determinant $1$. We now compute coboundaries for the IET described in Lemma~\ref{lemma:7gonIET} following the algorithm in \S\ref{sec:mercat}: the return words to $1$ in $X_{\sigma_7}$ are 
$$\{14,154, 164, 1424, 142324, 1423324\}.$$ It gives a matrix with zero left kernel.
Thus $$C=\begin{pmatrix}1& 1& 1&1&1&1
\end{pmatrix}M_{\sigma_7}=\begin{pmatrix}
    5&6&7&4&9&6
\end{pmatrix}.$$
\noindent We find $$V=\begin{pmatrix}
a^2+a+3/2&1&1\\ 3a^2-4a-5/2&2-a^2&a^2-a-1\\-2a^2+5a/2+2&-1&-1\\-a^2/2+3a/2-1&a^2-a-1&a\\ 5a/2-9/2&2+a-a^2&1-a\\ 13/2-9a/2+a^2/2&2a^2-a-4&-a^2+2a\end{pmatrix}.$$

\noindent and the space $W$ is the set of $w\in \mathbb Z^6$ such that there exists $r$ real number such that
$$\begin{cases}
r=wV_1\\ (a^2-a)r=wV_2\\ (a+1)r=wV_3
\end{cases} $$
Thus we obtain $$\begin{cases}
    w[(a^2-a)V_1-V_2]=0\\ w[(a+1)V_1-V_3]=0
\end{cases}$$


This can be reduced to the matrix equation

$$w\begin{pmatrix}
    -a^2/2+3a/2-2&-a^2+a/2+3/2\\
    9a^2/2-17a/2+2&a^2+a/2-9/2\\
    -2a^2+5a-3/2&-3a^2/2+a/2+5\\
    -3a^2+11a/2-1/2&a^2/2-3a/2-1/2\\
    -7a^2/2+17a/2-9/2&5a^2/2-a-11/2\\
    11a^2/2-15a+17/2&-5a^2/2+a+6
\end{pmatrix}=\begin{pmatrix}
0&0
\end{pmatrix}
$$


\paragraph*{\bf Galois conjugates} Note that $w$ is a rational (in fact integral) vector orthogonal to two vectors. By Galois property, it is also orthogonal to the algebraic conjugates of these vectors. Thus it is in the orthogonal of a space generated by $6$ vectors, and we are reduced to the system given by the matrix:

$$\begin{pmatrix}
    -2&3/2&-1/2&3/2&1/2&-1\\
    2&-17/2&9/2&-9/2&1/2&1\\
    -3/2&5&-2&5&1/2&-3/2\\
    -1/2&11/2&-3&-1/2&-3/2&1/2\\
    -9/2&17/2&-7/2&-11/2&-1&5/2\\
    17/2&-15&11/2&6&1&-5/2
\end{pmatrix}$$
\paragraph*{\bf Left kernel} A direct computation of the left kernel shows it is generated by the row vector $$\begin{pmatrix}
    5&6&7&4&9&6
\end{pmatrix}.$$
Thus the eigenvalues are all integers, so the additive group is $\mathbb Z$, and thus the substitution is weak mixing, proving Proposition~\ref{prop:eigenvalues-subst}.

\appendix

\section{Non-weak mixing directions in regular $n$-gon surfaces}
\label{appendix:ngons}

\paragraph*{\bf Billiards and $n$-gons} In this appendix, we compute pseudo-Anosov maps with Pisot expansion factors for the translation surfaces associated the regular $n$-gon, $n=7, 9, 14, 18, 20, 24$, with this list of polygons drawn from the work of Arnoux-Schmidt~\cite{ArnouxSchmidt}*{Theorem 2}. We give additional examples when $n=16, 30$ that were found as part of this project. This yields non-weak mixing directions in these surfaces, and can be used to produce non-minimal trajectories for the associated prism flows as well as interesting orbit closures of billiard trajectories in prisms over the associated $(\frac{\pi}{n}, \frac{(n-2)\pi}{2n}, \frac{\pi}{2})$ right triangles. We illustrate the eigenfunctions for the straight-line flow in eigendirections of $D\rho$. See Figure \ref{fig:heptagon eigenfunction} and the ancilliary files on the arXiv. Verification of statements made in this appendix are carried out in the notebook file {\texttt{appendix\_data.ipynb}}. This notebook uses \texttt{SageMath} \cite{sagemath} and \texttt{sage-flatsurf} \cite{sage-flatsurf} to carry out the calculations. The work of Winsor~\cite{Winsor} gives many further examples of \emph{special} pseudo-Anosov maps for these surfaces, which may be interesting to analyze in this fashion.

\subsection{Examples with an odd number of sides}
\label{sect:odd number of sides}
For odd $n \geq 3$, consider the two regular $n$-gons in $\C$ of side-length $1$ sharing the segment from $0$ to $1$. We form $S_n$ by identifying each edge of the first polygon to the parallel edge from the other polygon. (These are the surfaces depicted in Figure \ref{fig:heptagon eigenfunction} and in the ancillary files, but in pictures we have moved one polygon by translation to be adjacent to the other along a different edge.)
The holonomy field of this surface is $\Q(a_n)$ where $a_n = 2 \cos \frac{\pi}{n}$, and the ring of integers in this field is $\Z[a_n]$. Define the matrix
$$\displaystyle C_n = \begin{pmatrix}
1 & \tan \frac{\pi}{2n} \\
-1 & \tan \frac{\pi}{2n} \\
\end{pmatrix}.$$
Then $\omega_n = C_n \cdot S_n$ is a staircase surface built from $n-2$ rectangles and whose largest area rectangle is a $1 \times 1$ square. (This generalizes the surface $\omega_7$ of section \ref{sec:nonergodicdirections}.) The Veech group of $\omega_n$ contains the standard Hecke $(2, n, \infty)$-triangle group:
\begin{equation}
\label{eq:Veech odd}
\left\langle 
A_n = 
\begin{pmatrix}
    1 & a_n \\
    0 & 1 \\
\end{pmatrix},~
B = \begin{pmatrix}
    0 & -1 \\
    1 &  0 \\
\end{pmatrix}
\right\rangle.    
\end{equation}
Note that $-I=B^2$ is in the Veech group.

\paragraph*{\bf The heptagon.} Let $n=7$. This example was first found by Rosen and Towse \cite{RT}. Define
$$D_7 = B^{-1}A_7^{-1}BA_7 = \begin{pmatrix}
1 &     a_7 \\
a_7 & a_7^2+1 \\
\end{pmatrix},
\quad
E_7 = \frac{1}{7} \left(\begin{array}{rr}
-a^{2} - a + 2 & -a^{2} - 2 a \\
-a^{2} - a + 2 & -a^{2} + 2 a + 1
\end{array}\right).$$
Then $\omega_7$ admits a pseudo-Anosov with derivative
$D_7$ and expanding eigenvalue $\beta_7=a_7^2 + a_7$. The surface $E_7 \cdot \omega_7$ has the properties of Proposition \ref{prop:normalization:zbeta}: The group of eigenvalues in the horizontal and vertical directions is the ring of integers $\Z[a_7]$.

\begin{figure}
    \centering
    \includegraphics[width=\textwidth]{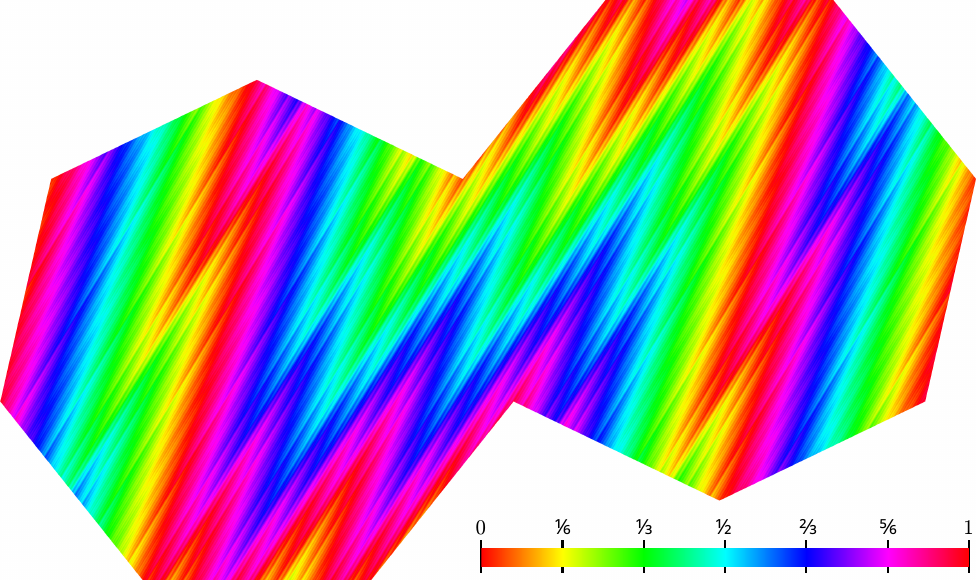}
    \caption{An eigenfunction for the straight-line flow in the direction of slope
    $$6 \, \sin\left(\tfrac{3}{7} \, \pi\right) - 2 \, \sin\left(\tfrac{2}{7} \, \pi\right) - 6 \, \sin\left(\tfrac{1}{7} \, \pi\right) \approx 1.683$$
    on the double heptagon. For more detail see Remark \ref{rem:heptagon figure}.}
    \label{fig:heptagon eigenfunction}
\end{figure}

\begin{rmk}
    \label{rem:heptagon figure}
    Figure \ref{fig:heptagon eigenfunction} depicts an eigenfunction $\psi$ on $S_7$ for the flow in the stable eigendirection of the matrix $C_7^{-1} \cdot D_7 \cdot C_7$ obtained by pulling back the eigenfunction with eigenvalue $1$ for the vertical flow on $E_7 \cdot \omega_7$. We have chosen $\psi$ to take the value zero at the singularity. As a consequence of Proposition \ref{prop:affine change}, $\psi$ has eigenvalue $ \big\|C_7^{-1} E_7^{-1} \big(\begin{smallmatrix} 0\\ 1 \end{smallmatrix}\big)\big\|^{-1}$.
\end{rmk}

\noindent A picture of the eigenfunction for the flow in the unstable eigendirection of the transformation $$C_7^{-1} \cdot D_7 \cdot C_7$$ obtained by pulling back an eigenfunction with eigenvalue $1$ for the horizontal flow on $E_7 \cdot \omega_7$ is available in the ancillary files.

\paragraph*{\bf The nonagon.} Let $n=9$. 
In \cite{HMTY}, two examples were found. The Veech group element
$$D_9 = B^{3}A_9^{3}BA_9^{-4}(BA_9)^{2} = \left(\begin{array}{rr}
-4 a_9^{2} - 1 & -9 a_9 - 4 \\
40 a_9 + 12 & 28 a_9^{2} + 12 a_9 - 1
\end{array}\right)$$
has expanding eigenvalue
$\beta_9 = 14 a_9^{2} + 26 a_9 + 7 \approx 105.31.$ Let
$$E_9 = \frac{1}{9} \left(\begin{array}{rr}
-16 a_9^{2} + 8 a_9 + 14 & -a_9^{2} - 13 a_9 - 10 \\
2 a_9^{2} + 2 a_9 - 4 & 2 a_9^{2} - a_9 - 4
\end{array}\right).$$
This conjugates the pseudo-Anosov on $\omega_9$ to a pseudo-Anosov on $E_9 \cdot \omega_9$ with diagonal derivative as in 
Proposition \ref{prop:normalization:zbeta}, and the groups of eigenfunctions for the horizontal and vertical flows on $E_9 \cdot \omega_9$ are the ring of integers $\Z[a_9]$. Pictures
of the eigenfunctions on $S_9$ obtained by pulling back the eigenfunctions with eigenvalue $1$ on $E_9 \cdot \omega_9$ in the horizontal (unstable) and vertical (stable) directions are in the ancillary files. The second hyperbolic Veech group element with Pisot expansion factor found in \cite{HMTY} has expansion factor $$\beta'_9 = 16193176 a_9^{2} + 30433216 a_9 + 8616209 \approx 1.23 \times 10^8.$$
We were not able to draw reasonable pictures of eigenfunctions in this case. 

\subsection{Examples with an even number of sides}

For even $n=2k \geq 4$, construct a regular $n$-gon in $\C$ with one edge running from $0$ to $1$ in $\C$. Gluing opposite sides of this polygon yields a translation surface $S_n$. The holonomy field of this surface is $\Q(a_k)$ where, as before, $a_k=2 \cos \left(\frac{\pi}{k}\right) = 2\cos\left(\frac{2\pi}{n}\right)$, and the ring of integers is $\Z[a_k]$. Depending on the class of $n$ modulo $4$ (that is, the parity of $k$, we define
$$C_n = 
\begin{pmatrix}
    \tan(\frac{\pi}{n}) & -1 \\
                     0 &  1
\end{pmatrix}
\quad
\text{if $n \equiv 0 \pmod{4}$, that is, $k$ even}
$$
$$C_n = 
\begin{pmatrix}
    0 & -\sec(\frac{\pi}{n}) \\
    \sin(\frac{\pi}{n}) & -\cos(\frac{\pi}{n}) \\
\end{pmatrix}
\quad 
\text{if $n \equiv 2 \pmod{4}$, that is $k$ odd.}
$$
Then the surface $\omega_n = C \cdot S_n$ has staircase presentation (partition into rectangles along saddle connections), where the dimensions of each rectangle lie in $\Q(a_k)$, and where the rectangle of largest area is a unit-area square whose vertical edges are identified, forming a cylinder of height and circumference one. The Veech group contains the $(k, \infty, \infty)$-triangle group with the following parabolic generators:
\begin{equation}
    \label{eq:Veech even}
\left\langle 
A = 
\begin{pmatrix}
    1 & 2 \\
    0 & 1 \\
\end{pmatrix},~
B_n = \begin{pmatrix}
                 1 & 0 \\
    1+\tfrac{a_k}{2} & 1 \\
\end{pmatrix}
\right\rangle.
\end{equation}

\paragraph*{\bf The tetradecagon.} Let $n=14$. This example is due to Arnoux-Schmidt \cite{ArnouxSchmidt}*{Lemma 7}. Let 
$$D_{14} = A B_{14}^{-1} A^{-1} B_{14} =
\left(\begin{array}{rr}
a_{7}^{2} + 3 a_{7} + 3 & 2 a_{7} + 4 \\
\frac{1}{2} a_{7}^{2} + 2 a_{7} + 2 & a_{7} + 3
\end{array}\right)
\quad \text{and}$$
$$E_{14} = \frac{1}{7} \left(\begin{array}{rr}
-3 a_{7}^{2} + 5 & 2 a_{7}^{2} - 2 a_{7} - 6 \\
-a_{7}^{2} - 3 a_{7} - 2 & 6 a_{7}^{2} + 2 a_{7} - 6
\end{array}\right).$$
The expanding eigenvalue of $D$ is the Pisot number $\beta_{14}=4 a_{7}^{2} + 3 a_{7} - 2 \approx 16.39.$
The surface $E_{14} \cdot \omega_{14}$ has the properties of Proposition \ref{prop:normalization:zbeta}: The group of eigenvalues in the horizontal and vertical directions is the ring of integers $\Z[a_{7}]$. Graphs of the pullbacks to $S_{14}$ of the eigenfunctions with eigenvalue $1$ on $E_{14} \cdot \omega_{14}$ in the horizontal and vertical directions are provided in the ancillary files.

\paragraph*{\bf The hexadecagon.} Let $n=16$. This example was found by searching within the Veech group. Let
\begin{align*}
D_{16} &= B_{16}^{-1}AB_{16}^{-1}(B_{16}^{-1}A)^{7}A^{2}(A^{2}B_{16}^{-1})^{2}(B_{16}^{-1}A)^{5} \\
&= \left(\begin{array}{rr}
-900 a_{8}^{3} - 1547 a_{8}^{2} + 831 a_{8} + 1141 & -772 a_{8}^{3} - 1406 a_{8}^{2} + 516 a_{8} + 890 \\
1112 a_{8}^{3} + \frac{3939}{2} a_{8}^{2} - 874 a_{8} - 1324 & 996 a_{8}^{3} + 1819 a_{8}^{2} - 639 a_{8} - 1107
\end{array}\right).
\end{align*}
Then $\beta_{16} = 184 a_{8}^{3} + 340 a_{8}^{2} - 108 a_{8} - 199
\approx 1923.07$ is the Pisot expanding eigenvalue. Let
$$E_{16} = \frac{1}{4} \left(\begin{array}{rr}
11 a_{8}^{3} + \frac{59}{2} a_{8}^{2} + \frac{35}{2} a_{1} + 1 & 7 a_{8}^{3} + 25 a_{8}^{2} + 28 a_{8} + 11 \\
\frac{23}{2} a_{8}^{3} - \frac{21}{2} a_{8}^{2} - \frac{179}{2} a_{8} - 57 & -6 a_{8}^{3} - 20 a_{8}^{2} - 18 a_{8} - 3
\end{array}\right).$$
Then $E_{16} \cdot \omega_{16}$ has the properties of Proposition \ref{prop:normalization:zbeta},
and the group of eigenvalues in the horizontal and vertical directions is the ring of integers $\Z[a_{8}]$. We again provide illustrations of the same eigenfunctions.

\paragraph*{\bf The octadecagon.} Let $n=18$. This example is due to Arnoux-Schmidt \cite{ArnouxSchmidt}*{Lemma 8}. Let
$$D_{18} = B_{18}^{-3} A^{-1} B_{18}^{4} A B_{18}^{-1} = 
\left(\begin{array}{rr}
4 a_9^{2} + 12 a_9 + 9 & -8 a_9 - 16 \\
-32 a_9^{2} - 74 a_9 - 38 & 12 a_9^{2} + 52 a_9 + 57
\end{array}\right).$$
The expanding Pisot eigenvalue is $\beta_{18}=32 a_9^{2} + 60 a_9 + 17 \approx
242.79$. The group of eigenvalues in both the horizontal and vertical directions of $E_{18} \cdot \omega_{18}$ is the ring of integers $\Z[a_9]$ when
$$E_{18} = \frac{1}{9} \left(\begin{array}{rr}
-7 a_9^{2} - 7 a_9 + 5 & 2 a_9^{2} + 6 a_9 + 4 \\
-4 a_9^{2} - a_9 + 8 & 8 a_9^{2} - 4 a_9 - 22
\end{array}\right).$$
Plots of the same eigenfunctions are provided.

\paragraph*{\bf The icosagon.} Let $n=20$. This example was found by searching within the Veech group and may be related to the example in \cite{ArnouxSchmidt}*{Lemma 9}. Let 
\begin{align*}
D_{20} &= (B_{20}^{-1}A)^{7}A^{6}B_{20}^{-1}A^{4} \\
&= \left(\begin{array}{rr}
-18 a_{10}^{3} - 31 a_{10}^{2} + 29 a_{10} + 43 & -132 a_{10}^{3} - 234 a_{10}^{2} + 208 a_{10} + 330 \\
12 a_{10}^{3} + \frac{49}{2} a_{10}^{2} - 11 a_{10} - 27 & 90 a_{10}^{3} + 183 a_{10}^{2} - 83 a_{10} - 203
\end{array}\right).
\end{align*}
The expanding Pisot eigenvalue is $\beta_{20} = 92 a_{10}^{3} + 175 a_{10}^{2} - 127 a_{10} - 242 \approx
782.72$. The matrix
$$E_{20}=\frac{1}{5} \left(\begin{array}{rr}
\frac{1}{2} a_{10}^{3} + \frac{1}{2} a_{10}^{2} - 2 a_{10} - \frac{5}{2} & -2 a_{10}^{3} - a_{10}^{2} + 5 a_{10} \\
\frac{49}{2} a_{10}^{3} + 49 a_{10}^{2} - 27 a_{10} - 60 & 8 a_{10}^{3} + 34 a_{10}^{2} + 39 a_{10} + 10
\end{array}\right)$$
sends $\omega_{20}$ to a surface whose eigenvalues for the horizontal and vertical flows are $\Z[a_{10}]$. Plots are provided as before.

\paragraph*{\bf The icositetragon.} Let $n=24$. This example is due to Arnoux and Schmidt \cite{ArnouxSchmidt}*{Lemma 10}.
\begin{align*}
D_{24} &= (B_{24}^{-1}A)^{2}B_{24}^{-6}A^{-1}B_{24}A^{-5}B_{24}^{-1}A(B_{24}A^{-1})^{2}B_{24}^{6}AB_{24}^{-1}A^{5}B_{24}A^{-1}
\\
&= \left(\begin{smallmatrix}
1384810 a_{12}^{3} + 2678580 a_{12}^{2} - 362410 a_{12} - 712559 & -2102180 a_{12}^{3} - 4065520 a_{12}^{2} + 552020 a_{12} + 1082920 \\
-1667820 a_{12}^{3} - 3220180 a_{12}^{2} + 450780 a_{12} + 864120 & 2530790 a_{12}^{3} + 4887020 a_{12}^{2} - 682790 a_{12} - 1311239
\end{smallmatrix}\right).
\end{align*}
The expanding Pisot eigenvalue is 
$$\beta_{24} = 3916750 a_{12}^{3} + 7566580 a_{12}^{2} - 1049490 a_{12} - 2027459 \approx 5.24 \times 10^7.$$ 
The matrix
$$E_{24} = 
\frac{1}{6} \left(\begin{array}{rr}
-\frac{1}{2} a_{12}^{3} - \frac{3}{2} a_{12}^{2} - \frac{1}{2} a_{12} + \frac{3}{2} & 2 a_{12}^{3} + 2 a_{12}^{2} - 4 a_{12} - 1 \\
\frac{115}{2} a_{12}^{3} + \frac{3}{2} a_{12}^{2} - \frac{383}{2} a_{12} + \frac{75}{2} & -44 a_{12}^{3} + 15 a_{12}^{2} + 187 a_{12} - 27
\end{array}\right)
$$ sends $\omega_{24}$ to a surface whose flows in the horizontal and vertical directions have eigenvalue groups $\Z[a_{12}]$. We were able to plot the eigenfunction for the flow in the unstable direction obtained by pulling back the eigenfunction with eigenvalue $1$ in the horizontal direction on $E_{24} \cdot \omega_{24}$; see the ancillary files. We were not able to plot eigenfunctions in the stable direction.

\paragraph*{\bf The triacontagon.} Let $n=30$. This example was found by searching through the Veech group. Let 
\begin{align*}
D_{30} &= (AB_{30}^{-1})^{10}A^{17}B_{30}^{-1}A(B_{30}^{-1}AB_{30}^{-1})^{2} \\
&= \left(\begin{array}{rr}
401 a_{15}^{3} + 1187 a^{2} + 706 a - 223 & -288 a^{3} - 866 a^{2} - 516 a + 190 \\
\frac{867}{2} a^{3} + 1268 a^{2} + 730 a - \frac{513}{2} & -321 a^{3} - 931 a^{2} - 498 a + 241
\end{array}\right).
\end{align*}
The expanding Pisot eigenvalue is 
$\beta_{30} = 92 a_{15}^{3} + 272 a_{15}^{2} + 164 a_{15} - 47 \approx 2003.60.$ Let
$$E_{30} = \frac{1}{15} \left(\begin{array}{rr}
-5 a_{15}^{3} + 9 a_{15}^{2} + 49 a_{15} + 32 & 2 a_{15}^{3} - 8 a_{15}^{2} - 32 a_{15} - 12 \\
23 a_{15}^{3} + 82 a_{15}^{2} + 76 a_{15} + 9 & -14 a_{15}^{3} - 70 a_{15}^{2} - 100 a_{15} - 36
\end{array}\right).
$$
Then $E_{30} \cdot \omega_{30}$ has eigenfunctions in the horizontal and vertical directions with eigenvalues given by $\Z[a_{15}]$. Plots are provided.

\section{Approximating eigenfunctions}
\label{sect:computing eigenfunctions}
\paragraph*{\bf Computing eigenfunctions} The results of section \ref{sect:Pisot} allow us to explicitly compute the group of eigenvalues $E(\rho)$ for the straight-line flow on a translation surface $S$ in the unstable eigendirection of $D \rho$. Let $c \in E(\rho)$ be an eigenvalue, let $u \in \R^2$ be a unit unstable eigenvector, and let $\phi_t:S \to S$ be the unit speed straight-line flow in direction $u$. 

\paragraph*{\bf Basepoint} Let $p_0 \in S$ be a preferred point. Consider the eigenfunction $\Psi_c:S \to \R/\Z$ satisfying
$$\Psi_c(p_0)=0 \quad \text{and} \quad \Psi_c(\phi_t x) = \Psi_c(x) + ct.$$
(The other eigenfunctions with eigenvalue $c$ can be obtained by postcomposing $\Psi_c$ with a rotation of $\R/\Z$.) Combining these equations, we see that
\begin{equation}
\label{eq:exact values}
\Psi_c(\phi_t p_0)=ct \quad \text{for all $t \in \R$}.    
\end{equation}
In the special case when $p_0$ is a singular point, this equation holds on each separatrix in the directions $\pm u$. Since the flow $\phi_t$ is minimal, \eqref{eq:exact values} determines the value of $\Psi_c$ exactly on a dense set. In practice, we can compute a long finite-length straight-line trajectory (or trajectories if $p_0$ is singular) through $p_0$ and know the value on this segment (or segments) exactly.

\paragraph*{\bf Pixelated images} To compute a pixelated image of an eigenfunction, we select a presentation of $S$ in the plane as a disjoint union $U$ of polygons with edge identifications. Then we choose an $\epsilon>0$ and cut the plane into $\epsilon \times \epsilon$ squares whose vertices lie in the grid $\epsilon \cdot \Z^2$. We consider the collection $\mathcal Q$ of those squares $Q$ whose center point lies in $U$. Given our choice of $p_0$, we compute a straight-line trajectory that is so long that it intersects each such square $Q$. Then for each $Q \in \mathcal Q$, we can select a point $x_Q \in Q$ for which we know $\Psi_c(x_Q)$ exactly using \eqref{eq:exact values}. To create our picture, we create an image where each pixel represents a $Q \in \mathcal Q$ and color the pixel according to the value of $\Psi_c(x_Q)$. Thus our pictures have the property that for each pixel in the image with color given by $v \in \R/\Z$, the corresponding square in the surface contains a point $x$ where $\Psi_c(x)=v$.

\paragraph*{\bf Flatsurf} We used sage-flatsurf \cite{sage-flatsurf} to compute our pictures of eigenfunctions (Figure \ref{fig:heptagon eigenfunction} and in the ancillary files on the arXiv) using the ideas above.

\section{IETs semiconjugate to toral rotations: An example from the heptagon}
\label{sect: example semiconjugacy}

\paragraph*{\bf Semiconjugacies} The famous Arnoux-Yoccoz~\cite{ArnouxBSMF88} IET is known to be measurably isomorphic to a (two-dimensional) toral translation by results of Arnoux-Cassaigne-Ferenczi-Hubert~\cite{ArnouxCassaigneFerencziHubert}*{Corollary 2}. We will work out
some details of a similar semiconjugacy related to the flow on the double heptagon, $S_7$, in the \emph{stable} direction of the pseudo-Ansosov described for $S_7$ in section \ref{sect:odd number of sides}. We believe that this semiconjugacy is a measurable isomorphism, but we do not prove it here.

\paragraph*{\bf Stable versus unstable directions} In \S\ref{sec:nonergodicdirections}, we consider the flow in the \emph{unstable} direction for a pseudo-Anosov on $S_7$, and the associated first return map to a transversal given by a piece of a leaf in the \emph{stable} direction, and show (Proposition~\ref{prop:eigenvalues-subst}) that the associated IET is weakly mixing. In this section we are considering the flow in the stable direction, mostly for ease of computation, and constructing a transversal for which the associated IET is semiconjugate to a toral translation, and thus not weak mixing. This emphasizes the important and subtle role of time changes in weak mixing properties of transformations and flows.

\paragraph*{\bf Level sets} Let $\Psi_c:S_7 \to \R/\Z$ denote the eigenfunction depicted in Figure \ref{fig:heptagon eigenfunction} and discussed in Remark \ref{rem:heptagon figure}. Let $p_0 \in S_7$ be the singularity. Then we have $\Psi_c(p_0)=0$. Let $L=\Psi_c^{-1}(\{0\}) \subset S_7$ be the level set of zero.

\begin{figure}[b]
    \centering
    \includegraphics[width=0.8\linewidth]{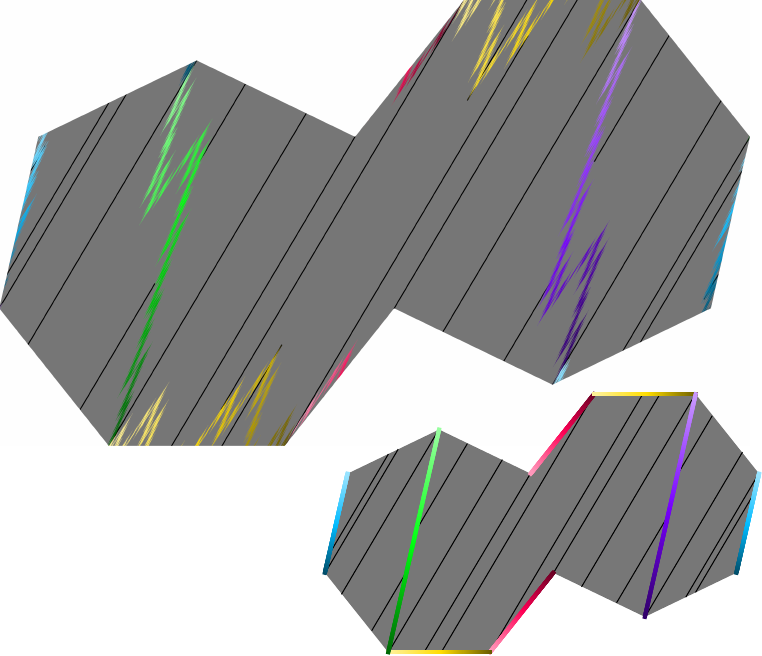}
    \caption{Top depicts the level set $L=\Psi_c^{-1}(\{0\})$, drawn in color. The bottom depicts $U$, the union of five saddle connections obtained by straightening $L$. Colors of $U$ chosen so that hue remains constant across the saddle connection and lightness varies linearly (in the HSL color space), and these colors were pushed forward to $L$ via the conjugacy of the return maps. The black lines are initial segments of separatrices in direction $\pm u$ until the first return to $L$ or $U$.}
    \label{fig:level set straightening}
\end{figure}

\paragraph*{\bf The stable direction} Let $u \approx (-0.51, -0.86)$ be the unit vector obtained by scaling $C_7^{-1} E_7^{-1}\big(\begin{smallmatrix} 0\\ 1 \end{smallmatrix}\big)$ by a positive constant.  This is a stable eigendirection for the derivative of the pseudo-Anosov on $S_7$. Let $\theta (\approx 239.27^\circ)$ be the direction of $u$ and let $\phi^\theta_t:S_7 \to S_7$ denote the unit speed flow in this direction. We define $U$ to be the union of five saddle connections depicted at the bottom of Figure \ref{fig:level set straightening}. 
We will show:

\begin{Prop}
\label{prop: conjugate L and U}
    The return maps of the flow $\phi^\theta_t$ to $U$ and to $L$ are topologically conjugate. Concretely, there is a homeomorphism $H_1:S_7 \to S_7$ that  preserves all $\phi^\theta_t$-trajectories including their orientation, and satisfies $H_1(U)=L$.
\end{Prop}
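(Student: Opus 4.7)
The plan is to construct $H_1$ as a continuous orbit equivalence of the stable flow, i.e., to set $H_1(p) = \phi^\theta_{\tau(p)}(p)$ for a continuous function $\tau: S_7 \to \R$, chosen so that $H_1(U) = L$. This form automatically guarantees that $H_1$ preserves each $\phi^\theta_t$-trajectory with its orientation, so the content of the proposition is the existence of such a $\tau$ together with the resulting conjugacy of return maps.

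First, I would work out the structure of $L$ from the zippered rectangle description of $\Psi_c$ developed in Section~\ref{sect:Pisot}. On each rectangle $R_i = \bar I_i \times [0, h_i]$, we have $\tilde\Psi_c(x,t) = \psi_v(x) + ct \pmod{\Z}$, so $L \cap R_i$ is a disjoint union of graphs $t = (k - \psi_v(x))/c$, indexed by those integers $k$ for which this value lies in $[0, h_i]$. The matching condition from Proposition~\ref{prop: construction part 1}, namely $c h_i \equiv v_i \pmod{\Z}$, glues these graphs coherently along horizontal edges to yield finitely many topological arcs joining $p_0$ to itself. The resulting cyclic combinatorial structure of $L$ at $p_0$, together with the $\phi^\theta_t$ return-time data between successive crossings, is entirely determined by $T_0$ and the values of $\psi_v$.

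Second, I would compute the analogous combinatorial data for $U$ directly from the picture in Figure~\ref{fig:level set straightening}. The five saddle connections of $U$ are loops at $p_0$ with explicit holonomies; tracking the first return of each of the ten $\pm u$-separatrices of $p_0$ to $U$ produces a return-map permutation together with a length vector for the IET $T_U$. The key claim, to be verified saddle connection by saddle connection, is that this data agrees with the data computed for $L$ in the previous step. This matching produces a canonical orientation-preserving homeomorphism $\sigma: U \to L$ conjugating $T_U$ to $T_L$. With $\sigma$ in hand, on each non-singular stable leaf $\gamma$ the sets $\gamma \cap U$ and $\gamma \cap L$ are ordered discrete subsets in order-preserving bijection via $\sigma$, and the piecewise-affine (in flow time) reparametrization of $\gamma$ sending one sequence to the other defines $H_1|_\gamma$. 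Continuity of $H_1$ off the singular leaves is then immediate, and $H_1|_U = \sigma$ by construction.

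The principal obstacle is the matching of local pictures along the ten singular separatrices through $p_0$: nearby stable leaves may hit $U$ and $L$ at quite different times, so naive extension by piecewise-affine reparametrization could fail to be continuous across the separatrix. The remedy is that both $U$ and $L$ pass through $p_0$ and meet it along the same ten topological sectors of the Euclidean cone of angle $10\pi$ at $p_0$, indexed by the consecutive stable separatrices. In each such sector, the piecewise-affine reparametrizations on adjacent leaves converge to the identity as the base point approaches the separatrix, and extending by the identity on the separatrix itself fits these local models together into a genuine homeomorphism $H_1$ of $S_7$. This sector-by-sector local analysis at $p_0$ is the step where the explicit figure-based description of $U$ is actually used.
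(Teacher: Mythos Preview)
Your outline has a genuine gap at its most delicate step.  You propose to ``work out the structure of $L$ from the zippered rectangle description'' and then ``verify saddle connection by saddle connection'' that the combinatorics of $L$ match those of $U$.  But the description $\tilde\Psi_c|_{R_i}(x,t)=\psi_v(x)+ct$ only tells you that $L\cap R_i$ is a finite union of graphs $t=(k-\psi_v(x))/c$; which integers $k$ occur, and how these graphs glue across rectangle boundaries, depends on the actual values of $\psi_v$ at the endpoints of the $I_i$.  The function $\psi_v$ is produced in \S\ref{sec:cohom:iet} only as the closure of the orbit $\mathcal O_v$ under a contracting renormalization---there is no closed-form expression for it, and the paper itself stresses that Figure~\ref{fig:level set straightening} depicts only an \emph{approximation} of $L$.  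So the ``key claim'' you flag is not something you can verify by inspection; it is exactly what needs a proof.  The same issue undermines your sector-by-sector argument at $p_0$: you assume that $L$ enters $p_0$ through the same ten sectors as $U$, but that is part of what you are trying to establish.

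The paper sidesteps this entirely.  Rather than computing $L$, it manufactures a second continuous function $g:S_7\to\R/\Z$, strictly increasing along $\phi^\theta_t$-trajectories, with $g^{-1}(0)=U$ (take $g\equiv 0$ on $U$ and affinely interpolate along each maximal flow segment in $S_7\setminus U$; continuity comes from the trapezoid decomposition of Lemma~\ref{lem:transversality}).  Then the only input needed about $L$ is that $[\Psi_c]=[g]$ as \emph{relative} cohomology classes in $H^1(S_7,\Sigma;\Z)$, which reduces to the computation in Lemma~\ref{lem:homologous} that the integer class to which $c\eta^s$ is forward asymptotic pairs with the edges $e_0,\dots,e_6$ by $(0,0,-1,1,1,0,-1)$, matching the algebraic intersection of $U$ with these edges.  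With that in hand, Lemma~\ref{lem:Fried} (a Fried-type statement: two circle-valued functions strictly increasing along trajectories, with equal relative class and agreeing at the singularity, differ by an ambient isotopy along leaves) produces $H_1$ automatically.  The upshot is that the paper never needs to know how many arcs $L$ has or where they sit---the cohomological equality plus the general lemma forces $H_1(U)=L$, and the conjugacy of return maps is then a formal consequence.
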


\noindent We will prove the second statement of the Proposition which implies the first.

\paragraph*{\bf Approximating the level set} An approximation
of the level set $L$ is depicted in the top part of Figure \ref{fig:level set straightening}. The colors used for $L$ correspond to the colors used for $U$ under the conjugacy. The proposition verifies the naive intuition that the arcs of $L$ are as depicted and they can be straightened to the saddle connections making up $U$ by moving points of $L$ within leaves of the straight-line flow.

\paragraph*{\bf Orienting} We orient the level set $L$ and the saddle connections making up $U$ so that as we move along the curves in the direction of orientation, the trajectories of $\phi^\theta_t$ cross from left to right. In the picture, the orientation of the saddle connections is from lighter to darker colors.

\begin{lemma}
\label{lem:homologous}
    The level set $L$ and the the union of saddle connections $U$ are homologous.
\end{lemma}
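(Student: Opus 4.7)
The plan is to show that both $[L]$ and $[U]$ coincide in $H_1(S_7; \Z)$ by identifying both with the Poincar\'e dual of the cohomology class $[\Psi_c] \in H^1(S_7; \Z)$.

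First I would establish that $[L]$ equals the Poincar\'e dual of $[\Psi_c]$. With the chosen orientation on $L = \Psi_c^{-1}(\{0\})$, for any smooth closed curve $\gamma$ transverse to $L$ and avoiding the singularity $p_0$, the algebraic intersection number $L \cdot \gamma$ equals the winding number of $\Psi_c \circ \gamma$, which is $[\Psi_c](\gamma)$ by the definition of the cohomology class associated to a continuous map to $\R/\Z$ given in Section \ref{sect:Pisot}. By nondegeneracy of the intersection pairing on $H_1(S_7; \Z)$, this characterizes $[L]$ uniquely.

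Next I would verify that $[U]$ satisfies the same characterization. Using the polygonal description of $S_7$, I would fix a basis $\{\gamma_1, \ldots, \gamma_{2g}\}$ of $H_1(S_7; \Z)$ (for instance, the basis coming from intervals of continuity of a return map as in Section \ref{sec:cohom:iet}), read off each intersection number $U \cdot \gamma_i$ directly from the picture of $S_7$, and check that it matches $[\Psi_c](\gamma_i) = \varphi_c^\infty(\gamma_i)$. The right-hand side is given explicitly by the proof of Theorem \ref{thm:asymptotic} in terms of the heights $h_i$ and the coboundary values $v_i$, both of which are computed from Lemma \ref{lemma:7gonIET} and the discussion in Section \ref{sec:nonergodicdirections}.

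The main obstacle will be the careful combinatorial bookkeeping for this finite comparison: matching the orientation conventions on the five saddle connections making up $U$, the orientation on $L$, and each basis curve $\gamma_i$, and verifying that all $2g=6$ intersection numbers line up. An alternative, more geometric route I would keep in reserve is to directly exhibit a $2$-chain $C$ in $S_7$ with $\partial C = L - U$ by sweeping short segments of $\phi^\theta_t$-trajectories from $U$ to $L$; Figure \ref{fig:level set straightening} strongly suggests that $L$ can be recovered from $U$ by such a piecewise flow deformation, and a careful analysis at the singularity $p_0$ would complete the argument.
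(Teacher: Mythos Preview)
Your overall strategy matches the paper's: identify $[L]$ with the Poincar\'e dual of $[\Psi_c]$ via the winding-number interpretation, then check that $[U]$ has the same intersection numbers with a generating set for $H_1(S_7;\Z)$. That conceptual outline is correct.

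However, your choice of generating set and data source is off. The basis $\{\gamma_i\}$ and the heights/coboundary data from Lemma~\ref{lemma:7gonIET} and Section~\ref{sec:nonergodicdirections} pertain to the first return map of the flow in the \emph{unstable} direction on the \emph{staircase} model $\omega_7$. In this appendix the eigenfunction $\Psi_c$ is for the flow in the \emph{stable} direction on the double heptagon $S_7$ (see Remark~\ref{rem:heptagon figure} and the paragraph ``Stable versus unstable directions''), and the saddle connections $U$ are drawn on $S_7$. So the integer class $\varphi_c^\infty$ relevant here is the one to which $c\eta^s$ is forward asymptotic under powers of $(\rho^{-1})^\ast$, not the class produced by the unstable-direction data of Lemma~\ref{lemma:7gonIET}. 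You would need either to redo the zippered-rectangle analysis for the stable flow, or to transport everything through the affine map $C_7$ and swap $\rho$ for $\rho^{-1}$---considerably more bookkeeping than your sketch suggests.

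The paper sidesteps this by using a generating set tailored to the picture: the seven edges $e_0,\ldots,e_6$ of one heptagon in $S_7$. It computes the vector $\big([\Psi_c](e_0),\ldots,[\Psi_c](e_6)\big)=(0,0,-1,1,1,0,-1)$ by a computer-assisted calculation of the forward-asymptotic class (in the ancillary notebook), and then leaves to the reader the visual check that $[e_i]\cap[U]$ matches this, which is immediate from Figure~\ref{fig:level set straightening}. Your alternative route of exhibiting a $2$-chain by sweeping along trajectories is not independent: it presupposes the topological description of $L$ that the figure suggests, which is precisely what Proposition~\ref{prop: conjugate L and U} is set up to \emph{prove} using this lemma.
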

\begin{proof}
Let $\rho:S_7 \to S_7$ be the pseudo-Anosov. Theorem \ref{thm:asymptotic} tells us that the cohomology class $[\Psi_c]$ is the integer class to which $c \eta_s$ is forward asymptotic to under $\rho^{-n}$ as $n$ tends to $+\infty$. (We are considering the stable direction of $\rho$.) In the notebook \texttt{appendix\_data.ipynb} included in ancillary files on the arXiv, we compute this cohomology class. Let $e_i$ denote the $i$-th edge, $i \in \{0, \ldots, 6\}$, of the leftmost heptagon in $S_7$, as depicted in Figure \ref{fig:level set straightening}, with the horizontal edge being $e_0$ and listed in counterclockwise order. We orient these edges in counterclockwise around the boundary of this heptagon. Observe that $\{e_0, \ldots, e_6\}$ generate $H_1(S_7; \Z).$
The notebook computes that 
\begin{equation}
    \label{eq:cohomology class}
    \left([\Psi_c](e_0), \ldots, [\Psi_c](e_6)\right) = (0, 0, -1, 1, 1, 0, -1).
\end{equation}
With our orientation convention, we have $[\Psi_c](e_i) = [e_i] \cap [L]$ for all $i$, where $[L]$ and $[e_i]$ denote homology classes. The reader may check that $[e_i] \cap [U]$ also gives values matching \eqref{eq:cohomology class}.
\end{proof}

\paragraph*{\bf Cohomological functions and isotopy}
It is a classical result of Fried \cite[Theorem C]{Fried82} that homologous sections of a flow yield topologically conjugate return maps. Moveover, the conjugacy can be obtained by an isotopy which only moves points along leaves of the flow. In the setting of translation surfaces, this can't quite be true because of the singularities. We will state and prove a variant of this result which does hold in our setting. Our argument is very close to Fried's~\cite{Fried}.

\paragraph*{\bf Increasing along trajectories} Rather than working with sections, we work with continuous functions to the circle. If $S$ is a translation surface with straight-line flow $\phi_t$, we say a continuous $f:S \to \R/\Z$ is {\em strictly increasing along trajectories} if for any closed interval $J \subset \R$ containing zero and any trajectory $\gamma:J \to S$ given by $\gamma(t)=\phi_t(p_0)$, the lift of $f \circ \gamma:J \to \R/\Z$ to a function $J \to \R$ is strictly increasing. We allow the trajectory $\gamma$ to take singular values at endpoints of $J$ (meaning that $\gamma$ could be a separatrix or saddle connection).

\paragraph*{\bf Cohomology classes} In section \ref{sect:Pisot}, we discussed the cohomology class of a map $f:S \to \R/\Z$. Here we consider the relative cohomology class $[f]$, which interpret as a linear $H_1(S, \Sigma; \Z) \to \R$, computed as before. This function will take integral values on absolute homology classes.

\begin{lemma}    
\label{lem:Fried}
Let $f, g: S \to \R/\Z$ be two continuous functions on a translation surface $S$ that are strictly increasing along trajectories of the vertical straight-line flow $\phi_t$. Suppose in addition that the relative cohomology classes of $f$ and $g$ are equal, and that the functions agree at the singularities. Then there is an ambient isotopy $H_s:S \to S$ with $s \in [0, 1]$, such that the following statements hold:
\begin{itemize}
    \item Each singularity is fixed by each $H_s$.
    \item $H_0$ is the identity.
    \item $f=g\circ H_1$.
    \item Each $H_s$ acts as an orientation-preserving homeomorphism of each vertical leaf.
\end{itemize}
\end{lemma}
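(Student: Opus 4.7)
The plan is to construct the isotopy explicitly by moving each non-singular point along its vertical trajectory by a carefully chosen flow time, following the spirit of Fried's argument~\cite{Fried} for homologous sections. We work on the $\Z$-covering $\pi: \tilde{S} \to S$ corresponding to the common relative cohomology class $[f] = [g] \in H^1(S, \Sigma; \Z)$, obtained as the pullback of $\R \to \R/\Z$ via $f$ (equivalently via $g$). Choose lifts $\tilde{f}, \tilde{g}: \tilde{S} \to \R$ of $f$ and $g$, equivariant for the deck action, and agreeing at every preimage of every singular point; this is possible because $f = g$ on $\Sigma$ and the deck transformation shifts both lifts by the same integer. The flow $\phi_t$ lifts to a flow $\tilde{\phi}_t$ on $\tilde{S}$, and both $\tilde{f}$ and $\tilde{g}$ remain strictly increasing along its trajectories.

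The key geometric step is to show that for every non-singular $p \in \tilde{S}$, the map $t \mapsto \tilde{g}(\tilde{\phi}_t(p))$ is a homeomorphism onto $\R$. Unboundedness in either direction follows from a compactness and recurrence argument on $S$: if $\tilde{g}(\tilde{\phi}_t(p))$ were bounded as $t \to \infty$, compactness of $S$ would provide a sequence $t_n \to \infty$ with $\tilde{\phi}_{t_n}(p)$ converging to some lift $q^* \in \tilde{S}$, and then the strict monotonicity $\tilde{g}(\tilde{\phi}_\sigma(q^*)) > \tilde{g}(q^*)$ for $\sigma > 0$ combined with continuity of $\tilde{g}$ would yield a contradiction. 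With this established, define $\tau_s: \tilde{S} \to \R$ for each $s \in [0, 1]$ to be the unique time satisfying
$$\tilde{g}\bigl(\tilde{\phi}_{\tau_s(p)}(p)\bigr) = (1-s)\tilde{g}(p) + s\tilde{f}(p)$$
at non-singular $p$, and $\tau_s(p) = 0$ at singularities. Setting $\tilde{H}_s(p) = \tilde{\phi}_{\tau_s(p)}(p)$ gives a $\Z$-equivariant map (since $\tilde{f}$ and $\tilde{g}$ transform identically under the deck action), which descends to $H_s: S \to S$.

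Granting continuity of $\tau_s$, the required properties of $H_s$ follow easily. We have $\tau_0 \equiv 0$, so $H_0 = \mathrm{id}$; the defining equation at $s = 1$ gives $g \circ H_1 = f$; and singularities are fixed by construction. On each non-singular vertical leaf parameterized by flow time $t$, writing $\alpha(t) = \tilde{f}|_{\mathrm{leaf}}$ and $\beta(t) = \tilde{g}|_{\mathrm{leaf}}$ (strictly increasing homeomorphisms $\R \to \R$ by the previous paragraph), the restriction of $\tilde{H}_s$ is the reparameterization $t \mapsto \beta^{-1}\bigl((1-s)\beta(t) + s\alpha(t)\bigr)$, a composition of strictly increasing continuous bijections, hence an orientation-preserving homeomorphism of the leaf. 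That $H_s$ is a homeomorphism of $S$ then follows from its being a continuous bijection on the compact Hausdorff space $S$.

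The hard part will be verifying continuity of $\tau_s$ (and hence of $H_s$, in both $s$ and $p$) at the singular points. Away from $\Sigma$ continuity is a one-dimensional implicit-function argument using that $\tilde{g}$ is strictly monotone continuous along each trajectory and depends continuously on the basepoint in a flow box. Near a singularity $p^* \in \Sigma$ one must show $\tau_s(p_n) \to 0$ as $p_n \to p^*$; the subtlety is that the flow is not equicontinuous at $p^*$, so points approaching $p^*$ on different trajectories can be flowed far away in bounded time. The argument must combine the local sector structure at the cone point (unit-speed trajectories within a disk of radius $r$ around $p^*$ remain in that disk for time at most $O(r)$) with the facts that $\tilde{f}(p_n) - \tilde{g}(p_n) \to 0$ and that $\tilde{g}$ is continuous at $p^*$, to bound the flow time required to realize the prescribed change in $\tilde{g}$-value.
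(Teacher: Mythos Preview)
Your approach is essentially the same as the paper's: lift to the $\Z$-cover associated to the common cohomology class, interpolate between the lifted functions, and define $H_s$ leafwise by matching values. The paper uses the interpolation $\tilde h_s \circ \tilde H_s = \tilde f$ with $\tilde h_s = (1-s)\tilde f + s\tilde g$, whereas you use $\tilde g \circ \tilde H_s = (1-s)\tilde g + s\tilde f$; these are different isotopies but agree at the endpoints and both work.

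There is one genuine inaccuracy. Your ``key geometric step'' asserts that $t \mapsto \tilde g(\tilde\phi_t(p))$ is a homeomorphism onto $\R$ for every non-singular $p$. This fails when $p$ lies on a separatrix or a saddle connection: the trajectory reaches a singularity in finite time in one or both directions, and the image is only a half-line or a bounded interval. The paper handles this by observing that, since $\tilde f$ and $\tilde g$ agree on the singularities, the images under $\tilde f$ and $\tilde g$ of any given trajectory, separatrix, or saddle connection are the same subinterval of $\R$; hence the value $(1-s)\tilde g(p) + s\tilde f(p)$ always lies in the range of $\tilde g$ along the leaf through $p$, and your $\tau_s(p)$ is still well defined. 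You need this observation, and it is exactly where the hypothesis that $f$ and $g$ agree on $\Sigma$ enters. Relatedly, your unboundedness argument needs the remark that $\{\tilde g \in [a,b]\}$ is compact in $\tilde S$ (only finitely many deck translates of a fundamental domain), not just compactness of $S$; the paper instead proves a uniform lower bound $\tilde f(\tilde\phi_\epsilon(p)) - \tilde f(p) > c$ via deck-group invariance and compactness, which is cleaner and directly gives the needed surjectivity.

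On continuity: you correctly flag the singular points as the delicate case but leave the argument as a sketch. The paper does not try to bound $\tau_s$ near a singularity; instead it argues directly that $\tilde H_{s_n}(p_n) \to \tilde H_{s_*}(p_*)$ via a flow-box argument around the \emph{target} point $q_* = \tilde H_{s_*}(p_*)$, using monotonicity of $\tilde h_s$ along trajectories to trap $q_n$ in any prescribed neighborhood. When $q_*$ is singular this is the same argument with $\tau$ replaced by a union of initial horizontal segments emanating from $q_*$. This avoids any appeal to equicontinuity of the flow at the cone point and is what you should supply to complete the proof.
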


\paragraph*{\bf Conjugate return maps} In particular, this Lemma tells us that return maps of the flow $\phi_t$
to corresponding level sets $f^{-1}(\{\theta\})$ and $g^{-1}(\{\theta\})$ are conjugate under the restriction of $H_1$ to $f^{-1}(\{\theta\})$.

\begin{proof}
We would like to work with functions to $\R$ rather than to the circle. To this end, we construct a cover $\tilde S \to S$ and lifts $\tilde f, \tilde g:\tilde S \to \R$ such that the following diagram commutes:
$$\begin{CD}
\tilde S @>\tilde f,~\tilde g>> \R\\
@VVV @VVV\\
S @>f,~g>> \R/\Z.
\end{CD}
$$
The cover $\tilde S$ is the cover associated to the common absolute cohomology classes of $f$ and $g$. That is, a loop $\gamma$ on $S$ lifts to $\tilde S$ if and only if $[f](\gamma)=0$. Because $[f]$ comes from a map to a circle, the absolute cohomology class is an integer class, and so $\tilde S$ is a $\Z$-cover: the cover is normal and has deck group isomorphic to $\Z$. Because $f$ and $g$ coincide on singularities and the relative classes agree, we can assume (by precomposing $\tilde g$ with a deck transformation) that $\tilde f$ and $\tilde g$ coincide on the singularities of $\tilde S$.

Let $\tilde \phi_t:\tilde S \to \tilde S$ be the lifted vertical straight-line flow. 
Since $S$ is compact and $\tilde f$ is strictly increasing along trajectories, we see that for any $\epsilon>0$, there is a $c>0$ such that for any $p \in \tilde S$,
$$\tilde f \circ \tilde \phi_\epsilon(p)-\tilde f(p)>c$$
and likewise for $\tilde g$.
(The quantity $\tilde f \circ \tilde \phi_\epsilon(p)-\tilde f(p)$ is deck group-invariant, and while not well defined at $p$ such that the trajectory hits a singularity before time $\epsilon$, extends to a well-defined positive continuous function on a compact set via the standard trick of splitting trajectories hitting singularities, and so is bounded from below by a positive number.) From this we conclude that the restriction of $\tilde f$ and $\tilde g$ to a bi-infinite trajectory $\{\tilde \phi_t x:~t \in \R\}$ is an orientation-preserving homeomorphism to $\R$, while the restriction to a separatrix is a homeomorphism to an infinite closed interval with one endpoint, and the restriction to a saddle connection is a homeomorphism to a closed and bounded interval.

Because $\tilde f$ and $\tilde g$ agree on the singularities, we see that the images under $\tilde f$ and $\tilde g$ of a bi-infinite trajectory, separatrix, or saddle connection are the same. We can use this to define a map $\tilde H_1:\tilde S \to \tilde S$ that carries $\tilde f$ to $\tilde g$. We define $\tilde H_1$ to fix singular points. Any non-singular point $p$ in $\tilde S$ can be extended to a bi-infinite trajectory, separatrix, or saddle connection $\gamma(t)$ with $\gamma(0)=p$. Then there is a unique $p'$ on $\gamma$ such that $\tilde f(p) = \tilde g(p')$ and we define $\tilde H_1(p)=p'$.

We extend our definition of $\tilde H_1$ to a family of maps $\tilde H_s:\tilde S \to \tilde S$ for $s \in [0,1]$. First define $\tilde h_s=(1-s)\tilde f + s \tilde g$. Then $\tilde h_s:\tilde S \to \R$ is strictly increasing along vertical trajectories, and agrees with $\tilde f$ and $\tilde g$ on singularities. Observe that the image of any bi-infinite trajectory, separatrix, or saddle connection under each $\tilde h_s$ is the same.
We define $\tilde H_s$ as in the previous paragraph, being the identity on singular points, and satisfying $\tilde H_s(p)=p'$ where $p'$ is the point on the trajectory through $p$ such that $\tilde f(p)=\tilde h_s(p')$ whenever $p$ is non-singular.

We claim that $\tilde H_s$ is continuous when viewed as a function $[0,1]\times \tilde S \to \tilde S$. To this end suppose that $\{s_n\}$ is a sequence in $[0,1]$ that converges to $s_\ast$ and $\{p_n\}$ is a sequence in $\tilde S$ that converges to $p_\ast$. Set $q_n=\tilde H_{s_n}(p_n)$ and $q_\ast=\tilde H_s(p_\ast)$. We claim that $q_n \to q_\ast$. 
First focus on the case when $q_\ast$ is non-singular. 
Let $U$ be a neighborhood of $q_\ast$ in $\tilde S$ that contains no singular points of $\tilde S$.
Let $\tau \subset U$ be a closed horizontal segment that contains $q$ in its interior. Since $\tau$ is compact, there is an $\epsilon>0$ such that the box
\begin{equation}
    \label{eq:flowbox}
    B= \bigcup_{t \in [-\epsilon, \epsilon]} \tilde \phi_t(\tau)
\end{equation}
is contained in $U$. Since $\tilde h_{s_\ast}$ is strictly increasing along trajectories
$$\tilde h_{s_\ast}(\phi_{-\epsilon} q_\ast) < \tilde h_{s_\ast}(q_\ast) < \tilde h_{s_\ast}(\phi_{\epsilon} q_\ast).$$
By continuity of the map $(s, q) \mapsto \tilde h_s(q)$, we can shrink $\tau$ and choose a closed interval neighborhood of $s_\ast$, $I \subset [0,1]$, to arrange that
\begin{align*}
    M_- < \tilde h_{s_\ast}(q_\ast) < M_+ \quad \text{where} \quad
    M_- &= \sup\, \{\tilde h_s( \tilde \phi_{-\epsilon} q):~(s,q) \in I \times \tau\} \\
    \text{and} \quad M_+ &= \inf\,\{\tilde h_s(\tilde \phi_{\epsilon} q):~(s,q) \in I \times \tau\}.
\end{align*}
With $B$ defined as in \eqref{eq:flowbox} but with the updated values $\tau$, we see the neighborhood $B$ of $q_\ast$ has the property that if $s \in I$ and $q \in \tilde S$ satisfies both:
\begin{itemize}
    \item $\tilde h_s(q) \in (M_-, M_+)$, and
    \item the $\tilde \phi_t$-trajectory of $q$ passes through $B$,
\end{itemize}
then $q \in B \subset U$. (This just uses monotonicity of $\tilde h_s$ along trajectories.) Now return to our sequence. Since $I$ is a neighborhood of $s_\ast$, for $n$ sufficiently large, we have $s_n \in I$.
Since $q_\ast$ lies on the same trajectory as $p_\ast$, and $p_n \to p_\ast$, we see that when $n$ is sufficiently large, the trajectory through $p_n$ must pass through $B$. Since $q_n$ lies on the same trajectory as $p_n$, this also holds for $q_n$. Finally observe that
$$\lim \tilde h_{s_n}(q_n)=\lim \tilde f(p_n)=\tilde f(p)=\tilde h_{s_\ast}(q),$$
so when $n$ is sufficiently large we have $\tilde h_{s_n}(q_n) \in (M_-, M_+)$. Therefore the observation above tells us that when $n$ is sufficiently large, $q_n \in B \subset U$. Thus we've shown that $q_n \to q_\ast$ as desired.

The above argument also works when $q_\ast$ is singular: we need to replace 
$U$ by a neighborhood that only contains the singularity $q_\ast$, and replace $\tau$ by a union of initial parts of all horizontal segments emanating from $q$.

We will now show that when $s$ is fixed, $\tilde H_s:\tilde S \to \tilde S$
is a homeomorphism. The above argument shows that each $\tilde H_s$ is continuous. We can deduce that $\tilde H_s$ has a continuous inverse using the same ideas. We produced $\tilde H_s$ to interpolate between $\tilde f$ and $\tilde g$. Fixing $s$, we can produce a new family of functions $\tilde H'_{s'}$ in the same way which interpolates between $\tilde h_s$ and $\tilde f$. That is, 
$\tilde h'_{s'} \circ \tilde H'_{s'} = \tilde h_s$
for each $s' \in [0,1]$ where $h'_{s'}=(1-s') \tilde h_s + s' \tilde f$.
Then each $\tilde H'_{s'}$ is continuous by the reasoning above. Now observe that $\tilde f \circ \tilde H'_{1} = \tilde h_s.$ By the definition of the maps, we see that $\tilde H'_1$ is the inverse of $\tilde H_s$, showing that $\tilde H_s$ has a continuous inverse as desired.

We have shown that $\tilde H_s$ is an ambient isotopy.
Observe that each $\tilde H_{s}$ is deck group-equivariant, and so descends to a homeomorphism $H_{s}:S \to S$ that fixes singularities. We observe that $H_s$ satisfies the required statements.
\end{proof}

We need some basic observations about the complement $S_7 \setminus U.$

\begin{lemma}
    \label{lem:transversality}
    No saddle connection in $U$ is parallel to the flow direction $\phi_t^\theta$. The complement $S_7 \setminus U$ is a pair of annuli.
    Initial segments of separatrices in direction $\theta$, up to the first intersection with $U$ join opposite sides of the annulus containing them, and cut each annulus into trapezoids.
\end{lemma}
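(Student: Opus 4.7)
The plan is to verify each assertion by direct computation using the explicit presentation of $U$ as the five saddle connections depicted in Figure~\ref{fig:level set straightening}, with the concrete coordinates stored in the ancillary notebook \texttt{appendix\_data.ipynb}.

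For the first claim, I would compute the holonomy vectors of the five saddle connections in $U$ (each lies in $\Q(a_7)^2$ after pulling back by $C_7^{-1}$) and verify directly that none is parallel to $u$. A cleaner conceptual argument is available: $u$ is the stable eigendirection of $\rho$, so $\rho^n$ contracts lengths in direction $u$ by $\beta_7^{-n}$. If $U$ contained a saddle connection $\sigma$ parallel to $u$, then $\{\rho^n(\sigma)\}_{n \geq 0}$ would be saddle connections parallel to $u$ of lengths tending to zero, contradicting the positivity of the systole of $S_7$. This rules out saddle connections parallel to $\theta$ anywhere on $S_7$.

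For the second claim, I would cut $S_7$ along $U$ and analyze the resulting surface with boundary by a purely combinatorial bookkeeping. Since $S_7 \in \hh(4)$ has genus $3$ with a single cone point of angle $10\pi$, an Euler characteristic computation, with the five saddle connections each cut into two boundary edges and all endpoints identified at the singularity, yields $\chi(S_7\setminus U) = 0$, and listing the boundary cycles shows there are exactly two of them. Orientability then forces each component to be an annulus. Alternatively, one can observe that $L = \Psi_c^{-1}(\{0\})$ separates $S_7$ into the preimages of the two half-circles under $\Psi_c$, and by Lemma~\ref{lem:Fried} applied to $\Psi_c$ and a corresponding map whose level sets are $U$, the complement $S_7 \setminus U$ is homeomorphic to $S_7 \setminus L$ via an ambient isotopy that preserves $\phi_t^\theta$-trajectories, reducing the statement to the analogous one for $L$ which is transparent from Figure~\ref{fig:level set straightening}.

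For the third and fourth claims, the singularity has five separatrices in direction $+u$ and five in direction $-u$. Each separatrix first hits $U$ in finite positive time by minimality of $\phi_t^\theta$ (which is a consequence of being in the stable direction of the pseudo-Anosov). The key step is to show that each initial separatrix segment crosses from one boundary component of its annulus to the other. I would verify this by direct computation in the notebook: compute the first-hit map on $U$ for each of the ten separatrices and check which boundary cycle of $S_7 \setminus U$ contains each endpoint. This is the main obstacle, since it is the only step that is not forced by abstract geometric or topological reasoning and requires explicit tracking of trajectories. Once this is confirmed, the subdivision of each annulus follows at once: each annulus is partitioned by straight segments in direction $u$ that connect its two boundary circles, and these segments are transverse to $U$ by the first claim; the resulting regions are bounded by two segments in direction $u$ and two sub-arcs of saddle connections of $U$, hence are trapezoids (with the two $u$-directed sides parallel).
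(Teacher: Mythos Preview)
Your approach is essentially the paper's: the authors write ``Checking this is a calculation we omit'' and note only that the trapezoid decomposition follows from the observation that each initial separatrix segment meets the annulus boundary with angle less than $\pi$ on each side. So outlining a direct verification in the notebook is exactly in the paper's spirit, and the conceptual argument for the first claim (no saddle connections in a pseudo-Anosov eigendirection) is a nice addition.

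Two issues, however. First, your alternative route for the second claim is circular as stated. You propose to invoke Lemma~\ref{lem:Fried} with $\Psi_c$ and ``a corresponding map whose level sets are $U$'', but the paper's construction of such a $g$ (in the proof of Proposition~\ref{prop: conjugate L and U}) uses the trapezoid decomposition from Lemma~\ref{lem:transversality} to verify that $g$ is continuous. You would need an independent construction of $g$, which you do not supply. Relatedly, the description of $S_7 \setminus L$ as ``the preimages of the two half-circles'' is off: $\R/\Z \setminus \{0\}$ is a single arc, so $\Psi_c^{-1}(\R/\Z \setminus \{0\})$ is not manifestly two pieces from this alone.

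Second, your boundary-cycle count does not match the conclusion. A pair of annuli has four boundary circles, not two; with $\chi(S_7 \setminus U)=0$ and exactly two boundary cycles, orientability would force a \emph{single} annulus. The actual combinatorics (ten half-edges at the cone point assembling into four boundary cycles, distributed two-and-two over two components) is exactly the explicit bookkeeping the paper defers to the notebook, and that is where the substance of the verification lies.
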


This situation is depicted in the bottom of figure \ref{fig:level set straightening}. Checking this is a calculation we omit. The fact that the annuli are cut into trapezoids can be deduced by observing that any initial part of a separatrix makes an angle of less than $\pi$ on each side with the boundary of the annulus containing it.

\begin{proof}[Proof of Proposition \ref{prop: conjugate L and U}]
    We need two functions to apply Lemma \ref{lem:Fried}. We use $\Psi_c$ which has $L$ as the level set of zero. We will also define a function $g:S_7 \to \R/\Z$ such that $g^{-1}(\{0\})=U$. We define $g$ to be zero on $U$, now let $\overline{pq}$ be a maximal segment of the flow whose interior is in the complement of $U$. Since it is a segment of the flow, it has a unit speed parameterization $\gamma: [a,b] \to \overline{pq}$. We define $g$ on $\overline{pq}$ by $g \circ \gamma(s)=\frac{s-a}{b-a} + \Z$; this map can be seen to be the inverse of $\gamma$ followed by an affine rescaling $[a,b] \to [0,1]$ and the covering to $\R \to \R/\Z$. From the partition into trapezoids coming from Lemma \ref{lem:transversality}, we see that $g$ is continuous; in fact, $g$ is affine-linear on each trapezoid and so continuity comes from the fact that the maps on each trapezoid agree on their boundaries.

    By construction, $U=g^{-1}(0)$ and $g$ is strictly increasing along flow lines. The absolute cohomology classes of $\Psi_c$ and $g$ are equal by Lemma \ref{lem:homologous}, because they are Poincaré dual to the homology classes of $L$ and $U$. Since $S_7$ only has one singularity, their relative classes coincide as well, and by construction both $\Psi_c$ and $g$ are zero at the singularity. Thus Lemma \ref{lem:Fried} gives us an an ambient isotopy $H_s$ that preserves flowlines and satisfies $\Psi_c \circ H_1=g$. Thus $H_1(U)=L$ and $H_1$ conjugates the first return maps.
\end{proof}

\begin{rmk}[Generality of the approach]
    Let $S$ be a translation surface and let $\psi:S \to \R/\Z$ be an eigenfunction for the straight-line flow. Call a level set {\em critical} if it contains a singularity. Let $L_1, \ldots, L_n$ be the
    critical level sets, written in increasing cyclic order. The absolute homology class $[L_i]$ is Poincar\'e dual to the class $[\psi]$ and we assume this class is known (via methods in this paper for example). 
    For each $L_i$ choose a graph $U_i$ whose vertices are the singularities in $L_i$, and whose edges are smooth curves transverse to the flow. We insist that each $[U_i]$ be Poincar\'e dual to $[\psi]$ and that the $U_i$ are pairwise disjoint.
    Let $L=\bigcup L_i$ and $U=\bigcup U_i$. We require that the complement $S \setminus U$ is a collection of annuli, and that trajectories enter an annulus through one boundary component and exit through the other, inducing a homomorphism between the components. We futher insist that for each annulus there is an index $i$ so that the boundary component through which trajectories enter comes from points in $U_i$ and the other boundary component comes from points in $U_{i+1}$.
    Let $g:S \to \R/\Z$ be the function which assigns to each point $U_i$ the value in $\psi(L_i)$ and which affine-linearly interpolates between boundary values on segments of the flow passing through annuli. 
    We have ensured that $[\psi]=[g]$ as absolute cohomology classes and
    that the functions $\psi$ and $g$ agree on the singularities. 
    To apply Lemma \ref{lem:Fried}, we need to know that $[\psi]=[g]$ as relative classes, which we get for free in our context because we only have one singularity. Then Lemma \ref{lem:Fried} gives a conjugacy between the return maps to $L$ and to $U$.
\end{rmk}

\paragraph*{\bf A conjugate IET}
The return map to $L$ is given by $\phi^\theta_{1/c}|_L:L \to L$. Let $r:U \to U$ denote the first return map of the flow to $U$. Note that both the sections $L$ and $U$ have natural invariant measures coming from the Lebesgue-transverse measure to the flow $\phi^\theta_t$.

\paragraph*{\bf Normalizing} After normalizing to ensure the total transverse measure of the saddle connections in $U$ is one, we compute the transverse measures of the saddle connections to be given by:

\begin{center}
    \begin{tabular}{cc}
        Color & Normalized transverse measure \\
        \hline
        red & $\frac{1}{13}(-16 a_7^{2} + 7 a_7 + 40) \approx 0.0509$\\
        yellow & $\frac{1}{13}(21 a_7^{2} - 10 a_7 - 46) \approx 0.3206$\\
        green, purple & $\frac{1}{13}(-10 a_7^{2} + 6 a_7 + 25) \approx 0.2571$\\
        blue & $\frac{1}{13}(15 a_7^{2} - 9 a_7 - 31) \approx 0.1144$\\
    \end{tabular}
\end{center}

\paragraph*{\bf Conjugating} Choose a measure-preserving map $h:U \to [0,1)$ that continuously carries the interior of each saddle connection to a subinterval of length equal to the normalized transverse measure of the saddle connection, with the saddle connections images being disjoint. Then $h$ conjugates $r:U \to U$ to an IET $\tau:[0,1) \to [0,1)$. Specifically, we arrange the intervals in decreasing rainbow order. This gives the IET depicted in Figure \ref{fig:iet}, whose interval lengths are:
$$\begin{array}{ll}
|A| = \tfrac{1}{13}(-10 a_7^{2} + 6 a_7 + 25) \approx 0.2571 & 
|B|= \tfrac{1}{13}(24 a_7^2 - 17 a_7 - 47) \approx 0.0227, \\
|C| =  \tfrac{1}{13}(-19 a_7^2 + 14 a_7 + 41) \approx 0.3488, &
|D| =  \tfrac{1}{13}(6 a_7^2 - a_7 - 15) \approx 0.2061, \\
|E| =  \tfrac{1}{13}(15 a_7^2 - 9 a_7 - 31) \approx 0.1144, &
|F| =  \tfrac{1}{13}(-16 a_7^2 + 7 a_7 + 40) \approx 0.0509.
\end{array}$$

\begin{figure}
    \centering
    \includegraphics[width=0.8\linewidth]{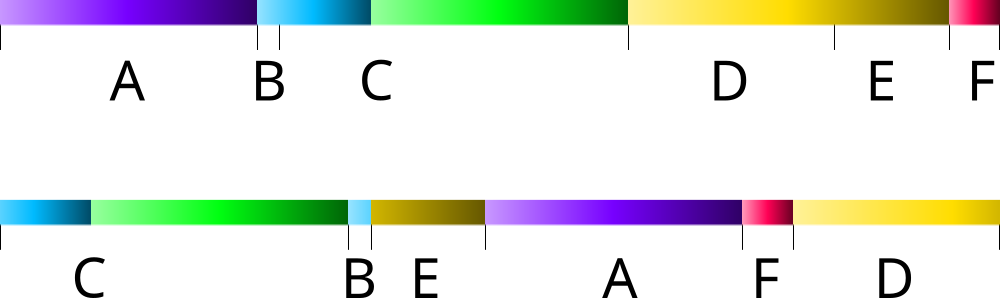}
    \caption{The IET $\tau$ carries the top arrangement to the bottom arrangement by piecewise translation. Colors come from $U$ via $h:U \to [0, 1)$ which conjugates $r:U \to U$ to $\tau$.}
    \label{fig:iet}
\end{figure}

\paragraph*{\bf Eigenvalues} From work in section \ref{sect:odd number of sides}, we know that the entries of $\mathbf c = (c, \frac{-c}{a_7}, -a_7 c)$ generates the group of eigenvalues in the stable direction on $S_7$. (These eigenvalues come from the triple $(1, \frac{-1}{a_7}, -a_7)$ of eigenvalues on $E_7 \cdot \omega_7$, and they generate the full group $\Z[a_7]$ of eigenvalues on $E_7 \cdot \omega_7$.) Let 
$$\bPsi_{\mathbf c} = \Psi_c \times \Psi_{-c/a_7} \times \Psi_{-a_7 c}:S_7 \to \TT^3$$ 
be the semiconjugacy defined as in Theorem \ref{theorem:pisotsemiconjugacy}. 
The map $\bPsi_{\mathbf c}$ is surjective and the preimage of $\{0\} \times \TT^2$ is the level set $L$ above. This means that the restriction $\bPsi_{\mathbf c}|_L:L \to \{0\} \times \TT^2$ is a surjection from $L$ to the $2$-torus. 
We have drawn an approximaton of this map $L \to \TT^2$ in Figure \ref{fig:torus}. From Theorem \ref{theorem:pisotsemiconjugacy}, we see that the first return map $\phi^\theta_{1/c}|_L:L \to L$ is semiconjugated to the translation $R:\TT^2 \to \TT^2$ of this $2$-torus by the vector $(\frac{-1}{a_7}, -a_7) \approx (0.4450, 0.1981)$
modulo $\Z^2$. Using the conjugacy between $\phi^\theta_{1/c}|_L$ and the IET $\tau$, we see that:

\begin{Cor}
    The (multiplicative) group of eigenvalues of $\tau$ is $$\{e^{2 \pi x i}:~x \in \Z[a_7]\}.$$ All eigenfunctions are continuous.
\end{Cor}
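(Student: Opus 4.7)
The plan is to carry the description of eigenvalues of the flow $\phi^\theta_t$ on $S_7$ (which is provided by Theorem~\ref{theorem:pisotsemiconjugacy} together with the normalization of \S\ref{sect:odd number of sides}) down to the IET $\tau$ along the chain of conjugacies
$$
\phi^\theta_{1/c}|_L : L \to L \;\stackrel{H_1^{-1}}{\cong}\; r:U \to U \;\stackrel{h}{\cong}\; \tau:[0,1)\to[0,1),
$$
using in an essential way the observation that the first return of $\phi^\theta_t$ to $L$ has \emph{constant} return time.

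The key point is that since $L = \Psi_c^{-1}(\{0\})$ and $\Psi_c \circ \phi^\theta_t = \Psi_c + ct \pmod{\Z}$, every point of $L$ returns to $L$ under the flow at time $1/c$. Hence $\phi^\theta_t$ is conjugate to the constant-roof suspension of $\phi^\theta_{1/c}|_L$ by the roof function identically $1/c$. For such a suspension there is a standard bijection between continuous eigenfunctions: given a continuous eigenfunction $F:S_7 \to S^1$ of the flow with additive eigenvalue $c'$, the restriction $f := F|_L$ is a continuous eigenfunction of $\phi^\theta_{1/c}|_L$ with multiplicative eigenvalue $e^{2\pi i c'/c}$; conversely, any continuous $f:L \to S^1$ satisfying $f \circ \phi^\theta_{1/c}|_L = e^{2\pi i c'/c} f$ extends via $F(\phi^\theta_s p) := e^{2\pi i c' s} f(p)$ for $p\in L$, $s \in [0,1/c]$, which is consistent at $s = 1/c$ and defines a continuous flow eigenfunction with additive eigenvalue $c'$. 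Since Theorem~\ref{theorem:pisotsemiconjugacy}(2) guarantees every flow eigenfunction is continuous, every eigenfunction of $\phi^\theta_{1/c}|_L$ is continuous, and the multiplicative group of its eigenvalues is exactly $\{e^{2\pi i c'/c} : c' \in E(\rho)\}$.

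To identify this set, recall from \S\ref{sect:odd number of sides} that $E(\rho)$ is generated as a $\Z$-module by $(c, -c/a_7, -a_7 c)$, so $E(\rho)/c$ is generated by $(1, -1/a_7, -a_7)$. Because $a_7$ is a unit in its ring of integers $\Z[a_7]$ (its minimal polynomial has constant term $\pm 1$), $1/a_7 \in \Z[a_7]$ and $\{1, -1/a_7, -a_7\}$ spans the same $\Z$-module as $\{1, a_7, a_7^2\}$, namely $\Z[a_7]$ itself. Hence $E(\rho) = c \cdot \Z[a_7]$, and the multiplicative group of eigenvalues of the return map is $\{e^{2\pi i x} : x \in \Z[a_7]\}$.

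Finally we transfer the statement to $\tau$ itself. Proposition~\ref{prop: conjugate L and U} supplies a topological conjugacy $H_1^{-1}: L \to U$ between $\phi^\theta_{1/c}|_L$ and $r:U \to U$, and the measure-preserving map $h: U \to [0,1)$ defined above (an isometry on the interior of each saddle connection) conjugates $r$ to $\tau$. Composing these conjugacies preserves the group of eigenvalues and transfers continuity of eigenfunctions to continuity on each interval of continuity of $\tau$, which is the standard notion for IETs. The main technical point requiring care in a complete write-up is verifying that the extension formula $F(\phi^\theta_s p) = e^{2\pi i c' s} f(p)$ produces a well-defined continuous function on $S_7$, including on trajectories through the singularity $p_0 \in L$; this follows from the continuity of $f$ together with the fact that $p_0$ is the common endpoint of every arc of $L$ (equivalently, every saddle connection of $U$), so that $f$ takes a single value at $p_0$ which is consistent across all incoming and outgoing separatrices.
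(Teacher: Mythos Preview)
Your approach is essentially the same as the paper's: both exploit the constant-roof suspension structure over $L$ to pass between eigenfunctions of the flow $\phi^\theta_t$ and of the return map, and both identify $E(\rho)/c$ with $\Z[a_7]$ (the paper obtains the lower bound via the toral semi-conjugacy, you via restricting flow eigenfunctions, but these are equivalent since $\bPsi_{\mathbf c}$ is built from eigenfunctions).

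There is, however, a small logical gap. You set up the bijection only between \emph{continuous} flow eigenfunctions and \emph{continuous} return-map eigenfunctions, and then assert ``Since Theorem~\ref{theorem:pisotsemiconjugacy}(2) guarantees every flow eigenfunction is continuous, every eigenfunction of $\phi^\theta_{1/c}|_L$ is continuous.'' This does not follow: to conclude that all return-map eigenfunctions are continuous (and to get the upper bound on the eigenvalue group) you must start from an arbitrary \emph{measurable} eigenfunction $f$ of the return map. The fix is immediate and is exactly what the paper does: the same extension formula $\bar f(\phi^\theta_s p)=e^{2\pi i c's}f(p)$ applied to measurable $f$ produces a measurable flow eigenfunction, which is then (a.e.\ equal to) a continuous one by Solomyak, whence $c'\in E(\rho)$ and $f$ agrees a.e.\ with its continuous restriction to $L$. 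Working in the measurable category also makes your ``main technical point'' about well-definedness at the singularity $p_0$ disappear, since the singular trajectories form a null set.
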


\paragraph*{\bf The Cantor topology} We remark that all the proof shows that all eigenfunctions continuously factor through $L$, which is topologically a bouquet graph with $5$ edges. This is a stronger condition than continuity in the Cantor topology.

\begin{proof}
    Since $\tau$ is semi-conjugate to a rotation of $\TT^2$ by $(\frac{-1}{a_7}, -a_7)$, postcomposing this map with projection to $\R/\Z$ gives (additive) eigenfunctions with (multiplicative) eigenvalues $e^\frac{-2 \pi i}{a_7}$ and $e^{-2 \pi i a_7}$, respectively. 
    These two generate the claimed group. Conversely, suppose that $e^{2 \pi i x}$ is a (multiplicative) eigenvalue for $\tau$. Then this is also an eigenvalue for $\phi^\theta_{1/c}|_L:L \to L$. Let $f:L \to \R/\Z$ be an associated (additive) measurable eigenfunction. Then we have 
    $$f \circ \phi^\theta_{1/c}(p) = x + f(p) \quad \text{for $p \in L$}.$$
    Extend $f$ to all of $S_7$ by defining
    $$\bar f\big(\phi^\theta_{t}(p)\big)=c x t + f(p) \quad \text{for $p \in L$ and $t \in [0, \frac{1}{c})$.}$$
    Observing that $\bar f$ is an eigenfunction for $\phi^\theta_t$ with additive eigenvalue $c x$, we conclude that $\bar f$ is continuous (from Solomyak's results \cite{Solomyak97ETDS}) and that $c x \in c \Z[a_7]$ from the classification of eigenvalues for $\phi^\theta_t$ obtained from appendix \ref{sect:odd number of sides} and Proposition \ref{prop:affine change}. We conclude that $x \in \Z[a_7]$.
\end{proof}

\begin{figure}
    \centering
    \includegraphics[width=0.9\linewidth]{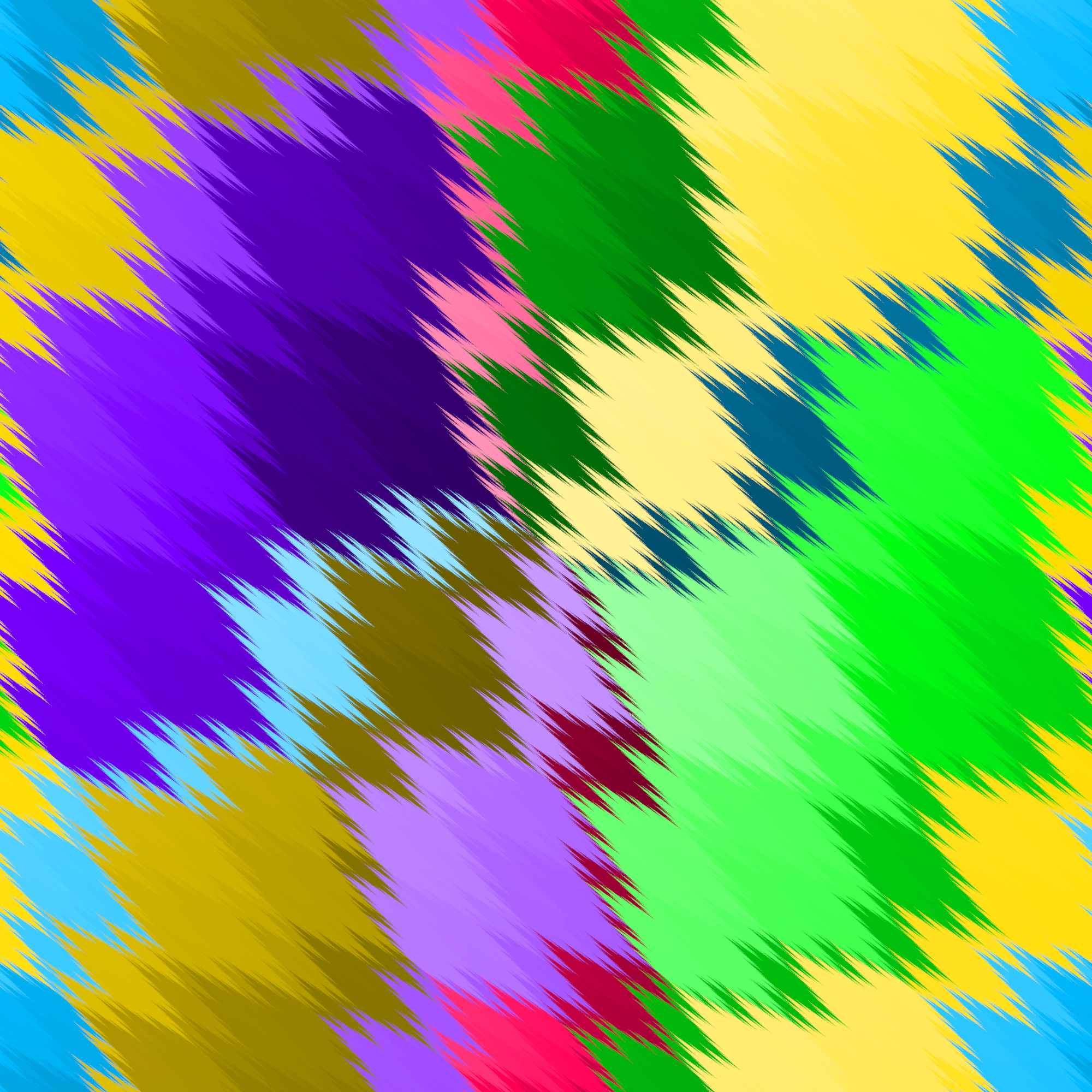}
    \caption{The square represents the $2$-torus $\{0\} \times \TT^2$, which is the image of $L$ under $\bPsi_{\mathbf c}$. We use the square fundamental domain with $\mathbf 0$ placed at the center.
    Points $x \in L$ are colored in Figure \ref{fig:level set straightening}. In this figure, we color $\bPsi_{\mathbf c}(x)$ the same color as $x \in L$.}
    \label{fig:torus}
\end{figure}

\begin{rmk}[Drawing Figure \ref{fig:torus}]
We computed the picture of $\bPsi_{\mathbf c}|_L:L \to \R^2$ in Figure \ref{fig:torus} using the results above and the ideas from section \ref{sect:computing eigenfunctions}. Using the semi-conjugacy from $\phi_{1/c}|_L:L \to L$ and the IET $\tau:[0,1) \to [0,1)$, observe we can draw the image of a semi-conjugacy from $\tau$ to the toral translation $R$. Let $h:[0,1) \to \TT^2$ denote this semi-conjugacy. We know that the boundary points of the colored intervals (which correspond to the singularity on $S$) should be sent to $\mathbf 0 \in \TT^2$. If $x_0 \in [0,1)$ is one of these boundary points, then we have $R^n(\mathbf 0) = h \circ \tau^n(x_0)$ for all $n \in \Z$. Since orbits of $R$ are dense, we can divide the torus into small squares, and find for each square $Q \subset \TT^2$ an $n_Q \in \Z$ such that $R^n(\mathbf 0) \in Q$. We then color the associated pixel to match the color of $\tau^n(x_0)$ in Figure \ref{fig:iet}.
\end{rmk}
\paragraph*{\bf Measurable isomorphisms}
We believe it should be the case that the semiconjugacy from $\bPsi_{\mathbf c}|_L$ to a toral automorphism is in fact a measurable isomorphism; see the discussion of the Pisot conjecture for flows in the introduction. We hope to address question in both this special case and in greater generality in a future work. Specifically, we wonder:

\begin{ques}
    \label{conj:conjugacy}
    In statement (3) of Theorem~\ref{theorem:pisotsemiconjugacy} where $\mathbf c=(c_1, \ldots, c_d)$ is a generating collection of eigenvalues, is the resulting map $\Psi_c$ always a measurable isomorphism from $S$ to $\TT^d$?
\end{ques}

\paragraph*{\bf Restriction to a level set} From this, it follows that the map from a level set to $\TT^{d-1}$ is also a measurable isomorphism. If this holds, then in this special case the following four systems would all be measurably isomorphic:
\begin{itemize}
    \item The map $\psi_{1/c}: L \to L$, equipped with normalized transverse measure.
    \item The first return map $r:U \to U$, equipped with normalized transverse measure.
    \item The IET $\tau:[0,1) \to [0, 1)$, with Lebesgue measure.
    \item The toral translation $R:\TT^2 \to \TT^2$, with Lebesgue measure.
\end{itemize}

\paragraph*{\bf Mercat's semi-algorithm} Mercat~\cite{Mercat:semialgorithm}, has implemented a \emph{semi-algorithm} to determine whether a substitution system has pure discrete spectrum, based on ideas of Rauzy~\cite{Rauzy}, and explained further in~\cite{Fogg}. There is also a more combinatorial algorithm based on ideas of Livshits~\cites{Livshits1, Livshits2}, as explained by Sirvent-Solomyak~\cite{SirventSolomyak}. In either case, these semi-algorithms terminate if the subsitution system does indeed have discrete spectrum. While $\tau$ is not periodic under Rauzy induction, it is pre-periodic, so the algorithm must be modified before applying it to the IET $\tau$. Mercat has done this, and indeed, the semi-algorithm verifies that $\tau$  has pure discrete spectrum, and thus $\tau$ is conjugate to the toral translation $R$. This implies then that the linear flow on the surface and the linear flow on the torus are conjugate, as are the pseudo-Anosov map and hyperbolic toral automorphism. In principle, we can apply Mercat's semi-algorithm (with appropriate adjustments to deal with pre-periodicity versus periodicity) to any of the semi-conjugacies implied by Theorem~\ref{theorem:pisotsemiconjugacy}, to try and answer Conjecture~\ref{conj:conjugacy}, but there is no guarantee that the algorithm terminates in any particular case.

\begin{bibdiv}
\begin{biblist}

\bib{ABBLS}{article}{
   author={Akiyama, Shigeki},
   author={Barge, Marcy},
   author={Berth\'e, Valérie},
   author={Lee, Jun-Yo},
   author={Siegel, Anne},
   title={On the Pisot substitution conjecture},
   conference={
      title={Mathematics of aperiodic order},
   },
   book={
      series={Progr. Math.},
      volume={309},
      publisher={Birkh\"auser/Springer, Basel},
   },
   isbn={978-3-0348-0902-3},
   isbn={978-3-0348-0903-0},
   date={2015},
   pages={33--72},
   review={\MR{3381478}},
   doi={10.1007/978-3-0348-0903-0\_2},
}

\bib{AHChaikaForni}{article}
{
title={Weak mixing in rational billiards}, 
      author={Arana-Herrera, Francisco},
      author={Chaika, Jon},
      author={Forni, Giovanni},
      year={2024},
      note={\href{https://arxiv.org/abs/2410.11117}{arXiv:2410.11117}}
}

\bib{ArnouxBSMF88}{article}{
   author={Arnoux, Pierre},
   title={Un exemple de semi-conjugaison entre un \'echange d'intervalles et
   une translation sur le tore},
   language={French, with English summary},
   journal={Bull. Soc. Math. France},
   volume={116},
   date={1988},
   number={4},
   pages={489--500 (1989)},
   issn={0037-9484},
   review={\MR{1005392}},
}

\bib{ArnouxCassaigneFerencziHubert}{article}{
   author={Arnoux, Pierre},
   author={Cassaigne, Julien},
   author={Ferenczi, S\'ebastien},
   author={Hubert, Pascal},
   title={Arnoux-Rauzy interval exchanges},
   journal={Ann. Sc. Norm. Super. Pisa Cl. Sci. (5)},
   volume={23},
   date={2022},
   number={1},
   pages={233--264},
   issn={0391-173X},
   review={\MR{4407190}},
}

\bib{ArnouxSchmidt}{article}{
   author={Arnoux, Pierre},
   author={Schmidt, Thomas A.},
   title={Veech surfaces with nonperiodic directions in the trace field},
   journal={J. Mod. Dyn.},
   volume={3},
   date={2009},
   number={4},
   pages={611--629},
   issn={1930-5311},
   review={\MR{2587089}},
   doi={10.3934/jmd.2009.3.611},
}

\bib{AGH}{book}{
   author={Auslander, Louis},
   author={Green, Leon},
   author={Hahn, Frank},
   title={Flows on homogeneous spaces},
   series={Annals of Mathematics Studies},
   volume={No. 53},
   note={With the assistance of L. Markus and W. Massey, and an appendix by
   L. Greenberg},
   publisher={Princeton University Press, Princeton, NJ},
   date={1963},
   pages={vii+107},
   review={\MR{0167569}},
}

\bib{AthreyaMasur}{book}{
   author={Athreya, Jayadev S.},
   author={Masur, Howard},
   title={Translation surfaces},
   series={Graduate Studies in Mathematics},
   volume={242},
   publisher={American Mathematical Society, Providence, RI},
   date={2024},
   pages={xi+179},
   isbn={978-1-4704-7655-7},
   isbn={[9781470476779]},
   isbn={[9781470476762]},
   review={\MR{4783430}},
   doi={10.1090/gsm/242},
}

\bib{AvilaDelecroix}{article}{
   author={Avila, Artur},
   author={Delecroix, Vincent},
   title={Weak mixing directions in non-arithmetic Veech surfaces},
   journal={J. Amer. Math. Soc.},
   volume={29},
   date={2016},
   number={4},
   pages={1167--1208},
   issn={0894-0347},
   review={\MR{3522612}},
   doi={10.1090/jams/856},
}

\bib{AvilaForni}{article}{
   author={Avila, Artur},
   author={Forni, Giovanni},
   title={Weak mixing for interval exchange transformations and translation
   flows},
   journal={Ann. of Math. (2)},
   volume={165},
   date={2007},
   number={2},
   pages={637--664},
   issn={0003-486X},
   review={\MR{2299743}},
   doi={10.4007/annals.2007.165.637},
}

\bib{AFS}{article}{
  title={Quantitative weak mixing for interval exchange transformations},
  author={Avila, Artur},
  author={Forni, Giovanni},
  author={Safaee, Pedram},
  journal={Geometric and Functional Analysis},
  volume={33},
  number={1},
  pages={1--56},
  year={2023},
  publisher={Springer}
}

\bib{BargeKwapisz}{article}{
   author={Barge, Marcy},
   author={Kwapisz, Jaroslaw},
   title={Geometric theory of unimodular Pisot substitutions},
   journal={Amer. J. Math.},
   volume={128},
   date={2006},
   number={5},
   pages={1219--1282},
   issn={0002-9327},
   review={\MR{2262174}},
}

\bib{Bartle}{book}{
	title={Introduction to Real Analysis},
	author={Bartle, Robert G},
	author={Sherbert, Donald R},
	year={2018},
	edition={4th edition},
	publisher={John Wiley \& Sons}
}

\bib{BCY}{article}{
  title={Billiards in polyhedra: a method to convert 2-dimensional uniformity to 3-dimensional uniformity},
  author={Beck, Jozsef},
  author={Chen, William W. L.},
  author={Yang, Yuxuan},
  journal={arXiv preprint arXiv:2403.19954},
  year={2024},
}

\bib{Boulanger}{article}{
   author={Boulanger, Julien},
   title={Central points of the double heptagon translation surface are not
   connection points},
   language={English, with English and French summaries},
   journal={Bull. Soc. Math. France},
   volume={150},
   date={2022},
   number={2},
   pages={459--472},
   issn={0037-9484},
   review={\MR{4495954}},
}

\bib{BC}{article}{
 Author = {Boshernitzan, Michael D.}
 AUthor = {Carroll, C. R.},
 Title = {An extension of {Lagrange}'s theorem to interval exchange transformations over quadratic fields},
 FJournal = {Journal d'Analyse Math{\'e}matique},
 Journal = {J. Anal. Math.},
 ISSN = {0021-7670},
 Volume = {72},
 Pages = {21--44},
 Year = {1997},
 Language = {English},
 DOI = {10.1007/BF02843152},
 Keywords = {28D05,37A10,11K50},
 zbMATH = {1146572},
 Zbl = {0931.28013}
}

\bib{BDM}{article}{
  title={Necessary and sufficient conditions to be an eigenvalue for linearly recurrent dynamical Cantor systems},
  author={Bressaud, Xavier},
  author={Durand, Fabien}, 
  author={Maass, Alejandro},
  journal={Journal of the London Mathematical Society},
  volume={72},
  number={3},
  pages={799--816},
  year={2005},
  publisher={Cambridge University Press}
}

\bib{BufetovSolomyak}{article}{
  title={Local spectral estimates and quantitative weak mixing for substitution $\mathbb Z$-actions},
  author={Bufetov, Alexander},
  author={Marshall-Maldonado, Juan},
  author={Solomyak, B},
  journal={arXiv preprint arXiv:2403.12657},
  year={2024},
}

\bib{BufetovSolomyakSurvey}{article}{
  title={Self-similarity and spectral theory: on the spectrum of substitutions},
  author={Bufetov, Alexander}, 
  author={Solomyak, Boris},
  journal={St. Petersburg Mathematical Journal},
  volume={34},
  number={3},
  pages={313--346},
  year={2023},
}

\bib{Cassels}{book}{
  title={An Introduction to Diophantine Approximation},
  author={Cassels, J.W.S.},
  lccn={72088516},
  series={Cambridge tracts in mathematics and mathematical physics},
  url={https://archive.org/embed/introductiontodi0000cass},
  year={1972},
  publisher={Hafner Publishing Company}
}
\bib{sage-flatsurf}{webpage}{
  author       = {Delecroix, Vincent},
  author = {Hooper, W. Patrick},
  author = {Rüth, Julian},
  title        = {sage-flatsurf},
  date = {2024-12-01},
  publisher    = {Zenodo},
  doi          = {10.5281/zenodo.5091445},
  url          = {https://doi.org/10.5281/zenodo.5091445},
  swhid        = {swh:1:dir:cfe21c2b6d48650aadadf59530ea59babc9ad45e
                   ;origin=https://doi.org/10.5281/zenodo.5091445;vis
                   it=swh:1:snp:078aed8825ae107f546d066c49314bebba6e9
                   d54;anchor=swh:1:rel:0e6e6fe43912ec5fd7ef4a1aa1c24
                   6db5d936ce1;path=flatsurf-sage-flatsurf-f9cc3e1
                  },
}

\bib{ClarkSadun}{article}{
   author={Clark, Alex},
   author={Sadun, Lorenzo},
   title={When size matters: subshifts and their related tiling spaces},
   journal={Ergodic Theory Dynam. Systems},
   volume={23},
   date={2003},
   number={4},
   pages={1043--1057},
   issn={0143-3857},
   review={\MR{1997967}},
   doi={10.1017/S0143385702001633},
}

\bib{DoSchmidt}{article}{
   author={Do, Hieu Trung},
   author={Schmidt, Thomas A.},
   title={New infinite families of pseudo-Anosov maps with vanishing
   Sah-Arnoux-Fathi invariant},
   journal={J. Mod. Dyn.},
   volume={10},
   date={2016},
   pages={541--561},
   issn={1930-5311},
   review={\MR{3579582}},
   doi={10.3934/jmd.2016.10.541},
}

\bib{Durand-Perrin-22}{book}{
Author = {Durand, Fabien},
author={Perrin, Dominique},
 Title = {Dimension groups and dynamical systems. {Substitutions}, {Bratteli} diagrams and {Cantor} systems},
 FSeries = {Cambridge Studies in Advanced Mathematics},
 Series = {Camb. Stud. Adv. Math.},
 Volume = {196},
 ISBN = {978-1-108-83868-9; 978-1-108-97603-9},
 Year = {2022},
 Publisher = {Cambridge: Cambridge University Press},
 Language = {English},
 DOI = {10.1017/9781108976039},
 Keywords = {37-02,22-02,37Bxx,37A55,37C45},
 zbMATH = {7436601},
 Zbl = {1493.37002}

}

\bib{FarbMarg}{book}{
   author={Farb, Benson},
   author={Margalit, Dan},
   title={A primer on mapping class groups},
   series={Princeton Mathematical Series},
   volume={49},
   publisher={Princeton University Press, Princeton, NJ},
   date={2012},
   pages={xiv+472},
   isbn={978-0-691-14794-9},
   review={\MR{2850125}},
}

\bib{FLP}{collection}{
   author={Fathi, Albert},
   author={Laudenbach, Fran\c cois},
   author={Po\'enaru, Valentin},
   title={Thurston's work on surfaces},
   series={Mathematical Notes},
   volume={48},
   note={Translated from the 1979 French original by Djun M. Kim and Dan
   Margalit},
   publisher={Princeton University Press, Princeton, NJ},
   date={2012},
   pages={xvi+254},
   isbn={978-0-691-14735-2},
   review={\MR{3053012}},
}

\bib{F}{article}{
 Author = {Ferenczi, S{\'e}bastien},
 Title = {Combinatorial methods for interval exchange transformations},
 FJournal = {Southeast Asian Bulletin of Mathematics},
 Journal = {Southeast Asian Bull. Math.},
 ISSN = {0129-2021},
 Volume = {37},
 Number = {1},
 Pages = {47--66},
 Year = {2013},
 Language = {English},
 Keywords = {37B10,05A05,37E05,37A25},
 zbMATH = {6230132},
 Zbl = {1289.37006}
}
\bib{FZ}{article}{
 Author = {Ferenczi, Sébastien}
 author={Zamboni, Luca Q.},
 Title = {Languages of {{\(k\)}}-interval exchange transformations},
 FJournal = {Bulletin of the London Mathematical Society},
 Journal = {Bull. Lond. Math. Soc.},
 ISSN = {0024-6093},
 Volume = {40},
 Number = {4},
 Pages = {705--714},
 Year = {2008},
 Language = {English},
 DOI = {10.1112/blms/bdn051},
 Keywords = {37B10,68R15},
 zbMATH = {5316997},
 Zbl = {1147.37008},
}

\bib{Foggbook}{book}{
author = {Pytheas Fogg, N.},
editor={Berth{\'e}, Val{\'e}rie and Ferenczi, S{\'e}bastien and Mauduit, Christian and Siegel, A.},
 Title = {Substitutions in dynamics, arithmetics and combinatorics},
 FSeries = {Lecture Notes in Mathematics},
 Series = {Lect. Notes Math.},
 ISSN = {0075-8434},
 Volume = {1794},
 ISBN = {3-540-44141-7},
 Year = {2002},
 Publisher = {Berlin: Springer},
 Language = {English},
 DOI = {10.1007/b13861},
 Keywords = {11B85,11-02,37-02,05-02,68Q45,68R15,11A55,11A63,11J70,37B10,28D99,37A45,05A17},
 URL = {link.springer.de/link/service/series/0304/tocs/t1794.htm},
 zbMATH = {1823664},
 Zbl = {1014.11015}
}

\bib{Fogg}{article}{
   author={Pytheas Fogg, N.},
   author={No\^us, Camille},
   title={Symbolic coding of linear complexity for generic translations on
   the torus, using continued fractions},
   journal={J. Mod. Dyn.},
   volume={20},
   date={2024},
   pages={527--596},
   issn={1930-5311},
   review={\MR{4811528}},
   doi={10.3934/jmd.2024015},
}

\bib{Forni}{article}{
  title={Twisted translation flows and effective weak mixing},
  author={Forni, Giovanni},
  journal={Journal of the European Mathematical Society},
  volume={24},
  number={12},
  pages={4225--4276},
  year={2022}
}

\bib{FoxKershner}{article}{
Author = {Fox, Richard H.}, 
Author={Kershner, Richard B.},
 Title = {Concerning the transitive properties of geodesics on a rational polyhedron.},
 Journal = {Duke Math. J.},
 ISSN = {0012-7094},
 Volume = {2},
 Pages = {147--150},
 Year = {1936},
 Language = {English},
 DOI = {10.1215/S0012-7094-36-00213-2},
 zbMATH = {2527707},
 JFM = {62.0817.01}
}

\bib{Franks}{article}{
 ISSN = {00029947},
 URL = {http://www.jstor.org/stable/1995062},
 author = {Franks, John},
 journal = {Transactions of the American Mathematical Society},
 pages = {117--124},
 publisher = {American Mathematical Society},
 title = {Anosov Diffeomorphisms on Tori},
 urldate = {2024-08-02},
 volume = {145},
 year = {1969}
}

\bib{FranksRykken}{article}{
   author={Franks, John},
   author={Rykken, Elyn},
   title={Pseudo-Anosov homeomorphisms with quadratic expansion},
   journal={Proc. Amer. Math. Soc.},
   volume={127},
   date={1999},
   number={7},
   pages={2183--2192},
   issn={0002-9939},
   review={\MR{1485474}},
   doi={10.1090/S0002-9939-99-04731-0},
}

\bib{Fried82}{article}{
  title={The geometry of cross sections to flows},
  author={Fried, David},
  journal={Topology},
  volume={21},
  number={4},
  pages={353--371},
  year={1982},
  publisher={Pergamon}
}

\bib{Fried}{article}{
	title={Growth rate of surface homeomorphisms and flow equivalence}, 
	volume={5}, 
	DOI={10.1017/S0143385700003151}, 
	number={4}, 
	journal={Ergodic Theory and Dynamical Systems}, 
	author={Fried, David}, 
	year={1985}, 
	pages={539–563}
}

\bib{Furstenberg}{article}{
   author={Furstenberg, Harry},
   title={Strict ergodicity and transformation of the torus},
   journal={Amer. J. Math.},
   volume={83},
   date={1961},
   pages={573--601},
   issn={0002-9327},
   review={\MR{0133429}},
   doi={10.2307/2372899},
}

\bib{GottschalkHedlund}{book}{
	title={Topological dynamics},
	author={Gottschalk, Walter Helbig},
	author={Hedlund, Gustav Arnold},
	volume={36},
	year={1955},
	publisher={American Mathematical Soc.}
}

\bib{GutkinJudge}{article}{
	title={Affine mappings of translation surfaces: geometry and arithmetic},
	author={Gutkin, Eugene},
	author={Judge, Chris},
	journal={Duke Math. J.},
	volume={104},
	number={1},
	pages={191--213},
	year={2000}
}

\bib{HMTY}{article}{
  title={Generalized continued fractions and orbits under the action of Hecke triangle groups},
  author={Hanson, Elise},
  author={Merberg, Adam},
  author={Towse, Christopher},
  author={Yudovina, Elena},
  journal={Acta Arithmetica},
  volume={134},
  pages={337--348},
  year={2008},
  publisher={Instytut Matematyczny Polskiej Akademii Nauk}
}

\bib{HS}{article}{
 Author = {Hollander, Michael},
 author={Solomyak, Boris},
 Title = {Two-symbol {Pisot} substitutions have pure discrete spectrum},
 FJournal = {Ergodic Theory and Dynamical Systems},
 Journal = {Ergodic Theory Dyn. Syst.},
 ISSN = {0143-3857},
 Volume = {23},
 Number = {2},
 Pages = {533--540},
 Year = {2003},
 Language = {English},
 DOI = {10.1017/S0143385702001384},
 Keywords = {11B85,37A99,37B10},
 zbMATH = {2015910},
 Zbl = {1031.11010}
}

\bib{Host}{article}{
   author={Host, Bernard},
   title={Valeurs propres des syst\`emes dynamiques d\'efinis par des
   substitutions de longueur variable},
   language={French},
   journal={Ergodic Theory Dynam. Systems},
   volume={6},
   date={1986},
   number={4},
   pages={529--540},
   issn={0143-3857},
   review={\MR{0873430}},
   doi={10.1017/S0143385700003679},
}

\bib{HubertSchmidt}{article}{
   author={Hubert, Pascal},
   author={Schmidt, Thomas A.},
   title={An introduction to Veech surfaces},
   conference={
      title={Handbook of dynamical systems. Vol. 1B},
   },
   book={
      publisher={Elsevier B. V., Amsterdam},
   },
   isbn={0-444-52055-4},
   date={2006},
   pages={501--526},
   review={\MR{2186246}},
   doi={10.1016/S1874-575X(06)80031-7},
}

\bib{J}{article}{
 Author = {Jullian, Yann},
 Title = {An algorithm to identify automorphisms which arise from self-induced interval exchange transformations},
 FJournal = {Mathematische Zeitschrift},
 Journal = {Math. Z.},
 ISSN = {0025-5874},
 Volume = {274},
 Number = {1-2},
 Pages = {33--55},
 Year = {2013},
 Language = {English},
 DOI = {10.1007/s00209-012-1056-2},
 Keywords = {37B10,20E08,37E05},
 zbMATH = {6176507},
 Zbl = {1291.37013}
}

\bib{KatokZelmjakov}{article}{
Author = {Zemlyakov, Alexander N.},
Author={Katok, Anatole B.},
 Title = {Topological transitivity of billiards in polygons},
 FJournal = {Mathematical Notes},
 Journal = {Math. Notes},
 ISSN = {0001-4346},
 Volume = {18},
 Pages = {760--764},
 Year = {1976},
 Language = {English},
 DOI = {10.1007/BF01818045},
 Keywords = {37J99,54H20},
 zbMATH = {3505912},
 Zbl = {0323.58012}
}

\bib{KenyonSmillie}{article}{
	title={Billiards on rational-angled triangles},
	author={Kenyon, Richard},
	author={Smillie, John},
	journal={Commentarii Mathematici Helvetici},
	volume={75},
	pages={65--108},
	year={2000},
	publisher={Birkh{\"a}user Verlag}
}

\bib{KMS}{article}{
   author={Kerckhoff, Steven},
   author={Masur, Howard},
   author={Smillie, John},
   title={Ergodicity of billiard flows and quadratic differentials},
   journal={Ann. of Math. (2)},
   volume={124},
   date={1986},
   number={2},
   pages={293--311},
   issn={0003-486X},
   review={\MR{0855297}},
   doi={10.2307/1971280},
}

\bib{KZ}{article}{
   author={Kontsevich, Maxim},
   author={Zorich, Anton},
   title={Connected components of the moduli spaces of Abelian differentials
   with prescribed singularities},
   journal={Invent. Math.},
   volume={153},
   date={2003},
   number={3},
   pages={631--678},
   issn={0020-9910},
   review={\MR{2000471}},
   doi={10.1007/s00222-003-0303-x},
}

\bib{Kwapisz}{article}{
   author={Kwapisz, Jaroslaw},
   title={A dynamical proof of Pisot's theorem},
   journal={Canad. Math. Bull.},
   volume={49},
   date={2006},
   number={1},
   pages={108--112},
   issn={0008-4395},
   review={\MR{2198723}},
   doi={10.4153/CMB-2006-010-9},
}

\bib{derLindenFranciscus}{article}{
  title={Class number computations of real abelian number fields},
  author={van der Linden, F. J.},
  journal={Mathematics of Computation},
  volume={39},
  number={160},
  pages={693--707},
  year={1982}
}

\bib{LindseyTrevino}{article}{
   author={Lindsey, Kathryn},
   author={Trevi\~no, Rodrigo},
   title={Infinite type flat surface models of ergodic systems},
   journal={Discrete Contin. Dyn. Syst.},
   volume={36},
   date={2016},
   number={10},
   pages={5509--5553},
   issn={1078-0947},
   review={\MR{3543559}},
   doi={10.3934/dcds.2016043},
}

\bib{Livshits1}{article}{
   author={Livshits, A. N.},
   title={On the spectra of adic transformations of Markov compact sets},
   language={Russian},
   journal={Uspekhi Mat. Nauk},
   volume={42},
   date={1987},
   number={3(255)},
   pages={189--190},
   issn={0042-1316},
   review={\MR{0896889}},
}
\bib{Livshits2}{article}{
   author={Livshits, A. N.},
   title={Some examples of adic transformations and automorphisms of
   substitutions},
   note={Selected translations},
   journal={Selecta Math. Soviet.},
   volume={11},
   date={1992},
   number={1},
   pages={83--104},
   issn={0272-9903},
   review={\MR{1155902}},
}

\bib{MarmiMoussaYoccoz}{article}{
	title={The cohomological equation for Roth-type interval exchange maps},
	author={Marmi, Stefano},
	author={Moussa, Pierre},
	author={Yoccoz, J-C},
	journal={Journal of the American Mathematical Society},
	volume={18},
	number={4},
	pages={823--872},
	year={2005}
}

\bib{Masur}{article}{
   author={Masur, Howard},
   title={Interval exchange transformations and measured foliations},
   journal={Ann. of Math. (2)},
   volume={115},
   date={1982},
   number={1},
   pages={169--200},
   issn={0003-486X},
   review={\MR{0644018}},
   doi={10.2307/1971341},
}

\bib{McMullen03}{article}{
  title={Billiards and Teichm{\"u}ller curves on Hilbert modular surfaces},
  author={McMullen, Curtis T.},
  journal={Journal of the American Mathematical Society},
  volume={16},
  number={4},
  pages={857--885},
  year={2003}
}

\bib{McMullen:cascade}{article}{
   author={McMullen, Curtis T.},
   title={Cascades in the dynamics of measured foliations},
   language={English, with English and French summaries},
   journal={Ann. Sci. \'Ec. Norm. Sup\'er. (4)},
   volume={48},
   date={2015},
   number={1},
   pages={1--39},
   issn={0012-9593},
   review={\MR{3335837}},
   doi={10.24033/asens.2237},
}

\bib{McMullen:survey}{article}{
   author={McMullen, Curtis T.},
   title={Billiards and Teichm\"uller curves},
   journal={Bull. Amer. Math. Soc. (N.S.)},
   volume={60},
   date={2023},
   number={2},
   pages={195--250},
   issn={0273-0979},
   review={\MR{4557380}},
   doi={10.1090/bull/1782},
}

\bib{Mercat}{article}{
	title={Coboundaries and eigenvalues of morphic subshifts},
	author={Mercat, Paul},
	journal={arxiv arXiv:2404.13656},
	year={2024},
}

\bib{Mercat:semialgorithm}{article}{
title={personal communication},
author={Mercat, Paul},
year={2025},
}

\bib{Monteil}{article}{
   author={Monteil, Thierry},
   title={On the finite blocking property},
   language={English, with English and French summaries},
   journal={Ann. Inst. Fourier (Grenoble)},
   volume={55},
   date={2005},
   number={4},
   pages={1195--1217},
   issn={0373-0956},
   review={\MR{2157167}},
   doi={10.5802/aif.2124},
}

\bib{Rauzy}{article}{
   author={Rauzy, G.},
   title={Nombres alg\'ebriques et substitutions},
   language={French, with English summary},
   journal={Bull. Soc. Math. France},
   volume={110},
   date={1982},
   number={2},
   pages={147--178},
   issn={0037-9484},
   review={\MR{0667748}},
}

\bib{RT}{article}{
  title={Continued fraction representations of units associated with certain Hecke groups},
  author={Rosen, David},
  author={Towse, Christopher},
  journal={Archiv der Mathematik},
  volume={77},
  number={4},
  pages={294--302},
  year={2001},
  publisher={Springer}
}

\bib{sagemath}{manual}{
      author={Developers, The~Sage},
       title={{S}agemath, the {S}age {M}athematics {S}oftware {S}ystem
  ({V}ersion 10.5)},
        date={2024},
        note={\url{https://www.sagemath.org}},
}

\bib{SirventSolomyak}{article}{
   author={Sirvent, V. F.},
   author={Solomyak, B.},
   title={Pure discrete spectrum for one-dimensional substitution systems of
   Pisot type},
   note={Dedicated to Robert V.\ Moody},
   journal={Canad. Math. Bull.},
   volume={45},
   date={2002},
   number={4},
   pages={697--710},
   issn={0008-4395},
   review={\MR{1941235}},
   doi={10.4153/CMB-2002-062-3},
}

\bib{Queffelec}{book}{
Author = {Queff{\'e}lec, Martine},
 Title = {Substitution dynamical systems. {Spectral} analysis},
 Edition = {2nd ed.},
 FSeries = {Lecture Notes in Mathematics},
 Series = {Lect. Notes Math.},
 ISSN = {0075-8434},
 Volume = {1294},
 ISBN = {978-3-642-11211-9; 978-3-642-11212-6},
 Year = {2010},
 Publisher = {Dordrecht: Springer},
 Language = {English},
 DOI = {10.1007/978-3-642-11212-6},
 Keywords = {11-02,28-02,37-02,28D99,11K55,11B85,37B10},
 zbMATH = {5702308},
 Zbl = {1225.11001}

}

\bib{Solomyak97ETDS}{article}{
   author={Solomyak, Boris},
   title={Dynamics of self-similar tilings},
   journal={Ergodic Theory Dynam. Systems},
   volume={17},
   date={1997},
   number={3},
   pages={695--738},
   issn={0143-3857},
   review={\MR{1452190}},
   doi={10.1017/S0143385797084988},
}

\bib{Veech:ergodic}{article}{
   author={Veech, William A.},
   title={Gauss measures for transformations on the space of interval
   exchange maps},
   journal={Ann. of Math. (2)},
   volume={115},
   date={1982},
   number={1},
   pages={201--242},
   issn={0003-486X},
   review={\MR{0644019}},
   doi={10.2307/1971391},
}

\bib{Veech:metrictheory}{article}{
   author={Veech, William A.},
   title={The metric theory of interval exchange transformations. I. Generic
   spectral properties},
   journal={Amer. J. Math.},
   volume={106},
   date={1984},
   number={6},
   pages={1331--1359},
   issn={0002-9327},
   review={\MR{0765582}},
   doi={10.2307/2374396},
}

\bib{Veech:dichotomy}{article}{
  title={Teichm{\"u}ller curves in moduli space, Eisenstein series and an application to triangular billiards},
  author={Veech, William A},
  journal={Inventiones mathematicae},
  volume={97},
  pages={553--583},
  year={1989},
  publisher={Springer}
}

\bib{Viana}{article}{
   author={Viana, Marcelo},
   title={Ergodic theory of interval exchange maps},
   journal={Rev. Mat. Complut.},
   volume={19},
   date={2006},
   number={1},
   pages={7--100},
   issn={1139-1138},
   review={\MR{2219821}},
   doi={10.5209/rev\_REMA.2006.v19.n1.16621},
}

\bib{Winsor}{misc}{
    title={Hecke Triangle Groups and Special Hyperbolic Elements},
    author={Winsor, Karl},
    year={2025},
    note={preprint, available at \url{https://karlwinsor.github.io/}}
}

\bib{YoccozSurvey}{article}{
	title={Echanges d’intervalles},
	author={Yoccoz, Jean-Christophe},
	journal={Cours College de France},
	year={2005},
}

\bib{Zorich:gauss}{article}{
   author={Zorich, Anton},
   title={Finite Gauss measure on the space of interval exchange
   transformations. Lyapunov exponents},
   language={English, with English and French summaries},
   journal={Ann. Inst. Fourier (Grenoble)},
   volume={46},
   date={1996},
   number={2},
   pages={325--370},
   issn={0373-0956},
   review={\MR{1393518}},
   doi={10.5802/aif.1517},
}

\bib{Zorich:survey}{article}{
   author={Zorich, Anton},
   title={Flat surfaces},
   conference={
      title={Frontiers in number theory, physics, and geometry. I},
   },
   book={
      publisher={Springer, Berlin},
   },
   isbn={978-3-540-23189-9},
   isbn={3-540-23189-7},
   date={2006},
   pages={437--583},
   review={\MR{2261104}},
}

\end{biblist}
\end{bibdiv}

\end{document}